\numberwithin{equation}{section}
\numberwithin{figure}{section}
\newcommand{\mc}[1]{\ensuremath{\mathcal{#1}}}
\newcommand{\calL}{\mathcal L}
\newcommand{\calS}{\mathcal S}
\newcommand{\bN}{\mathbb N}
\newcommand{\bQ}{\mathbb Q}
\newcommand{\bR}{\mathbb R}
\newcommand{\bZ}{\mathbb Z}
\newcommand{\acl}{\mathrm {acl}}
\newcommand{\eq}{\mathrm {eq}}
\newcommand{\Th}{\mathrm {Th}}
\newcommand{\tp}{\mathrm {tp}}
\newcommand{\ph}{\varphi}
\newcommand\vc{\operatorname{vc}}
\newcommand\VC{\operatorname{VC}}
\newcommand\ACF{\operatorname{ACF}}
\newcommand\MRk{\operatorname{MR}}
\newcommand\URk{\operatorname{U}}
\newcommand\breadth{\operatorname{breadth}}
\newcommand\width{\operatorname{width}}
\newcommand\height{\operatorname{height}}
\newcommand\Gdim{\operatorname{Gdim}}
\newcommand\PP{\operatorname{PP}}
\newcommand\restrict{\!\upharpoonright\!}
\newcommand{\abs}[1]{\lvert#1\rvert}
\DeclareMathAlphabet{\mathbf}{OML}{cmm}{b}{it}
\theoremstyle{definition}
\theoremstyle{remark}
\newtheorem*{remark*}{Remark}
\newtheorem*{remarks*}{Remarks}
\newtheorem*{problem*}{Problem}
\newtheorem*{example*}{Example}
\newtheorem*{examples*}{Examples}
\newtheorem*{claim*}{Claim}
\newtheorem{examples}[theorem]{Examples}
\begin{document}

\title[VC Density in some NIP Theories, II]{Vapnik-Chervonenkis Density in some Theories without the Independence Property, II}

\dedicatory{For Anand Pillay, on his 60th birthday.}

\thanks{Work on this paper was begun while some of the authors were participating in the thematic program on O-minimal Structures and Real Analytic Geometry at the Fields Institute in Toronto (Spring 2009), and attending the 
Durham Symposium on
New Directions in the Model Theory of Fields (July 2009), organized by the London Mathematical Society  and funded by EPSRC~grant EP/F068751/1. The support of these institutions is gratefully acknowledged.
Aschen\-bren\-ner was partly supported by NSF grant DMS-0556197. He would also like to express his gratitude to Andreas Baudisch and Humboldt-Universit\"at Berlin for their hospitality during Fall~2010. Haskell's research was supported by  NSERC grant~238875, Macpherson acknowledges support by EPSRC grant EP/F009712/1, and Starchenko was partly supported by NSF grant DMS-0701364.
We also thank the referees for the careful reading of the manuscript, in particular, for detecting an error in the original formulation and proof of Proposition~\ref{prop:dp-minimal abelian}.}

\author[Aschenbrenner]{Matthias Aschenbrenner}
\address{Department of Mathematics \\
University of California, Los Angeles \\ 
Box 951555 \\
Los Angeles, CA 90095-1555, U.S.A.}
\email{matthias@math.ucla.edu}

\author[Dolich]{Alf Dolich}
\address{Department of Mathematics and Computer Science \\
Kingsborough Community College \\
2001 Oriental Blvd. \\
Brooklyn NY 11235, U.S.A.}
\email{alfredo.dolich@kbcc.cuny.edu}

\author[Haskell]{Deirdre Haskell}
\address{Department of Mathematics and Statistics\\
	  McMaster University \\
	  1280 Main St W \\
         Hamilton ON L8S 4K1, Canada} 
\email{haskell@math.mcmaster.ca}

\author[Macpherson]{Dugald Macpherson}
\address{School of Mathematics\\
University of Leeds\\
Leeds LS2 9JT, U.K.}
\email{h.d.macpherson@leeds.ac.uk}

\author[Starchenko]{Sergei Starchenko}
\address{Department of Mathematics \\
University of Notre Dame\\
255 Hurley Building\\
Notre Dame, IN 46556-4618, U.S.A.}
\email{starchenko.1@nd.edu}

\keywords{theories without the independence property; Vapnik-Chervonenkis density; finite cover property; modules; abelian groups}
\secondarydata{03C60, 03G10}
\subjclass{03C45, 52C45}

\begin{abstract}
We study the Vapnik-Chervonenkis~(VC)~density of definable families in certain stable first-order theories. In particular we obtain uniform bounds on VC~density of definable families in finite $\URk$-rank theories without the finite cover property, and we characterize those abelian groups for which there exist uniform bounds on the VC~density of  definable families.
\end{abstract}

\maketitle

\tableofcontents
\newpage

\section{Introduction}

\noindent
The Vapnik-Chervonenkis (VC) density and its  cousin, the VC~dimension, are numerical parameters associated to any set system (i.e., a family of subsets of a given base set).
VC~classes, that is, set systems whose  VC~dimension is finite, have been investigated since the early seventies (beginning with \cite{sauer,S2,vc}). Since then, the concept of VC~dimension has found numerous applications in 
statistics, 
combinatorics, 
learning theory, and 
computational geometry. 
In model theory, the connection between VC~classes and the theories without the independence property (NIP theories) introduced by Shelah \cite{S1} was first made explicit by Laskowski \cite{l}, who showed that a complete first-order theory is NIP if and only if every formula defines, in each model, a VC~class. 
An important class of NIP theories are the ones which are stable.
The class of NIP theories, however, also encompasses, among other examples, all weakly o-minimal theories and many interesting theories of expansions of valued fields.
In recent years, not least due to the efforts of Anand Pillay (see, e.g., \cite{HPP,HP,HPS}), there has been some progress in extending the reach of the highly developed methods of stability theory into this wider realm of NIP theories.

In this paper we establish uniform bounds, in terms of the number of parameter variables, on the VC~density of formulas in certain stable theories, continuing our study from \cite{ADHMS}, where we mainly focussed on unstable examples. Such uniform bounds may be interesting since the VC~density is often an important parameter for the complexity of a family of sets, and is related to discrepancy  and the size of $\varepsilon$-approximations~\cite{MWW}, and to entropic dimension~\cite{Assouad}. Moreover, uniform bounds on VC~density are intimately connected with a strengthening of the NIP concept, called dp-minimality, which has recently received attention through the work of Shelah \cite{S-strongly dependent}, Onshuus-Usvyatsov \cite{OU}, Dolich-Goodrick-Lippel \cite{dl}, and others.
Before we state some of the main results of this paper, we briefly recall the relevant terminology.
In the rest of this introduction, $\mathcal L$ is a first-order language,  $T$ is a complete  $\mathcal L$-theory without finite models, and $\mathbf M$ is a model of~$T$ (with universe~$M$). Given a tuple $x=(x_1,\dots,x_m)$ of pairwise distinct variables we denote by $\abs{x}:=m$ the length of $x$.

\subsection{Shatter function, VC dimension, and VC density.}
Let $\varphi(x;y)$ be a \emph{partitioned $\mathcal L$-formula}, that is, an $\mathcal L$-formula with the free variables of $\varphi$ contained among the entries of the tuples $x=(x_1,\dots,x_m)$ and $y=(y_1,\dots,y_n)$ of pairwise distinct variables (also assumed to be disjoint). We call the $x_i$ the \emph{object variables} and the~$y_j$ the \emph{parameter variables} of $\varphi(x;y)$. 
We obtain a set system
$$\calS_\ph = \big\{ \ph(M^{m};b) : b\in M^{n}\big\}$$ 
on $M^{m}$, where $\ph(M^{m};b)$ denotes the subset of $M^{m}$ defined in $\mathbf M$ by the formula~$\varphi(x;b)$.
Given $A'\subseteq A\subseteq M^m$, we say that $A'$ is \emph{cut out from $A$ by $\varphi$} if $A'=S\cap A$ for some $S\in\calS_\ph$, and
we say that \emph{$A$ is shattered by $\varphi$} if every subset of $A$ is cut out from $A$ by~$\ph$.

For each non-negative integer $t$ let $\pi_\varphi(t)$ be the maximal number of subsets cut out from a $t$-element subset of $M^m$. (So $\pi_\ph(t)=2^t$ if and only if there is a $t$-element subset of $M^m$ shattered by $\ph$.)
The function $t\mapsto\pi_{\ph}(t)$ is called the \emph{shatter function} of $\ph$; it is routine to verify (see, e.g.,~\cite[Section~3]{ADHMS}) that $\pi_\ph$ is an invariant of the elementary theory $T$ of $\mathbf M$. 
The \emph{VC dimension} of $\ph$ (in $T$) is the largest $d=\VC(\ph)$ (if it exists) such that $\pi_\ph(d)=2^d$; if there is no such~$d$ (i.e., if $\pi_\ph(t)=2^t$ for each $t$), then we also set $\VC(\ph)=\infty$.
(We suppress the dependence on $T$ in the notation for the shatter function of $\varphi$ and concepts, like the VC~dimension of $\ph$, derived from it.)

Now if $\ph$ has finite VC~dimension $d$, then by a fundamental combinatorial fact (proved independently in~\cite{sauer}, \cite{S2} and~\cite{vc}), the shatter function of $\ph$ is bounded above by a polynomial in $t$ of degree $d$; in fact,  $\pi_\ph(t)$ is bounded by ${t\choose \leq d}:={t\choose 0}+{t\choose 1}+\cdots+{t\choose d}$ (the number of subsets of $[t] := \{1,\dots,t\}$ having at most $d$ elements).  One defines the \emph{VC~density $\vc(\ph)$} (in $T$) of a formula $\ph$ with finite VC~dimension in $T$ as the infimum of all real numbers~$r\geq 0$ such that $\pi_{\ph}(t)/t^r$ is bounded for all positive $t$; if $\VC(\ph)=\infty$, then we put~$\vc(\ph)=\infty$.

\subsection{Independence dimension and VC duality.}
The partitioned $\calL$-formula $\ph(x;y)$ is said to have the \emph{independence property}  for~$\mathbf M$  if for every $t\in\bN$ there are $b_1,\ldots,b_t\in M^n$ such that for every $S \subseteq [t]$ there is $a_S\in M^m$ such that for all $i\in [t]$, $\mathbf M\models \ph(a_S;b_i) \Longleftrightarrow i\in S$. The structure $\mathbf M$ is said to have the independence property if some $\calL$-formula has the independence property for $\mathbf M$, and to be \emph{NIP} (or to be \emph{dependent}) otherwise. Clearly the independence property for $\varphi$ only depends on $T=\Th(\mathbf M)$, so it makes sense to speak of a partitioned $\mathcal L$-formula having the independence property for the complete $\mathcal L$-theory $T$, and of~$T$ being NIP. The \emph{dual} of the partitioned $\mathcal L$-formula $\varphi(x;y)$ is the partitioned $\mathcal L$-formula $\varphi^*(y;x)$ which is syntactically the same formula as $\varphi$, except that the roles of object and parameter variables are interchanged. Thus $\varphi$ has the independence property for $T$ if and only if $\varphi^*$ has finite VC~dimension in $T$. (It is also true, by another fundamental observation about set systems, that $\varphi$ has the independence property for $T$ if and only if $\varphi$ has finite VC~dimension in $T$; cf., e.g.,~\cite[Section~3]{ADHMS}.)

Given $B\subseteq M^{n}$, a complete $\varphi(x;B)$-type is a maximal consistent subset of 
$$\big\{\varphi(x;b):b\in B\big\}\cup \big\{\neg\varphi(x;b):b\in B\big\}.$$ 
We denote the set of complete $\varphi(x;B)$-types by $S^\varphi(B)$.
The shatter function $\pi_{\varphi}^* := \pi_{\varphi^*}$ of the dual formula of $\varphi$ counts the number of complete $\varphi(x;B)$-types over finite parameter sets $B$: the map
$$B'\mapsto \big\{ \varphi(x;b) : b\in B' \big\} \cup \big\{ \neg\varphi(x;b) : b\in B\setminus B' \big\}$$ 
defines a bijection from the collection of subsets of $B$ cut out by $\varphi^*$ onto
$S^\varphi(B)$. Hence 
$$\pi^*_{\varphi}(t)=\max\big\{\abs{S^\varphi(B)}:B\subseteq M^n,\ \abs{B}=t\big\}\qquad\text{for each $t$.}$$
This interpretation of the shatter function allows us to transform the problem of estimating $\vc(\varphi^*)$, which is one of bounding the asymptotic growth of $\pi_{\varphi^*}$, into the problem of counting the number of $\varphi(x;B)$-types over finite parameter sets~$B$. For the latter task, ready-made model-theoretic tools (like the local ranks employed in Section~\ref{sec:other} below) are available. As already in \cite{ADHMS}, we also use a variant of the familiar notion of definable type; we combine it with additional combinatorial techniques, centered around the notion of breadth of a set system, and  apply it, in this paper, to modules.
(See Sections~\ref{sec:examples of VCm theories, 2}--\ref{sec:abelian}.)

\subsection{Finite-rank theories.}
Our first main theorem gives a uniform bound on VC~density of formulas for superstable theories of finite rank which do not have the finite cover property.
From \cite[Definition~II.4.1]{Shelah-book} recall that $T$ is said to have the \emph{finite cover property} if there is an $\mathcal L$-formula $\varphi(x;y)$ with the finite cover property in $T$, i.e.,
so that for arbitrarily large $t \in \bN$ there are $b_1, \dots, b_t\in M^{n}$ so that 
$\{ \ph(x;b_i) : i\in [t] \}$
is inconsistent but $\{ \ph(x;b_i) : i\in w \}$ is consistent for any proper subset $w$ of  $[t]$.
If $T$ has the finite cover property, then there is actually a formula $\varphi(x;y)$ with the finite cover property in $T$ where $\abs{x}=1$  \cite[Theorem~II.4.4]{Shelah-book}. Every theory which does not have the finite cover property is stable \cite[Theorem~II.4.2]{Shelah-book}. In Section~\ref{sec:other} we show:

\begin{theorem}\label{thm:finite MR}
Suppose  $T$ does not have the finite cover property and  $T$ has finite $\URk$-rank $d$.  Then every partitioned $\mathcal L$-formula $\varphi(x;y)$ has VC~density at most~$d\abs{y}$ in $T$ \textup{(}in fact, $\pi_{\varphi}(t)=O(t^{d\abs{y}})$\textup{)}.
\end{theorem}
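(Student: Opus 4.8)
\noindent
The plan is to pass to the dual formula $\varphi^*$ and to count local types. By the bijection recorded in the introduction, applied with $\varphi^*$ in place of $\varphi$, the shatter function of $\varphi$ satisfies
\[
\pi_\varphi(t)\ =\ \max\bigl\{\,\abs{S^{\varphi^*}(A)}\ :\ A\subseteq M^m,\ \abs{A}=t\,\bigr\},
\]
so (as $\pi_\varphi$ is an invariant of $T$) it suffices to show that in a sufficiently saturated model the number of complete $\varphi^*(y;A)$-types over a finite set $A$ with $\abs{A}=t$ is $O(t^{d\abs{y}})$. Put $n:=\abs{y}$ and $e:=dn$. Since $T$ does not have the finite cover property it is stable, so for the single formula $\varphi^*$ the local $\varphi^*$-rank $R(-,\varphi^*,\infty)$ (in the sense of Shelah) is finite-valued; and since $\varphi^*$-forking is a special case of forking, $R(q,\varphi^*,\infty)\le\URk(q)$ for every type $q$. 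As $T$ has $\URk$-rank $d$, the $U$-rank of every $1$-type is at most $d$, whence by the Lascar inequalities the $U$-rank of every type in the variables $y$ is at most $dn=e$. Consequently every $\varphi^*$-type in the variables $y$ has $\varphi^*$-rank at most $e$; in particular $R(y=y,\varphi^*,\infty)\le e$.

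The decisive input is a uniform bound on local multiplicity furnished by the absence of the finite cover property: there is $N=N(\varphi)<\omega$ such that every complete $\varphi^*$-type over a set $B$ has at most $N$ extensions to a complete $\varphi^*$-type over any $B'\supseteq B$ with the same $\varphi^*$-rank (equivalently, at most $N$ extensions that do not $\varphi^*$-fork over $B$). This is part of Shelah's study of the finite cover property \cite[Section~II.4]{Shelah-book}; isolating exactly this consequence in the form just stated is the step I expect to be the main obstacle.

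Granting this, fix $A=\{a_1,\dots,a_t\}$ with a chosen enumeration, and for a complete $\varphi^*(y;A)$-type $q$ set $q_i:=q\restrict\{a_1,\dots,a_i\}$ and $r_i:=R(q_i,\varphi^*,\infty)$. Then $e\ge r_0\ge r_1\ge\cdots\ge r_t\ge 0$, so the set $D:=\{\,i:r_i<r_{i-1}\,\}$ of ``drop'' indices has $\abs{D}\le r_0\le e$ and cuts $\{0,1,\dots,t\}$ into at most $\abs{D}+1\le e+1$ maximal blocks on which the sequence $(r_i)$ is constant. On each such block, the type $q$ restricted to the last index of the block is a $\varphi^*$-rank preserving extension of $q$ restricted to the first index, so by the previous paragraph there are at most $N$ possibilities for the former given the latter; and at each drop index $i$, the type $q_i$ is obtained from $q_{i-1}$ by deciding the single formula $\varphi^*(y;a_i)$, so there are at most $2$ possibilities for $q_i$ given $q_{i-1}$. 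Hence $q=q_t$ is determined by the choice of the set $D$, together with at most $\abs{D}$ binary choices and at most $\abs{D}+1$ choices from sets of size at most $N$, and therefore
\[
\abs{S^{\varphi^*}(A)}\ \le\ \sum_{k=0}^{e}{t\choose k}\,2^{k}\,N^{k+1}\ \le\ N^{e+1}2^{e}{t\choose\leq e}\ =\ O(t^{e})\ =\ O(t^{d\abs{y}}).
\]
Thus $\pi_\varphi(t)=O(t^{d\abs{y}})$, and in particular $\vc(\varphi)\le d\abs{y}$.
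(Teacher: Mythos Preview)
Your argument is correct and takes a genuinely different route from the paper's. Both proofs dualize to count $\varphi^*$-types over finite sets, bound the local rank $R_{\varphi^*}$ by $e=d\abs{y}$ via Lascar's inequality and the comparison $R_\Delta\leq\URk$, and ultimately rely on the NFCP consequences recorded in Fact~\ref{nfcp}. But they organize the count differently.

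The paper proceeds ``statically'' (Theorem~\ref{mainst}): for each $p\in S^{\varphi^*}(B)$ it manufactures, via a descending chain of $\Delta$-degree~$1$ formulas (Lemma~\ref{one}) and a compactness argument that needs \emph{definability of rank} (Fact~\ref{nfcp}(2), used in Lemma~\ref{two}), a single formula $\theta_j(y;b_{i_1},\dots,b_{i_e})$ with parameters from $B$ whose $\varphi^*$-rank equals $R_{\varphi^*}(p)$; bounded multiplicity then caps the number of $p$ by $ND\abs{B}^{e}$. Your drop-counting argument is more elementary and avoids the compactness step and Fact~\ref{nfcp}(2) altogether: you only need finite witness of rank and bounded multiplicity. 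The uniform $N$ you flag as the obstacle is exactly the combination of Fact~\ref{nfcp}(1) and~(3): any complete $\varphi^*$-type $p$ has the same rank as some subconjunction of $\le k$ instances of $\pm\varphi^*$, and the $\varphi^*$-degree of every instance of each of the finitely many such $k$-fold conjunctions is uniformly bounded; hence $\deg_{\varphi^*}(p)\le N$ uniformly, which bounds the number of rank-preserving extensions of $p$ to any larger parameter set. One notational point: the rank you want is the paper's $R_{\varphi^*}=R^{\abs{y}}(-,\{\varphi^*\},\aleph_0)$ rather than $R(-,\varphi^*,\infty)$, since Fact~\ref{nfcp} is stated for the former.
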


So e.g., for $p$ a prime or $0$, every  formula $\varphi(x;y)$ in the language of rings has VC~density at most~$\abs{y}$ in the theory of algebraically closed fields of characteristic~$p$. Other examples of theories to which Theorem~\ref{thm:finite MR} applies (including all expansions of groups having finite Morley rank) are provided in Section~\ref{sec:applications}. The proof of this theorem, given in Section~\ref{sec:other}, uses the local ranks $R^m(-,\Delta,\aleph_0)$ of Shelah \cite[Chapter~II]{Shelah-book}, here denoted by $R_\Delta(-)$. The absence of the finite cover property enters the picture as a necessary condition for the definability of the ranks~$R_\Delta$; see~Fact~\ref{nfcp} below.

\subsection{Abelian groups.}
We have an improvement over Theorem~\ref{thm:finite MR} for $\aleph_0$-cate\-go\-ri\-cal abelian groups. For an abelian group $A$, always written additively, let
$$U(p,i;A):=\abs{(p^iA)[p]/(p^{i+1}A)[p]}\qquad\text{ ($p$ prime)}$$ 
be its Ulm invariants; here $(p^iA)[p]$ denotes the subgroup
$$(p^iA)[p]=\{a\in A:\text{$pa=0$ and $p^ib=a$ for some $b\in A$}\}$$
of $A$. 
For each prime~$p$ let $U_{\geq\aleph_0}(p;A)$ denote the  set of $i\geq 0$ such that $U(p,i;A)$ is infinite. If $A$ has finite exponent, then each $U_{\geq\aleph_0}(p;A)$ is finite, with $U_{\geq\aleph_0}(p;A)=\emptyset$ for all but finitely many~$p$. With this notation we have:

\begin{theorem}\label{thm:aleph0-cat abelian}
Let $A$ be an infinite abelian group of finite exponent, construed as a first-order structure in the language $\mathcal L=\{0,{+}\}$ as usual. Let
$n_p:=\abs{U_{\geq\aleph_0}(p;A)}$ and $m_p:=\max U_{\geq\aleph_0}(p;A)$ if $U_{\geq\aleph_0}(p;A)\neq\emptyset$ and $m_p:=0$ otherwise.
Set 
$$d:=\sum_p \min\big\{n_p,\lfloor m_p/2\rfloor+1\big\}.$$ 
Then every $\mathcal L$-formula $\varphi(x;y)$ has VC~density at most $d\,\abs{y}$ in $A$.
\end{theorem}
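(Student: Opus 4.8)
The plan is to reduce the bound $\vc(\ph)\le d\abs y$ to an estimate on the breadth of a lattice of positive‑primitive (pp) definable subgroups, and to prove that estimate by a reduction to primary components followed by an explicit combinatorial analysis. By VC~duality one has $\pi_\ph=\pi^*_{\ph^*}$, so it suffices to prove the following and then apply it with $\psi:=\ph^*$ (whose object variables are $y$): for every partitioned $\calL$‑formula $\psi(y;z)$, the number $\abs{S^\psi(A')}$ of complete $\psi(y;A')$‑types over a finite set $A'$ of $\abs z$‑tuples is $O(\abs{A'}^{\,d\abs y})$. By the Baur--Monk theorem, $\psi$ is, modulo invariant sentences, equivalent to a Boolean combination of finitely many pp‑formulas $\theta_1(y;z),\dots,\theta_K(y;z)$; for each $a\in A'$, the set defined by $\theta_j(y;a)$ is empty or a coset of the pp‑definable subgroup $H_j\le A^{\abs y}$ defined by $\theta_j(y;0)$. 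A complete $\psi(y;A')$‑type is then determined by which coset of each $H_j$ (among the at most $\abs{A'}$ that occur) contains $y$, or by the information that $y$ lies in none of them; hence $\abs{S^\psi(A')}$ is at most the number of cells of the arrangement of these $\le K\abs{A'}$ cosets. Since cosets of a fixed subgroup are disjoint or equal, a counting argument based on the breadth machinery of the earlier sections shows this number is $O(\abs{A'}^{D})$, where $D$ is the breadth — measured up to finite index, as steps of finite index contribute only bounded factors — of the sublattice of the lattice $\Lambda_{\abs y}$ of pp‑definable subgroups of $A^{\abs y}$ generated by $H_1,\dots,H_K$, and thus at most the breadth of $\Lambda_{\abs y}$. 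So it is enough to show that $\Lambda_k$, up to commensurability, has breadth at most $d\,k$ for all $k\ge1$.

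To prove this, write $N$ for the exponent of $A$ and $A=\bigoplus_{p\mid N}A_p$ for its decomposition into $p$‑primary parts, each of which is $\emptyset$‑definable. A $\bZ$‑linear system splits over the primes, so every pp‑definable subgroup of $A^{k}$ is the direct sum over $p\mid N$ of pp‑definable subgroups of the $A_p^{k}$; hence $\Lambda_k$ is the product over $p\mid N$ of the lattices of pp‑definable subgroups of the $A_p^{k}$, and breadth is additive over finite products of lattices with least and greatest elements. Moreover, up to commensurability we may discard the summands of $A_p$ occurring with finite multiplicity, and so assume $A_p=\bigoplus_{i\in I}(\bZ/p^{i+1}\bZ)^{(\aleph_0)}$ where $I=U_{\ge\aleph_0}(p;A)$, so $\abs I=n_p$ and $\max I=m_p$. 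It remains to bound the breadth (up to commensurability) of the lattice of pp‑definable subgroups of this $A_p^{k}$ by $\min\{n_p,\lfloor m_p/2\rfloor+1\}\cdot k$. The bound $n_p\,k$ follows by localizing a pp‑definable subgroup of $A_p^{k}$ at each of the $n_p$ uniserial summand types, together with the fact that over the uniserial ring $\bZ/p^{i+1}\bZ$ the lattice of pp‑definable subgroups in $k$ variables has breadth at most $k$.

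The main obstacle is the second bound, $(\lfloor m_p/2\rfloor+1)\,k$, which improves on $n_p\,k$ precisely when there are many summand types but a modest largest exponent $p^{m_p+1}$. For this one analyzes the lattice of pp‑definable subgroups of $A_p$ in one variable explicitly: up to commensurability it is generated by the ``levels'' $p^iA_p$ and $A_p[p^j]$ and their pairwise intersections, and although its height is of order $m_p$, one shows that any meet of its elements already equals the meet of at most $\lfloor m_p/2\rfloor+1$ of them — roughly because imposing a divisibility constraint together with an annihilator constraint at level $i$ inside a summand $\bZ/p^{i+1}\bZ$ forces the complementary constraints at the mirror level, so only about half of the $m_p+1$ available levels can be used ``independently''. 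The passage from one to $k$ variables requires care, since $A_p^{k}$ has pp‑definable subgroups that are not direct sums of one‑variable ones; this is handled either by the same localization at summand types, or by an induction on $k$ that removes one coordinate and adds a single summand $\lfloor m_p/2\rfloor+1$ to the breadth at each stage. Combining the two bounds yields breadth at most $d\,k$, whence, by the cell count above, $\pi_\ph(t)=O(t^{\,d\abs y})$ and $\vc(\ph)\le d\abs y$ in $A$; the details are carried out in Section~\ref{sec:abelian}.
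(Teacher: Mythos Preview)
Your overall architecture matches the paper's: reduce to bounding the breadth (up to commensurability) of the lattice of pp-definable subgroups, use Baur--Monk to make that reduction, decompose into primary components, and discard the finite-multiplicity summands. Where your proposal diverges, and where the gaps lie, is in the two technical cores.

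First, your argument for the bound $\lfloor m_p/2\rfloor+1$ on the one-variable breadth is only a heuristic. The ``mirror'' idea---that a divisibility constraint and an annihilator constraint at level~$i$ force the complementary constraints---does not by itself give a breadth bound; one needs to show that \emph{every} meet of pp-definable subgroups of $A_p$ reduces to a meet of at most $\lfloor m_p/2\rfloor+1$ of them. The paper does this by an explicit lattice-theoretic computation: it identifies $\PP(A_p)$ with a concrete distributive lattice $\Lambda$ of ``slowly growing'' $n$-tuples, determines the ordered set $J(\Lambda)$ of join-irreducibles explicitly, and then computes $\width(J(\Lambda))$ via a hand-built partition into chains together with a matching antichain (invoking Dilworth and the identity $\breadth(L)=\width(J(L))$ for finite distributive $L$). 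This yields an \emph{exact} value $d(\lambda)$ for the breadth, of which $\min\{n_p,\lfloor m_p/2\rfloor+1\}$ is only a crude upper bound. Your sketch does not indicate how to carry out any comparable analysis.

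Second, your passage from one to $k$ variables is not justified. You assert that over the uniserial ring $\bZ/p^{i+1}\bZ$ the lattice $\PP_k$ has breadth at most $k$, and separately propose an ``induction on $k$ that removes one coordinate and adds a single summand $\lfloor m_p/2\rfloor+1$''; neither claim is argued, and the second in particular has no obvious mechanism (pp-definable subgroups of $A_p^k$ need not be products, so projecting out a coordinate does not in general drop the breadth by a fixed amount). The paper sidesteps this entirely: it proves the breadth bound \emph{only} in one variable, and then invokes the $\VC{}d$ machinery (Proposition~\ref{prop:breadth and vc for modules}, resting on Theorem~\ref{VCdensity, 2} from Part~I) to obtain $\vc^T(m)\le dm$ automatically. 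Since you already appeal to ``the breadth machinery of the earlier sections'' in your first paragraph, you should use it here too rather than attempt a direct $k$-variable argument.

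In short: the reduction is right, but the heart of the proof---the exact one-variable breadth computation via join-irreducibles and a Dilworth chain decomposition---is missing from your sketch, and your proposed route to the multi-variable bound is both harder and unsubstantiated compared to the paper's use of the $\VC{}d$ property.
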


Since, in the context of the previous theorem, the Morley rank of the abelian group~$A$ is $\MRk(A) = \sum_p \left(n_p+\sum_{i\in U_{\geq\aleph_0}(p;A)} i\right) \geq d$, the bound obtained here is more precise than the one in the  general Theorem~\ref{thm:finite MR}. Moreover, the bound in Theorem~\ref{thm:aleph0-cat abelian} can be refined even further; since the resulting bound is somewhat technical to describe, we refer to the proof of this theorem in Section~\ref{sec:examples of VCm theories, 2} below for the improved result. (However, even the crude bound stated in the theorem is optimal in some cases, under the additional condition that for all primes $p$ and all $i$ we have $U(p,i;A)\geq\aleph_0$ whenever~$U(p,i;A)\neq 1$.) 
The proof of Theorem~\ref{thm:aleph0-cat abelian} is based on our definable-type technique from \cite{ADHMS} and a detailed
analysis of the ordered set of join-irreducibles of the lattice of positive-primitive definable subgroups of $A$.

\medskip
\noindent
No particularly good bounds on density can be expected for $1$-based stable theories in general, even in the case of abelian groups (see Lemma~\ref{lem:lower bound, abelian gps}). However,  in \cite{ADHMS} we already showed that in Presburger~Arithmetic, every partitioned formula with~$n$ parameter variables has VC~density at most~$n$.
Theorem~\ref{thm:aleph0-cat abelian} is the cornerstone of a characterization of those  abelian groups admitting a uniform bound on VC~density of formulas in terms of the number of parameter variables:

\begin{theorem}\label{thm:abelian}
Let $A$ be an infinite abelian group. Then the following are equivalent:
\begin{enumerate}
\item there is some $d$ such that each $\mathcal L$-formula $\varphi(x;y)$ with $\abs{y}=1$ has VC~density at most $d$ in $A$;
\item there is some $d$ such that each $\mathcal L$-formula $\varphi(x;y)$ has VC~density at most $d\,\abs{y}$ in $A$;
\item there are only finitely many  $p$ such that $A[p]$ or $A/pA$ is infinite, and for all $p$ the set
$U_{\geq\aleph_0}(p;A)$ is finite.
\end{enumerate}
\end{theorem}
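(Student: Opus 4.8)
The plan is to establish the cycle of implications (2)$\Rightarrow$(1)$\Rightarrow$(3)$\Rightarrow$(2). The implication (2)$\Rightarrow$(1) is immediate by specializing to $\abs{y}=1$. For (3)$\Rightarrow$(2), I would reduce to the case covered by Theorem~\ref{thm:aleph0-cat abelian}. Write the hypothesis (3) as a structural constraint on $A$: only finitely many primes $p$ are "active" (those with $A[p]$ or $A/pA$ infinite), and each $U_{\geq\aleph_0}(p;A)$ is finite. The idea is to pass to a definable quotient (or definable subgroup) of $A$ that has finite exponent and carries all the VC-complexity, using that VC~density of a formula is not increased by interpreting the relevant structure in a reduct or by working in a definable quotient. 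Concretely, the condition that $U_{\geq\aleph_0}(p;A)$ is finite and bounded in $i$ means that for each active $p$ the "infinite part" of the $p$-primary structure of $A$ is controlled by an abelian group of finite exponent $p^{m_p+1}$; for inactive primes the $p$-divisible and $p$-torsion-free part contributes nothing to VC~density beyond what a single torsion-free (equivalently, $\bZ$-like) coordinate does, which is already handled by the Presburger bound recalled in the introduction. Assembling these finitely many pieces and applying Theorem~\ref{thm:aleph0-cat abelian} to the finite-exponent part yields a uniform bound $d\abs{y}$.

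For the crucial implication (1)$\Rightarrow$(3), I would argue by contraposition: if (3) fails, exhibit a family of formulas $\varphi(x;y)$ with $\abs{y}=1$ whose VC~density is unbounded, contradicting (1). There are two ways (3) can fail. First, infinitely many primes $p$ have $A[p]$ or $A/pA$ infinite; then for each finite set of such primes one builds, using the Chinese-remainder-type independence of the $p$-components, a single-parameter formula (a congruence condition mod a product of prime powers, or a divisibility condition) whose dual shatter function grows like $t^k$ with $k$ the number of primes used, so VC~density is at least $k$ — unbounded. Second, some $U_{\geq\aleph_0}(p;A)$ is infinite, meaning there are infinitely many Ulm levels $i$ at which $U(p,i;A)$ is infinite; here one uses the positive-primitive definable subgroups $(p^iA)[p]$ and the infinitely many independent "layers" they provide to again produce single-parameter formulas of arbitrarily large VC~density. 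In both cases the lower-bound mechanism is the same as in Lemma~\ref{lem:lower bound, abelian gps}: independent definable subgroups give a large set system realized with one parameter variable. I would invoke that lemma (or its proof) to package these constructions.

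The main obstacle I anticipate is the (1)$\Rightarrow$(3) direction, specifically making the lower-bound constructions genuinely use only a \emph{single} parameter variable while still forcing VC~density to grow. For the multi-prime case this is essentially the CRT construction of \cite{ADHMS} and should go through; the delicate part is the case of a single prime $p$ with $U_{\geq\aleph_0}(p;A)$ infinite, where one must encode infinitely many independent Ulm-layer conditions into a formula $\varphi(x;y)$ with $\abs{y}=1$ — this requires choosing the parameter to range over a cleverly constructed definable subset (e.g.\ elements whose "$p$-adic pattern" up to level $N$ is arbitrary) so that the number of $\varphi(x;B)$-types over a $t$-element $B$ is $\gtrsim t^{N}$. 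A secondary technical point is verifying, in (3)$\Rightarrow$(2), that the reduction to finite exponent genuinely loses nothing: one must check that the non-finite-exponent directions (divisible parts, $\bZ_{(p)}$-like parts, $\bZ$-summands) each contribute at most a bounded amount per parameter variable, which relies on the classification of abelian groups up to elementary equivalence via Szmielew invariants together with the Presburger estimate. I would handle this by an explicit coordinatization of $A$ (up to elementary equivalence) into finitely many "blocks," bounding the VC~density contribution of each block type separately and then using subadditivity of VC~density under products of coordinates.
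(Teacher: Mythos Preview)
Your overall architecture matches the paper's: the cycle (2)$\Rightarrow$(1)$\Rightarrow$(3)$\Rightarrow$(2), the contrapositive for (1)$\Rightarrow$(3), and a block decomposition for (3)$\Rightarrow$(2). But there is a genuine gap in your (3)$\Rightarrow$(2), and your (1)$\Rightarrow$(3) is more laborious than necessary.

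\textbf{The gap in (3)$\Rightarrow$(2).} Your reduction at a singular prime $p$ asserts that ``the infinite part of the $p$-primary structure of $A$ is controlled by an abelian group of finite exponent $p^{m_p+1}$.'' This fails exactly when $p$ is singular but $U_{\geq\aleph_0}(p;A)$ is empty or bounded while infinitely many Ulm invariants $U(p,i;A)$ are \emph{finite and nonzero}. Concretely, take $A=\bigoplus_{n>0}\bZ(p^n)$: every $U(p,i;A)=p$ is finite, so $U_{\geq\aleph_0}(p;A)=\emptyset$ and $m_p$ is undefined, yet $A[p]$ is infinite, so $p$ is singular. This group is not of finite exponent, not divisible, not $\bZ_{(p)}$-like, and not non-singular, so it is covered by none of the blocks you list. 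The paper handles this piece separately (Proposition~\ref{prop:homocyclic}), proving directly that $\breadth(\widetilde{\PP}_m)\leq m$ for such ``homocyclic'' sums via a DVR argument over $\bZ_{(p)}$; this is a nontrivial ingredient you would need to supply. More generally, after passing to a strict Szmielew representative the paper splits each singular $p$-part as $A^{[p]}_{\aleph_0}\oplus A^{[p]}_{<\aleph_0}\oplus\bZ(p^\infty)^{(\beta_p)}\oplus\bZ_{(p)}^{(\gamma_p)}$ and bounds the four summands by, respectively, Corollary~\ref{cor:finite exp, cor 2}, Proposition~\ref{prop:homocyclic}, and Corollary~\ref{cor:pp uniserial} (twice); the non-singular remainder has $\URk$-rank $1$ and is handled by Corollary~\ref{cor:finite U-rank for modules}, not by a Presburger argument.

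\textbf{Simplification of (1)$\Rightarrow$(3).} Your worry about encoding many Ulm layers into a single parameter variable is unnecessary. The paper avoids any such encoding: if $U_{\geq\aleph_0}(p;A)$ is infinite, pick indices $i_1<\cdots<i_n$ in it and set $A_n=\bigoplus_{k\in[n]}\bZ(p^{i_k})^{(\alpha_{p,i_k-1})}$, a \emph{pure} subgroup of $A$. Purity gives $\vc^T\geq\vc^{\Th(A_n)}$, and Corollary~\ref{cor:finite exp, cor 2} together with \eqref{eq:bound on d(I)} gives $\vc^{\Th(A_n)}(1)\geq\lceil n/2\rceil$. The multi-prime case is exactly Lemma~\ref{lem:lower bound, abelian gps}, which already yields the bound at $m=1$; no CRT packaging into a single formula is required.
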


Implicit in the arguments comprising the proof of this theorem is an explicit description of all dp-minimal abelian groups.
For example, by Theorem~\ref{thm:aleph0-cat abelian}, every $\mathcal L$-formula $\varphi(x;y)$ with $\abs{y}=1$ has VC~density at most $1$ in $$A=\bZ(p)^{(\aleph_0)}\oplus\bZ(p^2)^{(\aleph_0)},$$ 
hence this group $A$ is dp-minimal by \cite[Proposition~3.2]{dl} (see also Section~\ref{sec:dp-min} below).
We refer to Proposition~\ref{prop:dp-minimal abelian} for the complete list of all dp-minimal abelian groups.

\subsection{Organization of the paper.}
We begin with a preliminary Section~\ref{sec:prelims}, where we recall some basic definitions and results from Part~I of our paper. The reader familiar with \cite{ADHMS} may choose to skip this section upon first reading. In Section~\ref{sec:other} we prove Theorem~\ref{thm:finite MR} (in a slightly more general form, cf.~Theorem~\ref{mainst}). 
In Section~\ref{sec:examples of VCm theories, 2} we introduce the breadth of a module (defined via its lattice of positive-primitive definable subgroups) and relate it to VC~density, and after these foundations have been laid, we prove Theorems~\ref{thm:aleph0-cat abelian} and \ref{thm:abelian} in the final Section~\ref{sec:abelian}.

\subsection{Notations and conventions.}
Throughout this paper, $d$, $k$, $m$ and $n$ range over the set $\bN:=\{0,1,2,\dots\}$ of natural numbers. We set $[n]:=\{1,\dots,n\}$. Given a set $X$,
we write $2^X$ for the power set of $X$.

\section{Preliminaries}\label{sec:prelims}

\noindent
In this section we recapitulate some basic concepts and results proved in the predecessor \cite{ADHMS} of this paper.
We let $\mathbf M$ be an infinite $\mathcal L$-structure for some first-order language~$\mathcal L$ and $T=\Th(\mathbf M)$. 

\subsection{Dual VC~density of finite sets of formulas.}
Let $\Delta=\Delta(x;y)$ be a finite set of partitioned $\mathcal L$-formulas $\varphi=\varphi(x;y)$ in the tuple of object variables $x$ and tuple of parameter variables $y$.
We write $\neg\Delta:=\{\neg\varphi:\varphi\in\Delta\}$.
Let $B\subseteq M^{\abs{y}}$ be finite.  We set 
$$\Delta(x;B) := \big\{ \varphi(x;b) : \varphi\in\Delta,\ b\in B \big\},$$
and we call a consistent subset of $\Delta(x;B)\cup\neg\Delta(x;B)$ a $\Delta(x;B)$-type. 
The set of realizations in~$\mathbf M$ of a  $\Delta(x;B)$-type $p$ is denoted by $p^{\mathbf M}$. 
Given $a\in M^{\abs{x}}$ we write $\operatorname{tp}^\Delta(a/B)$ for the $\Delta(x;B)$-type realized by $a$, and
$$S^\Delta(B) = \big\{ \operatorname{tp}^\Delta(a/B) : a\in M^{\abs{x}}\big\}.$$
If $\Delta=\{\varphi\}$ is a singleton, we also write $S^\varphi(B)$ instead of $S^\Delta(B)$. 
We set
$$\pi^*_\Delta(t) :=\max\big\{ \abs{S^\Delta(B)}:B\subseteq M^{\abs{y}},\ \abs{B}=t\big\}\qquad\text{for each $t\in\bN$.}$$
If we pass from $\mathbf M$ to an elementarily equivalent $\mathcal L$-structure, then $\pi^*_\Delta$ does not change; this justifies our notation, which suppresses $\mathbf M$. (But $\pi^*_\Delta$ does depend on $T=\Th(\mathbf M)$, which is also suppressed in our notation.)
If no $\varphi\in\Delta$ has the independence property, then there exists a real number~$r$ with $0\leq r\leq \sum_{\varphi\in\Delta} \vc^*(\varphi)$ and
$$\abs{S^\Delta(B)} = O(\abs{B}^r) \qquad\text{for all finite $B\subseteq M^{\abs{y}}$.}$$
(See \cite[Lemma~3.15]{ADHMS}.)
Hence in this case one can define the \emph{dual VC density of~$\Delta$} as 
$$\vc^*(\Delta)=\inf\big\{r\geq 0:\pi^*_\Delta(t)=O(t^r)\big\}.$$ 
If $\Delta=\{\varphi\}$ is a singleton, then
$\vc^*(\Delta)$ agrees with $\vc^*(\varphi)$ as defined in the introduction.

\subsection{The VC~density function of $T$.}
Suppose $T$ is  NIP. We define the \emph{VC density of~$T$} to be the function
$\vc=\vc^T\colon\bN\to\bR^{\geq 0}\cup\{\infty\}$
given by
\begin{align*}
\vc(n)	& :=\sup\big\{ \vc(\varphi): \text{$\varphi(x;y)$ is an
$\mathcal L$-formula with $\abs{y}=n$} \big\} \\
		& \hskip0.25em =\sup\big\{ \vc^*(\varphi): \text{$\varphi(x;y)$ is an
$\mathcal L$-formula with $\abs{x}=n$} \big\}.
\end{align*}
It is easy to see that $\vc(m)\geq m$ for each $m$; in \cite[Section~3]{ADHMS} we showed that in fact $\vc(m+1)\geq \vc(m)+1$ for each $m$. Moreover, we proved (cf.~\cite[Lemma~3.6]{ADHMS}):

\begin{lemma}\label{lem:expansions by constants}
Let $\mathcal L'=\mathcal L\cup\{c_i:i\in I\}$ where the $c_i$ are new constant symbols, and let $T'\supseteq T$ be a complete $\mathcal L'$-theory. Then $\vc^T=\vc^{T'}$.
\end{lemma}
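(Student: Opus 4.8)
The plan is to prove, for each $n\in\bN$, the two inequalities $\vc^T(n)\le\vc^{T'}(n)$ and $\vc^{T'}(n)\le\vc^T(n)$; write $\vc_T(\ph)$, $\vc_{T'}(\ph)$ for the VC~density of a partitioned formula $\ph$ computed in $T$, in $T'$ respectively. Both VC~density functions are defined, since $T$ is NIP if and only if $T'$ is: an $\mathcal L$-formula with the independence property for $T$ also has it for $T'$, and an $\mathcal L'$-formula $\psi(x;y)=\ph(x;y;\bar c)$ with the independence property for $T'$ gives rise to the $\mathcal L$-formula $\ph(x;y,z)$ with the independence property for $T$, obtained by freeing the constants~$\bar c$ into parameter variables.

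For $\vc^T(n)\le\vc^{T'}(n)$: every partitioned $\mathcal L$-formula $\ph(x;y)$ is also a partitioned $\mathcal L'$-formula, and for any $\mathbf M'\models T'$ the set system $\calS_\ph$ it defines depends only on the reduct $\mathbf M'\restrict\mathcal L$, which is a model of $T$. Since the shatter function of an $\mathcal L$-formula is an invariant of its complete $\mathcal L$-theory, it follows that $\vc_T(\ph)=\vc_{T'}(\ph)$; taking the supremum over all $\ph$ with $\abs{y}=n$ yields the inequality.

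For the reverse inequality, fix a partitioned $\mathcal L'$-formula $\psi(x;y)$ with $\abs{y}=n$; we bound $\vc_{T'}(\psi)$ by $\vc^T(n)$. Only finitely many new constants, say $c_1,\dots,c_k$, occur in $\psi$, so $\psi(x;y)=\ph(x;y;c_1,\dots,c_k)$ for an $\mathcal L$-formula $\ph(x;y;z_1,\dots,z_k)$ obtained by replacing each $c_j$ with a fresh variable~$z_j$. The decisive move is to absorb the $z_j$ into the \emph{object} variables, not the parameter variables: let $\theta$ be the $\mathcal L$-formula $\ph$ read as a partitioned formula with object tuple $(x,z)$ of length $\abs{x}+k$ and parameter tuple $y$ of length~$n$. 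Work inside a model $\mathbf M'\models T'$ with universe $M$, put $\bar c:=(c_1^{\mathbf M'},\dots,c_k^{\mathbf M'})\in M^k$, and let $\mathbf M:=\mathbf M'\restrict\mathcal L$, a model of $T$. The map $\iota\colon M^{\abs{x}}\to M^{\abs{x}+k}$ sending $a$ to $(a,\bar c)$ is injective, and for every $b\in M^n$ one has $\psi(M^{\abs{x}};b)=\iota^{-1}\big(\theta(M^{\abs{x}+k};b)\big)$ (the former set computed in $\mathbf M'$, the latter in $\mathbf M$), whence $\theta(M^{\abs{x}+k};b)\cap\iota(A)=\iota\big(\psi(M^{\abs{x}};b)\cap A\big)$ for every $A\subseteq M^{\abs{x}}$. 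Therefore, for finite $A$, the assignment $X\mapsto\iota(X)$ is a bijection from the family of subsets of $A$ cut out by $\psi$ onto the family of subsets of $\iota(A)$ cut out by $\theta$; since $\abs{\iota(A)}=\abs{A}$, this gives $\pi_\psi(t)\le\pi_\theta(t)$ for all $t$, and hence $\vc_{T'}(\psi)\le\vc_T(\theta)\le\vc^T(n)$ because $\theta$ has exactly $n$ parameter variables. Taking the supremum over all such $\psi$ gives $\vc^{T'}(n)\le\vc^T(n)$.

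I expect the argument to be routine except for one point worth emphasizing: the constants must become extra \emph{object} variables. Folding them into the parameter variables would yield only $\vc_{T'}(\psi)\le\vc^T(n+k)$, which is useless since $\vc^T(m+1)\ge\vc^T(m)+1$; the point is that slicing the enlarged set system of $\theta$ by the fixed tuple $\bar c$ reproduces the set system of $\psi$ exactly, leaving the shatter function --- and thus the VC~density --- unchanged while keeping the number of parameter variables equal to~$n$.
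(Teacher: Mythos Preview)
Your proof is correct. The paper does not supply its own proof of this lemma but instead cites the companion paper \cite[Lemma~3.6]{ADHMS}; your argument---in particular the key move of absorbing the constants into the \emph{object} variables so as to leave the number of parameter variables unchanged---is the standard one and is presumably what appears there.
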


The VC~density function of $T$ also bounds the dual VC~density of finite sets of formulas; in fact, we have (cf.~\cite[Corollary~3.19]{ADHMS}):

\begin{lemma}\label{lem:encoding finite sets of formulas}
Let $\Phi$ be a set of $\mathcal L$-formulas with the tuple of object variables $x$ and varying parameter variables such that every $\mathcal L$-formula $\varphi(x;y)$ is equivalent in $T$ to a Boolean combination of formulas in $\Phi$. Then 
$$\vc^T(m) = \sup\big\{ \vc^*(\Delta):  \text{$\Delta\subseteq\Phi$ finite }\big\}\qquad\text{where $m=\abs{x}$.}$$
\end{lemma}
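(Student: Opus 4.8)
The plan is to prove the two inequalities $\vc^T(m) \leq \sup\{\vc^*(\Delta) : \Delta \subseteq \Phi \text{ finite}\}$ and $\vc^T(m) \geq \sup\{\vc^*(\Delta) : \Delta \subseteq \Phi \text{ finite}\}$ separately. The second inequality is the easy one: for any finite $\Delta \subseteq \Phi$ with object variables $x$ (so $\abs{x}=m$), each $\varphi \in \Delta$ is in particular an $\mathcal L$-formula with object variables $x$, and the dual formula $\varphi^*(y;x)$ has $\abs{x}=m$ parameter variables, so $\vc^*(\varphi) = \vc(\varphi^*) \leq \vc^T(m)$ by the definition of $\vc^T$. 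Then, using the subadditivity bound $\vc^*(\Delta) \leq \sum_{\varphi \in \Delta} \vc^*(\varphi)$ recalled above — or, better, the sharper fact from \cite{ADHMS} that passing to a finite set of formulas does not increase dual VC density beyond the max, via the standard trick of coding $\Delta$ by a single formula (see below) — one gets $\vc^*(\Delta) \leq \vc^T(m)$; taking the supremum over $\Delta$ gives the inequality. I would double-check which of \cite[Lemma~3.15]{ADHMS} or a companion statement gives exactly $\vc^*(\Delta) \le \vc^T(\abs x)$ directly; if a single clean reference is available this direction is essentially immediate.

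For the first inequality, fix a partitioned $\mathcal L$-formula $\varphi(x;y)$ with $\abs{x}=m$; we must bound $\vc^*(\varphi)$ by $\sup\{\vc^*(\Delta)\}$. By hypothesis $\varphi(x;y)$ is equivalent in $T$ to a Boolean combination of finitely many formulas $\psi_1(x;z_1),\dots,\psi_k(x;z_k)$ from $\Phi$, where each $z_j$ is the parameter tuple of $\psi_j$. The key point is that a $\varphi(x;B)$-type over a finite set $B \subseteq M^{\abs{y}}$ is determined by the restriction to $x$ of the corresponding $\Delta(x;B')$-type, where $\Delta = \{\psi_1,\dots,\psi_k\}$ (after harmonizing the parameter variables, e.g. replacing each $\psi_j$ by a formula in a common tuple $y'$ of parameter variables obtained by padding, which does not change dual VC density) and $B'$ is the finite set of parameter tuples occurring when each $b \in B$ is substituted into the Boolean combination. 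Concretely, to each $b \in B$ one associates the tuple $b^* = (b, \dots)$ recording the values substituted for $z_1,\dots,z_k$ in the equivalence; then $\abs{B^*} \leq \abs{B}$, and the map $\tp^\Delta(a/B^*) \mapsto \tp^\varphi(a/B)$ is well-defined and surjective onto $S^\varphi(B)$. Hence $\abs{S^\varphi(B)} \leq \abs{S^\Delta(B^*)} \leq \pi^*_\Delta(\abs{B})$, which gives $\pi^*_\varphi(t) \leq \pi^*_\Delta(t)$ for all $t$, and therefore $\vc^*(\varphi) \leq \vc^*(\Delta) \leq \sup\{\vc^*(\Delta') : \Delta' \subseteq \Phi \text{ finite}\}$. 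Taking the supremum over all $\varphi$ with $\abs{x}=m$ yields $\vc^T(m) \leq \sup\{\vc^*(\Delta)\}$, using the second equality in the definition of $\vc^T$.

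The main obstacle — really a bookkeeping nuisance rather than a genuine difficulty — is the handling of parameter variables: the formulas in $\Phi$ have "varying parameter variables", so before forming $\Delta$ one must arrange that they all live in a common parameter tuple, and one must track how a single parameter $b$ for $\varphi$ unfolds into a bounded-length tuple of parameters for the $\psi_j$. Both operations (padding parameter tuples, and substituting a definable tuple of parameters) are known not to affect dual VC density, and indeed this kind of reduction is exactly the content of \cite[Corollary~3.19]{ADHMS} which the lemma cites; so in the write-up I would state the parameter-harmonization step cleanly, invoke the relevant lemma from \cite{ADHMS} for the invariance of $\vc^*$ under these operations, and keep the type-counting inequality $\abs{S^\varphi(B)} \leq \abs{S^\Delta(B^*)}$ as the one genuinely new computation.
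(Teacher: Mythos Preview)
The paper does not prove this lemma; it simply records it as \cite[Corollary~3.19]{ADHMS}. Your outline is essentially the natural proof and is correct in spirit, with two small points worth tightening.

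For the inequality $\vc^T(m)\geq\sup_\Delta\vc^*(\Delta)$: the subadditivity bound $\vc^*(\Delta)\leq\sum_{\varphi\in\Delta}\vc^*(\varphi)$ by itself only gives $\vc^*(\Delta)\leq\abs{\Delta}\cdot\vc^T(m)$, which is too weak. You need the single-formula coding you allude to: after padding to a common parameter tuple $y$, pick distinct elements $c_1,\dots,c_k\in M$ (or add constants, which by Lemma~\ref{lem:expansions by constants} does not change $\vc^T$) and set $\chi(x;y,w):=\bigvee_{i}(w=c_i\wedge\varphi_i(x;y))$. Then $\abs{S^\Delta(B)}=\abs{S^\chi(B\times\{c_1,\dots,c_k\})}$, so $\pi^*_\Delta(t)\leq\pi^*_\chi(kt)$ and hence $\vc^*(\Delta)\leq\vc^*(\chi)\leq\vc^T(m)$.

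For the inequality $\vc^T(m)\leq\sup_\Delta\vc^*(\Delta)$: your type-counting argument is right, but the bound should read $\abs{B^*}\leq C\abs{B}$ for a constant $C$ depending on the Boolean combination (a single $b\in B$ may contribute several sub-tuples, one for each occurrence of each $\psi_j$), not $\abs{B^*}\leq\abs{B}$. This still yields $\pi^*_\varphi(t)\leq\pi^*_\Delta(Ct)$ and hence $\vc^*(\varphi)\leq\vc^*(\Delta)$, so the conclusion is unaffected.
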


\subsection{Breadth.}\label{sec:breadth}
Let $\mathcal B$ be a set system on a set $X$. The smallest integer $d$ (if it exists)
with the property that for every non-empty intersection  $\bigcap_{i\in [n]} B_i$ of $n>d$ sets from $\mathcal B$ 
there is some $I\subseteq [n]$ with $\abs{I}=d$ and $\bigcap_{i\in [n]} B_i=\bigcap_{i\in I} B_i$, is called the \emph{breadth} of $\mathcal B$; if there is no such $d$ we also say that $\mathcal B$ has breadth~$\infty$. See Section~2.4 of \cite{ADHMS} for more on this concept; the following fact was also already noted there:

\begin{lemma}\label{lem:breadth of cosets}
Let $G$ be a group and let $\mathcal H$ be a collection of subgroups of $G$ with breadth~$d$.  Let $\mathcal B=\{gH:g\in G,H\in\mathcal H\}$ be the set of all \textup{(}left\textup{)} cosets of subgroups from $\mathcal H$. Then $\mathcal B$ also has breadth $d$. 
\end{lemma}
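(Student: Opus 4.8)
The plan is to establish the two inequalities $\breadth(\mathcal B)\ge d$ and $\breadth(\mathcal B)\le d$ separately; the first is essentially immediate, and the second rests on a single translation trick.

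For $\breadth(\mathcal B)\ge d$: every $H\in\mathcal H$ is a coset of itself (take the representative to be the identity element of $G$), so $\mathcal H\subseteq\mathcal B$. Assuming $d\ge 1$, the fact that $\mathcal H$ has breadth \emph{exactly} $d$ means the defining property fails at the value $d-1$: there are $H_1,\dots,H_n\in\mathcal H$ with $n>d-1$ and $\bigcap_{i\in[n]}H_i\ne\emptyset$ but no $I\subseteq[n]$ of size $d-1$ with $\bigcap_{i\in I}H_i=\bigcap_{i\in[n]}H_i$. Viewed inside $\mathcal B$, this same family witnesses that $\mathcal B$ does not have breadth $\le d-1$; since the breadth property is monotone in its parameter (if it holds for some value it holds for all larger ones, by padding a witnessing index set), we get $\breadth(\mathcal B)\ge d$. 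The case $d=0$ is trivial.

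For $\breadth(\mathcal B)\le d$: let $g_1H_1,\dots,g_nH_n\in\mathcal B$ with $n>d$ and $\bigcap_{i\in[n]}g_iH_i\ne\emptyset$, and pick $a\in\bigcap_{i\in[n]}g_iH_i$. The key observation is that $a\in g_iH_i$ forces $g_iH_i=aH_i$ for each $i$, so $\bigcap_{i\in[n]}g_iH_i=\bigcap_{i\in[n]}aH_i=a\bigl(\bigcap_{i\in[n]}H_i\bigr)$, using that left translation by $a$ is a bijection of $G$ that commutes with intersections; in particular $\bigcap_{i\in[n]}H_i\ne\emptyset$. Now apply the hypothesis that $\mathcal H$ has breadth $d$ to $H_1,\dots,H_n$: there is $I\subseteq[n]$ with $\abs{I}=d$ and $\bigcap_{i\in I}H_i=\bigcap_{i\in[n]}H_i$. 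Translating back by $a$, $\bigcap_{i\in I}g_iH_i=\bigcap_{i\in I}aH_i=a\bigl(\bigcap_{i\in I}H_i\bigr)=a\bigl(\bigcap_{i\in[n]}H_i\bigr)=\bigcap_{i\in[n]}g_iH_i$, which is exactly what is required.

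Combining the two inequalities gives $\breadth(\mathcal B)=d$. I do not expect a real obstacle; the only points that merit care are the normalization $a\in g_iH_i\Rightarrow g_iH_i=aH_i$ (this is what permits pulling the translation outside the intersection) and the monotonicity of the breadth property in its parameter, which is invoked in the lower-bound half.
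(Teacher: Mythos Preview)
Your proof is correct and follows essentially the same approach as the paper's: both reduce the coset intersection to the subgroup intersection via a translation, then apply the breadth hypothesis on $\mathcal H$. The paper phrases the upper-bound step slightly differently (observing that $\bigcap_{i\in[n]} g_iH_i$ and $\bigcap_{i\in I} g_iH_i$ are both cosets of $\bigcap_{i\in[n]} H_i$, one contained in the other, hence equal) and leaves the lower bound $\breadth(\mathcal B)\ge d$ implicit, but your explicit treatment of it via $\mathcal H\subseteq\mathcal B$ is perfectly fine.
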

\begin{proof}
Let $H_1,\dots,H_n\in\mathcal H$ and $g_1,\dots,g_n\in G$ such that $\bigcap_{i\in [n]} g_iH_i\neq\emptyset$. 
This intersection is a coset of $H:=\bigcap_{i\in [n]} H_i$. Choose $I\subseteq [n]$ of size $d$ such that $H=\bigcap_{i\in I} H_i$. Then both $\bigcap_{i\in [n]} g_iH_i$ and $\bigcap_{i\in I} g_iH_i$ are cosets of $H$, with the former contained in the latter, and hence they are equal.
\end{proof}

\subsection{The $\VC{}d$ property.}
In the following we let $\Delta=\Delta(x;y)$ be a finite set of partitioned $\mathcal L$-formulas in the object variables $x$ and parameter variables $y$.
The $\VC{}d$~property is a convenient condition on a theory which allows the 
counting of the number of $\Delta(x;B)$-types over finite parameter sets $B$. The definition of the $\VC{}d$~property rests on a ``uniform'' variant of the notion of definable type, originating in~\cite{Guingona}:

\begin{definition}
We say that $\Delta$ has \emph{uniform definability of types over finite sets}\/ (abbreviated as \emph{UDTFS}\/) in $\mathbf M$ with $d$ parameters if there are finitely many families 
$$\mathcal D_i=\big(\operatorname{d}_{\varphi,i}(y;y_1,\ldots,y_d)\big)_{\ph\in\Delta}\qquad (i\in I)$$ 
of $\mathcal L$-formulas (with $\abs{y_j}=\abs{y}$ for $j=1,\dots,d$) such that for every finite  set $B\subseteq M^{\abs{y}}$
and $q\in S^\Delta(B)$ there are $b_1,\ldots,b_d \in B$ and some $i\in I$ such that $\mathcal D_i(y;b_1,\dots,b_d)$
defines $q$.  We call the family $\mathcal D=(\mathcal D_i)_{i\in I}$ a \emph{uniform definition of $\Delta(x;B)$-types over finite sets}\/ in $\mathbf M$ with $d$~parameters.
\end{definition}

An easy way to check for UDTFS is via breadth (cf.~\cite[Lemma~5.2]{ADHMS}):

\begin{lemma}\label{lem:breadth and UDTFS}
Suppose the set system 
$$\mathcal S_\Delta=\big\{\varphi(M^{\abs{x}};b):b\in M^{\abs{y}}\big\}$$ 
has breadth~$d$. Then $\Delta$ has UDTFS with $d$ parameters.
\end{lemma}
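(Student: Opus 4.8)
The plan is to show that, given a breadth-$d$ set system $\mathcal S_\Delta$ over the base set $M^{\abs{x}}$, each complete $\Delta(x;B)$-type over a finite parameter set $B$ is ``pinned down'' by at most $d$ of the instances occurring positively in it, and then to turn this observation into a finite family of definition schemes. Concretely, fix finite $B\subseteq M^{\abs{y}}$ and $q\in S^\Delta(B)$. Split $q$ into its positive part $q^+=\{\varphi(x;b)\in q:\varphi\in\Delta,\ b\in B\}$ and its negative part. The realization set $q^{\mathbf M}$ is contained in the intersection $\bigcap_{\varphi(x;b)\in q^+}\varphi(M^{\abs{x}};b)$ of sets from $\mathcal S_\Delta$ (together with complements of the remaining sets, but those play no role for the breadth argument). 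If this intersection is non-empty — which it is, since $q$ is consistent — breadth~$d$ gives a subfamily indexed by a set $J$ of size at most $d$, say with instances $\varphi_1(x;b_1),\dots,\varphi_{d'}(x;b_{d'})$ ($d'\le d$, and we may pad with repetitions to make exactly $d$), such that $\bigcap_{j}\varphi_j(M^{\abs{x}};b_j)$ equals the full positive intersection. The point is that membership of a parameter $b$ in the ``positive class'' of $q$, i.e. whether $\varphi(x;b)\in q$, is then decided by whether $\varphi(M^{\abs{x}};b)$ contains this small intersection.

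The next step is to record this decision procedure syntactically, uniformly in $B$. For each function $\sigma\colon\{1,\dots,d\}\to\Delta$ (there are finitely many, since $\Delta$ is finite) and using $d$ fresh parameter tuples $y_1,\dots,y_d$ of length $\abs{y}$, I would set, for each $\varphi\in\Delta$,
\[
\operatorname{d}_{\varphi,\sigma}(y;y_1,\dots,y_d)\ :=\ \forall x\Big(\bigwedge_{j=1}^{d}\sigma(j)(x;y_j)\ \rightarrow\ \varphi(x;y)\Big),
\]
and let $\mathcal D_\sigma=(\operatorname{d}_{\varphi,\sigma})_{\varphi\in\Delta}$, indexing the families by $I=\Delta^{\{1,\dots,d\}}$. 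I claim that for every finite $B$ and every $q\in S^\Delta(B)$ one can choose $b_1,\dots,b_d\in B$ and $\sigma\in I$ so that $\mathcal D_\sigma(y;b_1,\dots,b_d)$ defines $q$: take the $b_j$ and the formulas $\sigma(j):=\varphi_j$ produced by the breadth argument above. For $b\in B$ and $\varphi\in\Delta$: if $\varphi(x;b)\in q$ then $\varphi(M^{\abs{x}};b)$ contains $\bigcap_j\varphi_j(M^{\abs{x}};b_j)$, so $\mathbf M\models\operatorname{d}_{\varphi,\sigma}(b;b_1,\dots,b_d)$; conversely, if $\neg\varphi(x;b)\in q$, then some realization $a$ of $q$ satisfies all the $\varphi_j(x;b_j)$ (as these lie in $q^+$) but fails $\varphi(x;b)$, so $\mathbf M\not\models\operatorname{d}_{\varphi,\sigma}(b;b_1,\dots,b_d)$. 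Hence $\mathcal D_\sigma(y;b_1,\dots,b_d)$ defines $q$ exactly, which is UDTFS with $d$ parameters.

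There is one bookkeeping subtlety worth stating carefully rather than the only real ``obstacle'': the breadth hypothesis is about \emph{non-empty} intersections of more than $d$ sets, and the definition of breadth allows the selected subfamily to have size exactly $d$ (not ``at most $d$''); when the positive part $q^+$ already has $\le d$ instances, or the selected subfamily is smaller, I simply pad $(b_1,\dots,b_d)$ with arbitrary repeated entries drawn from the instances actually used (if $q^+=\emptyset$, any element of $B$ will do, and if $B=\emptyset$ the statement is vacuous since $S^\Delta(\emptyset)$ is then a single type over the empty parameter set and requires no defining parameters — one can reduce to $B\ne\emptyset$, or allow constant defining formulas). This is exactly the argument sketched for \cite[Lemma~5.2]{ADHMS}, so the main work is just to spell out the equivalence ``$\varphi(x;b)\in q\iff q^{\mathbf M}\subseteq\varphi(M^{\abs{x}};b)$'' for positive instances — which is immediate from $q$ being complete and consistent — and to check that the finitely many $\sigma$ and the $d$ parameter slots suffice uniformly over all finite $B$, which they do because $\Delta$ is finite and the breadth bound $d$ is uniform. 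I do not expect a genuine obstacle here; the only care needed is the padding and the empty-parameter-set edge case.
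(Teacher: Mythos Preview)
Your argument is essentially the standard one (the paper itself just cites \cite[Lemma~5.2]{ADHMS} without giving a proof), and the main body is correct: for a type $q$ with non-empty positive part $q^+$, breadth~$d$ lets you select $d$ positive instances whose intersection equals the full positive intersection, and then the containment scheme $\forall x\big(\bigwedge_j \sigma(j)(x;y_j)\to\varphi(x;y)\big)$ defines $q$ exactly.

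There is, however, a genuine slip in your treatment of the edge case $q^+=\emptyset$. You write ``any element of $B$ will do,'' but that is false with the schemes $\operatorname{d}_{\varphi,\sigma}$ as written. If you pad with some $b^*\in B$ and some $\sigma$, then $\operatorname{d}_{\sigma(1),\sigma}(b^*;b^*,\dots,b^*)$ asserts $\bigcap_j\sigma(j)(M^{\abs{x}};b^*)\subseteq\sigma(1)(M^{\abs{x}};b^*)$, which is trivially true; yet $\neg\sigma(1)(x;b^*)\in q$, so the scheme misdefines $q$. (Worse, if the padded intersection happens to be empty, every $\operatorname{d}_{\varphi,\sigma}$ becomes vacuously true.) The fix is precisely your parenthetical alternative: add one more index $i_0\in I$ whose defining formulas $\operatorname{d}_{\varphi,i_0}$ are identically false (e.g.\ $y\neq y$), and use this family for types with $q^+=\emptyset$. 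So the repair is trivial, but as written the sentence ``any element of $B$ will do'' is incorrect and should be replaced by the constant-formula option you mention only as an afterthought.
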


We say that $\mathbf M$ has the \emph{$\VC{}d$~property} if  any $\Delta$ with $\abs{x}=1$ has uniform definability of types over finite sets with $d$~parameters. 
Clearly if $\mathbf M$ has the $\VC{}d$~property, then so does every other model of $T$, and we say that $T$ has the $\VC{}d$~property if some model of $T$ does.
The point of the $\VC{}d$~property is that it guarantees that every finite subset of partitioned $\mathcal L$-formulas has UDTFS in $\mathbf M$ {\it with a uniform bound on the number of parameters}\/:

\begin{theorem}[{see \cite[Theorem~5.7]{ADHMS}}]\label{VCdensity, 2}
Suppose that $\mathbf M$ has the $\VC{}d$ property. Then every $\Delta$ has UDTFS in~$\mathbf M$ with  $d\,\abs{x}$~parameters. 
\end{theorem}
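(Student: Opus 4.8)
The plan is to reduce the case of arbitrary $\abs{x}=m$ to the case $\abs{x}=1$, which is exactly what the $\VC{}d$ property hands us, by an induction on $m$. The key structural observation is that a $\Delta(x_1,\dots,x_m;B)$-type can be built up coordinate by coordinate: one first splits off the last object variable $x_m$, records a type in the variables $(x_1,\dots,x_{m-1})$ together with whatever parameters are needed to define the $x_m$-part, and then iterates. Concretely, I would first treat the base case $m=1$, where the statement is just the definition of the $\VC{}d$~property. For the inductive step, given $\Delta=\Delta(x;y)$ with $x=(x_1,\dots,x_m)$, write $x=(x',x_m)$ with $x'=(x_1,\dots,x_{m-1})$, and for each $\varphi(x';x_m;y)\in\Delta$ regard $\varphi$ as a partitioned formula $\varphi(x_m;x'y)$ in the single object variable $x_m$ with parameter variables $(x',y)$. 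Collecting these gives a finite set $\Delta'=\Delta'(x_m;x'y)$ of formulas in one object variable, so the $\VC{}d$~property supplies a uniform definition $\mathcal D'=(\mathcal D'_i)_{i\in I}$ of $\Delta'$-types over finite sets with $d$ parameters, each parameter being a tuple of the form $(a',b)\in M^{\abs{x'}}\times M^{\abs{y}}$.

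Next I would show how to assemble the full definition. Fix a finite $B\subseteq M^{\abs{y}}$ and a type $q\in S^\Delta(B)$; let $a=(a',a_m)$ realize $q$. The type $\operatorname{tp}^{\Delta'}(a_m/(\text{some finite set}))$ of $a_m$ over the relevant parameters is, by the $\VC{}d$~property, defined by some $\mathcal D'_i(x_m\text{-slot};(a'_1,b_1),\dots,(a'_d,b_d))$ with the $(a'_k,b_k)$ drawn from a suitable finite parameter set; the crucial point is that the $b_k$ can be taken from $B$ and the $a'_k$ from $M^{\abs{x'}}$. The remaining data is the $\Delta''$-type of $a'$ over $B$ together with the finitely many new parameters $a'_1,\dots,a'_d$, where $\Delta''$ is a finite set of formulas in the $\abs{x'}=m-1$ object variables $x'$ obtained by substituting the defining formulas $\mathcal D'_i$ back in (so that knowing the $\Delta''$-type of $a'$ together with the $a'_k$ determines, via $\mathcal D'_i$, which instances $\varphi(a';a_m;b)$ hold). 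By the inductive hypothesis, $\Delta''$ has UDTFS in $\mathbf M$ with $d(m-1)$ parameters, giving a family $\mathcal D''$ whose defining formulas use $d(m-1)$ parameters from the relevant finite set. Composing $\mathcal D'$ and $\mathcal D''$ — substituting the $\mathcal D''$-definitions for the "$a'_k$" parameter-slots of $\mathcal D'$ and booleanly combining — produces, after a routine bookkeeping of indices, a finite family of $\mathcal L$-formulas that defines $q$ using at most $d + d(m-1) = dm = d\abs{x}$ parameters drawn from $B$.

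The last thing to check is that this assembled family really is a \emph{uniform} definition: the number of families $i$ in the composite index set is finite (it is a product of the finitely many indices coming from $\mathcal D'$ and from $\mathcal D''$), the defining formulas themselves do not depend on $B$ or on $q$, and every parameter fed into them can be chosen from $B$. This is exactly the content of the definition of UDTFS with $d\abs{x}$ parameters. I would reference \cite[Theorem~5.7]{ADHMS} for the detailed combinatorics and present the argument at the level of the induction and the coordinate-splitting described above.

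The step I expect to be the main obstacle — or at least the one requiring the most care — is the bookkeeping in the composition: making precise exactly which parameter-slots of the $\mathcal D'_i$ are filled by elements of $B$ versus by the auxiliary tuples $a'_k$, verifying that feeding the $\mathcal D''$-definitions of those auxiliary tuples back into $\mathcal D'_i$ yields a genuine $\mathcal L$-formula in the parameter variables $(y_1,\dots,y_{dm})$ alone, and confirming that no hidden parameters from outside $B$ sneak in. Once the substitution pattern is set up correctly, the fact that the resulting family has the required uniformity and parameter count is essentially formal; the subtlety is entirely in organizing the substitution so that the inductive hypothesis applies to a finite set $\Delta''$ in $m-1$ object variables that is itself independent of $B$.
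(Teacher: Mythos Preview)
The paper does not give its own proof of this theorem: it is simply quoted from \cite[Theorem~5.7]{ADHMS}, so there is nothing in the present paper to compare your argument against beyond the citation. That said, your inductive strategy on $m=\abs{x}$, splitting off the last object variable and applying the one-variable UDTFS hypothesis, is exactly the approach used in \cite{ADHMS}, and the parameter count $d+d(m-1)=dm$ you arrive at is the right one.

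One point of imprecision deserves attention. You write that the parameters $(a'_k,b_k)$ returned by the one-variable UDTFS have $b_k\in B$ and ``$a'_k$ from $M^{\abs{x'}}$,'' and then speak of recording ``the finitely many new parameters $a'_1,\dots,a'_d$.'' If the $a'_k$ were genuinely arbitrary tuples from $M^{\abs{x'}}$ there would be no finite set to draw them from, and the induction would break. In the standard argument one applies the one-variable UDTFS to the $\Delta'(x_m;x'y)$-type of $a_m$ over the finite set $\{a'\}\times B$, so that every returned parameter has the form $(a',b_k)$ with the \emph{same} $a'$ in the first slot. The formulas $\theta_{\varphi,i}(x';y,y_1,\dots,y_d):=\operatorname{d}'_{\varphi,i}\big((x',y);(x',y_1),\dots,(x',y_d)\big)$ then have $x'$ as object variables and $(y,y_1,\dots,y_d)$ as parameter variables, and one applies the inductive hypothesis to the resulting finite set $\Delta''$ over the parameter set $\{(b,b_1,\dots,b_d):b\in B\}\subseteq M^{(d+1)\abs{y}}$. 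The $d(m-1)$ parameters returned all share the last $d\abs{y}$ coordinates $b_1,\dots,b_d$, so only their first coordinates $b'_1,\dots,b'_{d(m-1)}\in B$ are new. With this clarification your bookkeeping concern in the final paragraph dissolves: no auxiliary tuples outside $B$ ever appear.
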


If $\Delta$ has UDTFS in $\mathbf M$ with $d$~parameters, then clearly
$\abs{S^\Delta(B)}=O(\abs{B}^d)$  for every finite $B\subseteq M^{\abs{y}}$. Hence if  $\mathbf M$ has the $\VC{}d$ property, then for each $\Delta$ we have 
$\pi^*_\Delta(t)=O(t^{d\abs{x}})$ for each $t$ and thus $\vc^*(\Delta)\leq d\,\abs{x}$.

\medskip

The following fact (cf.~\cite[Corollary~5.6]{ADHMS}), when combined with a quantifier-simplifica\-tion result, is handy for verifying the $\VC{}d$ property; it will later be applied in the case of modules.

\begin{lemma}\label{lem:qe and VCd}
Let $\Phi$ be a family of partitioned $\mathcal L$-formulas in the single object variable~$x$ such that 
\begin{enumerate}
\item every partitioned $\mathcal L$-formula in the object variable $x$ is equivalent in $T$ to a Boolean combination of formulas from $\Phi$, and
\item every finite set of  $\mathcal L$-formulas from $\Phi$ has UDTFS in $T$ with $d$ parameters.
\end{enumerate}
Then $T$ has the $\VC{}d$~property.
\end{lemma}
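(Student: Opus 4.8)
The plan is to unwind the two hypotheses in the obvious order. First I would fix a model $\mathbf M\models T$ and an arbitrary finite $\Delta=\Delta(x;y)$ with $\abs{x}=1$, and aim to produce a uniform definition of $\Delta(x;B)$-types over finite sets in $\mathbf M$ with $d$ parameters. Using hypothesis~(1), I would write each $\varphi\in\Delta$ as a fixed Boolean combination $C_\varphi$ of instances $\psi(x;\bar u)$ of finitely many formulas $\psi$ from $\Phi$, where each $\bar u$ is a tuple of variables from $y$. Collecting these over all $\varphi\in\Delta$ yields a single finite set $\Psi\subseteq\Phi$ (and, after padding parameter tuples with dummy variables, one may take all formulas in $\Psi$ to share a common parameter-variable tuple $z$). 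Hypothesis~(2) then provides a uniform definition $\mathcal D'=(\mathcal D'_i)_{i\in I}$ of $\Psi$-types over finite sets, each using $d$~parameters.

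Next I would pass from $\Delta$-types to $\Psi$-types. Given a finite $B\subseteq M^{\abs{y}}$ and $q\in S^\Delta(B)$, realized by some $a\in M$, I would form the finite parameter set $B':=\{\bar u(b):b\in B,\ \bar u\text{ one of the finitely many substitutions occurring}\}$ and the $\Psi$-type $q':=\operatorname{tp}^{\Psi}(a/B')$. The point that makes the argument go is that $q$ is recovered from $q'$: for $\varphi\in\Delta$ and $b\in B$ the relevant parameters $\bar u(b)$ all lie in $B'$, so whether $\varphi(x;b)\in q$ is just the Boolean function $C_\varphi$ applied to the bits ``$\psi(x;\bar u(b))\in q'$''. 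Applying the uniform definition $\mathcal D'$ to $q'$ produces an index $i\in I$ and elements $b_1',\dots,b_d'\in B'$ whose associated formulas define $q'$; substituting those defining formulas into the Boolean combinations $C_\varphi$ gives an explicit definition of $q$.

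It remains to check that this definition uses only $d$ parameters from $B$ and comes from finitely many templates. Each $b_\ell'\in B'$ is $\bar u(c_\ell)$ for a single $c_\ell\in B$ and one of the fixed finitely many substitutions $\bar u$; so, relabelling, the elements $b_1',\dots,b_d'$ are obtained from at most $d$ elements $b_1,\dots,b_d$ of $B$ by fixed coordinate substitutions. The data entering the construction --- the index $i\in I$, which substitution produced each $b_\ell'$, and the map $\rho\colon[d]\to[d]$ recording $c_\ell=b_{\rho(\ell)}$ --- ranges over a finite set depending only on $\Delta$. Collecting over all these choices gives finitely many families $\big(\operatorname{d}_{\varphi,\delta}(y;y_1,\dots,y_d)\big)_{\varphi\in\Delta}$, obtained from the $C_\varphi$ by plugging the formulas of $\mathcal D'_i$ and the substitutions into the appropriate variable slots, and these together witness UDTFS for $\Delta$ in $\mathbf M$ with $d$~parameters. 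Hence $\mathbf M$ has the $\VC{}d$~property.

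I expect the only genuine subtlety to be this last step. A priori one might worry that enlarging $B$ to $B'$ and then spending $d$ parameters on $\Psi$-types forces more than $d$ parameters for $\Delta$-types; the resolution is precisely that each of the $d$ parameters drawn from $B'$ is recovered from a single element of $B$ via a substitution fixed in advance, so no blow-up occurs. The remaining points --- the bookkeeping for the varying parameter-variable tuples of the $\Phi$-formulas (dealt with by padding with dummy variables), and the fact that the $\VC{}d$~property and UDTFS transfer between models of $T$ --- are routine.
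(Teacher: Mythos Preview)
The paper does not include a proof of this lemma; it is stated as a fact with a reference to \cite[Corollary~5.6]{ADHMS}. Your argument is correct and is the standard one: rewrite each $\varphi\in\Delta$ as a fixed Boolean combination of instances of finitely many formulas from $\Phi$, apply hypothesis~(2) to the resulting finite $\Psi\subseteq\Phi$, and then observe that the $d$ parameters drawn from the derived set $B'$ each arise from a single element of $B$ via one of finitely many fixed substitutions, so that the bookkeeping (choice of $i\in I$, choice of substitution for each of the $d$ slots) yields only finitely many templates and no parameter blow-up. Your identification of the only nontrivial point---that passing from $B$ to $B'$ does not cost extra parameters because each $b'_\ell$ is determined by a single $b\in B$ together with a substitution fixed in advance---is exactly right.
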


\subsection{Dp-minimality.}\label{sec:dp-min}
By a \emph{bigraph} we mean a triple $G=(X,Y,\Phi)$ where $X$, $Y$ are sets and $\Phi\subseteq X\times Y$. We call the elements of the disjoint union $X\cup Y$ the \emph{vertices} of the bigraph $G$ and the elements of
$\Phi$ the \emph{edges} of $G$. 
A bigraph $G'=(X',Y',\Phi')$ is a \emph{sub-bigraph} of the bigraph 
$G=(X,Y,\Phi)$ if $X'\subseteq X$, $Y'\subseteq Y$, and $\Phi'\subseteq \Phi$. We say that a bigraph $G$ contains a given bigraph $G'$ (as a sub-bigraph) if $G'$ is isomorphic to a sub-bigraph of $G$.

Let $\varphi(x;y)$ be a  partitioned $\mathcal L$-formula. Given subsets $A$, $A'$ of $M^{\abs{x}}$ we define a bi\-graph $G_{A,A',\varphi}=(A,A',\Phi)$ with edge set 
$$\Phi=\big\{(a,a')\in A\times A':\{a,a'\}\in\mathcal S_\varphi\cap (A\cup A')\bigr\}.$$ 
Note that a pair $(a,a')\in A\times A'$ is connected by an edge of $G_{A,A',\varphi}$ iff 
$$\big\{\varphi^*(y;a),\varphi^*(y;a')\big\}\cup\big\{ \neg\varphi^*(y;a_0) : a_0\in A\cup A',\ a_0\neq a,a'\big\}$$
is realized in $\mathbf M$.

It is easy to see that a bigraph $G=(X,Y,\Phi)$ with a finite number $n$ of vertices can have at most $\frac{1}{4}n^2$ edges. A fundamental fact about bigraphs is the theorem of K\H{o}v\'ari, S\'os and Tur\'an \cite{KST}: 
{\it given a positive integer $r$, there exists a real number $C=C(r)$ such that every  bigraph $G$ with $n$ vertices which does not contain $K_{r,r}$ as a sub-bigraph 
has at most $C\,n^{2-1/r}$ edges.}
Here $K_{r,r}$ denotes the (complete) bigraph $K_{r,r}=([r],[r],[r]\times[r])$.
In particular, from this theorem we obtain:

\begin{lemma}\label{lem:dp-min}
Let $\varphi(x;y)$ be a partitioned $\mathcal L$-formula. The following are equivalent:
\begin{enumerate}
\item for each $r>0$ there are finite subsets $A$, $A'$ of $M^{\abs{x}}$ such that $G_{A,A',\varphi}$ contains the bigraph $K_{r,r}$;
\item for each $r>0$ there are finite subsets $A$, $A'$ of $M^{\abs{x}}$ such that $G_{A,A',\varphi}$ has more than $Cn^{2-1/r}$ edges, where $C=C(r)$ is as above and $n=\abs{A}+\abs{A'}$.
\end{enumerate}
\end{lemma}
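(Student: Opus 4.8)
The plan is to show both implications directly, using the Kővári–Sós–Turán theorem only for the nontrivial direction. The implication (2)$\Rightarrow$(1) is the substantive one: fix $r>0$ and let $C=C(r)$ be the constant from the theorem. By (2) there are finite sets $A,A'\subseteq M^{\abs{x}}$ with $G_{A,A',\varphi}$ having more than $Cn^{2-1/r}$ edges, where $n=\abs{A}+\abs{A'}$ is the number of vertices. By the contrapositive of the theorem, a bigraph on $n$ vertices with more than $Cn^{2-1/r}$ edges must contain $K_{r,r}$ as a sub-bigraph, which is exactly the conclusion of (1) for this $r$. Since $r$ was arbitrary, (1) holds.

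For (1)$\Rightarrow$(2), fix $r>0$; I will pass to a larger value of $r$ to absorb the polynomial factor. Concretely, given $r$, I want finite $A,A'$ with $G_{A,A',\varphi}$ having more than $Cn^{2-1/r}$ edges. Apply (1) with a parameter $s\geq r$ to be chosen: there are finite $A,A'$ such that $G:=G_{A,A',\varphi}$ contains $K_{s,s}$. Restricting to the $2s$ vertices of that copy of $K_{s,s}$ — and noting that a sub-bigraph of $G_{A,A',\varphi}$ on sets $A_0\subseteq A$, $A_0'\subseteq A'$ need not literally be $G_{A_0,A_0',\varphi}$, since the defining condition $\{a,a'\}\in\mathcal S_\varphi\cap(A_0\cup A_0')$ involves the ambient set — I should instead observe the monotonicity the other way: if $\{a,a'\}=S\cap(A\cup A')$ for some $S\in\mathcal S_\varphi$ then this says $a,a'\in S$ and no other element of $A\cup A'$ lies in $S$; enlarging the ambient set can only destroy edges, so in fact I should keep the ambient sets $A,A'$ as given by (1) and just count: $G_{A,A',\varphi}$ has at least $s^2$ edges (from the $K_{s,s}$) on $n=\abs A+\abs{A'}$ vertices. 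Now I need $s^2>Cn^{2-1/r}$. This requires controlling $n$ in terms of $s$; but $n$ could be much larger than $2s$.

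The resolution is to not worry about the exact vertex count: the honest statement of (1)$\Rightarrow$(2) should let the copy of $K_{s,s}$ itself serve as the witnessing bigraph. That is, reread (2) as asserting the existence of $A,A'$ — it does not demand these be the same $A,A'$ from (1), nor minimal. So take the sub-bigraph induced on the $s+s$ vertices of the $K_{s,s}$: call its vertex sets $A_0,A_0'$, each of size $s$. The induced bigraph on these vertices contains all $s^2$ edges of $K_{s,s}$ among its edges (induced sub-bigraphs of $G_{A,A',\varphi}$ retain all edges between the chosen vertices, since an edge of $G$ restricted to a vertex subset is still an edge of the induced bigraph — the edge relation $\Phi$ is simply intersected with $A_0\times A_0'$). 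So the induced bigraph has $n_0:=2s$ vertices and $\geq s^2=n_0^2/4$ edges. Choosing $s$ large enough that $n_0^2/4=s^2>C(2s)^{2-1/r}=C2^{2-1/r}s^{2-1/r}$, i.e. $s^{1/r}>C2^{2-1/r}$, which holds for all sufficiently large $s\geq r$, gives (2).

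**Main obstacle.** The genuinely delicate point is the interplay between the set system $\mathcal S_\varphi$ and the bigraph construction: a "sub-bigraph of $G_{A,A',\varphi}$" in the sense of Section~\ref{sec:dp-min} is an abstract object, and one must be careful that the induced bigraph on a subset of vertices still records the correct edges. As noted above this works because we are shrinking, not enlarging, the vertex sets and the edge set is defined by intersecting $\Phi$ with the smaller product — enlarging would be problematic because the "no other element in $S$" clause is sensitive to the ambient set, but that direction is never needed. Once this bookkeeping is straight, both implications are immediate from Kővári–Sós–Turán and the trivial bound that a bigraph on $n$ vertices has at most $n^2/4$ edges; the choice of parameters in (1)$\Rightarrow$(2) is a routine asymptotic calculation.
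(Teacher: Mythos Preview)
The paper offers no proof of this lemma, treating it as an immediate consequence of the K\H{o}v\'ari--S\'os--Tur\'an theorem stated just before. Your argument is the natural one and is essentially correct, but there is a genuine muddle at the key step of (1)~$\Rightarrow$~(2) that you should straighten out.

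You pass to the vertex subsets $A_0\subseteq A$, $A_0'\subseteq A'$ supporting the copy of $K_{s,s}$, and then argue that ``the induced bigraph on these vertices contains all $s^2$ edges of $K_{s,s}$.'' That is true but beside the point: condition~(2) is a statement about the bigraph $G_{A_0,A_0',\varphi}$ built from scratch using the ambient set $A_0\cup A_0'$, not about the induced sub-bigraph of $G_{A,A',\varphi}$ on $(A_0,A_0')$. These are different objects in general. Your ``Main obstacle'' paragraph compounds the confusion by describing the passage to smaller vertex sets as ``the edge set is defined by intersecting $\Phi$ with the smaller product,'' which is again the induced sub-bigraph, not $G_{A_0,A_0',\varphi}$.

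The fix is the observation you stated earlier but failed to invoke where it matters: if $(a,a')$ is an edge of $G_{A,A',\varphi}$, witnessed by $S\in\mathcal S_\varphi$ with $S\cap(A\cup A')=\{a,a'\}$, and if $a\in A_0$, $a'\in A_0'$, then $S\cap(A_0\cup A_0')=\{a,a'\}$ as well, so $(a,a')$ is an edge of $G_{A_0,A_0',\varphi}$. Hence $G_{A_0,A_0',\varphi}$ contains (possibly strictly) the induced sub-bigraph, and in particular has at least $s^2$ edges on $2s$ vertices. Your asymptotic choice of $s$ then goes through. Once this is said cleanly, the proof is complete.
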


The $\mathcal L$-structure $\mathbf M$ is said to be \emph{dp-minimal} if there is no $\mathcal L$-formula  $\varphi(x; y)$ with $\abs{y} = 1$ satisfying one of the two equivalent conditions in the previous lemma. 
In~\cite{ADHMS} we called $\mathbf M$ \emph{vc-minimal} if $\vc^*(\varphi)<2$ for every $\mathcal L$-formula $\varphi(x;y)$ with $\abs{x}=1$. (So if $\vc(1)<2$, then $\mathbf M$ is vc-minimal.) It is easy to see (cf.~\cite[Corollary~5.13]{ADHMS}) that if $\mathbf M$ is vc-minimal, then $\mathbf M$ is dp-minimal. (We do not know whether, conversely, every dp-minimal structure is vc-minimal.) On the other hand, if $\mathbf M$ is dp-minimal, then for each $\mathcal L$-formula $\varphi(x;y)$ with $\abs{y}=1$ there is an upper bound on the size of finite subsets $A\subseteq M^{\abs{x}}$ such that $A\cap\mathcal S_\varphi$ contains all two-element subsets of $A$. Thus, although reducts of dp-minimal structures are obviously also dp-minimal, a structure interpretable in a dp-minimal structure is not in general itself dp-minimal:

\begin{example*}
Suppose $\mathcal L$ is the empty language, and let $\mathcal L'=\{\pi_1,\pi_2\}$ where $\pi_i$ is a unary function symbol, for $i=1,2$. Let $\mathbf M'$ be the $\mathcal L'$-structure with underlying set $M'=M\times M$, where $\pi_i^{\mathbf M'}$ is given by $(x_1,x_2)\mapsto (x_i,x_i)$, for $i=1,2$. Then clearly~$\mathbf M'$ is interpretable without parameters in $M$. Consider the partitioned $\mathcal L'$-formula $\varphi(x;y) := x=\pi_1(y)\vee x=\pi_2(y)$, where $\abs{x}=\abs{y}=1$. Then $\mathcal S_{\varphi}^{\mathbf M'}={D\choose 1}\cup {D\choose 2}$, where $D=\{(a,a):a\in M\}$,  hence $\mathbf M'$ is not dp-minimal.
\end{example*}

One defines  a complete $\mathcal L$-theory (without finite models) to be dp-minimal if one of its models is dp-minimal (equivalently, if all of its models are). The definition of dp-minimality given here is not the original one as introduced in \cite{OU}, which used the notion of ICT pattern from \cite{S-strongly dependent}:
an \emph{ICT pattern}\/ in $\mathbf M$ consists of a pair $\alpha(x;y)$, $\beta(x;y)$ of partitioned $\mathcal L$-formulas, where $\abs{x}=1$,  and sequences $(a_i)_{i\in\bN}$, $(b_j)_{j\in\bN}$ in $M^{\abs{y}}$ such that for all $i$ and $j$ the set 
$$\big\{\alpha(x;a_i),\beta(x;b_j)\big\}\cup\big\{ \neg\alpha(x;a_k) : k\neq i\big\} \cup \big\{ \neg\beta(x;b_l):l\neq j\big\}$$
is consistent with $\mathbf M$. One can show (see, e.g., \cite[Fact~2.13]{dl} or \cite[Proposition~5.12]{ADHMS}) that
$\mathbf M$ is dp-minimal iff  there is no ICT pattern in an elementary extension of $\mathbf M$.

\medskip

The extent of the failure of the dp-minimality property for a theory can be quantified via its dp-rank.

\begin{definition}  
One says that \emph{$T$ has  dp-rank at least $n$} if there are partitioned $\mathcal L$-formulas $\phi_1(x; y),\dots,\phi_n(x;y)$,  where $\abs{x}=1$, and parameters $a^i_j\in M^{\abs{y}}$, where $i\in [n]$ and $j \in \bN$, so that for any function $\eta\colon [n] \to \bN$
the set of $\mathcal L(M)$-formulas
\[
\big\{ \phi_i(x; a^i_{\eta(i)}) : i\in [n] \big\} \cup \big\{ \neg\phi_i(x; a^i_j) : i\in [n],\ j\neq\eta(i) \big\}\]
is consistent (with $\mathbf M$).
\end{definition}

So $T$ is not dp-minimal iff $T$ has dp-rank at least $2$.
An easy generalization of \cite[Corollary~5.13]{ADHMS} shows that if $T$ has dp-rank at least $n$, then $\vc(1)\geq n$.
The following lemma gives a convenient way to check for dp-rank at least $n$:

\begin{lemma}  \label{lem:dp-rank}
Suppose there exists an infinite definable set $X \subseteq M$ and a definable one-to-one function $X^n \to M$. Then the dp-rank of $T=\Th(\mathbf M)$ is at least $n$.
\end{lemma}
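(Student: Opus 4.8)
The plan is to transfer an ICT pattern (or a dp-rank configuration) from the infinite set $X$ into $M$ via the given injection, using that the empty language on an infinite set has dp-rank $1$ — more precisely, that $X$, as an infinite definable set, supports a single ``independent'' pattern which we can multiply up $n$ times inside $X^n$ and then push forward through the bijection. Concretely, let $f\colon X^n\to M$ be the definable injection. For each coordinate $i\in[n]$, I would use the infinitude of $X$ to pick a sequence $(c^i_j)_{j\in\bN}$ of pairwise distinct elements of $X$, together with (if needed) a fixed base point $c\in X$, and then define $a^i_j\in M$ by freezing all coordinates other than the $i$th to $c$ and setting the $i$th coordinate to $c^i_j$; that is, $a^i_j := f(c,\dots,c,c^i_j,c,\dots,c)$ with $c^i_j$ in position $i$.

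Next I would exhibit the formulas $\phi_i(x;y)$. The natural choice is to let $\phi_i(x;y)$ say ``the $i$th coordinate of $f^{-1}(x)$ equals the $i$th coordinate of $f^{-1}(y)$'' — this is definable since $f$ is a definable injection (its graph is definable, so $f^{-1}$ restricted to the image is definable, and projections onto coordinates of $X^n\subseteq M^n$ are definable). Then for any $\eta\colon[n]\to\bN$, the element $x_\eta := f(c^1_{\eta(1)},c^2_{\eta(2)},\dots,c^n_{\eta(n)})$ satisfies $\phi_i(x_\eta;a^i_{\eta(i)})$ for every $i$ (the $i$th coordinate of $f^{-1}(x_\eta)$ is $c^i_{\eta(i)}$, matching that of $a^i_{\eta(i)}$, whose $i$th coordinate is $c^i_{\eta(i)}$ by construction), and $\neg\phi_i(x_\eta;a^i_j)$ for $j\neq\eta(i)$ (since then the $i$th coordinate of $a^i_j$ is $c^i_j\neq c^i_{\eta(i)}$, using that the $c^i_j$ were chosen distinct). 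Hence the displayed set of $\mathcal L(\mathbf M)$-formulas is consistent, witnessed by $x_\eta$, which is exactly the definition of dp-rank at least $n$ with $\abs{x}=1$.

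A few bookkeeping points remain. Since $\abs{x}$ is required to be $1$ in the definition of dp-rank, I should either note that ``coordinate of $f^{-1}$'' is literally a formula in the single variable $x$ (it is: $\phi_i(x;y)$ is $\exists \bar u\,\exists\bar v\,(f(\bar u)=x\wedge f(\bar v)=y\wedge u_i=v_i)$, a formula with one object variable $x$ and one parameter variable $y$), or otherwise massage $f$ so that $M\supseteq X^n$ literally; the first option is cleanest. One should also observe that the parameters $a^i_j$ need not be distinct across different $i$, which causes no problem — the definition only quantifies over $\eta$ and demands consistency of one formula set per $\eta$. The only genuinely delicate point, and the one I expect to be the main obstacle, is making sure $\phi_i$ is a bona fide formula over the ambient structure: this needs that $f$ is definable \emph{with parameters from $M$}, which is what ``definable'' means here, and then the $a^i_j$ together with those parameters all live in $M$, so everything is an $\mathcal L(\mathbf M)$-formula as required. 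Modulo that, the argument is a direct unwinding of definitions, so I would keep the write-up short.
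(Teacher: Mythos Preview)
Your argument is correct and follows essentially the same route as the paper: exhibit formulas that recover the $i$th coordinate of $f^{-1}(x)$ and use the injectivity of $f$ to verify the pattern. The paper streamlines one detail---it takes the parameters $a^i_j$ to be the elements of $X$ themselves (setting $a^i_j:=a_j$ for a single sequence of distinct $a_j\in X$) and uses the formula $\phi_i(x;y):=\exists z_1\cdots z_n\big(\bigwedge_j \psi(z_j)\wedge f(\bar z)=x\wedge z_i=y\big)$, so only $x$ needs to lie in the image of $f$; your version pushes the parameters through $f$ as well, which is harmless but adds an unnecessary layer.
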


\begin{proof}  
Let $f\colon X^n\to M$ be definable and injective, where $X\subseteq M$ is definable.
Without loss of generality we may assume that $X$ is $\emptyset$-definable by $\psi(x)$ and $f$ is
also $\emptyset$-definable.  Pick pairwise distinct $a_i \in X$ for $i \in \bN$.
Let $\phi_i(x; y)$ be the formula
\[\exists z_1 \dots z_n\left(\bigwedge_{j\in [n]}\psi(z_j) \wedge  f(z_1,\dots,z_n)=x \wedge z_i=y \right).\]
Now as in the preceding definition setting $a^i_j:=a_j$ for $i\in [n]$ and $j\in\bN$, the
formulas $\phi_i$ and parameters $a^i_j$ witness that $T$ has dp-rank at least $n$.
\end{proof}

\section{Linear VC Density for Finite Rank Theories}\label{sec:other}

\noindent
Throughout this section we assume that $T$ is a complete stable theory without finite models, in a first-order language $\mathcal L$. We fix a monster model $\mathbf M$ of~$T$. 
``Small'' means ``of cardinality smaller than $\abs{M}$,'' and ``$\models\dots$'' abbreviates ``$\mathbf M\models\dots$.'' 
We also let $\Delta$ be a set (possibly infinite) of partitioned $\mathcal L$-formulas in the tuple of object variables $x=(x_1,\dots,x_m)$ (and varying tuples of parameter variables). 

Given $B\subseteq M$, by a $\Delta$-formula over $B$ we mean a formula of the form $\varphi(x;b)$ or $\neg\varphi(x;b)$ where $\varphi(x;y)\in\Delta$ and $b\in B^{\abs{y}}$, and
by a $\Delta$-type over $B$ (in $\mathbf M$) we mean a consistent set of $\Delta$-formulas over $B$. 
A $\Delta$-type $p=p(x)$ over $B$ is said to be complete if for all $\Delta$-formulas $\varphi$, either $\varphi$ or $\neg\varphi$ is in $p$. We denote by $S_\Delta(B)$ the set of complete $\Delta$-types over $B$ in $\mathbf M$, equipped with the topology with a subbasis consisting of the sets of the form $\{p\in S_\Delta(B):\varphi\in p\}$ where $\varphi$ is a $\Delta$-formula over $B$. This makes $S_\Delta(B)$ into a compact totally disconnected Hausdorff space.  As usual $S_x(B)$ (or $S_m(B)$, if the particular choice of an $m$-tuple $x$ of object variables is unimportant) denotes $S_\Delta(B)$ where $\Delta$ is the set of \emph{all} partitioned $\mathcal L$-formulas in the object variables $x$. (Note the difference between the notions $S_\Delta(B)$ and $S^\Delta(B)$ as introduced in Section~\ref{sec:prelims}  for finite sets of $\mathcal L$-formulas $\Delta=\Delta(x;y)$ in the parameter variables $y$: in the former the parameter set $B$ is understood to be a subset of $M$, in the latter it is a subset of $M^{\abs{y}}$.)

We will write $R_{\Delta}(-)$ for the rank denoted by $R^m(-,\Delta, \aleph_0)$
in \cite[Definition~II.1.1]{Shelah-book}. 
Given a set $\Theta=\Theta(x)$ of $\mathcal L(M)$-formulas, denote by $[\Theta]$ the closed subset of $S_\Delta(M)$ consisting of all $p(x)\in S_\Delta(M)$ which are consistent with $\Theta(x)$. Then
$R_\Delta(\Theta)$ is the Cantor-Bendixson rank of $[\Theta]$ (i.e., the maximum Cantor-Bendixson rank of an element of $[\Theta]$), with the convention that  $R_\Delta(\Theta)=-1$ if $\Theta$ is inconsistent, and if $[\Theta]$ does not have a Cantor-Bendixson rank, then $R_\Delta(\Theta):=\infty$. We call $R_\Delta(\Theta)$ the $\Delta$-rank of $\Theta$.
If $\Theta=\{\theta\}$ is a singleton we also write $R_\Delta(\theta)$ instead of $R_\Delta(\Theta)$. We also let  
$R_\Delta(T):=R_\Delta(x=x)$ where $x=x$ is shorthand for the $\mathcal L$-formula $x_1=x_1\wedge\cdots\wedge x_m=x_m$.
Since we assume that $T$ is stable, for {\it finite $\Delta$,}\/ the rank $R_\Delta(-)$ always takes values smaller than~$\omega$  \cite[Lemma~II.2.1 and Theorem~II.2.2]{Shelah-book}. 

\medskip
\noindent
Our goal in this section is to prove:

\begin{theorem}\label{mainst} 
Suppose $T$ does not have the finite cover property and $\Delta$ is finite, and let $\varrho=R_{\Delta}(T)$.
Then $\pi^*_\Delta(t)=O(t^\varrho)$ and hence  $\vc^*(\Delta)\leq \varrho$.
\end{theorem}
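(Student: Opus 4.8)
The plan is to bound the number of complete $\Delta$-types over a finite set $B\subseteq M^{\abs{y}}$ by exhibiting, for each such type, a ``small'' canonical parameter set from which it is definable, and then counting the possibilities. The absence of the finite cover property is exactly what lets us turn the $\Delta$-rank into such a definability statement. Concretely, I would first invoke the non-finite-cover-property hypothesis together with stability (Fact~\ref{nfcp} in the paper) to get that the rank $R_\Delta$ is \emph{definable}: for each $\Delta$-formula $\varphi(x;y)$ and each $k\leq\varrho$ there is an $\mathcal L$-formula $\vartheta_{\varphi,k}(y)$ such that, for all $b$, $\models\vartheta_{\varphi,k}(b)$ iff $R_\Delta(\varphi(x;b))\geq k$. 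One should also enlarge $\Delta$ harmlessly (closing under finitely many conjunctions of $\Delta$-formulas, which does not change $\varrho$ beyond a fixed finite amount — in fact the formulation with $\varrho = R_\Delta(T)$ is already set up so this is benign) so that for every consistent finite conjunction $\psi(x)$ of $\Delta$-formulas over $B$ there is a single $\varphi\in\Delta$ and parameter $b$ from $B$ with $\varphi(x;b)\equiv\psi(x)$; this is the standard device letting one talk about the rank of finite fragments of a type via single $\Delta$-formulas.

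Next, given a complete $\Delta$-type $p\in S^\Delta(B)$, I would run the usual ``rank-and-degree'' descent. Since $R_\Delta(x=x)=\varrho<\omega$, among the finitely many $\Delta$-formulas over $B$ lying in $p$ there is one, say $\varphi(x;b)$ with $b$ a tuple from $B$, realizing the minimum $\Delta$-rank, and then among those a further one pinning down the $\Delta$-\emph{multiplicity} (Shelah's $\operatorname{mlt}$, or equivalently the Cantor–Bendixson degree); standard stability theory for a finite $\Delta$ (\cite[Chapter~II]{Shelah-book}) gives that such a $\varphi(x;b)$ of minimal rank $\leq\varrho$ and of multiplicity one is \emph{$R_\Delta$-complete}, i.e.\ it has a unique extension to a complete $\Delta$-type of that rank, namely $p$ itself. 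The key point is that this $\varphi(x;b)$ involves a tuple $b$ whose length is $\abs{y}$ — a single instance from $\Delta$ after the closure step above — so $b$ is an element of $B^{\abs{y}}$-many-fold... more precisely, $b\in B$ in the sense that its coordinates come from the $\abs{B}=t$ available points, using at most $\abs{y}$ of them.

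The counting step then runs as follows. The formula $\varphi$ ranges over the finite set $\Delta$ (a constant), and the minimal rank $k\leq\varrho$ over finitely many values (a constant); the parameter $b$ is a tuple from $B$, contributing at most $t^{\abs{y}}$ choices if we use the original parameter variables, but the sharper bound comes from noticing that ``$R_\Delta$-completeness'' does not in fact depend on $\abs{y}$ extra parameters but only on $\varrho$ of them — this is where I would use the definability of $R_\Delta$ from the first step to re-express ``$\varphi(x;b)$ has minimal rank in $p$ and multiplicity one'' as a condition cut out by an auxiliary formula $\Psi(y; y_1,\dots,y_\varrho)$ applied to $b$ and to $\varrho$ further instances witnessing the bound, exactly in the spirit of UDTFS with $\varrho$ parameters. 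Since each complete $\Delta$-type over $B$ is thereby determined by a choice of $\varphi\in\Delta$, an $i$ indexing finitely many defining schemes, and $\varrho$ elements of $B$, we get $\abs{S^\Delta(B)}=O(t^{\varrho})$, whence $\pi^*_\Delta(t)=O(t^\varrho)$ and $\vc^*(\Delta)\leq\varrho$.

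The main obstacle is the middle step: extracting from the minimal-rank, multiplicity-one $\Delta$-formula a \emph{uniformly definable} witness that uses only $\varrho$ parameters rather than the naive $\abs{y}$ (times the size of the finite $\Delta$). This is precisely where one needs the definability of the local ranks $R_\Delta$ — guaranteed by the failure of the finite cover property — to be packaged as a UDTFS statement with parameter count controlled by $\varrho$; without it one only gets the crude $O(t^{\abs{y}\cdot\abs{\Delta}})$ bound. Setting up the right auxiliary formulas so that the Cantor–Bendixson degree (not just the rank) is definable and so that the defining scheme genuinely consumes only $\varrho$ points of $B$ is the technical heart of the argument.
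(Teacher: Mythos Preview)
Your outline has the right skeleton---use nfcp to make $R_\Delta$ definable, isolate each type by a formula of its own rank, count parameters---but it is missing the one idea that actually produces the exponent $\varrho$, and it also rests on an assumption that is not true in general.

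First, the multiplicity-one formula. You want, for each $p\in S^\Delta(B)$, a single $\Delta$-instance $\varphi(x;b)$ in $p$ of rank $R_\Delta(p)$ and $\Delta$-degree~$1$. There is no reason such a formula should exist with parameters drawn from $B$: degree-$1$ refinements are obtained (cf.\ Lemma~\ref{lem:degree 1}) by passing to $\mathcal L^{\eq}$-formulas with parameters in $\acl^{\eq}$ of the original parameters, not in $B$ itself, and a complete $\Delta$-type over a finite $B$ can perfectly well have $\Delta$-degree $>1$. The paper does \emph{not} aim for multiplicity one. Instead it uses the third consequence of nfcp (Fact~\ref{nfcp},~(3)): the $\Delta$-degree of any instance of a fixed formula is bounded by some constant $D$. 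One is then content with a formula $\theta_j(x;b_1,\dots,b_\varrho)$ of the correct rank; at most $D$ global $\Delta$-types extend it with that rank, so $p$ is determined up to $D$ possibilities---a harmless constant factor.

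Second, and more seriously, you correctly flag ``only $\varrho$ parameters suffice'' as the technical heart but supply no mechanism. Closing $\Delta$ under conjunctions does not help: either you close under conjunctions of all lengths (making $\Delta$ infinite), or under conjunctions of length $\leq k$ where $k$ is the finite-witness bound from Fact~\ref{nfcp},~(1), and then your rank-witnessing formula uses $k$ parameters from $B$, with no a priori relation between $k$ and $\varrho$. Definability of rank alone does not pare a $k$-parameter witness down to a $\varrho$-parameter one; it only tells you which instances have which rank. The paper's argument for this step (Lemma~\ref{one}) is a genuine rank \emph{descent}: start with a degree-$1$ $\mathcal L^{\eq}$-formula $\gamma_1$ over $\acl^{\eq}(\emptyset)$ of rank $\varrho$ containing a generic realization $a$ of $p$; because $\deg_\Delta(\gamma_1)=1$, conjoining with some single $\phi_{i_1}(x;b_{i_1})\in p$ must strictly drop the rank. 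Replace $\gamma_1$ by a degree-$1$ refinement over $\acl^{\eq}(b_{i_1})$ and repeat. Each step consumes one new parameter from $B$ and lowers the rank by at least one, so after at most $\varrho-\alpha$ steps one reaches rank $\alpha=R_\Delta(p)$ using $\leq\varrho$ parameters. Compactness together with definability of rank (Lemma~\ref{two}) then makes this uniform in a finite family of $\mathcal L$-formulas. This descent is the missing idea; without it there is no reason the number of parameters should be $\varrho$ rather than the unrelated constant~$k$.
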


See Corollary~\ref{cor:mainst, finite U-rank} below on how this theorem implies Theorem~\ref{thm:finite MR} from the introduction.
Before we give the proof of Theorem~\ref{mainst} we recall a few facts about $R_\Delta$  used in the proof.
We will occasionally work in ${\mathbf M}^{\eq}$.  Nonetheless, all sets or elements should
be assumed to lie in the home sort unless explicitly stated otherwise.

\subsection{Basic properties of $\Delta$-rank.}
Let, as above, $\Delta$ be a set of  partitioned $\mathcal L$-formulas in the object variables $x=(x_1,\dots,x_m)$, and let $\Theta(x)$ be a set of $\mathcal L(M)$-formulas.

If $R_\Delta(\Theta)<\infty$, then there is a largest integer $d$ such that there are pairwise distinct $p_1,\dots,p_d\in [\Theta]$ of maximal Cantor-Bendixson rank $R_\Delta(\Theta)$; we call $d$ the $\Delta$-degree of $\Theta$, denoted by $\deg_\Delta(\Theta)$. (In \cite[Definition~II.1.1]{Shelah-book} this is denoted by $\operatorname{Mlt}(\Theta,\Delta,\aleph_0)$.) 
For an $\mathcal L(M)$-formula $\theta$ we write $\deg_\Delta(\theta):=\deg_\Delta(\{\theta\})$.
For every type $\Theta(x)$, there is a $p\in S_x(M)$ with $p\supseteq\Theta$ and $R_\Delta(p)=R_\Delta(\Theta)$; there are exactly $\deg_\Delta(\Theta)$ many types in $S_\Delta(M)$ of the form $p\restrict \Delta$ where $p\in S_x(M)$ with $p\supseteq\Theta$ and $R_\Delta(p)=R_\Delta(\Theta)$.
If $\Theta$ is closed under finite conjunctions, then there is a formula $\theta\in\Theta$ such that $R_\Delta(\Theta)=R_\Delta(\theta)$ and $\deg_\Delta(\Theta)=\deg_\Delta(\theta)$.

If $a\in M^{m}$ and $B\subseteq M^n$, then we set $R_\Delta(a/B):=R_\Delta(\tp(a/B'))$ where $B'$ is the smallest subset of $M$ such that $B\subseteq (B')^n$, and similarly for $\deg_\Delta$. (Note that $\abs{B'}\leq n\,\abs{B}$.)
If $\Theta(x)$ is a type over a small set $B\subseteq M$, then there exists a realization $a\in M^{\abs{x}}$ of $\Theta$ with $R_\Delta(a/B)=R_\Delta(\Theta)$; we say that such a realization of $\Theta$ is \emph{$\Delta$-generic.}

If $\Delta'$ is another set of  partitioned $\mathcal L$-formulas in the object variables $x$ with $\Delta\subseteq\Delta'$, then 
$R_\Delta(\Theta)\leq R_{\Delta'}(\Theta)$, and  if equality holds, then $\deg_\Delta(\Theta)\leq\deg_{\Delta'}(\Theta)$ \cite[Lemma~II.1.3]{Shelah-book}. If $\Delta$ is the set of {\it all}\/ partitioned $\mathcal L$-formulas in the tuple of object variables $x$, then $R_\Delta$ is Morley rank; hence $R_\Delta(\Theta)\leq\MRk(\Theta)$.
Moreover, if $\Theta'$ is another set of $\mathcal L(M)$-formulas with $\Theta^{\mathbf M}\subseteq (\Theta')^{\mathbf M}$, then  $R_\Delta(\Theta)\leq R_\Delta(\Theta')$, and if equality holds, then $\deg_\Delta(\Theta)\leq\deg_\Delta(\Theta')$  \cite[Theorem~II.1.1]{Shelah-book}. It is also easy to see that for all $\mathcal L(M)$-formulas $\theta_1(x)$ and $\theta_2(x)$ we have
\begin{equation}\label{eq:Delta-rank union}
R_\Delta(\theta_1\vee\theta_2)=\max\{R_\Delta(\theta_1),R_\Delta(\theta_2)\}.
\end{equation}
Suppose now $A\subseteq B\subseteq M$, $p\in S_x(B)$. Then $R_\Delta(p\restrict A)\geq R_\Delta(p)$ for all $\Delta$, and $p$ does not fork over $A$ iff $R_\Delta(p\restrict A)=R_\Delta(p)$ for all finite $\Delta$ \cite[Theorem~III.4.1]{Shelah-book}. For example, if $B=\acl(A)$, then $p$ does not fork over $A$ (since any two extensions of a type in $S_x(A)$ to types in $S_x(\acl(A))$ are conjugate under $\operatorname{Aut}(\mathbf M|A)$ and hence have the same $\Delta$-rank). 
A complete type is stationary (i.e., has a unique non-forking extension to any larger parameter set) iff it has $\Delta$-degree $1$ for all finite~$\Delta$ \cite[Theorem~III.4.2]{Shelah-book}, and as a consequence of the Finite Equivalence Relation Theorem \cite[Theorem~III.2.8]{Shelah-book}, every complete type over a parameter set which is algebraically closed in $\mathbf M^\eq$ is stationary; hence we obtain:

\begin{lemma}\label{lem:degree 1}
Suppose $\Delta$ is finite, and let $\theta(x)$ be an $\mathcal L(A)$-formula, $A\subseteq M$, and set $\alpha=R_\Delta(\theta)$, $d=\deg_\Delta(\theta)$. There exist $\mathcal L^\eq(\acl^\eq(A))$-formulas $\theta_1(x),\dots,\theta_d(x)$ such that $R_\Delta(\theta_i)=\alpha$, $\deg_\Delta(\theta_i)=1$ for every $i\in [d]$, and
$\theta(M^{m})$ is the disjoint union of the sets $\theta_i(M^{m})$, $i\in [d]$.
\end{lemma}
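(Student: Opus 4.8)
The plan is to reduce the statement, via compactness and the Finite Equivalence Relation Theorem, to a purely algebraic decomposition of the formula $\theta$ together with a tracking of $\Delta$-rank and $\Delta$-degree through that decomposition. First I would pass to a model containing $\acl^{\eq}(A)$ in its home-sort-plus-imaginaries closure, and work with the type space $S_x(\acl^{\eq}(A))$. The key input is the cited fact (consequence of the Finite Equivalence Relation Theorem, \cite[Theorem~III.2.8]{Shelah-book}) that every complete type over $\acl^{\eq}(A)$ is stationary, hence has $\Delta$-degree $1$ for all finite $\Delta$ \cite[Theorem~III.4.2]{Shelah-book}. So I would take the $d=\deg_\Delta(\theta)$ many types $p_1,\dots,p_d\in S_\Delta(M)$ of maximal $\Delta$-rank $\alpha$ that extend $\theta$ (these exist and number exactly $d$ by the basic properties of $\Delta$-rank recalled just above Lemma~\ref{lem:degree 1}), lift each $p_j$ to a complete type $\widetilde p_j\in S_x(M)$ with $R_\Delta(\widetilde p_j)=\alpha$, and restrict to $q_j:=\widetilde p_j\restrict \acl^{\eq}(A)$. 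Since types over $\acl^{\eq}(A)$ do not fork over $A$, each $q_j$ still has $R_\Delta=\alpha$, and being over an algebraically closed (in $\mathbf M^{\eq}$) set, each $q_j$ has $\Delta$-degree~$1$.

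Next I would extract the formulas. Because $\theta$ has $\Delta$-rank $\alpha$ and $\Delta$-degree~$d$, and the $q_j$ are precisely the $d$ distinct types over $\acl^{\eq}(A)$ extending $\theta$ of rank $\alpha$ and degree $1$, I would choose for each $j$ a formula $\theta_j'(x)\in q_j$ with $R_\Delta(\theta_j')=\alpha$ and $\deg_\Delta(\theta_j')=1$ (possible since $q_j$ is closed under conjunction, using the displayed fact that over a conjunction-closed type there is a single formula realizing the rank and degree). Shrinking further by intersecting with $\theta$, I may assume $\theta_j'\proves\theta$. The sets $\theta_j'(M^m)$ are then rank-$\alpha$, degree-$1$ pieces of $\theta(M^m)$. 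To make them pairwise disjoint and exhaustive, I would apply the standard "greedy" argument: replace $\theta_1'$ by itself, $\theta_2'$ by $\theta_2'\wedge\neg\theta_1'$, and so on, and finally throw the remainder $\theta\wedge\bigwedge_j\neg\theta_j'$ into $\theta_1$. The point is that removing a set of $\Delta$-rank $<\alpha$ from a degree-$1$ set of rank $\alpha$ leaves rank $\alpha$ and degree $1$ (by the monotonicity of $R_\Delta,\deg_\Delta$ under inclusion and \eqref{eq:Delta-rank union}); and the union of all the $\theta_j'(M^m)$ already captures all types of rank $\alpha$ extending $\theta$, so the remainder $\theta\wedge\bigwedge_j\neg\theta_j'$ has $\Delta$-rank strictly less than $\alpha$ and can be absorbed into $\theta_1$ without changing its rank or degree.

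The main obstacle I anticipate is the bookkeeping in this last disjointification step: one has to be sure that after subtracting $\theta_1',\dots,\theta_{j-1}'$ from $\theta_j'$ the remaining set still has $\Delta$-rank exactly $\alpha$ (not less), and that the leftover piece genuinely has rank $<\alpha$ so that attaching it to $\theta_1$ is harmless. This is controlled by the fact that the types $q_1,\dots,q_d$ are pairwise distinct and are \emph{all} the rank-$\alpha$ types over $\acl^{\eq}(A)$ lying on $\theta$, so that each $q_j$ survives the subtraction of the others (the formulas separating them force $\theta_j'\wedge\neg\theta_i'\in q_j$ for $i\neq j$) while no rank-$\alpha$ type survives into the remainder. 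I would also need to note that everything can be expressed by $\mathcal L^{\eq}$-formulas with parameters in $\acl^{\eq}(A)$, which is immediate since the $q_j$ are types over $\acl^{\eq}(A)$ and the Boolean operations stay within that parameter set. Modulo these routine verifications, the decomposition $\theta(M^m)=\bigsqcup_{j\in[d]}\theta_j(M^m)$ with each $R_\Delta(\theta_j)=\alpha$ and $\deg_\Delta(\theta_j)=1$ follows.
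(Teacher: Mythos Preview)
Your proposal is correct and follows exactly the route the paper intends: the paper states Lemma~\ref{lem:degree 1} without a detailed proof, deriving it directly from the two facts displayed immediately before it (stationarity over $\acl^{\eq}(A)$ via the Finite Equivalence Relation Theorem, and the equivalence of stationarity with $\Delta$-degree~$1$ for all finite $\Delta$). Your argument---lifting the $d$ global $\Delta$-types of rank $\alpha$ extending $\theta$, restricting to $\acl^{\eq}(A)$, extracting degree-$1$ formulas, and disjointifying---is precisely the standard unpacking of that sketch, and the bookkeeping you flag is routine.
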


We finish the preliminaries by recording three important consequences of a theory not having the finite cover property.  

\begin{fact}\cite[Theorem~II.4.4]{Shelah-book}\label{nfcp} 
Suppose $T$ does not have the finite cover property and $\Delta$ is finite. Then:
\begin{enumerate}
\item There is $k \in \bN$ so that if $p$ is any $\Delta$-type there is $q \subseteq p$ with $\abs{q} \leq k$ so that
$R_{\Delta}(p)=R_{\Delta}(q)$. (Finite witness of rank.)
\item For any $\mathcal L$-formula $\theta(x;y)$ and any $\alpha \in \bN$ there is an $\mathcal L$-formula $\psi(y)$ so that for any $b\in M^{\abs{y}}$, $R_{\Delta}(\theta(x;b))=\alpha$ iff $\models\psi(b)$. (Definability of rank.)
\item For any $\mathcal L$-formula $\theta(x;y)$ there is some $D \in \bN$ so that for any $b\in M^{\abs{y}}$, $\deg_\Delta(\theta(x;b))\leq D$. (Boundedness of multiplicity.)
\end{enumerate}
\end{fact}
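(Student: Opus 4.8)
The plan is to extract from the absence of the finite cover property a single uniform combinatorial bound, and then to run an induction on $\Delta$-rank in which that bound is used, at each level, to replace the phrase ``infinitely many instances'' occurring in the definition of $R^{m}(-,\Delta,\aleph_0)$ by ``uniformly boundedly many instances''; definability of the rank and boundedness of its multiplicity fall out of the same induction, and the finite-witness statement from a parallel one. The first thing I would record is the combinatorial content of the hypothesis: for each $\mathcal L$-formula $\chi(x;z)$ there is $k_\chi\in\bN$ such that a family $\{\chi(x;c_i):i\in I\}$ of instances is consistent as soon as each of its subfamilies of size $\le k_\chi$ is consistent; equivalently, every minimal inconsistent family of instances of $\chi$ has size $\le k_\chi$, and hence every inconsistent family of instances of $\chi$ contains an inconsistent subfamily of size $\le k_\chi$. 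Indeed, an inconsistent family of instances of $\chi$ of size $t$ all of whose proper subfamilies are consistent is exactly a witness to the finite cover property for $\chi$ with parameter $t$, so if $\chi$ does not have the finite cover property such families have bounded size. Since $\Delta$ is finite, coding the formulas of $\Delta\cup\neg\Delta$ by a single $\mathcal L$-formula yields one constant $k_0\in\bN$ such that every inconsistent $\Delta$-type already contains an inconsistent sub-type of size $\le k_0$; this is item~(1) in the special case $R_\Delta(p)=-1$, and it will be the base case below. Note also that the hypothesis makes $T$ stable, so for the finite set $\Delta$ the rank $R_\Delta$ takes only finitely many values, say $R_\Delta(\theta)\in\{-1,0,1,\dots,N\}$ with $N=R_\Delta(T)$.

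Next I would prove~(2) by induction on $\alpha$ along this finite scale. For $\alpha=0$ the predicate ``$R_\Delta(\theta(x;b))\ge 0$'' is just ``$\exists x\,\theta(x;b)$''. For the inductive step one unwinds the recursive definition of $R^{m}(-,\Delta,\aleph_0)$: ``$R_\Delta(\theta(x;b))\ge\alpha+1$'' holds iff there are $\varphi(x;y)\in\Delta$ and a suitable infinite family of parameter tuples forming a $\Delta$-dividing configuration below $\theta(x;b)$ each of whose nodes is a formula of $\Delta$-rank $\ge\alpha$. By the inductive hypothesis ``this node has $\Delta$-rank $\ge\alpha$'' is a first-order condition, and by the bound $k_\chi$ from the first step, applied to the $\mathcal L$-formula $\chi$ whose instances carry the inconsistency pattern of the configuration, the infinite family may be taken of size $k_\chi$. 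Thus ``$R_\Delta(\theta(x;b))\ge\alpha+1$'' becomes an existential quantification, over boundedly many parameter tuples, of a first-order condition, so it is defined by some $\mathcal L$-formula $\psi(y)$; and ``$R_\Delta(\theta(x;b))=\alpha$'' is ``$(\ge\alpha)\wedge\neg(\ge\alpha+1)$'', again definable. This gives~(2), and~(3) follows: $\deg_\Delta(\theta(x;b))$ is finite for each $b$ by stability, and using~(2) the statement ``$\deg_\Delta(\theta(x;b))\ge D$'' is first-order in $b$ --- it asserts that some Boolean combination of $O(D^{2})$ many $\Delta$-instances splits $\theta(x;b)$ into $D$ consistent pieces each of $\Delta$-rank $\ge R_\Delta(\theta(x;b))$, the opposite inequality for the pieces being automatic by monotonicity of $R_\Delta$ and the value $R_\Delta(\theta(x;b))$ being definable in $b$ by~(2). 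Were this consistent for every $D$, compactness would produce a $b$ in $\mathbf M$ with $\deg_\Delta(\theta(x;b))=\infty$, contradicting stability; so some $D$ works.

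Finally I would obtain~(1) in general. As the Cantor--Bendixson derivatives $S_\Delta(M)^{(\alpha)}$ are closed and nested, $R_\Delta(p)=\min\{R_\Delta(q):q\subseteq p\ \text{finite}\}$ for every $\Delta$-type $p$, and since the ranks are integers $\le N$ this minimum is attained by some finite $q_{0}\subseteq p$ with $R_\Delta(q_{0})=R_\Delta(p)=:\alpha$. It then suffices to thin $q_{0}$ to bounded size without raising its rank, i.e.\ to prove: for each $\alpha$ there is $k_\alpha\in\bN$ such that every $\Delta$-type of $\Delta$-rank $\le\alpha$ contains a sub-type of size $\le k_\alpha$ of $\Delta$-rank $\le\alpha$. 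This is an induction on $\alpha$ with base case $\alpha=-1$ supplied by the first step; the inductive step again uses the unwound definition of rank together with the bounds $k_\chi$, for if a $\Delta$-type $p$ had rank $\le\alpha+1$ while every sub-type of size $\le k_{\alpha+1}$ had rank $\ge\alpha+2$, one could assemble from these finitely many witnessing configurations --- their widths bounded by the $k_\chi$ and their nodes controlled by the inductive statement at $\alpha+1$ --- a dividing configuration of height $\alpha+2$ below $p$ itself, contradicting $R_\Delta(p)\le\alpha+1$. Applying this to $q_{0}$ and setting $k:=\max\{k_\alpha:-1\le\alpha\le N\}$ gives the uniform witness bound of~(1), valid for all $\Delta$-types since $R_\Delta(p)\le N$ always.

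The main obstacle is the bookkeeping, in both inductions, around the local rank $R^{m}(-,\Delta,\aleph_0)$: one must spell out its recursive ``$\aleph_0$-dividing'' definition explicitly, isolate at each level the $\mathcal L$-formula $\chi$ whose instances carry the inconsistency, and check carefully that its constant $k_\chi$ indeed permits truncating the a priori infinite dividing configurations uniformly --- this single point simultaneously drives the definability statement~(2), the bounded-multiplicity statement~(3), and the finite-witness statement~(1). The remaining ingredients are routine compactness and the stability of $T$.
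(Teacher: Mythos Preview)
The paper does not prove this statement; it is cited from Shelah and used as a black box. Your outline is the standard one: extract from nfcp a uniform bound $k_\chi$ on minimal inconsistent families, induct on $\alpha$ to make ``$R_\Delta(\theta(x;b))\ge\alpha$'' first-order in $b$, deduce (3) from (2) by compactness, and run a parallel induction for (1). The base cases and the passage from (2) to (3) are correct as you describe them.

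The inductive step for (2), however, does not go through in the form you sketch. In the Cantor--Bendixson description the paper adopts, $R_\Delta(\theta)\ge\alpha+1$ means $[\theta]$ contains infinitely many complete $\Delta$-types of rank $\ge\alpha$. The bound $k_\chi$ lets you trim a large separating configuration to a small one, but definability needs the converse: that some fixed-size configuration already forces infinitely many rank-$\ge\alpha$ points. This implication fails even in nfcp theories. Take $\Delta=\{E(x;y)\}$ where $E$ is an equivalence relation with exactly two infinite classes: then $S_\Delta(M)$ consists of two isolated points, so $R_\Delta(x{=}x)=0$ and $\deg_\Delta(x{=}x)=2$, yet choosing $c_1,c_2$ in distinct classes gives a pairwise-inconsistent family $\{E(x;c_1),E(x;c_2)\}$ with each $R_\Delta(E(x;c_i))\ge 0$; no choice of $k$ rescues the equivalence. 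What actually closes the induction is a uniform bound on $\deg_\Delta$ at level $\alpha$ --- statement (3) one level down --- so (2) and (3) have to be established in a single intertwined induction, and the substantive use of nfcp lies in producing that degree bound (an unbounded degree at level $\alpha$ is what one converts into a formula with the finite cover property). Your induction for (1) inherits the same unresolved core: ``assemble a dividing configuration of height $\alpha+2$'' is precisely the nontrivial step, not bookkeeping.
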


\subsection{Proof of Theorem~\ref{mainst}.}
The proof will be through a series of lemmas.  We fix a finite set $\Delta=\Delta(x;y)$ of partitioned $\mathcal L$-formulas, and write $\varrho=R_{\Delta}(T)$.

\begin{lemma}\label{one}  
Let $\psi(x)= \bigwedge_{i=1}^k \phi_i(x;b_i)$ be a conjunction of instances of formulas $\phi_i(x;y_i)\in\Delta$, $\alpha:=R_\Delta(\psi)$, $\mu:=\min_i R_\Delta (\phi_i(x;b_i))$, and $a\in M^{m}$ with $\models\psi(a)$ and $R_{\Delta}(a/b_1, \dots, b_k)=\alpha$.
There is an $\mathcal L^\eq$-formula $\theta(x;z)$ and $c \in \acl^{\eq}(b_{i_1}, \dots, b_{i_{r}})$, where $i_1, \dots, i_{r} \in [k]$ with $r\in[\mu-\alpha]$, such that $\models \theta(a;c)$ and  $R_{\Delta}(\theta(x;c)) = \alpha$.
\end{lemma}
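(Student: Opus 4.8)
The plan is to induct on $\mu - \alpha$ and peel off one parameter-block at a time, using the finite-cover-property consequences (Fact~\ref{nfcp}) together with the degree-$1$ decomposition of Lemma~\ref{lem:degree 1}. First note the base case: if $\mu = \alpha$, then one of the conjuncts $\phi_i(x;b_i)$ already has $R_\Delta = \alpha$, and we may simply take $r = 0$ is not allowed by $r \in [\mu - \alpha] = \emptyset$... so actually the statement forces $\mu > \alpha$ here (when $\mu = \alpha$ the conclusion with $r=0$, $c \in \acl^{\eq}(\emptyset)$, $\theta(x;c) := \phi_i(x;b_i)$ works after a trivial reinterpretation). For the main case, suppose $\mu > \alpha$. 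Since $\alpha = R_\Delta(\psi) < \mu \le R_\Delta(\phi_i(x;b_i))$ for every $i$, each conjunct strictly drops rank when intersected with the others, so the conjunction $\psi$ forks over each $b_i$ individually.

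The key step is to extract an algebraic witness. By the Finite Equivalence Relation Theorem as packaged in Lemma~\ref{lem:degree 1}, applied to $\phi_{i_1}(x;b_{i_1})$ for a suitably chosen index $i_1$, we can write $\phi_{i_1}(M^m;b_{i_1})$ as a disjoint union of sets $\chi_j(M^m)$ defined by $\mathcal L^{\eq}(\acl^{\eq}(b_{i_1}))$-formulas of $\Delta$-degree $1$. Since $a \models \phi_{i_1}(x;b_{i_1})$, exactly one such piece, say $\chi_{j_0}$, contains $a$; let $c_1 \in \acl^{\eq}(b_{i_1})$ be the parameter appearing in $\chi_{j_0}$. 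Now $\psi(x) \wedge \chi_{j_0}(x)$ is equivalent to $\psi(x)$ (since $a$ is $\Delta$-generic in $\psi$ over $b_1,\dots,b_k$ and $\chi_{j_0}$ has $\Delta$-degree $1$ containing the generic type of $\psi$ restricted appropriately — here one uses that $R_\Delta(\psi \wedge \chi_{j_0}) = R_\Delta(\psi) = \alpha$ by $\Delta$-rank monotonicity and that a degree-$1$ formula containing a generic type of $\psi$ cannot omit any realization of $\psi$ of full rank). Replacing $\phi_{i_1}(x;b_{i_1})$ in the conjunction by $\chi_{j_0}(x;c_1)$ — a formula of $\Delta$-rank $\ge \mu$ (one must check the rank of $\chi_{j_0}$ did not collapse, using that the $\chi_j$ partition a set of rank $\ge \mu$, so at least one of them, and by a genericity argument the one containing $a$, has rank $\ge \mu$; if not, shuffle which conjunct we decompose) — we have replaced one ordinary parameter by an algebraic one.

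The induction is then driven by the observation that once a conjunct has $\Delta$-degree $1$, its generic type is stationary, and the rank of the partial conjunction built so far must strictly exceed $\alpha$ until we have used up enough conjuncts: each time we decompose and re-substitute, either the running rank of the partial conjunction drops by at least $1$ or we can finish. Since the running rank starts at $\mu$ and must reach $\alpha$, after at most $\mu - \alpha$ such steps we arrive at a formula $\theta(x;z)$ — the conjunction of the degree-$1$ replacements obtained so far — with a parameter tuple $c = (c_1,\dots,c_r) \in \acl^{\eq}(b_{i_1},\dots,b_{i_r})$, $r \le \mu - \alpha$, satisfying $\models \theta(a;c)$ and $R_\Delta(\theta(x;c)) = \alpha$. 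Finally, Fact~\ref{nfcp}(1) (finite witness of rank) is what guarantees the process terminates with a genuine single $\mathcal L^{\eq}$-formula rather than an infinite type: the rank $\alpha$ is witnessed by a bounded sub-conjunction, so only finitely many of the $b_i$'s are ever needed, and Fact~\ref{nfcp}(3) bounds the number of degree-$1$ pieces uniformly so that the resulting $\theta$ is a single formula.

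The main obstacle I expect is the bookkeeping on \emph{ranks of the replacement formulas}: when we decompose $\phi_{i_1}(x;b_{i_1})$ into degree-$1$ pieces via Lemma~\ref{lem:degree 1}, those pieces are only guaranteed to have $\Delta$-rank equal to $R_\Delta(\phi_{i_1}(x;b_{i_1}))$, which is $\ge \mu$ but could be $> \mu$ — and we need to track that the piece $\chi_{j_0}$ containing $a$ still satisfies the inductive hypothesis's rank inequalities so that the induction parameter $\mu' - \alpha$ for the sub-conjunction $\psi(x)\wedge\chi_{j_0}(x)$ genuinely decreases. Making precise \emph{which} conjunct to decompose at each stage (so that the minimum-rank quantity $\mu$ in the inductive statement is controlled, not just some rank), and verifying that substituting the degree-$1$ piece does not accidentally change $R_\Delta(\psi)$ — that is, $R_\Delta(\psi \wedge \chi_{j_0}) = \alpha$ — via the genericity of $a$, is the delicate point. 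The cleanest route is probably to phrase the induction on $\mu - \alpha$ with $\psi$, $a$ as given, choose the index $i$ realizing $\mu = R_\Delta(\phi_i(x;b_i))$, decompose that conjunct, and note that in the sub-problem the new minimum rank over the (unchanged) other conjuncts together with the new degree-$1$ conjunct is still $\ge$ some value whose gap to $\alpha$ has strictly shrunk.
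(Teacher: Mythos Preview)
Your proposal has the right ingredients (degree-$1$ decomposition via Lemma~\ref{lem:degree 1}, a rank that should decrease step by step) but the specific mechanism you describe does not make progress, and the appeal to Fact~\ref{nfcp} is a red herring.

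The concrete gap is in your inductive step. You decompose a conjunct $\phi_{i_1}(x;b_{i_1})$ into degree-$1$ pieces and replace it by the piece $\chi_{j_0}$ containing $a$. But Lemma~\ref{lem:degree 1} gives those pieces the \emph{same} $\Delta$-rank as $\phi_{i_1}(x;b_{i_1})$. After substitution the new conjunction $\psi' = \chi_{j_0}\wedge\bigwedge_{j\neq i_1}\phi_j(x;b_j)$ still has $R_\Delta(\psi') = \alpha$ (as you note) and minimum conjunct-rank still $\geq \mu$; the quantity $\mu-\alpha$ has not moved. Your ``running rank of the partial conjunction'' sentence hints at something that decreases, but nothing in the operation you actually specified drops. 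The point you are missing is that degree~$1$ is useful for an \emph{ambient} formula, not for the conjunct in isolation: if $\gamma$ has degree~$1$, rank $>\alpha$, and $a\models\gamma$, then some index $i$ must satisfy $R_\Delta(\gamma\wedge\phi_i(x;b_i))<R_\Delta(\gamma)$ --- for otherwise $R_\Delta(\gamma\wedge\neg\phi_i(x;b_i))<R_\Delta(\gamma)$ for every $i$, whence by \eqref{eq:Delta-rank union} $R_\Delta(\gamma\wedge\neg\psi)<R_\Delta(\gamma)$, forcing $R_\Delta(\gamma)=R_\Delta(\gamma\wedge\psi)\leq R_\Delta(\psi)=\alpha$, a contradiction.

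The paper runs exactly this engine. Start with a degree-$1$ formula $\gamma_1$ over $\acl^\eq(\emptyset)$ containing $a$ (obtained from Lemma~\ref{lem:degree 1} applied to $x=x$). The argument above produces $i_1$ with $R_\Delta(\gamma_1\wedge\phi_{i_1}(x;b_{i_1}))<R_\Delta(\gamma_1)$; now apply Lemma~\ref{lem:degree 1} to $\gamma_1\wedge\phi_{i_1}(x;b_{i_1})$ to get a degree-$1$ piece $\gamma_2$ over $\acl^\eq(b_{i_1})$ of this strictly smaller rank, still containing $a$. Iterate. The rank of the running $\gamma_j$ drops strictly at each step and stays $\geq\alpha$ (since $a\models\gamma_j$ and $R_\Delta(a/b_1,\dots,b_k)=\alpha$), so the process halts with $\theta:=\gamma_r\wedge\phi_{i_r}$ of rank exactly $\alpha$. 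There is no induction on $\mu-\alpha$ in the formal sense, just a descending chain of ranks; and Fact~\ref{nfcp} plays no role here --- Lemma~\ref{one} holds in any stable theory, and the finite-cover-property hypothesis enters only in Lemma~\ref{two} via compactness and definability of rank.
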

\begin{proof}
Let $\gamma_1(x;c_1)$, where $c_1\in\acl^\eq(\emptyset)$, have $\Delta$-rank~$\mu_1:=\varrho$ and $\Delta$-degree~$1$ so that $\models \gamma_1(a;c_1)$, by Lemma~\ref{lem:degree 1}. 
We have $\mu_1\geq\alpha$; if $\mu_1=\alpha$ we are done, so suppose $\mu_1>\alpha$. 
Then $R_\Delta(\gamma_1(x;c_1)\wedge \phi_i(x;b_i))<\mu_1$ for some $i$: otherwise $R_\Delta(\gamma_1(x;c_1)\wedge\neg\phi_i(x;b_i))<\mu_1$ for every $i$, since $\deg_\Delta(\gamma_1(x;c_1))=1$, hence $R_\Delta\left(\gamma_1(x;c_1)\wedge\neg\left(\bigwedge_{i} \phi_i(x;b_i)\right)\right)<\mu_1$ and therefore $R_\Delta(\gamma_1(x;c_1))<\mu_1$, by \eqref{eq:Delta-rank union}, a contradiction.  Without loss of generality we may assume that  $\mu_2:=R_\Delta(\gamma_1(x;c_1)\wedge \phi_1(x;b_1))<\mu_1$.
We have $\mu_2\geq \alpha$, and if $\mu_2=\alpha$ we are done. Otherwise let $\gamma_2(x;c_2)$, where $c_2\in\acl^{\eq}(b_1)$, have $\Delta$-rank $\mu_2$ and $\Delta$-degree $1$ so that 
$\models \gamma_2(a;c_2)$ and $\models\forall x (\gamma_2(x;c_2) \rightarrow \gamma_1(x;c_1)\wedge \phi_1(x;b_1))$, according to Lemma~\ref{lem:degree 1}.  As before we see that for some $i>1$ we have  $R_\Delta(\gamma_2(x;c_2)\wedge \phi_i(x;b_i))<\mu_2$.

We can continue in this fashion to find a sequence $\gamma_1(x;z),\dots,\gamma_{r}(x;z)$ of $\mathcal L^\eq$-formulas, a sequence of indices $i_1,\dots,i_{r}\in [k]$, and a sequence $c_1,\dots,c_{r}$ where $c_j\in\acl^\eq(b_{i_1},\dots,b_{i_{j-1}})$, of maximal length $r\geq 1$, subject to the following properties:  for every $j\in [r]$ we have
$\models\gamma_j(a;c_j)$, and setting $\delta_0(x):=(x=x)$ and
$\delta_j(x;y_{i_{j}},z):=\gamma_{j}(x;z)\wedge \phi_{i_{j}}(x;y_{i_{j}})$ for $j\in [r]$,
we have:
$$\models \forall x (\gamma_{j}(x;c_{j}) \rightarrow \delta_{j-1}(x;b_{i_{j-1}},c_{j-1}))$$ 
and 
$$\alpha\leq R_\Delta(\delta_{j}(x;b_{i_{j}},c_{j})) < R_\Delta(\delta_{j-1}(x;b_{i_{j-1}},c_{j-1})).$$
We have $r\leq\mu-\alpha$, and by the above $R_\Delta(\delta_r(x;b_{i_{r}},c_{r}))=\alpha$. Hence 
$$\theta(x;y_{i_r},z):=\delta_r(x;y_{i_r},z),\qquad c:=(b_{i_r},c_{r})$$ 
have the required properties.
\end{proof}

\begin{lemma}\label{two}  
Suppose $T$ does not have the finite cover property.
Let 
$$\psi(x; y_1,\dots,y_k)=\bigwedge_{i=1}^k \phi_i(x;y_i)\quad\text{ where $\phi_i(x;y_i)\in\Delta$ for $i\in [k]$,}$$ 
and let $\alpha\in [\varrho]$.  There is a family  
$$\big\{\theta_{\mathbf ij}(x;y_{i_1},\dots,y_{i_{\varrho-\alpha}}): \mathbf i=(i_1,\dots,i_{\varrho-\alpha})\in [k]^{\varrho-\alpha},\ j\in [N]\big\}$$ of $\mathcal L$-formulas, where $N\in\bN$, with the following properties: if $a\in M^{m}$ and $b_i\in M^{\abs{y_i}}$ \textup{(}$i\in [k]$\textup{)} are such that 
$$\models \psi(a; b_1,\dots,b_k)\quad\text{ and }\quad\alpha=R_{\Delta}(a/b_1,\dots,b_k)=R_{\Delta}(\psi(x; b_1,\dots,b_k)),$$  
then for some $\mathbf i\in [k]^{\varrho-\alpha}$ and $j\in [N]$ we have  
$$\models  \theta_{\mathbf ij}(a; b_{i_1},\dots, b_{i_{\varrho-\alpha}})\quad\text{and}\quad
R_{\Delta}(\theta_{\mathbf ij}(x; b_{i_1}, \dots, b_{i_{\varrho-\alpha}}))=\alpha.$$
\end{lemma}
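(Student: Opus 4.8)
The plan is to upgrade Lemma~\ref{one} to a fully uniform statement by invoking the three consequences of the non-finite-cover-property from Fact~\ref{nfcp}. First I would observe that in Lemma~\ref{one}, the parameter $\mu:=\min_i R_\Delta(\phi_i(x;b_i))$ satisfies $\mu\leq\varrho$, so the integer $r\in[\mu-\alpha]$ produced there certainly lies in $[\varrho-\alpha]$; by padding the index tuple (repeating entries) and weakening $\theta$ appropriately, I can assume $r=\varrho-\alpha$ exactly, so that every instance of $\psi$ with $\Delta$-rank $\alpha$ and a $\Delta$-generic realization yields an $\mathcal L^\eq$-formula $\theta(x;c)$ with $c\in\acl^\eq(b_{i_1},\dots,b_{i_{\varrho-\alpha}})$, $\models\theta(a;c)$, and $R_\Delta(\theta(x;c))=\alpha$. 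The task is to replace this $a,b$-dependent $\mathcal L^\eq$-formula by one of finitely many $\mathcal L$-formulas $\theta_{\mathbf ij}$.

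The key step is uniformity of the construction in Lemma~\ref{one}. Each $\gamma_j(x;z)$ produced there comes from Lemma~\ref{lem:degree 1}, i.e.\ from the Finite Equivalence Relation Theorem applied to a formula of the form $\delta_{j-1}(x;b_{i_{j-1}},c_{j-1})$; the point is that the finite equivalence relation, the number $\deg_\Delta$ of classes, and the defining $\mathcal L^\eq$-formulas of the classes can be taken \emph{uniformly} in the parameters. Here boundedness of multiplicity (Fact~\ref{nfcp}(3)) bounds $\deg_\Delta(\delta_{j-1})$ by some $D\in\bN$ independent of the parameters, so for each $j$ there is a finite list of $\mathcal L^\eq$-formula schemes $\gamma_j(x;z)$, the scheme being an $\mathcal L$-formula in $x$ and a parameter variable $z$ ranging over an imaginary sort, with the choice among the list (and the value of $c_j$) depending on $a$ and the $b_i$'s but the \emph{list} depending only on $\psi$ and $\Delta$. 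Definability of rank (Fact~\ref{nfcp}(2)) lets me express, by an $\mathcal L$-formula in the parameters, the rank conditions $R_\Delta(\delta_j(x;b_{i_j},c_j))=\mu_{j+1}$ that drive the descent, so that the branching in the construction is itself definable and hence falls into finitely many cases. Running the recursion $\varrho-\alpha$ times over these finitely many branches produces a finite family of $\mathcal L^\eq$-formulas $\widehat\theta_{\mathbf i j}(x;y_{i_1},\dots,y_{i_{\varrho-\alpha}},z)$, with $z$ an imaginary parameter, such that every relevant $a,(b_i)$ is caught by one of them with a suitable $c\in\acl^\eq(b_{i_1},\dots,b_{i_{\varrho-\alpha}})$.

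Finally I would eliminate the imaginary parameter $c$. Since $c\in\acl^\eq(b_{i_1},\dots,b_{i_{\varrho-\alpha}})$, there is an $\mathcal L^\eq$-formula (again from a finite list, by boundedness of the size of the finite set $\acl^\eq$ cuts out, which is uniform by elimination of imaginaries machinery / finite multiplicity) isolating the possibilities for $c$ over the real tuple $b_{i_1},\dots,b_{i_{\varrho-\alpha}}$; existentially quantifying $z$ out and distributing over the finitely many cases for which branch of $\acl^\eq$ we are in yields genuine $\mathcal L$-formulas $\theta_{\mathbf i j}(x;y_{i_1},\dots,y_{i_{\varrho-\alpha}})$ in the home sort. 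One checks that the rank is preserved: $R_\Delta(\theta_{\mathbf i j}(x;b_{i_1},\dots,b_{i_{\varrho-\alpha}}))=\alpha$ holds because over $\acl$ types do not fork and hence conjugate imaginary parameters give the same $\Delta$-rank (as noted in the preliminaries), so the existential quantification over the finitely many conjugates of $c$ does not raise the rank beyond $\alpha$, while the implication $\theta\to\widehat\theta(-;c)$ for the correct $c$ keeps it at least $\alpha$; here I use \eqref{eq:Delta-rank union} to handle the disjunction over the $\acl^\eq$-possibilities. The main obstacle is bookkeeping the uniformity through the $(\varrho-\alpha)$-fold recursion: making precise that "finitely many $\mathcal L^\eq$-formula schemes at each stage, with definable branching" composes to "finitely many at the end," which is where Fact~\ref{nfcp}(2) and (3) are used essentially rather than cosmetically, and where one must be careful that the imaginary parameters $c_j$ accumulated along the way all remain in $\acl^\eq$ of the \emph{chosen} real parameters $b_{i_1},\dots,b_{i_{\varrho-\alpha}}$ and not of all $k$ of them.
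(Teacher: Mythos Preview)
Your approach is in the right spirit but takes a much harder road than necessary, and one step is not adequately justified. You try to uniformize the entire recursion of Lemma~\ref{one} stage by stage, which forces you to assert a uniform version of Lemma~\ref{lem:degree 1} (``the finite equivalence relation, the number $\deg_\Delta$ of classes, and the defining $\mathcal L^\eq$-formulas of the classes can be taken uniformly in the parameters''); this is plausible but is itself a nontrivial claim requiring a compactness argument of the same flavor as the one you are trying to run. You then still have to eliminate the accumulated imaginary parameter at the end, and your justification for uniform bounds on the algebraicity of $c$ (``from a finite list, by boundedness of the size of the finite set $\acl^\eq$ cuts out'') is hand-wavy---there is no general bound on the size of $\acl^\eq$ in a theory without fcp; such a bound would have to come from the specific imaginaries produced in the recursion, i.e., from the very uniform Lemma~\ref{lem:degree 1} you have not proved.

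The paper avoids all of this with one observation you overlooked: once Lemma~\ref{one} yields $c\in\acl^\eq(b_{i_1},\dots,b_{i_{\varrho-\alpha}})$ and an $\mathcal L^\eq$-formula $\theta(x;c)$ of $\Delta$-rank $\alpha$ satisfied by $a$, you immediately get $R_\Delta(a/\acl^\eq(b_{i_1},\dots,b_{i_{\varrho-\alpha}}))=\alpha$, and since types over $B$ do not fork over $\acl^\eq(B)$ (noted in the preliminaries), $R_\Delta(a/b_{i_1},\dots,b_{i_{\varrho-\alpha}})=\alpha$ as well. So there is already a home-sort $\mathcal L$-formula over the real parameters $b_{i_1},\dots,b_{i_{\varrho-\alpha}}$ witnessing rank $\alpha$---no imaginary to eliminate and no recursion to uniformize. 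A single compactness argument, using only definability of rank (Fact~\ref{nfcp}(2)) to make the relevant inconsistent type first-order, then extracts the finite family $\{\theta_{\mathbf i j}\}$. In particular neither Fact~\ref{nfcp}(1) nor (3) is needed here.
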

\begin{proof}
For each $a\in M^{m}$ and  $b_i\in M^{\abs{y_i}}$ ($i\in [k]$) satisfying the hypothesis, by Lemma~\ref{one} there are $i_1,\dots,i_{\varrho-\alpha}\in [k]$ such that
$$R_\Delta(a/\acl^\eq(b_{i_1},\dots,b_{i_{\varrho-\alpha}}))=\alpha$$
and hence that 
$$R_\Delta(a/b_{i_1},\dots,b_{i_{\varrho-\alpha}})=\alpha,$$
so for some $\mathcal L$-formula $\theta(x;y_{i_1},\dots,y_{i_{\varrho-\alpha}})$ such that $\models \theta(a;b_{i_1},\dots,b_{i_{\varrho-\alpha}})$ we have
$$R_\Delta(\theta(x;b_{i_1},\dots,b_{i_{\varrho-\alpha}}))=\alpha.$$
Thus the set of formulas $\Gamma(z;y_1,\dots,y_k)$, where $\abs{z}=m$, consisting of
\begin{itemize}
\item $\psi(z;y_1,\dots,y_k)$;
\item $R_\Delta(\psi(x;y_1,\dots,y_k))=\alpha$;
\item $R_\Delta(\theta(x;y_1,\dots,y_k))<\alpha \to \neg\theta(z;y_1,\dots,y_k)$, where $\theta$ ranges over all partitioned $\mathcal L$-formulas in $(x;y_1,\dots,y_k)$; and
\item $\neg(\theta(z;y_{i_1},\dots,y_{i_{\varrho-\alpha}})\wedge R_\Delta(\theta(x;y_{i_1},\dots,y_{i_{\varrho-\alpha}}))=\alpha)$, where  $i_1,\dots,i_{\varrho-\alpha}$ range over $[k]$ and $\theta$ over all partitioned $\mathcal L$-formulas in $(z;y_{i_1},\dots,y_{i_{\varrho-\alpha}})$,
\end{itemize}
is inconsistent. Note that this is a first-order type by Fact~\ref{nfcp},~(2). The lemma now follows by compactness.
\end{proof}

\begin{lemma}\label{three} 
Suppose $T$ does not have the finite cover property.
There is a family $\{\theta_j(x; y_1, \dots, y_{\varrho})\}_{j\in [N]}$ of $\mathcal L$-formulas, where $N\in\bN$ and $\abs{y_i}=\abs{y}$ for $i\in [\varrho]$, with the following property:
if $B \subseteq M^{\abs{y}}$, $p \in S^{\Delta}(B)$, and $a\in M^{m}$ is a $\Delta$-generic realization of $p$,
 then there are $b_1,\dots,b_\varrho\in B$ and $j\in [N]$ such that $\models\theta_j(a; b_1,\dots,b_\varrho)$ and \[R_{\Delta}(\theta_j(x; b_1,\dots,b_\varrho))
 = R_{\Delta}(p).\]
\end{lemma}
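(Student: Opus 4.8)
### Proof proposal for Lemma~\ref{three}

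The plan is to deduce Lemma~\ref{three} from Lemma~\ref{two} by a compactness-and-type-counting argument, treating the rank $\varrho=R_\Delta(p)$ of a $\Delta$-generic realization as the starting point. First I would observe that a $\Delta$-type $p\in S^\Delta(B)$ is, after writing out the conjunctions of its instances, captured on a finite subset: by Fact~\ref{nfcp}(1) there is a uniform $k\in\bN$ such that for every $B$ and every $p\in S^\Delta(B)$ there is a sub-conjunction $\psi(x;b_1,\dots,b_k)=\bigwedge_{i=1}^k\phi_i(x;b_i)$ (with each $\phi_i\in\Delta\cup\neg\Delta$, and allowing repetitions so the length is exactly $k$) witnessing the rank of $p$, i.e. $R_\Delta(\psi(x;b_1,\dots,b_k))=R_\Delta(p)=\varrho_p$, say. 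Since $p$ is complete and a $\Delta$-generic realization $a$ of $p$ satisfies $R_\Delta(a/B)=R_\Delta(p)$, and $R_\Delta(a/b_1,\dots,b_k)\geq R_\Delta(a/B)=R_\Delta(p)$ while also $R_\Delta(a/b_1,\dots,b_k)\leq R_\Delta(\psi(x;b_1,\dots,b_k))=R_\Delta(p)$, we get $R_\Delta(a/b_1,\dots,b_k)=R_\Delta(\psi(x;b_1,\dots,b_k))=\varrho_p$ — exactly the hypothesis of Lemma~\ref{two}.

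Next I would run Lemma~\ref{two} (together with \eqref{eq:Delta-rank union} to absorb negations $\neg\phi_i$ into the set $\Delta$, or more carefully by noting $\neg\phi_i$ is a Boolean combination whose rank behaves well — this is a minor bookkeeping point) for each value $\alpha\in[\varrho]$ and for each of the finitely many choices of the tuple $(\phi_1,\dots,\phi_k)\in(\Delta\cup\neg\Delta)^k$ of formulas appearing in $\psi$. For fixed $\psi$ and $\alpha$, Lemma~\ref{two} produces a finite family $\{\theta_{\mathbf ij}\}$ of $\mathcal L$-formulas in $(x;y_{i_1},\dots,y_{i_{\varrho-\alpha}})$ with $\varrho-\alpha\leq\varrho$ parameter blocks; padding each with dummy parameter variables $y_{i_{\varrho-\alpha+1}},\dots,y_{i_\varrho}$ (and, if one wishes uniformity, reindexing) we may take all these formulas to have exactly $\varrho$ parameter blocks each of length $\abs{y}$. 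Taking the union of all these finite families over the finitely many pairs $(\psi,\alpha)$ gives a single finite family $\{\theta_j(x;y_1,\dots,y_\varrho)\}_{j\in[N]}$. Given $B$, $p\in S^\Delta(B)$ and a $\Delta$-generic realization $a$, pick the finite witness $\psi(x;b_1,\dots,b_k)$ with $b_i\in B$ as above, set $\alpha:=R_\Delta(p)$; Lemma~\ref{two} hands us $\mathbf i\in[k]^{\varrho-\alpha}$ and a $j$ with $\models\theta_{\mathbf ij}(a;b_{i_1},\dots,b_{i_{\varrho-\alpha}})$ and $R_\Delta(\theta_{\mathbf ij}(x;b_{i_1},\dots,b_{i_{\varrho-\alpha}}))=\alpha$; since $b_{i_1},\dots,b_{i_{\varrho-\alpha}}\in B$, padding out to $\varrho$ parameters from $B$ (repeat one of them) and renaming $\theta_{\mathbf ij}$ as some $\theta_{j'}$ in the big family yields the required $b_1,\dots,b_\varrho\in B$, $j'\in[N]$.

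The main obstacle I anticipate is \emph{uniformity of the finite witness}: Lemma~\ref{two} takes as input a single fixed $\psi=\bigwedge_{i=1}^k\phi_i(x;y_i)$, but a priori different types $p$ over different finite $B$ could require different $\psi$'s. This is exactly where Fact~\ref{nfcp}(1) is essential — it bounds the length $k$ of the needed sub-conjunction uniformly, so that only finitely many shapes of $\psi$ (i.e. finitely many tuples $(\phi_1,\dots,\phi_k)\in(\Delta\cup\neg\Delta)^k$) ever occur, and we may take the finite union of the families coming from Lemma~\ref{two} over all of them. A secondary, purely technical point is ensuring that the hypothesis ``$\alpha=R_\Delta(a/b_1,\dots,b_k)=R_\Delta(\psi(x;b_1,\dots,b_k))$'' of Lemma~\ref{two} is genuinely met: this follows because $\Delta$-generic realizations realize the $\Delta$-rank over $B$, because enlarging the parameter set from $\{b_1,\dots,b_k\}$ to all of $B$ cannot raise $\Delta$-rank, and because $\psi$ was chosen to be a rank witness — so the squeeze described above forces all three quantities to coincide.
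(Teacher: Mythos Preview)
Your proposal is correct and is essentially the paper's own proof: the paper likewise closes $\Delta$ under negation, invokes Fact~\ref{nfcp}(1) to obtain a uniform $k$, applies Lemma~\ref{two} to every conjunction of $k$ formulas from $\Delta$ (implicitly ranging over all values of $\alpha$), and combines the resulting finite families into one. Your explicit squeeze argument $R_\Delta(a/B)\leq R_\Delta(a/b_1,\dots,b_k)\leq R_\Delta(\psi)=R_\Delta(p)$ and your explicit padding of parameter blocks to length $\varrho$ spell out steps the paper leaves tacit, but the route is the same.
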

\begin{proof}
We may and shall assume that $\Delta$ is closed under negation, i.e., with every $\varphi\in\Delta$ the set $\Delta$ also contains a formula equivalent to $\neg\varphi$ (in $T$).
By Fact~\ref{nfcp},~(1) take $k\in\bN$ so that if $p$ is any $\Delta$-type, then
$R_{\Delta}(p)=R_{\Delta}(q)$ for some $q \subseteq p$ with $\abs{q} \leq k$. 

Apply Lemma~\ref{two} to all possible conjunctions of $k$ formulas from $\Delta$  (and then combine the resulting families $\{\theta_{\mathbf i j}\}$ to a single family)
to obtain a family $\{\theta_j(x; y_1, \dots, y_{\varrho})\}_{j\in [N]}$ of $\mathcal L$-formulas, where $N\in\bN$ and $\abs{y_i}=\abs{y}$ for $i\in [\varrho]$, with the following property: if $\psi(x)=\bigwedge_{i=1}^k \phi_i(x;b_i)$ is a conjunction of instances of formulas from $\Delta$ and $a\in M^{m}$ with $\models\psi(a)$ and $R_\Delta(a/b_1,\dots,b_k)=R_\Delta(\psi(x))$, then for some  $j\in [N]$ we have 
$$\models \theta_j(a;b_{i_1},\dots,b_{i_\varrho})\quad\text{ and }\quad R_\Delta(\theta_j(x;b_{i_1},\dots,b_{i_\varrho}))=R_\Delta(a/b_1,\dots,b_k).$$
Now let $B \subseteq M^{\abs{y}}$, $p \in S^{\Delta}(B)$, and $a\in M^{m}$ be a realization of $p$ with $R_\Delta(a/B)=R_\Delta(p)$. Choose $\phi_i(x;y)\in\Delta$ and $b_i\in B$, where $i\in [k]$, such that $R_\Delta(\bigwedge_{i=1}^k \phi_i(x;b_i))=R_\Delta(p)$. Then there are  $i\in [k]^\varrho$ and $j\in [N]$ such that $\models\theta_j(a;b_{i_1},\dots,b_{i_\varrho})$ and $R_\Delta(\theta_j(x;b_{i_1},\dots,b_{i_\varrho}))=R_\Delta(p)$.
\end{proof}

\begin{proof}[Proof of Theorem~\ref{mainst}.]
We wish to count the number of $\Delta(x;B)$-types over finite parameter sets $B\subseteq M^{\abs{y}}$.
Fix $\mathcal L$-formulas $\theta_i(x;y_1,\dots,y_\varrho)$ with $i\in [N]$ as in Lemma~\ref{three}.  Let $D \in \bN$
bound the $\Delta$-degree of any instance of any $\theta_i$.  Note that $D$ exists by Fact~\ref{nfcp},~(3).  Fix a finite set $B \subseteq M^{\abs{y}}$.  Let $p \in S^{\Delta}(B)$ and let $a\in M^{m}$ be a $\Delta$-generic realization of
$p$.
There are $b_1,\dots,b_\varrho \in B$ and $i\in [N]$
so that $\models \theta_i(a; b_1,\dots,b_\varrho)$ and $R_{\Delta}(p)=R_{\Delta}(\theta_i(x; b_1,\dots,b_\varrho))$.
Since $\deg_\Delta(\theta_i(x;b_1,\dots,b_\varrho))\leq D$ there are global $\Delta$-types $q_1, \dots,
q_D \in S_{\Delta}(M)$ (not necessarily distinct) such that for any $a'$ with $\models \theta_i(a';b_1,\dots,b_\varrho)$ and
$R_{\Delta}(a'/B)=R_{\Delta}(\theta_i(x;b_1,\dots,b_\varrho))$ we have $\tp^{\Delta}(a'/B) \subseteq q_i$ some $i\in [D]$.
In particular $p \subseteq q_i$ for some $i\in [D]$. So $\abs{S^\Delta(B)}\leq ND\abs{B}^\varrho$, and our result follows.
\end{proof}

\subsection{$\Delta$-rank, $\URk$-rank, and Morley rank.}
We now  want to apply Theorem~\ref{mainst} to obtain a uniform bound on the VC~densities $\vc^*(\Delta)$ of finite sets of $\mathcal L$-formulas~$\Delta=\Delta(x;y)$ in terms of $m=\abs{x}$ and the $\URk$-rank $\URk(T)$ of $T$, provided the latter is finite (i.e., less than~$\omega$). For this, we first note the following relationship between the ranks $R_\Delta$ and the Lascar $\URk$-rank of a complete type:

\begin{lemma}
Let $p\in S_x(A)$ where $A\subseteq M$. Then:
\begin{enumerate}
\item $R_\Delta(p)\leq\URk(p)$ for all finite sets $\Delta=\Delta(x;y)$ of partitioned $\mathcal L$-formulas;
\item if $\URk(p)<\omega$, then there is some finite set $\Delta=\Delta(x;y)$ of partitioned $\mathcal L$-formulas such that $R_\Delta(p)=\URk(p)$.
\end{enumerate}
\end{lemma}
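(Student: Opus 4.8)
The plan is to prove the two statements by the standard comparison between the Lascar $\URk$-rank and the local ranks $R_\Delta$; together they assert that, for $\URk(p)<\omega$, one has $\URk(p)=\sup\{R_\Delta(p):\Delta=\Delta(x;y)\text{ finite}\}$ and that this supremum is attained.

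For part~(1), I would first dispose of the case $\URk(p)\geq\omega$: there $R_\Delta(p)<\omega\leq\URk(p)$ for every finite~$\Delta$, since $T$ is stable and $\Delta$ finite, so $R_\Delta$ is finite-valued. So assume $\URk(p)=n<\omega$ and induct on~$n$. The inductive step rests on the following claim: \emph{if $p$ is a complete type with $R_\Delta(p)\geq k+1$, then $p$ has a forking extension $q$ \textup{(}complete, over a superset of $\dom p$\textup{)} with $R_\Delta(q)\geq k$.} Granting the claim: if $R_\Delta(p)\geq n+1$ (with $\URk(p)=n$), apply it with $k=n$ to get a forking extension $q$ with $R_\Delta(q)\geq n$; since $q$ is a proper forking extension of a type of $\URk$-rank~$n$ we have $\URk(q)\leq n-1$, so the inductive hypothesis gives $R_\Delta(q)\leq\URk(q)\leq n-1$, a contradiction. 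Hence $R_\Delta(p)\leq n$. (The base case $n=0$ is the same argument: $\URk(p)=0$ means $p$ has no forking extension, so the claim forces $R_\Delta(p)\leq 0$.) To prove the claim I would use that $R_\Delta$ is the local rank $R^m(-,\Delta,\aleph_0)$ of \cite[Chapter~II]{Shelah-book}: an extra unit of $\Delta$-rank of $p$ is witnessed by an instance $\varphi(x;\ba)$ of some $\varphi(x;y)\in\Delta$ which divides (hence forks) over $\dom p$ and for which $R_\Delta(p\cup\{\varphi(x;\ba)\})\geq k$; completing $p\cup\{\varphi(x;\ba)\}$ to a complete type $q$ over $\dom p\cup\ba$ with $R_\Delta(q)\geq k$ is then possible, because every partial type has a complete extension over $\mathbf M$ of the same $\Delta$-rank and restriction does not lower $\Delta$-rank, and since $\varphi(x;\ba)\in q$ the type $q$ forks over $\dom p$. (One may also first replace $p$ by its non-forking extension to $\acl^\eq(\dom p)$; this preserves both $R_\Delta$ and $\URk$ by \cite[Theorem~III.4.1]{Shelah-book} and makes $p$ stationary, which streamlines the passage from ``$\Delta$-branching'' to ``forking''.)

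For part~(2): by part~(1), $R_\Delta(p)\leq\URk(p)=n$ for every finite~$\Delta$, so it remains to produce a single finite $\Delta$ with $R_\Delta(p)\geq n$. Here I would invoke the converse comparison $\URk(p)\leq\sup\{R_\Delta(p):\Delta\text{ finite}\}$, which is part of Shelah's forking calculus \cite[Chapter~III]{Shelah-book}: concretely, fix a $\URk$-rank chain $p=q_0\subsetneq q_1\subsetneq\cdots\subsetneq q_n$ with $q_{i+1}$ forking over $\dom q_i$, record each of the $n$ forking steps by finitely many $\mathcal L$-formulas (an instance forking over $\dom q_i$ together with finitely many formulas dividing over $\dom q_i$ into which it splits), and take $\Delta$ to be the (finite) closure under Boolean combinations of all these formulas; one then gets $R_\Delta(q_{i+1})<R_\Delta(q_i)$ for every~$i$, whence $R_\Delta(p)=R_\Delta(q_0)\geq n$. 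Combined with part~(1) this gives $\sup_\Delta R_\Delta(p)=n$, and since the values $R_\Delta(p)$ are natural numbers bounded by~$n$, this supremum is attained, yielding a finite $\Delta$ with $R_\Delta(p)=n$.

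The delicate point, and what I expect to be the main obstacle, is the step just indicated in part~(2): arranging that a \emph{single} finite $\Delta$ witnesses the drop in $\Delta$-rank at \emph{every} step of the $\URk$-rank chain (equivalently, the inequality $\URk(p)\leq\sup_\Delta R_\Delta(p)$). Naively taking the union of the finite witnesses for the individual forking steps is not obviously sufficient, because a finite set of formulas that detects a given instance of forking need not continue to do so after enlargement, and one has to appeal to Shelah's analysis relating the ranks $R_\Delta$, the multiplicities $\deg_\Delta$, and forking—in particular the interplay between a drop in $\deg_\Delta$ and a drop in $R_{\Delta'}$ for $\Delta'\supseteq\Delta$ (cf.\ the monotonicity facts recalled above and \cite[Theorem~III.4.1, Theorem~III.4.2]{Shelah-book}). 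Part~(1), by contrast, is routine once the claim above—which is close to the defining property of the local rank $R_\Delta$—is established.
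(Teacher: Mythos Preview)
Your approach coincides with the paper's. Part~(1) is fine: the paper phrases it as the transfinite induction ``$R_\Delta(p)\geq\alpha\Rightarrow\URk(p)\geq\alpha$'' based on the fact that the $R_\Delta$'s jointly detect forking, which is the same content as your claim and induction.

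For part~(2) you have correctly located the only real issue and left it open: whether a single finite $\Delta$ can be chosen so that the $R_\Delta$-rank strictly drops at \emph{each} step of the forking chain. Your worry that enlarging $\Delta$ might destroy a previously witnessed drop is exactly the point, and your gesture toward ``Shelah's analysis'' is right but vague. The paper closes this gap by invoking \cite[Lemma~III.1.2,~(2)]{Shelah-book}: if $q\in S_x(B)$ forks over $A\subseteq B$, then there is a finite $\Delta_0$ such that for \emph{every} finite $\Delta\supseteq\Delta_0$ one has $R_\Delta(q)<R_\Delta(q\restrict A)$. This is precisely the persistence-under-enlargement statement you were missing; with it, taking $\Delta$ to contain the $\Delta_0$'s for each of the $n$ forking steps immediately gives $R_\Delta(p_0)>R_\Delta(p_1)>\cdots>R_\Delta(p_n)\geq 0$, hence $R_\Delta(p)\geq n$. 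No Boolean closure or multiplicity considerations are needed.
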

\begin{proof}
For (1) recall that the $\Delta$-ranks detect forking: $q\in S_x(B)$ with $A\subseteq B\subseteq M$ does not fork over $A$ iff $R_\Delta(q\restrict A)=R_\Delta(q)$ for every finite set $\Delta=\Delta(x;y)$ of partitioned $\mathcal L$-formulas. Given such $\Delta$, a straightforward transfinite induction now proves the implication $R_\Delta(p)\geq\alpha\Rightarrow\URk(p)\geq\alpha$, for all ordinals $\alpha$, as required.

For (2), we use that by \cite[Lemma~III.1.2,~(2)]{Shelah-book}, if a type $q\in S_x(B)$ with $A\subseteq B\subseteq M$ forks over $A$, then there is a finite $\Delta_0$ such that for every finite $\Delta$ containing $\Delta_0$ one has $R_\Delta(q)<R_\Delta(q\restrict A)$. So from a forking sequence 
$p=p_0 \subseteq p_1 \subseteq \cdots \subseteq p_n$ of types
of length $n$ one obtains a finite $\Delta$ such that $R_\Delta(p_0)>R_\Delta(p_1)>\cdots>R_\Delta(p_n)\geq 0$ and so $R_\Delta(p)=R_\Delta(p_0)\geq n$.
\end{proof}

A special case of Lascar's Inequality (see, e.g., \cite[Theorem~1.5]{Saffe}) states that if $\URk(T)<\omega$ and 
$a_1\in M^{m_1}$ and $a_2\in M^{m_2}$ are independent (over $\emptyset$), then $\URk(a_1,a_2)=\URk(a_1)+\URk(a_2)$. 
(Here $\URk(a):=\URk(\tp(a/\emptyset))$ for $a\in M^m$.) Thus in this case all types in $S_m(T)=S_m(\emptyset)$ have
$\URk$-rank at most $m\URk(T)$. 
Hence from Theorem~\ref{mainst} and the previous lemma we obtain the following linear bound on the VC~density function of $T$ (that is, Theorem~\ref{thm:finite MR} from the introduction):

\begin{corollary}\label{cor:mainst, finite U-rank}
Suppose $T$ has finite $\URk$-rank $d=\URk(T)$ and does not have the finite cover property. Then if $\Delta$ is finite we have $\pi_\Delta^*(t)=O(t^{d\,\abs{x}})$, and hence $\vc^T(m)\leq dm$ for each~$m$.
\end{corollary}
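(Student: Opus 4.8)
The plan is to deduce the corollary from Theorem~\ref{mainst}, the lemma above bounding $\Delta$-rank by the Lascar $\URk$-rank, and the special case of Lascar's Inequality just recalled. By Theorem~\ref{mainst}, for every finite $\Delta=\Delta(x;y)$ one has $\pi^*_\Delta(t)=O(t^{R_\Delta(T)})$ and $\vc^*(\Delta)\leq R_\Delta(T)$; so it suffices to prove that $R_\Delta(T)\leq d\,\abs{x}$ for every such $\Delta$.

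Write $m=\abs{x}$. First I would translate the ``global'' quantity $R_\Delta(T)=R_\Delta(x=x)$ into the $\URk$-rank of a complete type over $\emptyset$. As noted in the preliminaries, the type $x=x$ over $\emptyset$ has a $\Delta$-generic realization $a\in M^m$, meaning $R_\Delta(a/\emptyset)=R_\Delta(x=x)=R_\Delta(T)$; thus $R_\Delta(T)=R_\Delta(\tp(a/\emptyset))$. Part~(1) of the preceding lemma then gives $R_\Delta(T)=R_\Delta(\tp(a/\emptyset))\leq\URk(\tp(a/\emptyset))=\URk(a)$.

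Next I would bound $\URk(a)$ for $a\in M^m$. Since $\URk(T)=d<\omega$, the special case of Lascar's Inequality recalled above shows that every type in $S_m(\emptyset)$ has $\URk$-rank at most $md$ (the underlying point being that $\URk$ is additive on independent tuples and single elements have $\URk$-rank at most $d$), so in particular $\URk(a)\leq md$. Hence $R_\Delta(T)\leq md$, and Theorem~\ref{mainst} yields $\pi^*_\Delta(t)=O(t^{dm})$ and $\vc^*(\Delta)\leq dm$ for every finite $\Delta=\Delta(x;y)$ with $\abs{x}=m$. Finally, since $T$ does not have the finite cover property it is stable, hence NIP, so $\vc^T$ is defined; applying the bound just obtained to $\Delta=\{\varphi\}$ for an arbitrary partitioned $\mathcal L$-formula $\varphi(x;y)$ with $\abs{x}=m$ gives $\vc^*(\varphi)\leq dm$, and taking the supremum over all such $\varphi$ (which equals $\vc^T(m)$) gives $\vc^T(m)\leq dm$.

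I do not expect a genuine obstacle here: the substance is entirely in Theorem~\ref{mainst} and in the lemma bounding $R_\Delta$ by $\URk$, and the preamble has already done the Lascar computation. The only step worth a moment's care is the reduction in the second paragraph, namely identifying $R_\Delta(T)$, a Cantor-Bendixson rank computed in $S_\Delta(M)$, with the $\Delta$-rank of a type over $\emptyset$; choosing a $\Delta$-generic realization of $x=x$ over $\emptyset$ (and, if one likes, invoking the monotonicity $R_\Delta(p\restrict A)\geq R_\Delta(p)$) makes this routine.
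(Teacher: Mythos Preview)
Your proof is correct and follows exactly the approach the paper has in mind: the paper simply says ``Hence from Theorem~\ref{mainst} and the previous lemma we obtain'' the corollary, relying on the preceding observation that Lascar's Inequality bounds the $\URk$-rank of any $m$-type over $\emptyset$ by $m\URk(T)$. Your write-up just spells out the passage from $R_\Delta(T)$ to $\URk$-rank via a $\Delta$-generic realization, which is the intended (and routine) step.
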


Since $\URk(T)\leq\MRk(T)$, the previous corollary applies in particular if $T$ has finite Morley rank and does not have the finite cover property. 
The following folklore result (see, e.g., \cite[Proposition~B.1]{PP}, \cite{Poizat-finite MR}, or \cite[Theorem~4.5]{Saffe}) can be used to verify that $T$ has finite Morley rank:

\begin{proposition} \label{prop:U=Mrk}
Suppose there is a family $\{D_i\}_{i\in I}$ of strongly minimal definable sets  such that every non-algebraic type in $T$ is non-orthogonal to some $D_i$. Then $\URk(p)=\MRk(p)<\omega$ for each $p\in S_m(B)$, $B\subseteq M$.
\end{proposition}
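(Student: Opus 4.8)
The plan is to prove the two inequalities $\URk(p)\leq\MRk(p)$ and $\MRk(p)\leq\URk(p)$ separately, together with the finiteness $\MRk(p)<\omega$; the first inequality is general and always holds, so the content is in the second together with finiteness. First I would recall the standard facts: $\URk$ is bounded above by any ordinal-valued rank that drops on forking extensions, so in particular $\URk(p)\leq\MRk(p)$ for every type $p$ (this is the easy direction and needs no hypothesis). The hypothesis will be used to show that $\MRk(p)$ is finite and, simultaneously, that it cannot exceed $\URk(p)$.

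The key device is the hypothesis that every non-algebraic type is non-orthogonal to one of the strongly minimal sets $D_i$. The main step is to show that $T$ is \emph{unidimensional relative to the family $\{D_i\}$} in the strong sense needed: I would argue that for any type $p$ over a parameter set, after a finite number of forking extensions one reaches an algebraic type, and moreover the length of such a forking sequence is controlled. Concretely, I would proceed by induction on $\MRk(p)$ after first establishing that $\MRk(p)$ is finite. To get finiteness: if $p$ is non-algebraic, pick $D_i$ with $p\not\perp D_i$; then over a suitable parameter set there is a realization $a\models p$ and $e\in D_i$ (in the monster) with $a$ and $e$ interalgebraic-in-the-sense-of-nonorthogonality, allowing one to extract from $a$ a ``coordinate'' living in the strongly minimal set. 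Iterating and using that each $D_i$ is strongly minimal (hence of $\URk$ and $\MRk$ both equal to $1$), one sees that $p$ is domination-equivalent to (a finite tuple from) $\bigcup D_i$; since finite tuples from strongly minimal sets have finite Morley rank equal to their $\URk$-rank, and Morley rank, $\URk$, and the relevant additivity all transfer across domination equivalence, both $\MRk(p)$ and $\URk(p)$ are finite and equal.

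More carefully, the steps in order: (1) recall $\URk(p)\le \MRk(p)$ always. (2) Show every non-algebraic type $p$ is domination-equivalent over some parameter set to $\tp(\bar d/\text{that set})$ for a finite tuple $\bar d$ from $\bigcup_i D_i$; this is where non-orthogonality to a strongly minimal set is used repeatedly, peeling off one strongly minimal coordinate at a time and invoking that a type orthogonal to all $D_i$ must be algebraic. (3) For such a tuple $\bar d$ from finitely many strongly minimal sets, $\MRk=\URk<\omega$ by the standard fact that Morley rank is additive on independent strongly minimal coordinates and agrees with $\URk$ there (Lascar's inequality becomes an equality in the strongly minimal, and hence in the finite-rank-over-a-minimal-family, setting). (4) Since Morley rank and $\URk$-rank are both invariants of domination equivalence (for $\URk$ this is standard; for $\MRk$ one uses that domination-equivalent types have the same Morley rank in a superstable theory with all the relevant ranks finite), conclude $\MRk(p)=\URk(p)<\omega$.

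The hard part will be making step~(2) precise: extracting, from an arbitrary realization $a$ of a non-algebraic type, an honest finite tuple in $\bigcup_i D_i$ that dominates $a$, and controlling that this process terminates after finitely many steps. The subtlety is that non-orthogonality of $p$ to $D_i$ only gives, over some extension of the base, a single element of (a nonforking extension of) $D_i$ non-algebraic over the base but algebraic over $a$ together with the base; one must then pass to $\tp(a/\text{base}\cup\{\text{that element}\})$, which has strictly smaller foundation rank, and repeat. Ensuring the descent is by a well-founded rank (e.g. the $\URk$-rank, which we are trying to compute, or an auxiliary appeal to superstability) requires care so as not to argue in a circle; the clean way is to run the induction on $\URk(p)$ — which we already know is some ordinal — prove along the way that it is finite, and only afterwards deduce equality with Morley rank. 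I expect this bookkeeping, rather than any single deep fact, to be the main obstacle, and it is exactly why the statement is ``folklore'' rather than entirely trivial; the cited references (\cite[Proposition~B.1]{PP}, \cite{Poizat-finite MR}, \cite[Theorem~4.5]{Saffe}) handle these details.
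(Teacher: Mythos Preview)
The paper does not actually prove this proposition; it is quoted as folklore with pointers to \cite[Proposition~B.1]{PP}, \cite{Poizat-finite MR}, and \cite[Theorem~4.5]{Saffe}. So there is no in-paper argument to compare against, only the cited literature.

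Your outline through step~(3) is essentially the standard coordinatization argument and is fine: repeated use of non-orthogonality to a strongly minimal $D_i$ produces, over some $B'\supseteq B$ with $\tp(a/B')$ a nonforking extension of $p$, elements $e_1,\dots,e_n$ (each in some $D_i$) with $a\in\acl(B'e_1\cdots e_n)$ and $n=\URk(p)$; and indeed $\MRk(e_1\cdots e_n/B')\le n$ since each $e_j$ lives in a strongly minimal set.

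The gap is in step~(4). Neither $\URk$ nor $\MRk$ is literally an invariant of domination equivalence, and your parenthetical justification (``domination-equivalent types have the same Morley rank in a superstable theory with all the relevant ranks finite'') is circular: finiteness of the Morley ranks is part of what is to be proved. More concretely, from $a$ and $\bar e$ interalgebraic over $B'$ you get $\MRk(a/B')=\MRk(\bar e/B')\le n$, but you need $\MRk(a/B)\le n$. Adding parameters can only \emph{decrease} Morley rank, so a priori $\MRk(a/B)\ge\MRk(a/B')$, and closing this gap amounts to knowing that Morley rank does not drop along the nonforking extension $\tp(a/B)\subseteq\tp(a/B')$---which is exactly the equality $\MRk=\URk$ you are after.

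The references avoid this circularity by arguing more directly. One clean route (in the spirit of \cite{Poizat-finite MR} and \cite[Theorem~4.5]{Saffe}): prove $\MRk(p)\le n$ by induction on $n=\URk(p)$, using that non-orthogonality to a strongly minimal $D$ yields, after passing to a definable finite cover, a definable map from (a set containing) $p$ to $D$ whose fibres have $\URk\le n-1$; by induction the fibres have $\MRk\le n-1$, and a definable family of sets of Morley rank $\le n-1$ indexed by a strongly minimal set has Morley rank $\le n$. This replaces your appeal to domination equivalence by an honest definable fibration, which is what makes the Morley-rank bookkeeping go through over the original base~$B$.
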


\begin{example} \mbox{} \label{ex:U=Mrk}
Suppose $T$ is totally transcendental and unidimensional (i.e., any two non-algebraic $1$-types are non-orthogonal). Then $T$ satisfies the hypothesis of the previous proposition with a single $D_i$. (The fact that $\MRk(T)$ is finite for $\aleph_1$-categorical countable $T$ was first shown in \cite{Baldwin}.)
\end{example}

\subsection{Applications.}\label{sec:applications}
Together with the remarks in the previous subsection, we can now draw some immediate consequences of Theorem~\ref{mainst}. 

\subsubsection{Totally transcendental $\aleph_0$-categorial theories.}
First we discuss the   $\aleph_0$-cate\-gorical case. It is well-known that a stable $\aleph_0$-categorical theory does not have the finite cover property. (This is a consequence of \cite[Theorem~II.4.4]{Shelah-book}, see \cite[Lemma~2.4]{Saffe}).) 
If $T$ is countable, $\aleph_0$-categorical, and $\omega$-stable, then $T$ has finite Morley rank, in fact,  $\MRk(T)\leq \abs{S_2(\emptyset)}$; see \cite[Corollary~4.3 and Theorem~5.1]{CHL}. In this case moreover $\URk(p)=\MRk(p)$ for each  $m$-type $p$ over a finite parameter set, by \cite[Theorem~2.1]{Saffe}.
Hence by Corollary~\ref{cor:mainst, finite U-rank}:

\begin{corollary} Suppose $T$ is countable, totally transcendental, and $\aleph_0$-categori\-cal. 
Then $\vc^T(m)\leq m\URk(T)=m\MRk(T)\leq m\,\abs{S_2(\emptyset)}$ for every $m$.
\end{corollary}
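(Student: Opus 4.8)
The plan is to deduce this directly from Corollary~\ref{cor:mainst, finite U-rank}, so that the work reduces to checking that its two hypotheses hold for $T$: that $T$ does not have the finite cover property, and that $T$ has finite $\URk$-rank. The first is immediate from what has already been recalled: a totally transcendental theory is stable, and a stable $\aleph_0$-categorical theory does not have the finite cover property (a consequence of \cite[Theorem~II.4.4]{Shelah-book}; cf.\ \cite[Lemma~2.4]{Saffe}).

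For the second hypothesis, together with the numerical bounds, I would invoke two cited results. By \cite[Corollary~4.3 and Theorem~5.1]{CHL}, a countable, $\omega$-stable, $\aleph_0$-categorical theory has finite Morley rank, and in fact $\MRk(T)\leq\abs{S_2(\emptyset)}$. Since $\URk(T)\leq\MRk(T)$ in general (as noted above), this already gives $\URk(T)<\omega$, so Corollary~\ref{cor:mainst, finite U-rank} applies and yields $\vc^T(m)\leq m\,\URk(T)$ for each $m$. To get the stated equality $\URk(T)=\MRk(T)$ rather than merely the inequality, one uses \cite[Theorem~2.1]{Saffe}: for such $T$ one has $\URk(p)=\MRk(p)$ for every $m$-type $p$ over a finite parameter set, and in particular $\URk(T)=\MRk(T)$. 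Chaining these together gives
$$\vc^T(m)\leq m\,\URk(T)=m\,\MRk(T)\leq m\,\abs{S_2(\emptyset)}$$
for every $m$, as desired.

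There is no genuine obstacle here, since the substantive content is entirely packaged into Corollary~\ref{cor:mainst, finite U-rank} and the two external references; the only point requiring a little care is to apply the cited results in exactly the right generality --- in particular to note that the equality of Lascar and Morley rank from \cite{Saffe} holds for types over finite parameter sets (not just over $\emptyset$), so that it is legitimate to identify the quantity $\URk(T)$ appearing in Corollary~\ref{cor:mainst, finite U-rank} with the finite value $\MRk(T)$, and that finiteness of $\MRk(T)$ already suffices to license the application of that corollary in the first place.
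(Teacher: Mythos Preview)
Your proposal is correct and follows essentially the same route as the paper: verify the hypotheses of Corollary~\ref{cor:mainst, finite U-rank} by citing that stable $\aleph_0$-categorical theories lack the finite cover property, that countable $\omega$-stable $\aleph_0$-categorical theories have $\MRk(T)\leq\abs{S_2(\emptyset)}$ by \cite{CHL}, and that $\URk=\MRk$ on types over finite parameter sets by \cite{Saffe}. The paper's discussion preceding the corollary is exactly this.
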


\subsubsection{Dimensional theories.}
Recall that $T$ is called dimensional (also known as non-multidimensional) if the set of domination equivalence classes of non-algebraic stationary $1$-types is small.
Dimensional theories do not have the finite cover property (see \cite[Lem\-ma~IX.1.10]{Shelah-book}). 
Hence by Corollary~\ref{cor:mainst, finite U-rank} and Example~\ref{ex:U=Mrk}  above:

\begin{corollary}\label{cat}  Suppose $T$ is totally transcendental and unidimensional.  Then $\URk(T)=\MRk(T)<\omega$ and $\vc^T(m)\leq m\MRk(T)$ for every $m$.
\end{corollary}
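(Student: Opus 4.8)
The plan is to combine Corollary~\ref{cor:mainst, finite U-rank} with Example~\ref{ex:U=Mrk}. The hypothesis is that $T$ is totally transcendental (hence $\omega$-stable) and unidimensional. By Example~\ref{ex:U=Mrk}, such a $T$ satisfies the hypothesis of Proposition~\ref{prop:U=Mrk} with a single strongly minimal set $D$; therefore $\URk(p)=\MRk(p)<\omega$ for every $p\in S_m(B)$ with $B\subseteq M$. In particular $d:=\URk(T)=\MRk(T)$ is finite (take $p$ to be a $\Delta$-generic type in $S_m(\emptyset)$ of maximal $\URk$-rank, or simply note $\URk(T)=\sup_m \URk(x=x\text{ in }m\text{ variables})$ is finite by Lascar's inequality as recalled before Corollary~\ref{cor:mainst, finite U-rank}).

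Next I would verify that $T$ does not have the finite cover property. This is exactly the content noted in the paragraph on dimensional theories: unidimensional is a special case of dimensional (non-multidimensional), and dimensional theories do not have the finite cover property by \cite[Lemma~IX.1.10]{Shelah-book}. Alternatively, one can invoke that unidimensional totally transcendental theories have all the domination-equivalence structure trivialized by the single $D$, so the set of domination classes of non-algebraic stationary $1$-types is a singleton, in particular small, giving dimensionality directly.

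With both hypotheses of Corollary~\ref{cor:mainst, finite U-rank} in hand---finite $\URk$-rank $d=\URk(T)$ and no finite cover property---that corollary immediately yields $\vc^T(m)\le dm = m\,\URk(T)$ for every $m$. Finally substitute $\URk(T)=\MRk(T)$ from the first step to conclude $\vc^T(m)\le m\,\MRk(T)$ for every $m$, and record $\URk(T)=\MRk(T)<\omega$ as the first assertion of the corollary. There is essentially no obstacle here: the statement is a direct packaging of the preceding two results, so the only care needed is in citing the correct folklore facts (that totally transcendental unidimensional implies the Proposition~\ref{prop:U=Mrk} hypothesis with one $D_i$, and that dimensional implies no fcp) rather than reproving them.
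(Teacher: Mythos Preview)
Your proposal is correct and follows essentially the same argument as the paper: the corollary is stated as an immediate consequence of Corollary~\ref{cor:mainst, finite U-rank} together with Example~\ref{ex:U=Mrk}, using that dimensional (hence unidimensional) theories do not have the finite cover property. Your write-up supplies a bit more connective tissue, but the route is identical.
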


The previous corollary applies if $T$ is countable and $\aleph_1$-categorical.
In particular, we see that if  $T$ is strongly minimal, then $\vc^T(m)=m$ for every~$m$. For example, if $T=\ACF_p$ is the theory of algebraically closed fields of characteristic~$p$ (where~$p$ is a prime or $p=0$), then $\vc^T(m)=m$ for every~$m$. 

\medskip
\noindent
Every  $\URk$-rank $1$ theory is dp-minimal \cite[Fact~3.2]{OU}. Our next application strengthens this remark:

\begin{corollary}
Suppose $\URk(T)=1$. Then $\vc^T(m)=m$ for every~$m$.
\end{corollary}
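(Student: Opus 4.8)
The plan is to deduce this from Corollary~\ref{cat}, or more directly from Corollary~\ref{cor:mainst, finite U-rank}, by showing that $\URk(T)=1$ forces $T$ to be totally transcendental (indeed of Morley rank $1$) and to lack the finite cover property. First I would recall that a theory of finite $\URk$-rank is superstable; in fact a $\URk$-rank $1$ theory has, over any parameter set, only non-forking (generic) and algebraic extensions of any given type, which is exactly the condition that every non-algebraic complete type is strongly minimal. Hence $\MRk(p)=\URk(p)$ for all types and $\MRk(T)=1$, so $T$ is totally transcendental. Moreover every non-algebraic $1$-type is strongly minimal and all such are non-orthogonal (any two non-orthogonal-to-nothing types over a common base must be non-orthogonal in rank $1$, since a forking extension would be algebraic), so $T$ is unidimensional; alternatively, $T$ is dimensional because there is essentially one domination class of non-algebraic stationary $1$-types. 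Either way, by the discussion preceding Corollary~\ref{cat}, dimensional (or unidimensional totally transcendental) theories do not have the finite cover property.

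With these two facts in hand — $T$ does not have the finite cover property, and $\URk(T)=1$ is finite — Corollary~\ref{cor:mainst, finite U-rank} gives $\vc^T(m)\leq m\cdot\URk(T)=m$ for every $m$. For the reverse inequality I would invoke the elementary lower bound recorded in Section~\ref{sec:prelims}, namely that $\vc^T(m)\geq m$ always holds (this is the remark ``$\vc(m)\geq m$ for each $m$'' in the subsection on the VC density function of $T$). Combining the two gives $\vc^T(m)=m$ for every $m$, as claimed.

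The main obstacle, such as it is, lies not in any deep estimate but in cleanly justifying that $\URk(T)=1$ implies the absence of the finite cover property — that is, in pinning down which of the standard implications to cite. The cleanest route is: $\URk(T)=1 \Rightarrow$ every non-algebraic type is strongly minimal and they all lie in a single non-orthogonality class $\Rightarrow$ $T$ is unidimensional and totally transcendental $\Rightarrow$ (by \cite[Lemma~IX.1.10]{Shelah-book}, or directly Example~\ref{ex:U=Mrk} together with the dimensionality remark) $T$ does not have the finite cover property. An alternative that avoids the strongly-minimal reorganization is to note that $\URk$-rank $1$ theories are $\aleph_0$-stable of Morley rank $1$ and hence superstable with no f.c.p.\ by the structural results already cited. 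Since the excerpt already isolates both ``$\vc(m)\geq m$'' and Corollary~\ref{cor:mainst, finite U-rank}, essentially no new computation is required; the proof is a two-line invocation once the f.c.p.\ point is dispatched, and indeed the corollary can simply be phrased as an immediate consequence of Corollary~\ref{cat} upon observing $\URk(T)=1\Rightarrow\MRk(T)=1$ and $T$ unidimensional.
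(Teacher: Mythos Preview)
Your argument has a genuine gap: the claim that $\URk(T)=1$ forces $T$ to be totally transcendental (equivalently, $\aleph_0$-stable, or of Morley rank $1$) is false. The abelian group $(\bZ,{+})$ has $\URk$-rank $1$---as the paper itself notes in Section~\ref{sec:dp-min expansions of Z}---but is not $\omega$-stable: it has infinitely many p.p.~definable subgroups (the $n\bZ$), so $\PP(\bZ)$ fails the descending chain condition and $\MRk(\Th(\bZ,{+}))=\infty$. Your assertion that ``only non-forking and algebraic extensions'' is ``exactly the condition that every non-algebraic complete type is strongly minimal'' conflates $\URk$-rank $1$ with Morley rank $1$; these coincide only under additional hypotheses (such as those in Proposition~\ref{prop:U=Mrk}), which you have not verified. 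Consequently neither Corollary~\ref{cat} nor the ``alternative'' you sketch is available, and you have not established that $T$ lacks the finite cover property.

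The paper's proof takes a different route to no-f.c.p.\ that does not pass through total transcendentality. It invokes a result of Belegradek--Peterzil--Wagner \cite{bpw}: after naming constants (permissible by Lemma~\ref{lem:expansions by constants}), a $\URk$-rank~$1$ theory becomes \emph{quasi strongly minimal}, meaning every definable subset of the home sort is a Boolean combination of $\emptyset$-definable and finite sets. From this one gets that every non-algebraic $1$-type over any $A$ is determined by its restriction to $\emptyset$, hence $T$ is dimensional, hence has no f.c.p. Then Corollary~\ref{cor:mainst, finite U-rank} applies and gives $\vc^T(m)\leq m$; your invocation of the lower bound $\vc^T(m)\geq m$ is correct and completes the argument. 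The essential missing ingredient in your attempt is thus the \cite{bpw} step (or some other legitimate route to dimensionality), not any estimate.
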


\begin{proof}
By Theorem~21 in \cite{bpw}, after expanding $\mathcal L$ by constant symbols if necessary, one may assume that $T$ is \emph{quasi strongly minimal} in the sense of \cite{bpw}, that is: in each model of $T$, the one-variable definable sets are precisely the Boolean combinations of $\emptyset$-definable or finite sets. 
(Quasi strongly minimal theories are the analogue, in stability theory, of quasi weakly o-minimal theories.)
In this case, by \cite[Theorem~20]{bpw} every non-algebraic $1$-type $p\in S_1(A)$ is determined by its restriction $p\restrict\emptyset$ to the empty parameter set. In particular, $T$ is dimensional, and hence does not have the finite cover property, thus Corollary~\ref{cor:mainst, finite U-rank} applies.
\end{proof}

For a simple example where this corollary applies (which can also be dealt with directly), consider the language $\mathcal L=\{P_i:i\in\bN\}$, where each $P_i$ is a unary predicate symbol, and the  $\mathcal L$-theory $T$ of an infinite set with the $P_i$  interpreted by independent predicates, i.e.: 
$$T\models\exists^{\geq n} x\left(\textstyle\bigwedge_{i\in I} P_i x\wedge\bigwedge_{j\in J}\neg P_j x\right)\qquad\text{ for all $n$ and finite disjoint $I,J\subseteq\bN$.}$$
(See also \cite[Proposition~3.7]{dl}.)

\subsubsection{Groups of finite Morley rank.}
Groups of finite Morley rank are finite-dimen\-sional and hence do not have the finite cover property \cite[Lemme~4, p.~457]{Lascar}. Hence by Corollary~\ref{cor:mainst, finite U-rank}:

\begin{corollary} \label{cor:finite MR groups}
Suppose $\mathcal L$ is an expansion of the language of groups and  $T$ is an expansion of the theory of groups with $\MRk(T)<\omega$.  Then $\vc^T(m)\leq m\MRk(T)$ for every~$m$.
\end{corollary}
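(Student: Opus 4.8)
The plan is to obtain this as an immediate consequence of Corollary~\ref{cor:mainst, finite U-rank}; all the work goes into checking its two hypotheses for $T$, namely the absence of the finite cover property and the finiteness of $\URk(T)$.

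For the finite cover property, I would note that every model of $T$ is a group of finite Morley rank, possibly carrying additional structure coming from the expansion $\mathcal L$ of the language of groups, and invoke the fact that such a structure is finite-dimensional — i.e., dimensional (non-multidimensional) in the sense recalled before Corollary~\ref{cat} — by \cite[Lemme~4, p.~457]{Lascar}. Since no dimensional theory has the finite cover property by \cite[Lemma~IX.1.10]{Shelah-book} (as already noted above), it follows that $T$ does not have the finite cover property. For the finiteness of $\URk$, since $\URk(p)\leq\MRk(p)$ for every complete type $p$ we get $\URk(T)\leq\MRk(T)<\omega$. Hence Corollary~\ref{cor:mainst, finite U-rank} applies with $d=\URk(T)$ and yields $\pi^*_\Delta(t)=O(t^{\URk(T)\,\abs{x}})$ for every finite $\Delta=\Delta(x;y)$, and in particular $\vc^T(m)\leq m\,\URk(T)\leq m\,\MRk(T)$ for every $m$, as desired.

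I expect no genuine obstacle here; once the cited non-multidimensionality result is in hand the rest is bookkeeping. The only point deserving a moment's attention is that this dimensionality statement must be applied to a group of finite Morley rank in the \emph{expanded} language $\mathcal L$, not merely to a pure group — but that is precisely the generality in which Lascar's lemma is stated, so nothing further is needed.
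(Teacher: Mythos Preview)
Your proof is correct and follows essentially the same route as the paper: the paper notes (just before the corollary) that groups of finite Morley rank are finite-dimensional by \cite[Lemme~4, p.~457]{Lascar}, hence lack the finite cover property, and then invokes Corollary~\ref{cor:mainst, finite U-rank} together with the inequality $\URk(T)\leq\MRk(T)$ already recorded after that corollary. Your additional remark about the expanded language is a reasonable clarification, but the argument is otherwise identical.
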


This corollary applies, in particular, to the complete theory of a module of finite Morley rank. However, in this case we may use instead Corollary~\ref{cor:lower bounds on vc} and Lemma~\ref{lem:tt} below to obtain the same result.

\subsection{Examples and remarks.}
We first point out that the bound on VC~density given in Theorem~\ref{mainst} is  far from sharp:

\begin{example*}
Let $\mathcal L=\{E_n:n\geq 0\}$ be the language consisting of
countably many binary relations $E_n$.  Let $T$ be the theory whose axioms state that each $E_n$ is an equivalence relation
with infinitely many infinite classes and so that $E_{n+1}$ refines $E_n$ in such a way that each $E_n$-class is partitioned into infinitely many
$E_{n+1}$-classes.  The theory $T$ is stable and does not have the finite cover property.  Fix some $n$ and let $\Delta(x;y)=\{E_0(x;y), \dots, E_{n-1}(x;y)\}$.  It is
easy to see that $\vc^*(\Delta)\leq 1$  yet $R_{\Delta}(x=x) \geq n$.
\end{example*}

Similarly one may ask if not having the finite cover property is necessary for $T$ to have minimal density.  This is also seen to be false by a simple example.  

\begin{example*}
Let $\mc{L}$ be the language with a single binary relation $E$ and let $T$ be the theory stating that $E$ is an equivalence relation
with one class of size $n$ for each~$n$.  Then $T$ is $\omega$-stable and has the finite cover property.  But it is easy to see that $\vc^*(\Delta)\leq\abs{x}$ 
for any finite set  $\Delta=\Delta(x;y)$ of partitioned $\mathcal L$-formulas. 
\end{example*}

Laskowski \cite{chrisdef} has shown that for every $\mathcal L$-formula $\varphi(x;y)$ there is an $\mathcal L$-formula
$\operatorname{d}_\varphi(y; y_1,\dots,y_d)$ with $d=R^m(x=x,\varphi,2)$, where $m=\abs{x}$, so that if $B$ is a set of $\abs{y}$-tuples (possibly infinite)
and $p \in S_{\varphi}(B)$, then there are $b_1,\dots,b_d \in B$ so that $\operatorname{d}_\varphi(y; b_1, \dots, b_d)$ defines~$p$, and hence $\vc^*(\varphi)\leq d$. (See also \cite[Section~5.3]{ADHMS}.) 
We may ask if a similar situation
 holds in the case that $T$ does not have the finite cover property and we replace $R^m({-},\varphi,2)$ by $R_{\varphi}({-})$.  If this were the case, then our analysis of VC~density in the superstable finite-rank case would mirror much of our analysis in other cases given in \cite{ADHMS}, in that bounds on VC~density arise out
 of a bound on the number of parameters needed to define types.  We show by a very simple example that this is not the case.  Namely we
 show that for any $n>0$ there is a totally categorical theory $T_n$ with $\MRk(T_n)=1$ in which for a fixed formula $\varphi(x;y)$ with $\abs{x}=1$ the defining scheme for $\varphi$-types requires the naming of $n$~parameters. (This example also illustrates why VC~density is, arguably, a better measurement of complexity than the number of parameters in the defining formulas for types.)

\begin{example}\label{ex:n-sets}
 Let $\mathcal L=\{U,S,E\}$ be the language consisting of a unary predicate $U$ and a binary predicates $S$ and $E$. Fix $n>0$, and let $\mathbf M_n=(M_n,U,S,E)$ be the $\mathcal L$-structure with the following properties (see Figure~\ref{fig:equ class}):
\begin{enumerate}
\item $E$ is an equivalence relation with countably many classes, each of size $2n+{2n \choose n}$;
\item $U$ splits every $E$-class $C$ so that $\abs{C \cap U}=2n$ and
 $\abs{C \cap \neg U}={2n \choose n}$;
\item $S$ holds only between $E$-equivalent elements and
 only between elements of $U$ and $\neg U$; also, for each $E$-class $C$ and each $n$-element subset $X$ of  $C \cap U$  let there be a unique $b_X$ in $C \cap \neg U$ so
 that $S(b_X,y)$ holds if and only if $y \in X$, and let these be the only pairs in $S$. 
\end{enumerate}
The $\mathcal L$-theory $T_n=\Th(\mathbf M_n)$ is totally categorical with $\MRk(T_n)=1$, and Corollary~\ref{cat} applies.  For a fixed $E$-class $C$, if $Y \subseteq C \cap U$ has
 size less than $n$, then for any $a_1,a_2 \in (C\cap U) \setminus Y$ there is an automorphism of $\mathbf M_n$ fixing $Y$ and switching $a_1$ and $a_2$.
 Thus fixing an $E$-class $C$ and letting $A=C \cap U$, if $Z$ is an $A$-definable set definable with fewer than $n$ parameters from $A$, then
 $\abs{Z \cap A} \not= n$.  Hence for any $X \subseteq A$ the type $\tp_{S(x,y)}(b_X/A)$ is not definable using fewer than~$n$ parameters from $A$, yet the
 VC~density of $S(x,y)$ is~$1$ by Corollary~\ref{cat} and $R_{S(x,y)}(x=x)=1$.
\end{example}

\begin{figure}  
\vskip1em

\begin{tikzpicture}
\draw (0,0) rectangle (6,3);
\draw (3,3) -- (3,0);
\fill [black] (1,1) circle (2pt);
\fill [black] (1,2) circle (2pt);
\fill [black] (2,2) circle (2pt);
\fill [black] (2,1) circle (2pt);

\fill [black] (4,0.75) circle (2pt);
\fill [black] (5,0.75) circle (2pt);

\fill [black] (4,1.5) circle (2pt);
\fill [black] (5,1.5) circle (2pt);

\fill [black] (4,2.25) circle (2pt);
\fill [black] (5,2.25) circle (2pt);

\path (6.5,1) node {$C$};
\path (1.5, -0.5) node {$C\cap U$};
\path (4.5, -0.5) node {$C\cap \neg U$};

\path (4, 2.7) node (bX) {$b_X$};

\draw (-0.15,2.10) [rotate=-47] ellipse (40pt and 10pt);

\path (2.6, 1) node {$X$};

\draw (1,2) .. controls (2,2.5) .. (4, 2.25);

\draw (2,1) .. controls (3,1.5) .. (4, 2.25);

\end{tikzpicture}
\caption{An $E$-equivalence class $C$ in the case $n=2$}
 \label{fig:equ class}

\end{figure}
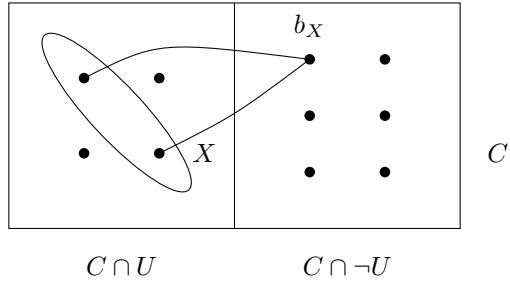

\section{Modules of Finite Breadth}\label{sec:examples of VCm theories, 2}

\noindent
Let $R$ be a ring (with $1$). In this paper, ``$R$-module'' always means ``left $R$-module.''  Let $\mathcal L_R$ be the language of $R$-modules and  $M$ be an $R$-module, construed as an $\mathcal L_R$-structure in the natural way. 
Recall that an $\mathcal L_R$-formula $\varphi(x)$  is said to be positive-primitive (p.p., for short) if $\varphi$ is equivalent (in the theory of $R$-modules) to a formula of the form $\exists y\, Ax=By$ where  $A\in R^{k\times \abs{x}}$, $B\in R^{k\times \abs{y}}$, for some~$k$ (where we employed the usual shorthand notation of combining a conjunction of linear equations into a single matrix equation). By the Baur-Monk Theorem, each $\mathcal L_R$-formula is equivalent, in $\Th(M)$,  to a Boolean combination of p.p.~$\mathcal L_R$-formulas.
(Our main references for the model theory of modules are \cite{Prest} and \cite[Appendix~A.1]{Hodges}.) 
In this section we first introduce a notion of breadth of~$M$, defined via its lattice of p.p.~definable subgroups, and, then show that if $M$ has finite breadth $d$, then $M$ has the $\VC{}d$ property.  
(Our usage of breadth is not obviously related to the concept of breadth of a module considered in \cite[Chapter~10]{Prest}.) We also show that uniform bounds on the VC~density (in $M$) of formulas with~$m$ parameter variables can be computed from a certain quotient of the lattice of p.p.~definable subgroups of~$M^m$.

\subsection{Dimension, width, height and breadth.}\label{sec:dimension ...}
We begin by recalling a few definitions and basic facts from the theory of (partially) ordered sets. In the following $(P,{\leq})$ denotes an ordered set. (We also simply denote $(P,{\leq})$ by $P$ if the ordering  $\leq$  is understood from the context.) 
Given another ordered set $(P',{\leq'})$, a map $f\colon P\to P'$ is said to be increasing (or
a morphism of ordered sets) if $a\leq b\Rightarrow f(a)\leq' f(b)$ for all $a,b\in P$ and strictly increasing if $a<b\Rightarrow f(a)<' f(b)$ for all $a,b\in P$. We call $f$
an embedding of ordered sets if $a\leq b\Longleftrightarrow f(a)\leq' f(b)$ for all $a,b\in P$.
Unless otherwise stated, proofs of the facts mentioned in the rest of this subsection can be found in any standard text on ordered sets (e.g., \cite{Harzheim}). 

\medskip
\noindent
The ordering $\leq$ can be extended to a linear ordering on $P$ (Szpilrajn/Mar\-czew\-ski). Moreover, if $\{{\leq}_i\}_{i\in I}$ is the family of all linear orderings on $P$ extending $\leq$, then ${\leq}=\bigcap_{i\in I} {\leq_i}$, i.e., for all $a,b\in P$ one has $a\leq b$ iff $a\leq_i b$ for every $i\in I$.
The \emph{dimension} of $P$ is the smallest non-zero cardinal $d=\dim(P)$  such that  there is a family $\{{\leq}_i\}_{i\in I}$ of $d=\abs{I}$ linear orderings  on $P$ with ${\leq}=\bigcap_{i\in I} {\leq_i}$. Equivalently, $\dim(P)$ is the smallest cardinal $d>0$ such that $P$ can be embedded into a direct product of $d$ chains. Here and below, given a non-empty family $\{(P_i,{\leq_i})\}_{i\in I}$ of ordered sets $(P_i,{\leq_i})$, the direct product $\prod_{i\in I} P_i$ is equipped with the ordering defined by $(a_i)\leq (b_i)$ iff $a_i\leq_i b_i$ for every $i\in I$.
We are mostly interested in finite-dimensional ordered sets. 
For all finite-dimensional ordered sets $P$ and $P'$,
$$\dim(P\times P') \leq \dim(P)+\dim(P'),$$
with equality if both $P$ and $P'$ have a greatest and a smallest element \cite[Chapter~2,(3.4) and (3.5)]{Trotter}.

\medskip
\noindent
The \emph{width} of $P$ is defined to be the supremum of the cardinalities of antichains in $P$, and denoted by $\width(P)$.  Clearly if $P$ is non-empty, then $P$ is a chain iff $\dim(P)=1$ iff $\width(P)=1$. Dilworth's Theorem states that if $\width(P)$ is finite, then it equals the smallest number of chains in $P$ whose union is $P$.
As a consequence of this theorem one obtains the inequality
\begin{equation}\label{eq:dilworth}
\dim(P)\leq \width(P),
\end{equation}
provided $P$ is of finite width. 
Dually, the \emph{height} of $P$ is defined to be the supremum of the cardinalities of a chain in $P$, denoted by $\height(P)$.

\medskip
\noindent
The \emph{breadth} of $P$, denoted by $\breadth(P)$, is the smallest integer $d\geq 0$ (if it exists) with the following property: for all $x_1,\dots,x_{d+1},y_1,\dots,y_{d+1}\in P$ such that $x_i\leq y_j$ for all $i\neq j$ in $[d+1]$ there exists $i\in [d+1]$ such that $x_i\leq y_i$. If there is no such $d$, then we set $\breadth(P)=\infty$. This definition of breadth for an arbitrary ordered set (which is ``self-dual'': $\breadth(P,{\leq})=\breadth(P,{\geq})$) is due to Wehrung. It is easy to easy to see that if $(L,{\wedge})$ is a (meet-) semilattice, then the breadth of $L$ may also be described as the smallest $d$ (if it exists) such that for all $x_1,\dots,x_n\in L$ with $n>d$ there are $i_1,\dots,i_d\in [n]$ with $x_1\wedge\cdots\wedge x_n = x_{i_1}\wedge\cdots\wedge x_{i_d}$. (Compare with Section~\ref{sec:breadth}.) Let  $P'$ be another ordered set; then
$$\breadth(P\times P')\leq\breadth(P)+\breadth(P'),$$
with equality if $P$ and $P'$ have  a greatest and a smallest element, and if $P'$ embeds into  $P$,  then $\breadth(P')\leq\breadth(P)$.
Together with \eqref{eq:dilworth} this yields the following fundamental inequality: if $P$ has finite width,  then
\begin{equation}\label{eq:dilworth, 2}
\breadth(P) \leq \dim(P) \leq \width(P).
\end{equation}
Note that in general there is no bound on $\breadth(P)$ in terms of $\height(P)$: the ``standard example $\operatorname{St}_d$'' of a $d$-dimensional ordered set (see, e.g., \cite[Chapter~1,~(5.1)]{Trotter}) actually has breadth $d$, yet has height $2$. However, if $(L,{\wedge})$ is a semilattice, then 
\begin{equation}\label{eq:breadth vs height}
\breadth(L)\leq\height(L),
\end{equation}
with a strict inequality if $L$ has a largest element. (See also \cite[Example~2.16]{ADHMS}.) 

\medskip
\noindent
We have $\breadth(L')\leq\breadth(L)$ if there exists a surjective morphism of semilattices $L\to L'$. A map $f\colon L\to L'$ between semilattices is a morphism of semilattices if $f(a\wedge b)=f(a)\wedge f(b)$ for all $a,b\in L$. (This condition is stronger than merely being a morphism of ordered sets, when $L$, $L'$ are viewed only as ordered sets.) An injective morphism of semilattices is called an embedding of semilattices. By \cite[Section~II.5, Exercise~6~(c)]{Birkhoff} we have $\breadth(L)\geq n$ iff $L$ embeds the semilattice~$(2^{[n]},{\cap})$.

\subsection{Breadth in modular and distributive lattices.}
In this subsection, $L$ denotes a lattice with smallest element $0$ and largest element $1$, and we assume $0\neq 1$. 
Recall that $L$ is said to be modular if the identity
$$(a \wedge c) \vee (b \wedge c) = \big((a \wedge c) \vee b\big) \wedge c$$
holds for all $a,b,c\in L$.
So far we have discussed upper bounds on the breadth of ordered sets and semilattices.
Lower bounds on the breadth of a modular lattice can be obtained via its (dual) Goldie dimension, and for distributive lattices, the breadth can be  computed as the width of an associated (often simpler) ordered~set.

\medskip
\noindent
A subset $A$ of $L\setminus \{0\}$ is said to be {\it join-independent}\/ if $(\bigvee A')\wedge a = 0$ for all finite subsets $A'$ of $A$ and all $a\in A\setminus A'$. The {\it Goldie dimension}\/ of $L$ is the largest $n=\Gdim(L)$ such that $L$ contains a join-independent subset of size $n$, if there is such an $n$; otherwise we set $\Gdim(L)=\infty$.
The Goldie dimension of the dual $L^*$ of $L$ is called the \emph{dual Goldie dimension} of $L$ and denoted by $\Gdim^*(L)$. (See~\cite{GP}.)

\begin{lemma}\label{lem:Goldie dim vs breadth}
Suppose $L$ is modular. Then
$$\breadth(L)\geq \max\big\{\Gdim(L),\Gdim^*(L)\big\}.$$
\end{lemma}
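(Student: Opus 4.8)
The plan is to show $\breadth(L) \geq \Gdim(L)$; the bound $\breadth(L) \geq \Gdim^*(L)$ then follows by applying this to the dual lattice $L^*$, using the self-duality $\breadth(L) = \breadth(L^*)$ noted in Section~\ref{sec:dimension ...}. So suppose $n := \Gdim(L)$ is finite (otherwise there is nothing to prove), and fix a join-independent subset $\{a_1,\dots,a_n\}$ of $L \setminus \{0\}$; I want to produce witnesses showing $\breadth(L) \geq n$, i.e., showing that the breadth-inequality fails at level $n-1$. By the characterization recalled at the end of Section~\ref{sec:dimension ...} (from \cite{Birkhoff}), an equivalent goal is to exhibit a semilattice embedding $(2^{[n]},{\cap}) \hookrightarrow L$, where $L$ is viewed as a meet-semilattice.

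For each $I \subseteq [n]$ set $b_I := \bigvee_{i \notin I} a_i$ (so $b_{[n]} = 0$ in the obvious reading, taking the empty join to be $0$, and $b_\emptyset = \bigvee_i a_i$). The first key step is to verify that $I \mapsto b_I$ is a meet-semilattice homomorphism into $(L,{\wedge})$, i.e., that $b_{I \cap J} = b_I \wedge b_J$ for all $I,J$. The inclusion $\leq$ is immediate since $b_{I\cap J}\leq b_I$ and $b_{I\cap J}\leq b_J$ (larger index sets give smaller joins, monotonically). For the reverse inequality $b_I \wedge b_J \leq b_{I \cap J}$, I would argue by induction on $\abs{[n] \setminus (I \cup J)}$, peeling off one index $k \in I \setminus J$ (the case $I \cup J = [n]$ being trivial), and at each stage invoking the modular law together with join-independence — concretely, join-independence gives $a_k \wedge \bigl(\bigvee_{i \neq k} a_i\bigr) = 0$, and modularity lets one distribute $b_J$ (which is a join not involving $a_k$) across the join $b_I = a_k \vee b_{I \cup \{k\}}$. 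This is the computational heart of the argument and the step I expect to require the most care: getting the modular-law manipulation to interact correctly with join-independence over multi-element index complements, rather than just for a single generator.

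The second key step is injectivity of $I \mapsto b_I$. Suppose $b_I = b_J$ with $I \neq J$, say $k \in I \setminus J$. Then $a_k \leq b_J = b_I \leq \bigvee_{i \neq k} a_i$ (since $k \notin I$ forces $k$ among the indices defining $b_I$... — here one must track orientation carefully, choosing instead $k \in J \setminus I$ so that $a_k$ is one of the joinands of $b_I$ but $b_J = \bigvee_{i \notin J} a_i$ omits $a_k$; then $a_k \leq b_I = b_J \leq \bigvee_{i \neq k} a_i$), and intersecting with $a_k$ and using join-independence yields $a_k = a_k \wedge \bigvee_{i \neq k} a_i = 0$, contradicting $a_k \neq 0$. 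Combining the two steps, $I \mapsto b_I$ is an embedding of the semilattice $(2^{[n]},{\cap})$ into $(L,{\wedge})$, whence $\breadth(L) \geq n = \Gdim(L)$ by the cited exercise in \cite{Birkhoff}. Applying the same to $L^*$ gives $\breadth(L) = \breadth(L^*) \geq \Gdim(L^*) = \Gdim^*(L)$, and taking the maximum completes the proof. (If one prefers to avoid quoting the $(2^{[n]},\cap)$-embedding criterion, the family $x_i := a_i$, $y_i := \bigvee_{j \neq i} a_j$ directly witnesses $\breadth(L) \geq n$ against the definition: $x_i \leq y_j$ for $i \neq j$ trivially, while $x_i \leq y_i$ would give $a_i = a_i \wedge \bigvee_{j\neq i} a_j = 0$.)
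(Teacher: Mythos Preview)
Your main argument contains an orientation slip. Since $b_I := \bigvee_{i\notin I} a_i$ is order-\emph{reversing} in $I$ (as you yourself note, ``larger index sets give smaller joins''), from $I\cap J\subseteq I$ we get $b_{I\cap J}\geq b_I$, not $b_{I\cap J}\leq b_I$. The identity you want is $b_{I\cup J}=b_I\wedge b_J$, not $b_{I\cap J}=b_I\wedge b_J$; equivalently, it is $I\mapsto b_{[n]\setminus I}$ (not $I\mapsto b_I$) that embeds $(2^{[n]},\cap)$ into $(L,\wedge)$. With that correction your plan is exactly the paper's: setting $\widehat{a}:=\bigvee(A\setminus\{a\})$ (your $b_{\{i\}}$), the paper cites \cite[Chapter~IV, Theorem~11]{Graetzer} for the modular-lattice identity $\bigwedge_{a\in A'}\widehat{a}=\bigvee(A\setminus A')$, which is precisely $b_{I\cup J}=b_I\wedge b_J$ in disguise, and then reads off the breadth bound from the semilattice characterization.

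Your parenthetical alternative at the end, on the other hand, is already a complete and correct proof. Taking $x_i=a_i$, $y_i=\bigvee_{j\neq i}a_j$ and checking the Wehrung definition directly uses only the single instance $a_i\wedge\bigvee_{j\neq i}a_j=0$ of join-independence, and never invokes modularity. This is arguably cleaner than both your main route and the paper's, since it sidesteps the Gr\"atzer identity (and the inductive modular-law computation you were anticipating) entirely.
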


\begin{proof}
Since the lattices $L$ and $L^*$ have the same breadth, it suffices to prove that $\breadth(L)\geq\Gdim(L)$. Let $A\subseteq L\setminus\{0\}$ be a finite join-independent set. For each $a\in A$ put $\widehat{a}:=\bigvee (A\setminus\{a\})$. Then
for each $A'\subseteq A$ we have
$$\bigwedge_{a\in A'} \widehat{a} = \bigwedge_{a\in A'} \bigvee (A\setminus\{a\}) =  \bigvee \left(\bigcap_{a\in A'} A\setminus\{a\}\right) = \bigvee A\setminus A',$$
using \cite[Chapter~IV, Theorem~11]{Graetzer}. In particular $\bigwedge_{a\in A} \widehat{a}=0\neq \bigwedge_{a\in A'} \widehat{a}$ for all proper subsets $A'$ of $A$. This shows $\breadth(L)\geq\abs{A}$, so yields the claim.
\end{proof}

Distributive lattices form an important subclass of the class of modular lattices.
Recall that $L$ is said to be distributive~if
$$a\vee (b\wedge c) = (a\vee b) \wedge (a\vee c)\qquad\text{for all $a,b,c\in L$.}$$
Equivalently, a lattice  is distributive iff it is isomorphic to a sublattice of $2^X$ (with the usual meet and join), for some set $X$
(Birkhoff-Stone~Theorem). For finite distributive lattices, such an isomorphism can be described explicitly:
Recall that a non-zero element $a$ of $L$ is join-irreducible if whenever $a=x\vee y$ with $x,y\in L$, then $a=x$ or $a=y$, and let $J(L)$ be the set of join-irreducible elements of $L$, equipped with the ordering induced by~$L$.
Also, for an ordered set $P$ we denote by $I(P)$ the set of initial segments of $P$, ordered by inclusion, and for $a\in L$ let 
$$(a):=\{x\in J(L):x\leq a\}\in I(J(L)).$$ 
If $L$ is \emph{finite} and distributive, then the map $a\mapsto (a)\colon L\to I(J(L))$ is an isomorphism.

\begin{proposition}[Dilworth, cf.~{\cite[Chapter~2, (8.7)]{Trotter}}] \label{prop:dilworth}
Suppose $L$ is finite and distributive. Then
$$\breadth(L) = \dim(L) = \width(J(L)), \qquad \height(L)=\abs{J(L)}+1,$$
and $\Gdim(L)$ is the maximum number of pairwise disjoint initial segments of $J(L)$.
\end{proposition}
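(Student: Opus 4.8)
The plan is to exploit the isomorphism $a\mapsto (a)\colon L\to I(J(L))$ which was recalled just before the statement; since $L$ is finite and distributive this lets us replace every lattice-theoretic quantity by the corresponding quantity for the ordered set $P:=J(L)$ and the lattice $I(P)$ of initial segments of~$P$. The statement then breaks into four assertions: (i) $\breadth(L)=\width(P)$; (ii) $\dim(L)=\width(P)$; (iii) $\height(L)=\abs{P}+1$; and (iv) $\Gdim(L)$ equals the maximum number of pairwise disjoint initial segments of~$P$. I would prove these in an order chosen so that the easy ones set up notation for the hard one.

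First I would dispose of~(iii): a maximal chain in $I(P)$ has length $\abs{P}+1$ (start from $\emptyset$ and add one element of $P$ at a time, respecting the order), and since $L\cong I(P)$ and height counts the number of elements in a maximal chain, $\height(L)=\abs{P}+1$. Next, (iv): an element $I\in I(P)$ is join-irreducible in $I(P)$ exactly when it is a principal initial segment $\{x\in P:x\le p\}$ for some $p\in P$; a family of such principal segments is join-independent in the sense of Section~4.2 iff the generating elements $p$ are pairwise incomparable, i.e.\ form an antichain — here one checks that $(\bigvee A')\wedge\{x\le p\}=\emptyset$ for $p\notin A'$ amounts to $p\not\le p'$ for all $p'$ in the antichain, using distributivity of $I(P)$ — and conversely, but more is true: a general join-independent subset of $I(P)$ can be taken to consist of principal segments without decreasing its size, so $\Gdim(L)$ is the largest antichain in $P$, which by Dilworth's theorem is also the maximum number of pairwise disjoint chains needed, hence the maximum number of pairwise disjoint initial segments (each chain spans a principal-type initial segment; conversely disjoint initial segments have pairwise incomparable "tops"). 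Actually the cleanest route is to observe $\Gdim(L)=\width(P)$ directly and then note that $\width(P)$ is precisely the maximum number of pairwise disjoint nonempty initial segments of $P$ — disjoint initial segments must have incomparable maximal elements, and conversely an antichain of size $k$ yields $k$ disjoint principal segments.

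For~(ii), the equality $\dim(L)=\width(P)$ is a classical theorem of Dilworth (this is \cite[Chapter~2, (8.7)]{Trotter}, as the statement already credits), so I would simply invoke it; if a self-contained argument is wanted, one direction is \eqref{eq:dilworth} applied to $L$ together with $\width(L)\le\width(P)$... — no, one rather embeds $I(P)$ into a product of $\width(P)$ chains via a Dilworth chain-decomposition $P=C_1\cup\dots\cup C_w$, sending an initial segment to the tuple of its traces on the $C_j$, which are initial segments of chains and hence ordinals; this gives $\dim(L)\le\width(P)$, and the reverse inequality comes from the fact that an antichain of size $w$ in $P$ gives the "standard example" $\operatorname{St}_w$ as a subposet of $I(P)$, which has dimension $w$. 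Finally, (i): the inequality $\breadth(L)\le\dim(L)$ is \eqref{eq:dilworth, 2} since a finite distributive lattice has finite width; for $\breadth(L)\ge\width(P)$, take an antichain $p_1,\dots,p_w$ in $P$ and in $L$ consider $x_i:=(\text{the principal initial segment determined by }p_i)$ and $y_i:=\bigvee_{k\ne i} x_k$ — then $x_i\le y_j$ for $i\ne j$ but $x_i\not\le y_i$ because $p_i$ is not $\le$ any $p_k$ with $k\ne i$ — which by the definition of breadth forces $\breadth(L)\ge w$. Combining, $\breadth(L)=\dim(L)=\width(P)$.

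The main obstacle I anticipate is not any single step but keeping the bookkeeping straight across the translation $L\leftrightarrow I(P)$: one must be careful that "join-irreducible," "join-independent," "initial segment," and "antichain" line up correctly, and in particular that join-independence of a family of initial segments really does reduce to incomparability of a chosen system of generators — this uses distributivity in an essential way and is the one place where a genuinely nontrivial (though short) verification is needed. Everything else is either a direct appeal to Dilworth's theorem, to \eqref{eq:dilworth}/\eqref{eq:dilworth, 2}, or a one-line construction with principal initial segments.
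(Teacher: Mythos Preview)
The paper does not actually prove this proposition: it is stated with a citation to \cite[Chapter~2, (8.7)]{Trotter} and no argument is given. So there is nothing to compare your approach against; I can only assess correctness.

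Your treatment of $\height(L)=\abs{J(L)}+1$, of $\dim(L)=\width(J(L))$, and of $\breadth(L)=\width(J(L))$ is fine. The error is in your handling of $\Gdim(L)$. You assert that a family of principal initial segments $\downarrow p_1,\dots,\downarrow p_k$ is join-independent iff $p_1,\dots,p_k$ form an antichain, and hence that $\Gdim(L)=\width(P)$. This is false. Take $P=\{a,b,c\}$ with $c<a$, $c<b$, and $a,b$ incomparable. Then $\{a,b\}$ is an antichain of size $2$, yet $\downarrow a\cap\downarrow b=\{c\}\neq\emptyset$, so $\{\downarrow a,\downarrow b\}$ is \emph{not} join-independent in $I(P)$. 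In fact every nonempty initial segment of this $P$ contains $c$, so the maximum number of pairwise disjoint nonempty initial segments is $1$, and indeed $\Gdim(I(P))=1\neq 2=\width(P)$. Your sentence ``conversely an antichain of size $k$ yields $k$ disjoint principal segments'' is exactly where this breaks: incomparability of $p_i,p_j$ does not prevent them from having a common lower bound.

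The fix is that the intended statement is much easier than what you tried to prove. In $I(P)$ the lattice operations are union and intersection, so for nonempty $I_1,\dots,I_n\in I(P)$ join-independence means $\big(\bigcup_{j\neq i} I_j\big)\cap I_i=\emptyset$ for each $i$, which is equivalent to pairwise disjointness. Hence $\Gdim(L)$ is literally the maximum size of a pairwise disjoint family of nonempty initial segments of $J(L)$, which is the assertion. No detour through $\width(P)$ is needed (or valid).
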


\begin{example*}
Suppose $L=2^{[n]}$. Then $L$ is distributive, and
$$\breadth(L)=\dim(L)=\Gdim(L)=n, \qquad \height(L)=n+1.$$
It is also well-known that
$\width(L)={n\choose \lfloor n/2\rfloor}$ (Sperner's Theorem).
\end{example*}

\subsection{The breadth of the lattice of p.p.~definable subgroups.}\label{sec:breadth and pp subgroups}
Let $\PP=\PP_R$ be the set of p.p.~$\mathcal L_R$-formulas in the single indeterminate $x$ modulo equivalence in the theory of $R$-modules. In the following we don't distinguish between a p.p.~$\mathcal L_R$-formula and its representative in $\PP$. The set $\PP$ has the structure of a modular lattice with meet and join given by
\begin{align*}
(\varphi\wedge\psi)(x)	&:= \varphi(x)\wedge\psi(x), \\
(\varphi\vee\psi)(x)		&:= \exists y\exists z (\varphi(y)\wedge \psi(z)\wedge x=y+z).
\end{align*}
having a smallest element (represented by $x=0$) and a largest element (represented by $x=x$).
Given an  $R$-module $M$, we also equip the collection of p.p.~definable subgroups
$$\PP_m(M)=\big\{\varphi(M):\text{$\varphi(x)$ p.p.~$\mathcal L_R$-formula with $\abs{x}=m$}\big\}$$ 
of $M^m$ with the structure of a modular lattice via
$$M_1\wedge M_2 = M_1\cap M_2, \quad M_1\vee M_2 = M_1+M_2\qquad\text{for $M_1,M_2\in\PP_m(M)$,}$$
having smallest element $\{0\}$ and greatest element $M^m$.
For all $m_1$, $m_2$ we have an embedding of lattices
\begin{equation}\label{eq:embedding of PPm's}
\PP_{m_1}(M)\times\PP_{m_2}(M) \to \PP_{m_1+m_2}(M)
\end{equation}
given by 
$$(H_1,H_2)\mapsto \big(H_1\times \{0\}^{m_2}\big)\oplus \big(\{0\}^{m_1}\times H_2\big).$$
If $N$ is a submodule of $M$ and $N$ is p.p.~definable in $M$, then clearly $\PP_m(N)$ is a sublattice of $\PP_m(M)$, and denoting the natural surjection $M^m\to (M/N)^m$ by~$\pi$, we have an injective lattice morphism 
$$H\mapsto\pi^{-1}(H)\colon \PP_m(M/N)\to\PP_m(M).$$
If $N$ is a pure submodule of $M$ (e.g., if $N$ is a direct summand of $M$), then for every p.p.~definable subgroup $H$ of $M^m$, the subgroup $H\cap N^m$ is p.p.~definable (by the same p.p.~formula), and the map 
\begin{equation}\label{eq:pure incl}
H\mapsto H\cap N^m\colon\PP_m(M)\to\PP_m(N)
\end{equation}
is a surjective morphism of lattices. 
If $M$ is an $R$-module and $I$ is a non-empty index set,  then after choosing some arbitrary $i_0\in I$
and identifying $M$ with the $i_0$-component of $N=M^{(I)}$ in the natural way, 
$M$ becomes a direct summand of $N=M^{(I)}$, and the resulting map \eqref{eq:pure incl} is an isomorphism of lattices.

\medskip

The following is also easy to verify (cf.~\cite[Lemma~2.10]{Prest}):

\begin{lemma}\label{lem:pp definable for direct sum}
Suppose $\{M_i\}_{i\in I}$ is a family of $R$-modules, where $I\neq\emptyset$. Then $M=\bigoplus_{i\in I} M_i$ is pure in $\prod_{i\in I} M_i$, and the map 
$$H\mapsto H\cap M^m\colon\PP_m\left(\textstyle\prod_{i\in I} M_i\right)\to\PP_m(M)$$ 
is bijective. Moreover, for each $H\in\PP_m(M)$ we have $H=\bigoplus_{i\in I} H\cap M_i$, and the map
\begin{equation}\label{eq:lattice embedding}
H\mapsto (H\cap M_i)_{i\in I}\colon\PP_m(M) \to \textstyle\prod_{i\in I} \PP_m(M_i)
\end{equation}
is an embedding of lattices. If $I$ is finite and each $M_i$ is p.p.~definable in $M$, then~\eqref{eq:lattice embedding} is onto.
\end{lemma}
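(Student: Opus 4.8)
The plan is to reduce every clause to the elementary fact that, in a product of modules, a finite system of linear equations is solvable precisely when it is solvable coordinatewise. Fix $m$ and a p.p.~$\mathcal L_R$-formula $\varphi(x)$ with $\abs{x}=m$; up to equivalence in the theory of $R$-modules we may write $\varphi(x)$ as $\exists z\,(Ax=Bz)$ for suitable matrices $A$ and $B$ over~$R$. Put $P:=\prod_{i\in I}M_i$, so that $P^m=\prod_{i\in I}M_i^m$ and $M^m=\bigoplus_{i\in I}M_i^m\subseteq P^m$. For $a=(a_i)_i\in P^m$ we have $a\in\varphi(P)$ iff the system $Ax=Bz$ with $x=a$ has a solution $z$, iff for each $i$ the system is solvable in the $i$-th coordinate, that is, $a_i\in\varphi(M_i)$; hence $\varphi(P)=\prod_{i\in I}\varphi(M_i)$. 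If in addition $a$ has finite support, then a witness $z$ can be chosen with support contained in that of $a$, so $\varphi(M)=\bigoplus_{i\in I}\varphi(M_i)$, and consequently $\varphi(M)=\varphi(P)\cap M^m$. This last identity, holding for all p.p.~$\varphi$ and all~$m$, is exactly the statement that $M$ is pure in~$P$, which is the first assertion.

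Next, the coordinatewise description immediately shows that $H\mapsto H\cap M^m$ from $\PP_m(P)$ to $\PP_m(M)$ is well defined and surjective: $\varphi(P)\cap M^m=\varphi(M)$, and every element of $\PP_m(M)$ has this form. Writing a given $H\in\PP_m(M)$ as $\varphi(M)$, the decomposition $\varphi(M)=\bigoplus_i\varphi(M_i)$ together with $\varphi(M_i)=\varphi(M)\cap M_i^m$ (valid since each $M_i$ is a direct summand, hence a pure submodule, of $M$, using the fact recalled just before the lemma) gives $H=\bigoplus_{i\in I}(H\cap M_i^m)$. For injectivity of $H\mapsto H\cap M^m$: if $\varphi(M)=\psi(M)$ for p.p.~$\varphi$, $\psi$, then intersecting with each pure submodule $M_i$ yields $\varphi(M_i)=\psi(M_i)$ for all $i$, whence $\varphi(P)=\prod_i\varphi(M_i)=\prod_i\psi(M_i)=\psi(P)$ by the coordinatewise formula; thus the map is a bijection.

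For the map \eqref{eq:lattice embedding}, $H\mapsto(H\cap M_i^m)_{i\in I}$: intersection with $M_i^m$ visibly commutes with $\cap$, and it commutes with $+$ because, by the decomposition just proved, $H_1+H_2=\bigoplus_i\bigl((H_1\cap M_i^m)+(H_2\cap M_i^m)\bigr)$, so that $(H_1+H_2)\cap M_j^m=(H_1\cap M_j^m)+(H_2\cap M_j^m)$; hence it is a morphism of semilattices/lattices. It is injective because $H$ is recovered as $\bigoplus_i(H\cap M_i^m)$ from its image, and an injective lattice morphism is automatically order-reflecting, hence an embedding of lattices. Finally suppose $I$ is finite and each $M_i=\theta_i(M)$ for a p.p.~formula $\theta_i(x)$ with $\abs{x}=1$; then $M_i^m$ is p.p.~definable in $M^m$ via $\bigwedge_{1\le j\le m}\theta_i(x_j)$. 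Given $(K_i)_{i\in I}$ with $K_i\in\PP_m(M_i)$, say $K_i=\varphi_i(M_i)$, purity of $M_i$ in $M$ gives $K_i=\varphi_i(M)\cap M_i^m$ as a subgroup of $M^m$, which is therefore p.p.~definable in $M^m$; set $H:=\sum_{i\in I}K_i$, a finite sum of p.p.~subgroups and hence an element of $\PP_m(M)$. Since the $M_i^m$ are independent submodules of $M^m$ and $K_i\subseteq M_i^m$, we get $H=\bigoplus_i K_i$ and so $H\cap M_j^m=K_j$; thus $H$ maps to $(K_i)_{i\in I}$.

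No step is deep; the one requiring care is the finite-support bookkeeping underpinning the coordinatewise description (choosing existential witnesses of finite support), together with the observation that the final surjectivity clause genuinely uses both hypotheses: finiteness of $I$, since an infinite sum $\sum_i K_i$ need not be p.p.~definable, and p.p.-definability of each $M_i$ in $M$, which is what makes $K_i$, viewed inside $M^m$, p.p.~definable there.
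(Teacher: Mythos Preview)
Your proof is correct. The paper itself does not supply a proof of this lemma; it merely says the result is ``easy to verify'' and points to \cite[Lemma~2.10]{Prest}. Your argument---reducing to the coordinatewise solvability of $Ax=Bz$ in a product, noting that witnesses can be chosen with finite support over the direct sum, and then reading off purity, the bijection, the decomposition $H=\bigoplus_i(H\cap M_i^m)$, and the lattice embedding from this---is exactly the natural verification, and your final paragraph correctly isolates where the two extra hypotheses (finiteness of $I$ and p.p.~definability of each $M_i$) are used for surjectivity.
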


In the rest of this subsection we focus on $\PP(M)=\PP_1(M)$. 
The map $\varphi\mapsto\varphi(M)\colon \PP\to\PP(M)$ is a surjective morphism of lattices.
Note that there is an $R$-module $M^*$ such that this map $\PP\to\PP(M^*)$ is an isomorphism of lattices (e.g., take $M^*=$ the direct sum of a set of representatives for all isomorphism types of finitely presented $R$-modules, cf.~\cite[Corollary~8.17]{Prest}). In particular
$$\breadth(\PP_R)=\max_M\breadth(\PP(M)).$$
We say that $M$ has breadth $d$ if $\PP(M)$ has breadth $d$, and similarly for width and height. (So $\PP_R$ has finite breadth iff there is a uniform [finite]  bound on the breadth of all $R$-modules.)
We collect some basic properties of breadth (immediate from the preceding discussions):

\begin{lemma}\label{lem:basic properties of breadth}
Let $M$, $N$ be $R$-modules. Then:
\begin{enumerate}
\item If $N$ is a p.p.~definable submodule of $M$, then  
$$\breadth(N),\breadth(M/N)\leq\breadth(M);$$
\item if $N$ is a pure submodule of $M$, then $\breadth(N)\leq\breadth(M)$;
\item if $I$ is a non-empty index set, then $\breadth(M^{(I)})=\breadth(M)$;
\item $\breadth(M\oplus N)\leq\breadth(M)+\breadth(N)$, with equality if
both $M$ and~$N$ are p.p.~definable in $M\oplus N$.
\end{enumerate}
\end{lemma}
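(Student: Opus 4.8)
The plan is to derive all four statements directly from the general facts about breadth of ordered sets and semilattices collected in Section~\ref{sec:dimension ...}, combined with the lattice-theoretic relationships between $\PP(M)$, $\PP(N)$, $\PP(M/N)$ and their products that were established earlier in this subsection; no new idea is required, only careful bookkeeping of \emph{which kind} of morphism between these lattices is available in each case, since the monotonicity properties of breadth are sensitive to that distinction.

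For (1), when $N$ is a p.p.~definable submodule of $M$, I would use that $\PP(N)$ is a sublattice of $\PP(M)$, so that the inclusion is in particular an embedding of ordered sets $\PP(N)\hookrightarrow\PP(M)$; since $\breadth(P')\leq\breadth(P)$ whenever $P'$ embeds into $P$, this gives $\breadth(N)\leq\breadth(M)$. For the quotient I would invoke the injective lattice morphism $H\mapsto\pi^{-1}(H)\colon\PP(M/N)\to\PP(M)$; an injective lattice morphism is automatically an embedding of ordered sets (if $f(a)\leq f(b)$ then $f(a)=f(a)\wedge f(b)=f(a\wedge b)$, whence $a=a\wedge b$), so the same monotonicity yields $\breadth(M/N)\leq\breadth(M)$. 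For (2), when $N$ is pure in $M$, the map $H\mapsto H\cap N\colon\PP(M)\to\PP(N)$ is a surjective morphism of lattices, hence a surjective morphism of meet-semilattices, and breadth does not increase under surjective semilattice morphisms, giving $\breadth(N)\leq\breadth(M)$.

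For (3), since (after identifying $M$ with an $i_0$-component) $M$ is a direct summand, hence a pure submodule, of $M^{(I)}$, the corresponding map $H\mapsto H\cap M\colon\PP(M^{(I)})\to\PP(M)$ coming from \eqref{eq:pure incl} is in fact an \emph{isomorphism} of lattices, so the two lattices have the same breadth and $\breadth(M^{(I)})=\breadth(M)$. For (4), Lemma~\ref{lem:pp definable for direct sum}, applied with $I=\{1,2\}$, provides a lattice embedding $H\mapsto(H\cap M,\,H\cap N)\colon\PP(M\oplus N)\to\PP(M)\times\PP(N)$; combining monotonicity of breadth under ordered-set embeddings with $\breadth(P\times P')\leq\breadth(P)+\breadth(P')$ yields $\breadth(M\oplus N)\leq\breadth(M)+\breadth(N)$. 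If moreover $M$ and $N$ are both p.p.~definable in $M\oplus N$, then by Lemma~\ref{lem:pp definable for direct sum} this embedding is onto, hence an isomorphism of lattices; as $\PP(M)$ and $\PP(N)$ each possess a smallest element ($\{0\}$) and a largest element ($M$, resp.\ $N$), the equality clause of $\breadth(P\times P')\leq\breadth(P)+\breadth(P')$ applies and gives $\breadth(M\oplus N)=\breadth(M)+\breadth(N)$.

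The only point genuinely needing care — hardly an obstacle — is to pair each transition map with the right general lemma: an embedding of ordered sets suffices for (1), for the inequality $\breadth(M)\leq\breadth(M^{(I)})$ underlying (3), and for (4), whereas for (2) one truly needs a surjective \emph{semilattice} morphism (a surjective order-preserving map would not be enough), which is exactly what purity of $N$ supplies; dually, the equality assertions in (3) and (4) require promoting the relevant embedding to an isomorphism, for which the hypotheses stated (pure summand in (3), both summands p.p.~definable in (4)) are precisely what is needed.
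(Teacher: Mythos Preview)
Your proposal is correct and matches the paper's own treatment exactly: the paper presents this lemma as ``immediate from the preceding discussions'' without giving a proof, and what you have written is precisely the intended unpacking---each item follows from the relevant lattice map established earlier in the subsection (sublattice inclusion, $H\mapsto\pi^{-1}(H)$, the pure-submodule map~\eqref{eq:pure incl}, the direct-sum embedding of Lemma~\ref{lem:pp definable for direct sum}) combined with the appropriate breadth monotonicity fact from Section~\ref{sec:dimension ...}. Your care in distinguishing when an \emph{ordered-set embedding} suffices versus when a \emph{surjective semilattice morphism} is needed is exactly right and is the only subtlety present.
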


It can be checked that the Goldie dimension $\Gdim(\PP(M))$ of the modular lattice $\PP(M)$ is the largest $n$ (if it exists) such that $M$ contains an internal direct sum $H_1\oplus \cdots \oplus H_n$ of $n$ non-zero p.p.~definable subgroups $H_1,\dots,H_n$ of~$M$. The dual Goldie dimension $\Gdim^*(\PP(M))$ of $\PP(M)$ is the largest $n$ (if it exists) such that $M$ contains proper p.p.~definable subgroups $H_1,\dots,H_n$ of~$M$ such that $H_i+\bigcap_{j\neq i} H_j=M$ for all $i\in [n]$.
We refer to $\Gdim(\PP(M))$ and $\Gdim^*(\PP(M))$ as the p.p.~Goldie dimension and the dual p.p.~Goldie dimension of $M$, respectively.
By Lemma~\ref{lem:Goldie dim vs breadth} we have 
$$\breadth(M)\geq\max\big\{\Gdim(\PP(M)),\Gdim^*(\PP(M))\big\}.$$
The following is the archetypical example of a module of infinite breadth:

\begin{example*}
Suppose $R=\bZ$ and $M=\bZ$, considered as a $\bZ$-module as usual. Then $\Gdim^*(\PP(M))=\infty$, since for each prime $p$ the subgroup $p\bZ$ is p.p.~definable, and for all pairwise distinct primes $p_1,\dots,p_d$, we have
$p_i\bZ+\bigcap_{j\neq i} p_j\bZ=\bZ$ for all $i\in [d]$.   Hence $\breadth(M)=\infty$ and so $\breadth(\PP_\bZ)=\infty$. (But note that $\Gdim(\PP(M))=1$.)
\end{example*}

If $M$ has only finitely many p.p.~definable subgroups (e.g., if $M$ is $\aleph_0$-categorical), then clearly $\breadth(M)$ is finite (and bounded by $\log_2\abs{\PP(M)}$). In fact, it is enough to require that $M$ has  finite height: $\breadth(M)<\height(M)$ by \eqref{eq:breadth vs height} (and this is strict since $\PP(M)$ has a largest element).
Note also that if $\height(M)$ is finite, then $\MRk(T) < \height(M)$, and if $M\equiv M^{\aleph_0}$ and $\MRk(T)<\omega$, then we have $\height(M) = \MRk(T) +1$ and hence $\breadth(M)\leq\MRk(T)$; here $T=\Th(M)$. 
(See \cite[Example~3.14]{ADHMS}.)

\begin{example*}
Prest \cite{PrestMR} showed that {\it all}\/ $R$-modules have finite Morley rank iff $R$ is of finite representation type, i.e.,  there are finitely many indecomposable $R$-modules $N_1,\dots,N_r$ such that every $R$-module is a direct sum of copies of the $N_i$. In this case there is a uniform (finite) bound $d$ on the Morley rank of $R$-modules,  hence $\breadth(\PP_R)\leq d$.
\end{example*}

The previous example raises the following question, for which we do not know the answer:

\begin{question}
For which rings $R$ do all $R$-modules have finite breadth, and for which rings~$R$ does $\PP_R$ have finite breadth?
\end{question}

If $\PP(M)$ is finite and distributive, then Proposition~\ref{prop:dilworth} allows us to compute the breadth of $\PP(M)$ as the width of the (often much simpler) ordered set $J(\PP(M))$ of join-irreducibles of $\PP(M)$.  It is well-known (cf.~\cite[Theorem~3.1]{EH}) that if $R$ is commutative, then for every $R$-module $M$ the lattice $\PP(M)$ is distributive iff~$R$ is a Pr\"ufer ring, i.e.,  the  lattice of all of its ideals is distributive.
(Equivalently, a commutative ring $R$ is Pr\"ufer iff the ideals of each localization $R_{\mathfrak m}$ of $R$ at a maximal ideal~$\mathfrak m$ of $R$ are totally ordered by inclusion.)

\subsection{Breadth and the $\VC{}d$ property.}
Let $M$ be an $R$-module and $T=\Th(M)$.
The following observation, connecting the breadth of $M$ with the $\VC{}d$~property, explains our interest in the concept of breadth:

\begin{proposition}\label{prop:breadth and vc for modules}
Suppose $M$ has \textup{(}finite\textup{)} breadth $d$. Then $T$ has the $\VC{}d$ property. In particular, if in addition $M$ is infinite, then $\vc^T(m)\leq dm$ for every~$m$.
\end{proposition}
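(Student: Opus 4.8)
The plan is to apply Lemma~\ref{lem:qe and VCd} with $\Phi$ the family of all positive-primitive $\mathcal L_R$-formulas $\varphi(x;y)$ in the single object variable~$x$ (with varying tuples~$y$ of parameter variables). Condition~(1) of that lemma holds by the Baur--Monk Theorem, which says that every $\mathcal L_R$-formula is equivalent in $T$ to a Boolean combination of p.p.~formulas. So it remains to verify condition~(2): every finite $\Delta\subseteq\Phi$ has UDTFS in $M$ with $d$~parameters. After adding dummy parameter variables we may assume $\Delta=\Delta(x;y)$ for a single tuple~$y$, with $\abs{x}=1$.

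The key observation is that the instances of a p.p.~formula define cosets of p.p.~subgroups. For $\varphi(x;y)\in\Delta$ the set $\{(a,b):\,\models\varphi(a,b)\}$ is a p.p.~definable subgroup of $M\times M^{\abs{y}}$, so for every $b\in M^{\abs{y}}$ the set $\varphi(M;b)$ is either empty or a coset of the p.p.~definable subgroup $H_\varphi:=\varphi(M;0)$ of~$M$; note that $H_\varphi\in\PP(M)=\PP_1(M)$. Hence, with $\mathcal H:=\{H_\varphi:\varphi\in\Delta\}$, a finite subset of~$\PP(M)$, every element of the set system $\mathcal S_\Delta=\{\varphi(M;b):\varphi\in\Delta,\ b\in M^{\abs{y}}\}$ is either empty or a coset of a subgroup in~$\mathcal H$.

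Next I would bound the breadth of $\mathcal S_\Delta$. Since the intersection of any family of subgroups contains~$0$, the breadth of the set system of all p.p.~definable subgroups of~$M$ (in the sense of Section~\ref{sec:breadth}) agrees with the breadth of the meet-semilattice $\PP(M)$, which equals~$d$ by hypothesis; being a sub-collection, $\mathcal H$ has breadth at most~$d$. By Lemma~\ref{lem:breadth of cosets}, the collection of all cosets of members of~$\mathcal H$ has breadth at most~$d$; adjoining the empty set (which lies in no non-empty intersection) and passing to the sub-collection~$\mathcal S_\Delta$ does not raise the breadth, so $\mathcal S_\Delta$ has breadth at most~$d$. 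Lemma~\ref{lem:breadth and UDTFS} now yields that $\Delta$ has UDTFS in~$M$ with $d$ parameters (UDTFS with fewer parameters trivially gives UDTFS with~$d$), establishing condition~(2). Thus $T$ has the $\VC{}d$~property by Lemma~\ref{lem:qe and VCd}. Finally, if $M$ is infinite then $T$ has no finite models, and by the remarks following Theorem~\ref{VCdensity, 2} the $\VC{}d$~property gives $\vc^*(\varphi)\leq d\,\abs{x}$ for every partitioned $\mathcal L$-formula~$\varphi(x;y)$; that is, $\vc^T(m)\leq dm$ for all~$m$.

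The only step that requires genuine care is the coset observation of the second paragraph — checking that $\varphi(M;b)$ is a coset of the p.p.~subgroup $\varphi(M;0)$ and that these subgroups lie in $\PP_1(M)$ — together with the minor bookkeeping identifying the breadth of $\PP(M)$ as a semilattice with the breadth of the associated set system of subgroups; after that, the proof is just an assembly of Lemmas~\ref{lem:qe and VCd}, \ref{lem:breadth of cosets} and~\ref{lem:breadth and UDTFS} and the Baur--Monk Theorem.
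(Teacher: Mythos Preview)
Your proof is correct and follows essentially the same route as the paper: Baur--Monk gives condition~(1) of Lemma~\ref{lem:qe and VCd}, Lemma~\ref{lem:breadth of cosets} bounds the breadth of the coset system, Lemma~\ref{lem:breadth and UDTFS} converts this into UDTFS with $d$ parameters, and Lemma~\ref{lem:qe and VCd} finishes. You are simply more explicit than the paper about the coset structure of instances of p.p.\ formulas and about handling the empty instance, but the argument is the same.
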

\begin{proof}
Let $\Phi$ be the set of all p.p.~$\mathcal L_R$-formulas in the single object variable $x$. By Baur-Monk, every $\mathcal L_R$-formula $\varphi(x;y)$ is equivalent in $T$ to a Boolean combination of formulas from $\Phi$. 
So if $M$ has breadth $d$, then by  Lemma~\ref{lem:breadth of cosets}, the set system $\mathcal S_\Phi$  has breadth~$d$,
hence by Lemma~\ref{lem:breadth and UDTFS}, every finite subset of $\Phi$ has UDTFS with~$d$ parameters;
therefore $T$ has the $\VC{}d$ property by Lemma~\ref{lem:qe and VCd}.
\end{proof}

\begin{remark*}
A suitable modification of the argument in the proof above (using the natural multi-sorted version of Theorem~\ref{VCdensity, 2}, cf.~\cite[Corollary~5.8]{ADHMS}) shows more generally that if $\mathbf G$ is a $1$-based expansion of a group, and the meet-semilattice of $\acl^\eq(\emptyset)$-definable subgroups of $G$ has breadth $d$, then $(\mathbf G,G)$ has the $\VC{}d$ property, and hence
$\vc^{\Th(\mathbf G)}(m)\leq dm$ for every $m$. 
\end{remark*}

We record two immediate corollaries of the previous proposition. The $R$-module~$M$ is called \emph{p.p.-uniserial} if the ordered set $\PP(M)$ is a chain. 

\begin{corollary}\label{cor:pp uniserial}
If $M$ is p.p.-uniserial, then $T$ has the $\VC{}1$ property.
\end{corollary}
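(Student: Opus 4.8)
The plan is to obtain this as an immediate consequence of Proposition~\ref{prop:breadth and vc for modules}, once we know that a p.p.-uniserial module has breadth at most~$1$. First I would note that, by the discussion in Section~\ref{sec:dimension ...}, a non-empty ordered set is a chain if and only if its width equals~$1$; thus the hypothesis that $\PP(M)$ is a chain means precisely that $\width(\PP(M))=1$. Since a chain has finite width, the fundamental inequality~\eqref{eq:dilworth, 2} then yields
$$\breadth(M)=\breadth(\PP(M))\leq\dim(\PP(M))\leq\width(\PP(M))=1.$$
(Alternatively, one can argue directly from the semilattice description of breadth recalled in Section~\ref{sec:dimension ...}: in a chain, a finite meet $\varphi_1\wedge\cdots\wedge\varphi_n$ coincides with the smallest of the $\varphi_i$, so $\breadth(\PP(M))\leq 1$ without appealing to Dilworth's theorem.)

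Having established that $M$ has finite breadth $d\leq 1$, I would then apply Proposition~\ref{prop:breadth and vc for modules} to conclude that $T$ has the $\VC{}d$ property. Finally, since the $\VC{}d$ property trivially implies the $\VC{}d'$ property whenever $d\leq d'$ (any uniform definition of types over finite sets with $d$ parameters becomes one with $d'$ parameters after adding dummy parameter variables), it follows that $T$ has the $\VC{}1$ property, as claimed.

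I do not expect any real obstacle here: all the substance is contained in Proposition~\ref{prop:breadth and vc for modules} together with the elementary fact that chains have breadth at most~$1$. The only point requiring a moment's attention is the degenerate case in which $M$ is trivial, so that $\PP(M)$ is a one-element lattice of breadth~$0$; but this is harmless, since breadth~$0$ still gives the $\VC{}1$ property by the padding observation above (and in any case the statement is vacuous when $M$ is trivial, so one may simply assume $M\neq 0$).
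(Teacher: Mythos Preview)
Your proposal is correct and matches the paper's approach: the paper states this corollary without proof, as an immediate consequence of Proposition~\ref{prop:breadth and vc for modules} together with the (implicit) observation that a chain has breadth at most~$1$. Your write-up simply makes explicit the trivial step the paper leaves to the reader.
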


The following corollary offers a more precise result than Corollary~\ref{cor:finite MR groups} (but with an identical bound on $\vc^T$), under the additional assumption that $M^{\aleph_0}\equiv M$.

\begin{corollary}\label{cor:modules with finite MR}
Suppose $M^{\aleph_0}\equiv M$.
If $M$ has finite Morley rank $d$, then $M$ has the $\VC{}d$ property; in particular, if $\MRk(T)=1$,  then $T$ has the $\VC{}1$ property.
\end{corollary}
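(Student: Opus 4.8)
The plan is to obtain this immediately from Proposition~\ref{prop:breadth and vc for modules}, combined with the bound on the breadth of a module in terms of its Morley rank that was recorded at the end of Section~\ref{sec:breadth and pp subgroups}. Recall from that discussion (see \cite[Example~3.14]{ADHMS}) that the hypothesis $M^{\aleph_0}\equiv M$ together with $\MRk(T)<\omega$ forces $\height(M)=\MRk(T)+1$; in particular $\PP(M)$ has finite height. Since $\PP(M)$ is a (modular) lattice with a largest element, inequality~\eqref{eq:breadth vs height} then applies with strict inequality, giving $\breadth(M)<\height(M)=\MRk(T)+1$, i.e., $\breadth(M)\leq\MRk(T)=d$.

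Next I would invoke the monotonicity of the $\VC{}e$ property in $e$. Writing $e:=\breadth(M)\leq d$, Proposition~\ref{prop:breadth and vc for modules} gives that $T$ has the $\VC{}e$ property: every finite set $\Delta(x;y)$ of partitioned $\mathcal L_R$-formulas with $\abs{x}=1$ has UDTFS with $e$ parameters. A uniform definition with $e$ parameters is trivially also one with $d$ parameters, obtained by adjoining $d-e$ dummy parameter variables to each defining formula (and, given a type $q\in S^\Delta(B)$ defined by $b_1,\dots,b_e\in B$, filling the remaining slots by, say, repeating $b_1$ when $B\neq\emptyset$, the empty case being trivial). Hence $T$ has the $\VC{}d$ property, which is the first assertion; and then, when $M$ is infinite, the bound $\vc^T(m)\leq dm$ follows from the discussion after Theorem~\ref{VCdensity, 2} exactly as in the last sentence of Proposition~\ref{prop:breadth and vc for modules}.

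The ``in particular'' clause is just the case $d=1$: here $\breadth(M)\leq 1$, so Proposition~\ref{prop:breadth and vc for modules} already yields the $\VC{}1$ property directly, with no padding needed. I do not expect any real obstacle in this argument: all the substance is contained in Proposition~\ref{prop:breadth and vc for modules} and in the computation $\height(M)=\MRk(T)+1$ from \cite{ADHMS}. The only points that deserve a line of justification are the passage from ``$\height$ finite'' to ``$\breadth\leq\MRk$'' via~\eqref{eq:breadth vs height} (using that $\PP(M)$ has a top element), and the elementary fact that the $\VC{}e$ property implies the $\VC{}d$ property for $e\leq d$.
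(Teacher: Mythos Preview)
Your proof is correct and is essentially the argument the paper has in mind: the corollary is stated there as an immediate consequence of Proposition~\ref{prop:breadth and vc for modules} together with the observation, recorded at the end of Section~\ref{sec:breadth and pp subgroups}, that $M^{\aleph_0}\equiv M$ and $\MRk(T)<\omega$ force $\breadth(M)\leq\MRk(T)$. The only thing you add beyond the paper's one-line derivation is making explicit the (trivial) monotonicity of the $\VC{}e$ property in $e$, which the paper takes for granted.
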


\subsection{Commensurability and VC~density.}
In the previous subsection, for the theory $T=\Th(M)$ of the infinite $R$-module $M$ we obtained an upper bound on the VC~density $\vc^T(m)$ in terms of $m$ and the breadth of the lattice $\PP(M)=\PP_1(M)$. In this subsection we show that $\vc^T(m)$ can indeed be computed precisely as the breadth of a suitable quotient of the lattice $\PP_m(M)$.
In fact, everything generalizes to $1$-based groups,  so we work in this wider setting. (In the last corollary of this subsection we additionally assume commutativity.) 

Let $G$ be a group (written multiplicatively). For subgroups $H_1$, $H_2$ of $G$ we write $H_1 \lesssim H_2$ if $H_1\cap H_2$ has finite index in $H_1$. It is easy to see that $\lesssim$ is a quasi-ordering on the set of subgroups of $G$ which extends the ordering by inclusion. The equivalence relation $\sim$ associated to $\lesssim$ is called commensurability: $H_1\sim H_2$ iff $H_1\cap H_2$ is of finite index in both $H_1$ and $H_2$. The $\sim$-class of a subgroup of $G$ is called its commensurability class; the quasi-ordering $\lesssim$ induces an ordering on the set of commensurability classes of subgroups of $G$.
One easily verifies:

\begin{lemma}
Let $H_1$ and $H_2$ be subgroups of $G$ with $H_1 \lesssim H_2$. Then
\begin{enumerate}
\item $H_1\cap H \lesssim H_2\cap H$ for every subgroup $H$ of $G$;
\item $H_1H\lesssim H_2H$ for every normal subgroup $H$ of $G$.
\end{enumerate}
\end{lemma}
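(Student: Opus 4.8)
The plan is to reduce both parts to the elementary fact that if a subgroup $K$ has finite index in a group $L$, then $K\cap L'$ has finite index in $L'$ for every subgroup $L'\le L$ (the $(K\cap L')$-cosets in $L'$ inject into the $K$-cosets in $L$), together with its homomorphic counterpart: a group homomorphism carries a finite-index inclusion to a finite-index inclusion, and index is preserved under taking full preimages.

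For part (1), I would first note that $(H_1\cap H)\cap(H_2\cap H)=H_1\cap H_2\cap H$, and rewrite this as $(H_1\cap H_2)\cap(H_1\cap H)$. Since $H_1\lesssim H_2$ says precisely that $H_1\cap H_2$ has finite index in $H_1$, the fact quoted above (applied with $L=H_1$, $K=H_1\cap H_2$, $L'=H_1\cap H$) gives that $(H_1\cap H_2)\cap(H_1\cap H)$ has finite index in $H_1\cap H$, i.e.\ $H_1\cap H\lesssim H_2\cap H$.

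For part (2), let $\pi\colon G\to G/H$ be the canonical surjection (legitimate as $H$ is normal). Then $\pi(H_i)=H_iH/H$ and, since $H=\ker\pi\subseteq H_iH$, one has $\pi^{-1}(\pi(H_i))=H_iH$ for $i=1,2$; hence $\pi^{-1}\big(\pi(H_1)\cap\pi(H_2)\big)=H_1H\cap H_2H$. As full preimages preserve index, it suffices to show $\pi(H_1)\cap\pi(H_2)$ has finite index in $\pi(H_1)$. But $\pi$ maps the finitely many cosets of $H_1\cap H_2$ in $H_1$ onto a set of representatives for the cosets of $\pi(H_1\cap H_2)$ in $\pi(H_1)$, so $\pi(H_1\cap H_2)$ — and therefore the larger subgroup $\pi(H_1)\cap\pi(H_2)$ — has finite index in $\pi(H_1)$. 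Thus $H_1H\lesssim H_2H$.

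There is no genuine obstacle; the only place calling for a little care is the bookkeeping in part (2) verifying that $H_1H\cap H_2H$ is exactly the $\pi$-preimage of $\pi(H_1)\cap\pi(H_2)$, which rests on the observation $\ker\pi=H\subseteq H_1H\cap H_2H$.
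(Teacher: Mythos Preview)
Your proof is correct. The paper omits the proof entirely (stating only ``One easily verifies''), so there is nothing to compare against; your argument is exactly the kind of routine verification the authors had in mind, and the bookkeeping you flag in part~(2) is handled cleanly.
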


In particular, by (1), $\cap$ induces a semilattice structure on the set of commensurability classes of subgroups of $G$; by (1) and (2), there is also a natural lattice structure on the set of commensurability classes of normal subgroups of $G$.

If $G$ is finite, there is only one commensurability class; from now on assume $G$ is infinite.
Let also $\mathbf G$ be an expansion of $G$ (viewed as a structure in the language of groups as usual). {\it Throughout the rest of this subsection we assume $T=\Th(\mathbf G)$ is $1$-based.}\/ We then have:

\begin{proposition}\label{prop:commensurable}
Suppose that the semilattice of commensurability classes of $\acl^\eq(\emptyset)$-de\-fin\-a\-ble subgroups of $G^m$ has finite breadth $d$. Then $\vc^T(m)\leq d$.
\end{proposition}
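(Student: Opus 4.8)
The plan is to prove the equivalent statement that $\vc^*(\varphi)\leq d$ for every partitioned $\mathcal L$-formula $\varphi(x;y)$ with $\abs{x}=m$, using the dual description $\vc^T(m)=\sup\{\vc^*(\varphi):\abs{x}=m\}$. First, by Lemma~\ref{lem:expansions by constants} I would add constants naming $\acl^\eq(\emptyset)$ to the language, so that ``$\acl^\eq(\emptyset)$-definable'' becomes ``$\emptyset$-definable''; this changes neither $\vc^T$ nor the semilattice $\mathcal H=\mathcal H_m$ of $\acl^\eq(\emptyset)$-definable subgroups of $G^m$ nor its commensurability quotient $\mathcal H/{\sim}$, so still $\breadth(\mathcal H/{\sim})=d$. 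Next I would invoke the structure theory of $1$-based groups: every definable subset of $G^m$ is a Boolean combination of cosets of definable subgroups of $G^m$, and — using rigidity together with the fact that a definable family of subgroups of a $1$-based group meets only finitely many commensurability classes, each with an $\acl^\eq(\emptyset)$-definable representative — a compactness argument yields, for the given $\varphi$, a number $N$ and finitely many $\emptyset$-definable subgroups $H_1,\dots,H_r$ of $G^m$ such that for every parameter $b$ the set $\varphi(G^m;b)$ is a Boolean combination of at most $N$ cosets $gK$ with $g\in G^m$ and $K$ commensurable with some $H_i$. Let $L_0\subseteq\mathcal H$ be the meet-subsemilattice generated by $H_1,\dots,H_r$; since $L_0/{\sim}$ embeds as a subsemilattice into $\mathcal H/{\sim}$, the characterization of breadth in terms of embeddings of $(2^{[n]},\cap)$ gives $\breadth(L_0/{\sim})\leq d$.

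With this in hand, fix a finite $B\subseteq M^{\abs{y}}$ and put $t:=N\abs{B}$. Complete $\varphi(x;B)$-types correspond to the nonempty atoms of the Boolean algebra on $G^m$ generated by $\{\varphi(G^m;b):b\in B\}$, which is contained in the Boolean algebra $\mathcal B$ generated by $t$ cosets $C_1,\dots,C_t$, where $C_j=g_jK_j$ and each $K_j$ is commensurable with some $H_{(j)}\in\{H_1,\dots,H_r\}$. So it suffices to bound the number of nonempty atoms of $\mathcal B$ by $O(t^d)$, with an implied constant depending only on $\varphi$. I would stress at this point that one cannot simply quote Lemma~\ref{lem:breadth and UDTFS} (the coset system itself need not have breadth $d$, only its ``commensurability shadow'' does), nor route through a bound on the number of parameters needed to define $\varphi(x;B)$-types (cf.\ Example~\ref{ex:n-sets}); the count of atoms has to be done directly.

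The count itself would go as follows. For a nonempty atom $A$ set $S=S(A):=\{j\in[t]:A\subseteq C_j\}$ and $C_S:=\bigcap_{j\in S}C_j$; then $C_S$ is a coset of $K_S:=\bigcap_{j\in S}K_j$, and since $A\subseteq C_S$ one checks that $S=\{j:C_S\subseteq C_j\}$, so $A\mapsto C_S$ is injective on atoms and it is enough to bound the number of cosets $C_S$ that arise. By the lemma preceding this proposition commensurability is preserved under intersection, so $K_S\sim\bigcap_{j\in S}H_{(j)}\in L_0$; as $\breadth(L_0/{\sim})\leq d$ there is $\gamma=\gamma(S)\subseteq S$ with $\abs{\gamma}\leq d$ and $\bigcap_{j\in\gamma}H_{(j)}\sim\bigcap_{j\in S}H_{(j)}$, hence $K_\gamma:=\bigcap_{j\in\gamma}K_j\sim K_S$; since $K_S\leq K_\gamma$, the index $[K_\gamma:K_S]$ is finite and $C_S$ lies inside the coset $C_\gamma:=\bigcap_{j\in\gamma}C_j$ of $K_\gamma$. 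Now fix $\gamma$ with $\abs{\gamma}\leq d$, of which there are at most $\binom{t}{\leq d}$. For every atom $A$ with $\gamma(S(A))=\gamma$ one has $\gamma\subseteq S(A)$, hence $K_{S(A)}=\bigcap_{j\in S(A)}(K_\gamma\cap K_j)$ with each factor of finite index in $K_\gamma$. Here is where stability enters as the key tool: by the Baldwin--Saxl chain condition every $K_S$ is already an intersection of at most $\ell$ of the $K_j$ (with $\ell$ depending only on $\varphi$), and, invoking also that a uniformly definable family of finite-index subgroups of a fixed definable group is finite and of uniformly bounded index, one obtains a constant $e=e(\varphi)$ bounding all indices $[K_\gamma:K_S]$ that occur as well as the number of distinct finite-index subgroups of $K_\gamma$ of the form $K_\gamma\cap K_{j'}$. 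Consequently $K_{S(A)}$ is the intersection of a subset of at most $e$ fixed subgroups of $K_\gamma$, so it is one of at most $2^e$ subgroups, and $C_{S(A)}$ — a coset of such a subgroup contained in the fixed coset $C_\gamma$ with $[K_\gamma:K_{S(A)}]\leq e$ — is one of at most $2^e\cdot e$ cosets. By injectivity of $A\mapsto C_{S(A)}$ there are at most $2^e e$ atoms with $\gamma(S(A))=\gamma$, hence at most $\binom{t}{\leq d}\,2^e e=O(t^d)$ atoms in all. Thus $\pi^*_\varphi(t)=O(t^d)$, so $\vc^*(\varphi)\leq d$, and taking the supremum over $\varphi$ with $\abs{x}=m$ gives $\vc^T(m)\leq d$.

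I expect the main obstacle to be precisely the passage from commensurability to genuine containment that is packaged into the constant $e$: because commensurable subgroups are not equal, the hypothesis on $\breadth(\mathcal H/{\sim})$ only pins down the positive cells $C_S$ up to bounded index, and checking that all the resulting indices — and all the resulting choices of near-coinciding subgroups — are uniformly bounded in terms of $\varphi$ alone requires a somewhat delicate iterated use of the Baldwin--Saxl condition and of the finiteness of uniformly definable families of finite-index subgroups of a fixed definable group. Everything else (the reduction to cosets, the injectivity of $A\mapsto C_S$, the stratification by $\gamma$) is routine.
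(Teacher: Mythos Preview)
Your overall strategy---reduce to cosets via $1$-basedness, send each type (or atom) to the coset obtained by intersecting its positive instances, use the breadth hypothesis to pin that coset down with $d$ parameters from $B$, then bound the residual ambiguity by a constant depending only on the formula---is exactly the paper's. The difference, and the source of the obstacle you flag at the end, lies in the reduction step. You arrange only that each $K_j$ is \emph{commensurable} with one of finitely many fixed $\acl^\eq(\emptyset)$-definable subgroups $H_1,\dots,H_r$, but Hrushovski--Pillay actually gives more: every definable subset of $G^m$ is a Boolean combination of cosets of $\acl^\eq(\emptyset)$-definable subgroups \emph{themselves}. Combining this with Lemma~\ref{lem:encoding finite sets of formulas}, the paper reduces directly to bounding $\vc^*(\Delta)$ for a finite $\Delta$ in which every instance $\varphi(G;b)$ of each $\varphi\in\Delta$ is a coset of a \emph{fixed} subgroup $H_\varphi$, independent of $b$. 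Then only $\abs{\Delta}$ subgroups are in play; for $q\in S^\Delta(B)$ with positive part $\Psi_q=\{\psi\in\Delta:\psi(x;b)\in q\text{ for some }b\}$, the set $D_q=\bigcap_{\psi(x;b)\in q}\psi(G;b)$ is a coset of $H_{\Psi_q}=\bigcap_{\psi\in\Psi_q}H_\psi$, and once $\Psi_q^\#\subseteq\Psi_q$ of size $\le d$ with $H_{\Psi_q^\#}\sim H_{\Psi_q}$ is chosen, the index $n_{\Psi_q}=[H_{\Psi_q^\#}:H_{\Psi_q}]$ is a fixed finite number depending only on $\Psi_q\subseteq\Delta$. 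The injection $q\mapsto(\Psi_q,i_q,b_q)\in 2^\Delta\times[n]\times B^d$ with $n=\max_\Psi n_\Psi$ finishes the proof; no Baldwin--Saxl or uniform-index argument is needed at all.

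Under your weaker reduction the constant $e$ is genuinely at risk. You need, uniformly in the parameters defining both $K_\gamma$ and $K_{j'}$, a bound on $[K_\gamma:K_\gamma\cap K_{j'}]$ whenever that index is finite, and on the number of distinct such intersections. Your appeal to ``a uniformly definable family of finite-index subgroups of a fixed definable group is finite and of uniformly bounded index'' is the right shape of fact, but $K_\gamma$ is not fixed---it varies with $B$ and $\gamma$---and ``the index is finite'' is not a priori a definable condition on the parameters, so the required uniformity over all $\gamma$ does not follow directly from what you invoke. One can probably repair this by routing all indices through the fixed $H_i$ (bounding $[K_\gamma:K_\gamma\cap K_{j'}]$ in terms of $[H_\gamma:H_\gamma\cap H_{(j')}]$ and uniform bounds on the ``distance'' between each $K_j$ and $H_{(j)}$), but the paper's route---take $K_j\in\{H_1,\dots,H_r\}$ outright, as Hrushovski--Pillay permits---simply eliminates the issue.
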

\begin{proof}   
By Lemma~\ref{lem:expansions by constants} we may assume that the language of $\mathbf G$ includes a constant symbol for every element of $G$.
Let $\Delta(x;y)$ be a finite set of partitioned
  formulas where $x$ is a tuple of variables of length $m$. 
By $1$-basedness (see \cite{hp}) and Lemma~\ref{lem:encoding finite sets of formulas} we may assume that each
instance of a formula $\varphi\in\Delta$ defines a coset of an $\acl^\eq(\emptyset)$-de\-fin\-a\-ble subgroup of 
$G^m$ that we denote by $H_\varphi$. (Here and in the rest of the proof,
  ``coset'' means ``left coset.'') 
Also  for every $\Psi\subseteq\Delta$
we   set $H_\Psi:=\bigcap_{\psi\in\Psi} H_\psi$. We may also assume
  $\abs{\Delta}\geq d$.

  For each non-empty subset $\Psi$ of $\Delta$ we choose
  $\Psi^{\#}\subseteq \Psi$ with $\abs{\Psi^\#}\leq d$ 
such that $H_{\Psi^\#}\sim
  H_\Psi$. Let $\varphi_{\Psi,1},\dots,\varphi_{\Psi,d}$ be the
  elements of $\Psi^\#$.  Also, choose representatives
  $c_{\Psi,1},\dots,c_{\Psi,n_\Psi}\in H_{\Psi^\#}$ for the cosets
  of $H_\Psi$ in $H_{\Psi^\#}$. Set $n:=\max_{\Psi\subseteq\Delta}
  n_\Psi$ where $n_\emptyset=1$.  For each $\Psi$ and a coset $E$ of
  $H_{\Psi^\#}$ in $G^m$ we also choose an arbitrary element $c_\Psi(E)$ of
  $E$.

  Let $B\subseteq G^{\abs{y}}$ be finite and non-empty. We define a map
\begin{equation}\label{eq:embedding into triples}
q\mapsto (\Psi_q,i_q,b_q)\colon S^{\Delta}(B)\to 2^{\Delta}\times [n] \times B^d
\end{equation}
as follows: given $q\in S^\Delta(B)$ 
let
$$\Psi = \Psi_q := \{ \psi\in\Delta:\text{$\psi(x;b)\in q$ for some $b\in B$}\},$$
so
$$D = D_q := \bigcap_{\psi(x;b)\in q} \psi(G;b)$$
is a coset of $H_\Psi$.  If $\Psi=\emptyset$ we let $i_q:=1$ and
$b_q\in B^d$ be arbitrary. Suppose $\Psi\neq\emptyset$, and take
$b=b_q=(b_1,\dots,b_d)\in B^d$ such that $\varphi_{\Psi,j}(x;b_j)\in
q$ for $j=1,\dots,d$. So~$D$ is contained in the coset
$E=\bigcap_{j=1}^d \varphi_{\Psi,j}(G;b_j)$ of $H_{\Psi^\#}$, hence
$c_\Psi(E)^{-1}D$ is a coset of $H_\Psi$ contained in $H_{\Psi^\#}$
and thus of the form $c_\Psi(E)^{-1}D = c_{\Psi,i} H_\Psi$ for a
unique $i=i_q\in [n_\Psi]$.  We have
$$D = c_\Psi(E) c_{\Psi,i}  H_\Psi \qquad\text{where $E=\varphi_{\Psi,1}(x;b_1)\cap\cdots\cap\varphi_{\Psi,d}(x;b_d)$,}$$
so the triple $(\Psi,i,b)$ uniquely determines $q$. Hence our map \eqref{eq:embedding into triples} is one-to-one. This shows that $\vc^*(\Delta)\leq d$ as required.
\end{proof}

It would be desirable if, strengthening Proposition~\ref{prop:breadth and vc for modules}, one could deduce linear growth of $\vc^T$ by simply knowing that the semilattice of commensurability classes of $\acl^\eq(\emptyset)$-definable subgroups of $G$ has finite breadth. An answer to the following natural question (which we do not know) would yield this:

\begin{question}
  Suppose the semilattice of commensurability classes of
  $\acl^\eq(\emptyset)$-definable subgroups of $G$ has finite breadth
  $d$.  Does the structure $\mathbf G$ have the $\VC{}d$ property?
  \textup{(}Of course, the choice of the coset element $c_\Psi(E)$ in the proof
  above is the main obstacle.\textup{)}
\end{question}

This proposition gives rise to a characterization of dp-minimal $1$-based groups (complementing the stability-theoretic characterization of stable dp-mini\-mal theories given in~\cite{OU}):

\begin{corollary}\label{cor:commensurable, 1}
The following are equivalent:
\begin{enumerate}
\item $T$ is dp-minimal;
\item $T$ is vc-minimal: $\vc^T(1)<2$;
\item $\vc^T(1)=1$;
\item the set of commensurability classes of $\acl^\eq(\emptyset)$-definable subgroups of $G$ is linearly ordered by $\lesssim$.
\end{enumerate} 
\end{corollary}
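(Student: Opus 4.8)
The plan is to prove the cycle of implications $(1)\Rightarrow(4)\Rightarrow(3)\Rightarrow(2)\Rightarrow(1)$, of which only $(1)\Rightarrow(4)$ requires real work. The implication $(3)\Rightarrow(2)$ is immediate since $1<2$, and $(2)\Rightarrow(1)$ is the fact recalled in Section~\ref{sec:dp-min} (from \cite[Corollary~5.13]{ADHMS}) that vc-minimality implies dp-minimality. For $(4)\Rightarrow(3)$ I would argue as follows: the set of commensurability classes of $\acl^\eq(\emptyset)$-definable subgroups of $G=G^1$ is a meet-semilattice under $\cap$, and condition~(4) says it is a chain, hence of breadth~$1$ (it contains the two distinct elements $[\{e\}]$ and $[G]$, as $G$ is infinite). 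Then Proposition~\ref{prop:commensurable} applied with $m=1$ gives $\vc^T(1)\le 1$; since $T$ is $1$-based, hence stable, hence NIP, we also have $\vc^T(1)\ge 1$, so $\vc^T(1)=1$.

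For $(1)\Rightarrow(4)$ I would prove the contrapositive. Suppose the commensurability classes of $\acl^\eq(\emptyset)$-definable subgroups of $G$ are not linearly ordered by $\lesssim$; then there are $\acl^\eq(\emptyset)$-definable subgroups $H_1,H_2$ of $G$ with $[H_1:K]=[H_2:K]=\infty$, where $K:=H_1\cap H_2$. After naming finitely many constants to define $H_1$ and $H_2$ (harmless for dp-minimality, since an ICT pattern in the expanded language is one in the original, and for $\vc^T$ by Lemma~\ref{lem:expansions by constants}) we may assume $H_1,H_2$ are $\emptyset$-definable. Let $\phi_1(x;y)$ be the formula $y^{-1}x\in H_2\wedge y\in H_1$ and $\phi_2(x;y)$ the formula $xy^{-1}\in H_1\wedge y\in H_2$, both with $\abs{x}=1$. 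Choose $u_i\in H_1$ $(i\in\bN)$ with the left cosets $u_iK$ pairwise distinct, and $v_j\in H_2$ $(j\in\bN)$ with the right cosets $Kv_j$ pairwise distinct; this is possible precisely because $[H_1:K]$ and $[H_2:K]$ are infinite. Then $\phi_1(x;u_i)$ is equivalent to $x\in u_iH_2$, and the $u_iH_2$ are pairwise distinct left cosets of $H_2$ (if $u_iH_2=u_kH_2$ then $u_k^{-1}u_i\in H_1\cap H_2=K$, so $u_iK=u_kK$); symmetrically $\phi_2(x;v_j)$ is equivalent to $x\in H_1v_j$, and the $H_1v_j$ are pairwise distinct right cosets of $H_1$. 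Hence, for all $i$ and $j$, the set
\[\big\{\phi_1(x;u_i),\phi_2(x;v_j)\big\}\cup\big\{\neg\phi_1(x;u_k):k\ne i\big\}\cup\big\{\neg\phi_2(x;v_l):l\ne j\big\}\]
is equivalent to $x\in u_iH_2\cap H_1v_j$, and this intersection contains $u_iv_j$ because $u_i^{-1}(u_iv_j)=v_j\in H_2$ and $(u_iv_j)v_j^{-1}=u_i\in H_1$. Thus $\bigl(\phi_1,\phi_2,(u_i)_{i\in\bN},(v_j)_{j\in\bN}\bigr)$ is an ICT pattern in $\mathbf G$, so $T=\Th(\mathbf G)$ is not dp-minimal, i.e.\ $(1)$ fails.

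The main obstacle is the construction of the ICT pattern in the previous paragraph. The naive attempt---take $\phi_1$, $\phi_2$ to describe (left) cosets of $H_1$ and $H_2$ respectively---breaks down, because forcing the instances $\phi_1(x;u_i)$ to name pairwise distinct cosets makes the $u_i$ range over distinct $H_1$-cosets, and then the intersections $u_iH_1\cap v_jH_2$ need not be nonempty unless $H_1$ and $H_2$ permute. The device that circumvents this is the asymmetric choice above---left cosets of $H_2$ for $\phi_1$, right cosets of $H_1$ for $\phi_2$---for which the single element $u_iv_j$ witnesses \emph{every} one of the required intersections, with no normality or permutability hypothesis; this is what lets the argument cover all $1$-based groups, and in particular all modules (the case needed for Theorems~\ref{thm:aleph0-cat abelian} and~\ref{thm:abelian}). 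A secondary, routine point is the $\acl^\eq$-bookkeeping: one either names the relevant parameters as constants or passes to $\mathbf M^\eq$, using that neither operation changes dp-minimality or the function $\vc^T$.
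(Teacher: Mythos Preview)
Your proof is correct and follows essentially the same approach as the paper: the same cycle of implications, with the only substantive step being $(1)\Rightarrow(4)$ via an ICT pattern built from the asymmetric left/right coset trick (your $u_iH_2\cap H_1v_j\ni u_iv_j$ is exactly the paper's $b_{1j_1}H_1\cap H_2b_{2j_2}\ni b_{1j_1}b_{2j_2}$, up to swapping the roles of $H_1$ and $H_2$). The only cosmetic difference is that you name constants to make $H_1,H_2$ $\emptyset$-definable, whereas the paper works with $\mathcal L^\eq$-formulas and then observes that the resulting ICT pattern in $(\mathbf G,G)$ descends to one in $\mathbf G$; both are valid ways of handling the $\acl^\eq$ bookkeeping.
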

\begin{proof}
The implication (4)~$\Rightarrow$~(3) holds by Proposition~\ref{prop:commensurable}, (3)~$\Rightarrow$~(2) is trivial, and (2)~$\Rightarrow$~(1) holds by \cite[Corollary~5.13]{ADHMS}, so it only remains to show (1)~$\Rightarrow$~(4). Suppose $H_1$, $H_2$ are $\acl^\eq(\emptyset$)-definable subgroups of $G$ which are incomparable with respect to $\lesssim$, i.e., $[H_i:H_1\cap H_2]=\infty$ for each $i=1,2$. Let $H=H_1\cap H_2$ and
choose elements $b_{ij}\in H_{3-i}\setminus H$ ($i=1,2$, $j\in\bN$) with $b_{ij}b_{ik}^{-1}\notin H$ for $i=1,2$ and distinct $j,k\in\bN$.
For $j_1,j_2\in\bN$ set $a_{j_1,j_2} = b_{1j_1}\cdot b_{2j_2}$. 
We also let $\varphi_1(x;y)$ be an $\mathcal L^{\eq}$-formula stating that $y^{-1}x\in H_1$ and $\varphi_2(x;y)$ be an $\mathcal L^{\eq}$-formula stating that $xy^{-1}\in H_2$. Then
\begin{align*}
\mathbf G\models \varphi_1(a_{j_1,j_2};b_{1j})	&\qquad\Longleftrightarrow\qquad b_{1j}^{-1}b_{1j_1}b_{2j_2}\in H_1 \\
												&\qquad\Longleftrightarrow\qquad b_{1j}^{-1}b_{1j_1}\in H_1 \\
												&\qquad\Longleftrightarrow\qquad j=j_1,
\end{align*}
and similarly $\mathbf G\models \varphi_2(a_{j_1,j_2};b_{2j})$ iff $j=j_2$. Hence $\varphi_1(x,y)$, $\varphi_2(x,y)$ and the sequences $(b_{ij})_{j\in\bN}$ ($i=1,2$) form an ICT pattern in $(\mathbf G,G)$. This gives rise to an ICT pattern in $\mathbf G$, showing that $\mathbf G$ is not dp-minimal. 
\end{proof}

We also obtain the description of $\vc^T(m)$ advertised at the beginning of this subsection. 

\begin{corollary}\label{cor:commensurable, 2}
Suppose $G$ is abelian. Then $\vc^T(m)$ equals the breadth of the lattice of commensurability classes of $\acl^\eq(\emptyset)$-definable subgroups of $G^m$.
\end{corollary}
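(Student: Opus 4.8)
Write $L_m$ for the lattice of commensurability classes of $\acl^\eq(\emptyset)$-definable subgroups of $G^m$. The inequality $\vc^T(m)\leq\breadth(L_m)$ is immediate from Proposition~\ref{prop:commensurable}: the breadth of the lattice $L_m$ equals the breadth of its meet-semilattice reduct, so if this breadth is some finite $d$ then $\vc^T(m)\leq d$ by that proposition, and if it is infinite there is nothing to prove. Hence the content of the corollary is the reverse inequality, and since $\vc^T(m)=\infty$ follows once $\vc^T(m)\geq d$ for all $d$, it suffices to prove the following: \emph{if $d\in\bN$ and $\breadth(L_m)\geq d$, then $\vc^T(m)\geq d$.}

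So fix such a $d$. By the characterisation of breadth recalled in Section~\ref{sec:dimension ...} (Birkhoff's exercise), there is a meet-semilattice embedding $\iota\colon(2^{[d]},{\cap})\to L_m$. Choose $\acl^\eq(\emptyset)$-definable subgroups $H_1,\dots,H_d$ of $G^m$ representing the classes $\iota([d]\setminus\{k\})$ for $k\in[d]$, and for $I\subseteq[d]$ put $H_I:=\bigcap_{i\in I}H_i$. Since $\iota$ preserves meets and $\{k\}=\bigcap_{i\ne k}([d]\setminus\{i\})$ while $\emptyset=\bigcap_{i\in[d]}([d]\setminus\{i\})$, the class of $H_{[d]\setminus\{k\}}$ is $\iota(\{k\})$ and the class of $H_{[d]}$ is $\iota(\emptyset)$; as $\iota$ is injective these differ, and since $H_{[d]}\subseteq H_{[d]\setminus\{k\}}$ this says precisely that $H_{[d]}$ has infinite index in $H_{[d]\setminus\{k\}}$, for each $k\in[d]$. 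Now, for every $k$, the cosets of $H_k$ meeting $H_{[d]\setminus\{k\}}$ are in bijection with the cosets of $H_k\cap H_{[d]\setminus\{k\}}=H_{[d]}$ in $H_{[d]\setminus\{k\}}$, of which there are infinitely many; so we may pick elements $g^{(k)}_1,\dots,g^{(k)}_N\in H_{[d]\setminus\{k\}}$ lying in $N$ pairwise distinct cosets of $H_k$, none of them equal to $H_k$. Note that $g^{(k')}_j\in H_{[d]\setminus\{k'\}}\subseteq H_k$ whenever $k\neq k'$, whereas $g^{(k)}_j\notin H_k$ and $g^{(k)}_j-g^{(k)}_{j'}\notin H_k$ for $j\neq j'$ (the latter two because $H_{[d]\setminus\{k\}}\cap H_k=H_{[d]}$).

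Next, as in the proof of Proposition~\ref{prop:commensurable} — using $1$-basedness, the fact that an $\acl^\eq(\emptyset)$-definable subgroup of $G^m$ is definable over a tuple from the home sort, and Lemma~\ref{lem:encoding finite sets of formulas} — one obtains partitioned $\mathcal{L}$-formulas $\varphi_k(x;y,w)$, $k\in[d]$, with $\abs x=m$ and $w$ a tuple of home-sort parameter variables, and home-sort tuples $\bar a_k$, such that $\varphi_k(G^m;b,\bar a_k)=b+H_k$ for all $b\in G^m$, and such that $\Delta:=\{\varphi_1,\dots,\varphi_d\}$ satisfies $\vc^*(\Delta)\leq\vc^T(m)$. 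Given $N$, take the parameter set $B_N$ consisting of the tuples $(g^{(k)}_j,\bar a_k)$ for $k\in[d]$, $j\in[N]$; so $\abs{B_N}=dN$ and the corresponding instances of the $\varphi_k$ are the cosets $g^{(k)}_j+H_k$. For each partial function $\sigma$ from $[d]$ to $[N]$ set $a_\sigma:=\sum_{k\in\dom\sigma}g^{(k)}_{\sigma(k)}\in G^m$. Using the three membership relations noted above one checks, for all $k\in[d]$ and $j\in[N]$, that
\[\mathbf{G}\models\varphi_k\bigl(a_\sigma;g^{(k)}_j,\bar a_k\bigr)\quad\Longleftrightarrow\quad k\in\dom\sigma\ \text{ and }\ \sigma(k)=j,\]
since $a_\sigma-g^{(k)}_j$ differs from an element of $H_k$ (a sum of terms $g^{(k')}_{\cdot}$ with $k'\ne k$) by $0$ if $k\in\dom\sigma$ and $\sigma(k)=j$, by $g^{(k)}_{\sigma(k)}-g^{(k)}_j\notin H_k$ if $k\in\dom\sigma$ and $\sigma(k)\ne j$, and by $-g^{(k)}_j\notin H_k$ if $k\notin\dom\sigma$. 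Hence distinct $\sigma$ yield distinct complete $\Delta(x;B_N)$-types, so $\abs{S^\Delta(B_N)}\geq(N+1)^d$ and thus $\pi^*_\Delta(dN)\geq(N+1)^d$. Therefore $\pi^*_\Delta(t)$ is not $O(t^r)$ for any $r<d$, i.e.\ $\vc^*(\Delta)\geq d$, and so $\vc^T(m)\geq d$, completing the proof.

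The heart of the argument is the last computation, which is routine once the subgroups $H_k$ have been ``coordinatised'' by the $g^{(k)}_j$: the key point is that the infinite indices $[H_{[d]\setminus\{k\}}:H_{[d]}]$ (which is exactly what breadth $\geq d$ supplies) allow one to realise all $(N+1)^d$ partial choices simultaneously by the single element $a_\sigma$. The one place that genuinely needs care is the passage from the $\acl^\eq(\emptyset)$-definable subgroups $H_k$ — a priori requiring imaginary parameters — to honest partitioned $\mathcal{L}$-formulas $\varphi_k$ with $\abs x=m$ for which $\vc^*(\{\varphi_1,\dots,\varphi_d\})\leq\vc^T(m)$; this is handled by exactly the home-sort-definability observation and $1$-basedness bookkeeping already used in the proof of Proposition~\ref{prop:commensurable} (together with the standard repackaging of a finite $\Delta$ into a single partitioned formula, should one wish to bypass Lemma~\ref{lem:encoding finite sets of formulas}).
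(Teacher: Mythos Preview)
Your argument is correct and follows essentially the same route as the paper. The paper separates the lower bound into Lemma~\ref{lem:breadth and vc for modules} (taking definable subgroups $H_1,\dots,H_d$ with $[H_{\neq i}:H]=\infty$ and producing $t^d$ many $\Delta(x;B)$-types from sums $a_{\mathbf j}=b_{1j(1)}+\cdots+b_{dj(d)}$), and then simply observes that Proposition~\ref{prop:commensurable} together with this lemma gives the corollary. You have effectively inlined that lemma, first extracting the subgroups via the Birkhoff characterization of breadth, and then running the same ``sum of coset representatives'' computation (with partial functions $\sigma$ rather than total ones, yielding $(N+1)^d$ types instead of $N^d$, which makes no difference). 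One cosmetic point: where you worry about passing from $\acl^\eq(\emptyset)$-definable subgroups to honest $\mathcal L$-formulas, the paper sidesteps this entirely---Lemma~\ref{lem:breadth and vc for modules} only needs the $H_i$ to be \emph{definable} (with home-sort parameters), and Lemma~\ref{lem:expansions by constants} lets one absorb those parameters as constants without affecting $\vc^T$; this is cleaner than invoking the bookkeeping from the proof of Proposition~\ref{prop:commensurable}.
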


This is an immediate consequence of Proposition~\ref{prop:commensurable} and the following observation (in which we revert back to additive notation) generalizing the argument in the proof of (1)~$\Rightarrow$~(4) in Corollary~\ref{cor:commensurable, 1}:

\begin{lemma}\label{lem:breadth and vc for modules}
Let $A$ be an expansion of an infinite abelian group in a language~$\mathcal L$ expanding the language $\{0,{+}\}$ of abelian groups. Suppose $A^m$ contains definable subgroups $H_1,\dots,H_d$, where $d>1$,  such that for each $i\in [d]$, the definable subgroup
$H:=H_1\cap\cdots\cap H_d$ has infinite index in
$$H_{\neq i} := H_1\cap\cdots\cap H_{i-1}\cap H_{i+1}\cap\cdots \cap H_d.$$
Then  $\vc^{\Th(A)}(m)\geq d$.
\end{lemma}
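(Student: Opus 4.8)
The plan is to exhibit a $\VC{}d$-type configuration — more precisely, a dp-rank $\geq d$ pattern — witnessed by the $d$ subgroups $H_1,\dots,H_d$ and suitable cosets, and then invoke the fact (noted just after the definition of dp-rank) that dp-rank $\geq d$ implies $\vc^T(1)\geq d$, together with the observation that $\vc(m)\geq\vc(1)$ via encoding; in fact it will be cleaner to work directly with the dual count. First I would reduce, using Lemma~\ref{lem:expansions by constants}, to the case where $\mathcal L$ contains a constant symbol for every element of $A$, so that all cosets $a+H_i$ are definable by partitioned formulas with the single parameter $a$. For each $i\in[d]$, since $[H_{\neq i}:H]=\infty$, I can choose a sequence $(b^i_j)_{j\in\bN}$ of elements of $H_{\neq i}$ lying in pairwise distinct cosets of $H$; concretely, $b^i_j - b^i_k \notin H$ for $j\neq k$, while $b^i_j - b^i_k \in H_{\ell}$ for every $\ell\neq i$ (because both lie in $H_{\neq i}\subseteq H_\ell$).

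Now let $\varphi_i(x;y)$ be the formula $x - y \in H_i$ (an $\mathcal L^{\eq}$-, indeed $\mathcal L$-, formula after the expansion). The core computation is the following: given any $\eta\colon[d]\to\bN$, set $a_\eta := b^1_{\eta(1)} + \cdots + b^d_{\eta(d)} \in A^m$. Then for each $i$ and each $j$,
\begin{align*}
A \models \varphi_i(a_\eta;b^i_j) &\quad\Longleftrightarrow\quad \textstyle\sum_{\ell} b^\ell_{\eta(\ell)} - b^i_j \in H_i \\
 &\quad\Longleftrightarrow\quad b^i_{\eta(i)} - b^i_j \in H_i \\
 &\quad\Longleftrightarrow\quad j = \eta(i),
\end{align*}
where the second equivalence holds because for $\ell\neq i$ we have $b^\ell_{\eta(\ell)}\in H_{\neq\ell}\subseteq H_i$, so those terms may be discarded modulo $H_i$, and the third holds because $b^i_{\eta(i)}-b^i_j\in H_{\neq i}\subseteq H_i$ automatically, so the condition $b^i_{\eta(i)}-b^i_j\in H_i$ is equivalent to $b^i_{\eta(i)}-b^i_j\in H = H_i\cap H_{\neq i}$, i.e.\ to $\eta(i)=j$ by choice of the $b^i_j$. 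Hence the parameters $a^i_j := b^i_j$ together with the formulas $\varphi_i(x;y)$ witness that $\Th(A)$ has dp-rank at least $d$ — for any $\eta$, the set $\{\varphi_i(x;a^i_{\eta(i)}):i\in[d]\}\cup\{\neg\varphi_i(x;a^i_j):i\in[d],\,j\neq\eta(i)\}$ is realized by $a_\eta$.

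By the remark following the definition of dp-rank (an easy generalization of \cite[Corollary~5.13]{ADHMS}), dp-rank $\geq d$ gives $\vc^T(1)\geq d$; and since $\vc^T$ is non-decreasing (indeed $\vc^T(m)\geq\vc^T(1)$, as $\vc^T(m+1)\geq\vc^T(m)+1$), this already yields $\vc^{\Th(A)}(m)\geq d$. Alternatively — and this avoids even citing the dp-rank remark — one counts directly: taking $B=\{b^i_j : i\in[d],\ j\in[t]\}\subseteq A^{\abs{y}}$ of size $dt$ and $\Delta=\{\varphi_1,\dots,\varphi_d\}$, the displayed computation shows the elements $a_\eta$ for $\eta\colon[d]\to[t]$ realize $t^d$ distinct $\Delta(x;B)$-types, so $\pi^*_\Delta(dt)\geq t^d$, giving $\vc^*(\Delta)\geq d$ and hence $\vc^{\Th(A)}(m)\geq d$ by Lemma~\ref{lem:encoding finite sets of formulas}. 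There is essentially no obstacle here; the only point requiring a moment's care is the bookkeeping in the three-line equivalence above — specifically the observation that membership in $H_i$ of the difference $b^i_{\eta(i)}-b^i_j$ collapses to membership in $H$ because the difference already lies in $H_{\neq i}$ — and, in the direct-count version, confirming that distinct $\eta$'s give genuinely distinct types (which is immediate since $\eta$ is recovered from the type as the unique tuple with $\varphi_i(x;b^i_{\eta(i)})$ in it).
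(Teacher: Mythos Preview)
Your direct-count argument in the final paragraph is exactly the paper's proof: the paper defines $\varphi_i(x;y)$ by $x-y\in H_i$, chooses $b_{ij}\in H_{\neq i}$ pairwise inequivalent modulo $H$, forms $a_{\mathbf j}=\sum_i b_{ij(i)}$, verifies the same three-line equivalence (in fact checking the slightly stronger statement $A\models\varphi_i(a_{\mathbf j};b_{kl})\iff k=i$ and $l=j(i)$, though your version already suffices to separate the types), and concludes $\pi^*_\Delta(dt)\geq t^d$. One small caveat on your dp-rank detour: the paper's definition of dp-rank requires $\abs{x}=1$, whereas your $\varphi_i$ have $\abs{x}=m$, so the pattern you build is not literally a dp-rank witness; but since you explicitly supply the direct count as an alternative, the proof stands.
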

\begin{proof}
Let $H_1,\dots, H_d$ be as in the hypothesis, and  let $i$ and $k$ range over $[d]$. Let $y$ be a tuple of new variables with $\abs{x}=\abs{y}$, and for each $i$ let 
$\varphi_i(x;y)$ be an $\mathcal L$-formula stating that $x-y \in H_i$, and set $\Delta:=\{\varphi_1,\dots,\varphi_d\}$. Since $[H_{\neq i}:H]$ is infinite,  we may choose elements $b_{ij}$ ($j\in\bN$) of $H_{\neq i}\setminus H$ which are pairwise inequivalent modulo~$H$. Fix  $t\in\bN$, and set
$$B := \big\{ b_{ij}: i\in [d], \ j\in [t]\big\},$$
so $\abs{B}=dt$. For each $\mathbf j=(j(1),\dots,j(d))\in [t]^d$ set
$$a_{\mathbf j} = b_{1j(1)} + \cdots + b_{dj(d)}.$$
Then for each $i$ we have
$$a_{\mathbf j}-b_{ij(i)} = \sum_{k\neq i} b_{kj(k)} \in \sum_{k\neq i} H_{\neq k} \subseteq H_i.$$
(Here we use that $A$ is abelian.)
Hence for each $i$, $k$ and  $l\in [t]$ we have 
\begin{align*}
A\models\varphi_i(a_{\mathbf j};b_{kl})	&\qquad\Longleftrightarrow\qquad a_{\mathbf j}-b_{kl} \in H_i \\
										&\qquad\Longleftrightarrow\qquad b_{ij(i)}-b_{kl}\in H_i \\
										&\qquad\Longleftrightarrow\qquad \text{$k=i$ and $l=j(i)$.}
\end{align*}
So the $a_{\mathbf j}$ realize distinct $\Delta(x;B)$-types in $A$. Thus $\pi^*_\Delta(dt)\geq\abs{S^\Delta(B)}\geq t^d$, and since $t$ was arbitrary, this yields $\vc^{\Th(A)}(m)\geq d$.
\end{proof}

\subsection{Commensurability and VC~density for modules.}
We now return to the setting of modules. Let $M$ be an infinite $R$-module and $T=\Th(M)$. 
We write $\widetilde{\PP}_m(M)$ for the lattice of commensurability classes of p.p.~definable subgroups of $M^m$. We have a natural surjective lattice morphism $\PP_m(M)\to\widetilde{\PP}_m(M)$, so $\widetilde{\PP}_m(M)$ is modular (and distributive if $\PP_m(M)$ is distributive) with
$$\breadth(\PP_m(M))\geq \breadth\big(\widetilde{\PP}_{m}(M)\big).$$ 
For all $m_1$, $m_2$ the natural embedding \eqref{eq:embedding of PPm's} shows that
$$\breadth\big(\PP_{m_1}(M)\big)+\breadth\big(\PP_{m_2}(M)\big)\leq\breadth\big(\PP_{m_1+m_2}(M)\big)$$
and
\begin{equation}\label{eq:m1m2}
\breadth\big(\widetilde{\PP}_{m_1}(M)\big)+\breadth\big(\widetilde{\PP}_{m_2}(M)\big)\leq\breadth\big(\widetilde{\PP}_{m_1+m_2}(M)\big).
\end{equation}
If $M\equiv M^{\aleph_0}$, then the morphism $\PP_m(M)\to\widetilde{\PP}_m(M)$ is bijective (see, e.g., \cite[Lemma~A.1.7]{Hodges}) and hence
$\PP_m(M)$ and $\widetilde{\PP}_{m}(M)$ have the same breadth, for each~$m$.

\begin{corollary}\label{cor:lower bounds on vc}
For each $m$,
$$m\breadth\big(\widetilde{\PP}(M)\big)\leq\breadth\big(\widetilde{\PP}_m(M)\big)=\vc^T(m)\leq m\breadth(\PP(M)),$$
with both inequalities being equalities if $M\equiv M^{\aleph_0}$.
\end{corollary}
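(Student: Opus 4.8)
The plan is to derive the corollary by assembling results already in hand; it has four parts — the two displayed inequalities, the middle equality, and the collapse to equalities when $M\equiv M^{\aleph_0}$ — and only one of them, the bound $\vc^T(m)\leq\breadth(\widetilde{\PP}_m(M))$, requires any real care. For the left-hand inequality I would iterate \eqref{eq:m1m2} $m-1$ times and use $\widetilde{\PP}_1(M)=\widetilde{\PP}(M)$ to get $m\breadth(\widetilde{\PP}(M))\leq\breadth(\widetilde{\PP}_m(M))$. For the right-hand inequality: if $\breadth(\PP(M))=\infty$ there is nothing to prove, and otherwise $M$ has finite breadth $\breadth(\PP(M))$, so Proposition~\ref{prop:breadth and vc for modules} gives $\vc^T(m)\leq m\breadth(\PP(M))$.

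The middle equality I would prove by two inequalities. For $\vc^T(m)\geq\breadth(\widetilde{\PP}_m(M))$ it suffices to show $\vc^T(m)\geq d$ whenever $\breadth(\widetilde{\PP}_m(M))\geq d$; the cases $d\leq 1$ are immediate since $\vc^T(m)\geq m\geq 1$, so take $d\geq 2$. By the semilattice criterion for breadth (\cite[Section~II.5, Exercise~6~(c)]{Birkhoff}) there is an embedding of semilattices $e\colon(2^{[d]},{\cap})\to\widetilde{\PP}_m(M)$. Choose p.p.~definable subgroups $H_1,\dots,H_d$ of $M^m$ whose commensurability classes are $e([d]\setminus\{1\}),\dots,e([d]\setminus\{d\})$. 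Since $H\mapsto[H]$ is a lattice morphism $\PP_m(M)\to\widetilde{\PP}_m(M)$, the class of $H:=H_1\cap\cdots\cap H_d$ is $e(\emptyset)$ and that of $H_{\neq i}:=\bigcap_{j\neq i}H_j$ is $e(\{i\})$ for each $i$; as $e$ is strictly increasing and $\emptyset\subsetneq\{i\}$ we get $[H]<[H_{\neq i}]$, i.e.\ $H$ has infinite index in $H_{\neq i}$. Lemma~\ref{lem:breadth and vc for modules}, applied with $A=M$ and these $H_1,\dots,H_d$, then gives $\vc^T(m)\geq d$.

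For the reverse inequality $\vc^T(m)\leq\breadth(\widetilde{\PP}_m(M))$, if $\breadth(\widetilde{\PP}_m(M))=\infty$ there is nothing to prove, so assume it equals $d<\infty$. By Baur--Monk and Lemma~\ref{lem:encoding finite sets of formulas} it is enough to bound $\vc^*(\Delta)$ by $d$ for every finite set $\Delta=\Delta(x;y)$ of p.p.~$\mathcal L_R$-formulas with $\abs{x}=m$. For such $\Delta$ every instance of a $\varphi\in\Delta$ defines a coset of a p.p.~definable subgroup of $M^m$, and these subgroups together with all their finite intersections lie in $\PP_m(M)$, whose quotient lattice $\widetilde{\PP}_m(M)$ has breadth $d$. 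The argument proving Proposition~\ref{prop:commensurable} now goes through verbatim, with ``p.p.~definable subgroup of $M^m$'' in the role there played by ``$\acl^\eq(\emptyset)$-definable subgroup of $G^m$'' (equivalently, every definable subgroup of $M^m$ is commensurable with a p.p.~definable one, so $\widetilde{\PP}_m(M)$ is exactly the lattice of commensurability classes of $\acl^\eq(\emptyset)$-definable subgroups of $M^m$ and one may simply invoke Corollary~\ref{cor:commensurable, 2}); this yields $\vc^*(\Delta)\leq d$, hence $\vc^T(m)\leq d$. Finally, if $M\equiv M^{\aleph_0}$ then the canonical morphism $\PP_k(M)\to\widetilde{\PP}_k(M)$ is bijective for every $k$ (as recalled just before the statement), so $\breadth(\widetilde{\PP}_m(M))=\breadth(\PP_m(M))$ and $\breadth(\widetilde{\PP}(M))=\breadth(\PP(M))$; the three-term chain then reads $m\breadth(\PP(M))\leq\vc^T(m)\leq m\breadth(\PP(M))$, forcing all inequalities to be equalities.

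The only genuinely delicate point is this last upper bound: one must verify that the commensurability bookkeeping in the proof of Proposition~\ref{prop:commensurable} survives the passage to p.p.~subgroups — equivalently, that restricting to p.p.~formulas via Baur--Monk loses nothing in estimating $\vc^*(\Delta)$. Once that is checked, the corollary follows by the purely mechanical assembly described above.
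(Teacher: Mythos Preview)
Your proposal is correct and follows essentially the same route as the paper: the left inequality from \eqref{eq:m1m2}, the right inequality from Proposition~\ref{prop:breadth and vc for modules}, and the middle equality from Corollary~\ref{cor:commensurable, 2}. The only difference is that where the paper simply cites Corollary~\ref{cor:commensurable, 2} (noting that for modules the lattice of commensurability classes of $\acl^\eq(\emptyset)$-definable subgroups of $M^m$ is exactly $\widetilde{\PP}_m(M)$), you unpack its two halves---reproducing the lower bound via Lemma~\ref{lem:breadth and vc for modules} and the upper bound via the argument of Proposition~\ref{prop:commensurable}---before observing in your parenthetical that one could just invoke Corollary~\ref{cor:commensurable, 2} directly.
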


\begin{proof}
The first inequality is a consequence of \eqref{eq:m1m2}, the equation is a special case of Corollary~\ref{cor:commensurable, 2}, and the last inequality is Proposition~\ref{prop:breadth and vc for modules}.
\end{proof}

\begin{question}
Is $\breadth\big(\widetilde{\PP}_m(M)\big)=m\breadth\big(\widetilde{\PP}(M)\big)$ for every $m$? \textup{(}In this case we would have $\vc^T(m) = m\vc^T(1)$ for each $m$, in line with the Question posed in Section~3.2 of~\textup{\cite{ADHMS}.)}
\end{question}

\begin{example*}
Let $K$ be an infinite field, $R=Ke_1\oplus\cdots\oplus Ke_d$ the ring-theoretic direct product of $d>0$ copies of $K$, with idempotents $e_1,\dots,e_d$, and $M=R$ as a module over itself. Then $M$ is the direct sum of its definable submodules 
$$Ke_i=\{a\in M:\text{$e_ja=0$ for all $j\neq i$}\} \qquad (i=1,\dots,d),$$ so 
$\PP(M)\cong 2^{[d]}$ by Lemma~\ref{lem:pp definable for direct sum} and hence $\breadth(M)=d$.
In particular, $M$ has the $\VC{}d$ property by Proposition~\ref{prop:breadth and vc for modules}. Thus $\vc^T(m)\leq dm$ for every $m$, and since $M\equiv M^{\aleph_0}$, we have in fact $\vc^T(m)=dm$ for every $m$, by the previous corollary.
\end{example*}

Since $\breadth\big(\widetilde{\PP}(M)\big)$ bounds both 
the Goldie dimension $\Gdim(\widetilde{\PP}(M))$ and the dual Goldie dimension $\Gdim^*(\widetilde{\PP}(M))$ of the lattice $\widetilde{\PP}(M)$, the previous corollary implies:

\begin{corollary}\label{cor:Gdim and vc for modules}
Let $H_1,\dots,H_d$ be p.p.~definable subgroups of $M$. If
\begin{enumerate}
\item $H_i$ is infinite and $H_i\cap\sum_{j\neq i} H_j$ is finite, for each $i$, or
\item $H_i$ has infinite index in $M$ and $H_i+\bigcap_{j\neq i} H_j$ has finite index in $M$, for each $i$, 
\end{enumerate}
then  $\vc^{T}(m)\geq md$ for every $m$.
\end{corollary}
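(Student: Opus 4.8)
The plan is to reduce the statement, via Corollary~\ref{cor:lower bounds on vc}, to a computation of the Goldie dimensions of the lattice $\widetilde{\PP}(M)=\widetilde{\PP}_1(M)$ of commensurability classes of p.p.~definable subgroups of $M$. That corollary already gives $\vc^T(m)\ge m\,\breadth\bigl(\widetilde{\PP}(M)\bigr)$, and since $\widetilde{\PP}(M)$ is the image of the modular lattice $\PP(M)$ under a surjective morphism of lattices it is itself modular, so Lemma~\ref{lem:Goldie dim vs breadth} applies to it and yields $\breadth\bigl(\widetilde{\PP}(M)\bigr)\ge\max\bigl\{\Gdim(\widetilde{\PP}(M)),\Gdim^*(\widetilde{\PP}(M))\bigr\}$. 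Thus it suffices to show that hypothesis~(1) forces $\Gdim(\widetilde{\PP}(M))\ge d$ and hypothesis~(2) forces $\Gdim^*(\widetilde{\PP}(M))\ge d$; as $\vc^T(m)\ge m$ always, we may assume $d\ge 2$. Throughout I would write $[H]$ for the commensurability class in $\widetilde{\PP}(M)$ of a p.p.~definable subgroup $H$ of $M$, using that $H\mapsto[H]$ is a surjective morphism of lattices, so $[H_1\cap H_2]=[H_1]\wedge[H_2]$ and $[H_1+H_2]=[H_1]\vee[H_2]$, that $[H]$ is the least element of $\widetilde{\PP}(M)$ exactly when $H$ is finite, and that $[H]$ is the greatest element exactly when $H$ has finite index in $M$.

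Under hypothesis~(1) I would check that $[H_1],\dots,[H_d]$ are pairwise distinct and join-independent. Each $[H_i]$ is nonzero because $H_i$ is infinite; if $[H_i]=[H_j]$ with $i\ne j$, then $[H_i\cap H_j]=[H_i]\ne 0$, so $H_i\cap H_j$ is infinite, contradicting the finiteness of $H_i\cap\sum_{k\ne i}H_k\supseteq H_i\cap H_j$. For join-independence, given $S\subseteq[d]$ and $i\in[d]\setminus S$, the element $\bigl(\bigvee_{j\in S}[H_j]\bigr)\wedge[H_i]=\bigl[\bigl(\sum_{j\in S}H_j\bigr)\cap H_i\bigr]$ is $\le\bigl[\bigl(\sum_{k\ne i}H_k\bigr)\cap H_i\bigr]=0$; hence $\Gdim(\widetilde{\PP}(M))\ge d$. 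Hypothesis~(2) is handled dually. Each $[H_i]$ is a proper element (distinct from the greatest one) because $H_i$ has infinite index in $M$; if $[H_i]=[H_j]$ with $i\ne j$, then $[H_i+H_j]=[H_i]$, so $H_i$ has finite index in $H_i+H_j$, which contains $H_i+\bigcap_{k\ne i}H_k$ of finite index in $M$, forcing $[M:H_i]<\infty$, a contradiction; and for each $i$ we have $[H_i]\vee\bigwedge_{j\ne i}[H_j]=\bigl[H_i+\bigcap_{j\ne i}H_j\bigr]=1$, the greatest element, since $H_i+\bigcap_{j\ne i}H_j$ has finite index in $M$. Since $\bigwedge_{j\in S}[H_j]\ge\bigwedge_{j\ne i}[H_j]$ whenever $i\notin S$, the $d$ distinct proper elements $[H_1],\dots,[H_d]$ then form a coindependent family, giving $\Gdim^*(\widetilde{\PP}(M))\ge d$. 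In either case $\breadth\bigl(\widetilde{\PP}(M)\bigr)\ge d$, so $\vc^T(m)\ge md$ for every $m$.

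I do not expect a real obstacle: after Corollary~\ref{cor:lower bounds on vc} and Lemma~\ref{lem:Goldie dim vs breadth} the argument is essentially bookkeeping in the modular lattice $\widetilde{\PP}(M)$. The one point needing genuine care is the distinctness of the classes $[H_i]$, since the hypotheses constrain only finite-index relations among the $H_i$ and do not directly rule out coincidences $[H_i]=[H_j]$; establishing this is exactly where the strength of ``$H_i$ infinite'' (case~(1)), respectively ``$H_i$ of infinite index in $M$'' (case~(2)), is used, beyond guaranteeing that each $[H_i]$ is a nontrivial, respectively proper, element of $\widetilde{\PP}(M)$.
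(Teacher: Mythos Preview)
Your proposal is correct and follows exactly the route the paper indicates: the paper's entire proof is the one sentence preceding the statement, observing that $\breadth\bigl(\widetilde{\PP}(M)\bigr)$ bounds both $\Gdim(\widetilde{\PP}(M))$ and $\Gdim^*(\widetilde{\PP}(M))$ (Lemma~\ref{lem:Goldie dim vs breadth}) and invoking Corollary~\ref{cor:lower bounds on vc}. Your write-up simply fills in the verifications that hypotheses~(1) and~(2) produce, respectively, a join-independent and a coindependent family of size $d$ in $\widetilde{\PP}(M)$, including the distinctness of the classes $[H_i]$---all of which is routine and correct.
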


One also verifies easily the following properties of the breadth of the lattice of p.p.~definable subgroups modulo commensurability (cf.~Section~\ref{sec:breadth and pp subgroups}):

\begin{lemma}\label{lem:basic properties of breadth, commensurability}
Let $M$, $N$ be $R$-modules. Then for each $m$:
\begin{enumerate}
\item If $N$ is p.p.~definable in $M$, then  
$$\breadth(\widetilde{\PP}_m(N)),\breadth(\widetilde{\PP}_m(M/N))\leq\breadth(\widetilde{\PP}_m(M));$$
\item if $N$ is a pure submodule of $M$, then 
$$\breadth(\widetilde{\PP}_m(N))\leq\breadth(\widetilde{\PP}_m(M));$$
\item if $I$ is a non-empty index set, then $$\breadth(\widetilde{\PP}_m(M^{(I)}))=\breadth(\widetilde{\PP}_m(M));$$
\item and
$$\breadth(\widetilde{\PP}_m(M\oplus N))\leq\breadth(\widetilde{\PP}_m(M))+\breadth(\widetilde{\PP}_m(N)),$$ with equality if
both $M$ and $N$ are p.p.~definable in $M\oplus N$, or if $N$ is finite \textup{(}in which case $\breadth(\widetilde{\PP}_m(N))=0$\textup{)}.
\end{enumerate}
\end{lemma}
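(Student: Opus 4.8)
The plan is to deduce all four items by the same mechanism that produced their ``untilde'' counterparts in Lemma~\ref{lem:basic properties of breadth}: transport the lattice morphisms between the lattices $\PP_m(-)$ of p.p.~definable subgroups set up in Section~\ref{sec:breadth and pp subgroups} and in Lemma~\ref{lem:pp definable for direct sum} to morphisms between the commensurability quotients $\widetilde{\PP}_m(-)$, and then apply the order-theoretic properties of breadth recalled in Section~\ref{sec:dimension ...}: an embedding of ordered sets does not increase breadth, a surjective morphism of semilattices does not increase breadth, and $\breadth(P\times P')\leq\breadth(P)+\breadth(P')$, with equality once both factors have a least and a greatest element. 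The one new thing to verify is that each of the maps occurring here is compatible with commensurability, i.e., carries commensurable pairs of subgroups to commensurable pairs, so that it factors through the quotient by $\sim$. This rests on the elementary fact that if $K\leq H$ has finite index in a group and $L$ is any subgroup of the ambient group, then $(L\cap H)/(L\cap K)$ embeds into $H/K$, whence $[L\cap H:L\cap K]\leq[H:K]$, together with the observation that $\pi^{-1}$ preserves and reflects the index between subgroups containing $\ker\pi$.

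Granting this, (1) follows because the embedding $\PP_m(N)\hookrightarrow\PP_m(M)$ is compatible with the inclusion $N^m\subseteq M^m$ and hence both preserves and reflects $\sim$, while $\pi\colon M^m\to(M/N)^m$ makes $H\mapsto\pi^{-1}(H)\colon\PP_m(M/N)\hookrightarrow\PP_m(M)$ index-preserving on subgroups containing $\ker\pi=N^m$; both therefore descend to embeddings of ordered sets into $\widetilde{\PP}_m(M)$. For (2), the surjective lattice morphism $H\mapsto H\cap N^m$ of~\eqref{eq:pure incl} sends commensurable $H_1,H_2$ to commensurable $H_1\cap N^m,H_2\cap N^m$ (apply the index estimate with $L=N^m$), so descends to a surjective morphism of semilattices $\widetilde{\PP}_m(M)\twoheadrightarrow\widetilde{\PP}_m(N)$. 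For (4), the lattice embedding $H\mapsto(H\cap M^m,H\cap N^m)\colon\PP_m(M\oplus N)\hookrightarrow\PP_m(M)\times\PP_m(N)$ of~\eqref{eq:lattice embedding} preserves $\sim$ (purity of $M$ and of $N$ in $M\oplus N$) and reflects $\sim$ (since $H=(H\cap M^m)\oplus(H\cap N^m)$ makes $[H_1:H_1\cap H_2]$ the product of the two coordinate indices), so it descends to a semilattice embedding $\widetilde{\PP}_m(M\oplus N)\hookrightarrow\widetilde{\PP}_m(M)\times\widetilde{\PP}_m(N)$ and the stated inequality follows. When $M$ and $N$ are p.p.~definable in $M\oplus N$ this embedding is onto by Lemma~\ref{lem:pp definable for direct sum}, hence an isomorphism of (bounded) semilattices, and equality holds; when $N$ is finite, $\widetilde{\PP}_m(N)$ is trivial (every p.p.~definable subgroup of $N^m$ is finite, hence commensurable with $0$), so equality is forced once we note that ``$\geq$'' is an instance of (2) applied to the pure inclusion $M\hookrightarrow M\oplus N$.

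The step I expect to be the main obstacle is the inequality ``$\leq$'' of (3); its converse ``$\geq$'' is (2) applied to $M$ as a direct summand of $M^{(I)}$. The natural route is to invoke the lattice isomorphism $\PP_m(M^{(I)})\cong\PP_m(M)$ of Section~\ref{sec:breadth and pp subgroups} (intersection with the $i_0$-component $M^m$) and to check that it descends to an isomorphism $\widetilde{\PP}_m(M^{(I)})\cong\widetilde{\PP}_m(M)$; establishing compatibility of \emph{this} map with $\sim$ is the delicate point, because a lattice isomorphism between subgroup lattices need not respect the finite-index relation, so one must argue with the subgroups themselves rather than with the abstract lattices and keep a close eye on the hypotheses on $M$. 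The transparent case is $M\equiv M^{\aleph_0}$: there, by \cite[Lemma~A.1.7]{Hodges}, the natural maps $\PP_m(-)\to\widetilde{\PP}_m(-)$ are bijective for $M$ and for $M^{(I)}$ alike, and (3) reduces to Lemma~\ref{lem:basic properties of breadth},~(3). Throughout the writeup it is worth keeping in mind that, by Corollary~\ref{cor:lower bounds on vc}, all the breadths in question are the values $\vc^{\Th(-)}(m)$, which suggests recasting the lemma as a list of closure properties of the VC~density function.
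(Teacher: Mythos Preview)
The paper offers no proof beyond ``one also verifies easily \dots\ (cf.~Section~\ref{sec:breadth and pp subgroups}),'' so your plan of transporting the lattice morphisms of Section~\ref{sec:breadth and pp subgroups} to the commensurability quotients is exactly the intended approach, and your arguments for (1), (2) and (4) are correct.

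Your suspicion about the ``$\leq$'' direction of~(3) is not merely a gap in your proof: the statement is \emph{false} as written. For infinite $I$ one has $(A/B)^{(I)}$ infinite whenever $A\neq B$, so two p.p.~definable subgroups of $(M^{(I)})^m$ are commensurable iff they are equal; hence $\widetilde{\PP}_m(M^{(I)})\cong\PP_m(M^{(I)})\cong\PP_m(M)$, and~(3) would force $\breadth(\PP_m(M))=\breadth(\widetilde{\PP}_m(M))$. Take $M=\bZ(p)^{(\aleph_0)}\oplus\bZ(q)$ with $p\neq q$ primes: the only p.p.~definable subgroups of $M$ are $0$, $M[p]=qM$, $M[q]=pM$ and $M$, with $M[q]$ finite and $[M:M[p]]=q$, so $\widetilde{\PP}(M)$ is the two-element chain (breadth~$1$), whereas $\PP(M)\cong 2^{[2]}$ has breadth~$2$. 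Thus $\breadth(\widetilde{\PP}(M^{(\aleph_0)}))=2>1=\breadth(\widetilde{\PP}(M))$. (Incidentally, this also shows that Corollary~\ref{cor:operations on dp-minimal modules},\,(3) fails: $M$ is dp-minimal but $M^{(\aleph_0)}=\bZ(p)^{(\aleph_0)}\oplus\bZ(q)^{(\aleph_0)}$ is not, by Corollary~\ref{cor:commensurable, 1}. Fortunately the paper never actually invokes part~(3) of that corollary.) Your ``transparent case'' $M\equiv M^{\aleph_0}$ is the correct salvage: under that hypothesis the quotient maps $\PP_m\to\widetilde{\PP}_m$ are bijections and~(3) reduces to Lemma~\ref{lem:basic properties of breadth},\,(3). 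Your closing remark about recasting the breadths as $\vc^{\Th(-)}(m)$ via Corollary~\ref{cor:lower bounds on vc} in fact \emph{explains} the failure: $M$ and $M^{(I)}$ need not be elementarily equivalent, so there is no reason for their VC~density functions to agree.
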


In Section~\ref{sec:dp-min} we noted that in general, dp-minimality is not preserved under interpretability.
However, together with Corollary~\ref{cor:commensurable, 1} the case $m=1$ of Lemma~\ref{lem:basic properties of breadth, commensurability} immediately yields that dp-minimality is preserved under some natural constructions with modules:

\begin{corollary}\label{cor:operations on dp-minimal modules}
Let $M$ be a dp-minimal $R$-module. Then the following $R$-modules are also dp-minimal:
\begin{enumerate}
\item any p.p.~definable submodule $N$ of $M$ and its quotient $M/N$;
\item any pure submodule of $M$;
\item any power $M^{(I)}$, where $I$ is a non-empty index set; 
\item any direct sum $M\oplus N$ where $N$ is a finite $R$-module; and
\item any direct summand of $M$ of finite index.
\end{enumerate}
\end{corollary}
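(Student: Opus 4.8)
The plan is to reduce the whole statement to the equivalence, valid for any infinite $R$-module $M$,
\begin{equation*}
\text{$\Th(M)$ is dp-minimal}\qquad\Longleftrightarrow\qquad \breadth\big(\widetilde{\PP}_1(M)\big)=1.
\end{equation*}
To establish this, first note that a module is $1$-based, so Corollary~\ref{cor:commensurable, 1} applies with $\mathbf G=M$ and gives: $\Th(M)$ is dp-minimal iff $\vc^{\Th(M)}(1)<2$. Next, since $M$ is abelian, the case $m=1$ of the equality in Corollary~\ref{cor:lower bounds on vc} gives $\vc^{\Th(M)}(1)=\breadth\big(\widetilde{\PP}_1(M)\big)$; observe that the breadth on the right is that of the lattice of commensurability classes of \emph{p.p.}\ definable subgroups of $M$, which is exactly the lattice controlled by Lemma~\ref{lem:basic properties of breadth, commensurability}. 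Finally, for infinite $M$ the subgroups $\{0\}$ and $M$ are incommensurable, so $\breadth\big(\widetilde{\PP}_1(M)\big)\geq 1$; being a non-negative integer, it is $<2$ precisely when it equals $1$. Stringing these three facts together proves the equivalence.

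Granting the equivalence, the corollary is now an immediate application of the case $m=1$ of Lemma~\ref{lem:basic properties of breadth, commensurability} to the dp-minimal $M$, for which $\breadth\big(\widetilde{\PP}_1(M)\big)=1$. In~(1), part~(1) of that lemma gives $\breadth\big(\widetilde{\PP}_1(N)\big)\leq 1$ and $\breadth\big(\widetilde{\PP}_1(M/N)\big)\leq 1$. In~(2) --- and hence also in~(5), since a direct summand is in particular a pure submodule --- part~(2) gives $\breadth\big(\widetilde{\PP}_1(N)\big)\leq 1$. In~(3), part~(3) gives $\breadth\big(\widetilde{\PP}_1(M^{(I)})\big)=\breadth\big(\widetilde{\PP}_1(M)\big)=1$. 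In~(4), a finite $R$-module $N$ has $\breadth\big(\widetilde{\PP}_1(N)\big)=0$, so part~(4) gives $\breadth\big(\widetilde{\PP}_1(M\oplus N)\big)\leq\breadth\big(\widetilde{\PP}_1(M)\big)=1$. In each case the module produced is infinite --- automatically so in~(3),~(4),~(5), and in~(1) and~(2) there is nothing to prove if it happens to be finite --- so applying the equivalence in the reverse direction turns the resulting bound $\breadth\big(\widetilde{\PP}_1(\,\cdot\,)\big)\leq 1$ into dp-minimality.

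I do not expect a genuine obstacle: the paper itself labels this corollary ``immediate.'' The only points calling for a little care are the assembly of the bridge equivalence --- in particular keeping straight that the breadth appearing in Corollary~\ref{cor:lower bounds on vc} is that of the p.p.\ commensurability lattice $\widetilde{\PP}_1$, so that Lemma~\ref{lem:basic properties of breadth, commensurability} applies verbatim --- and the harmless edge case where $N$ or $M/N$ in~(1), or the pure submodule in~(2), happens to be finite.
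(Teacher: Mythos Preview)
Your proof is correct and follows essentially the same approach as the paper, which simply says the corollary is obtained by combining Corollary~\ref{cor:commensurable, 1} with the case $m=1$ of Lemma~\ref{lem:basic properties of breadth, commensurability}. You route the bridge through $\vc^T(1)=\breadth(\widetilde{\PP}_1(M))$ via Corollary~\ref{cor:lower bounds on vc}, whereas the paper implicitly uses condition~(4) of Corollary~\ref{cor:commensurable, 1} directly (linear ordering of commensurability classes $\Leftrightarrow$ breadth~$1$); these are equivalent and your version is arguably cleaner in that it avoids having to remark that for modules the $\acl^\eq(\emptyset)$-definable subgroups agree (up to commensurability) with the p.p.\ definable ones.
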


To finish this subsection we now relate the lattice $\widetilde{\PP}_m(M)$ to another modular lattice (of connected $\bigwedge$-definable subgroups of $M^m$) which plays an important role in the model theory of modules.
As usual we say that a subgroup $H$ of $M^m$ is (p.p.~definably) \emph{connected} if for each p.p.~definable subgroup $H'$ of $M^m$ we have $H\lesssim H'$ iff $H\leq H'$ (i.e., $[H:H\cap H']<\infty\Rightarrow H\subseteq H'$). 
In the following we assume that $M$ is $\abs{R}^+$-saturated.
We let $\PP_m^0(M)$ be the set of connected subgroups of $M^m$ which arise as the intersection of a (possibly infinite) family  of p.p.~definable subgroups of $M^m$. (In \cite{Prest} this is denoted by $\PP_0^{(m)}(M)$.) We equip $\PP_m^0(M)$ with the ordering by inclusion. One can show that then $\PP_m^0(M)$ is a modular lattice; moreover, for $H\in \PP_m(M)$ the subgroup
$$H^0 := \bigcap\{H'\in\PP_m(M): H\lesssim H'\}$$
of $H$ is an element of $\PP_m^0(M)$ (and called the connected component of $H$); see \cite[Corollary~2.3 and Lemma~2.6]{Prest}. The  map
$$\PP_m(M)\to \PP_m^0(M)\colon H\mapsto H^0,$$
which is clearly a morphism of ordered sets, is in general not a morphism of lattices; in fact, the intersection of two connected p.p.~definable subgroups of $M$ is not in general connected \cite[Example~2, p.~119]{Prest}.
Note however that two commensurable subgroups in $\PP_m(M)$ have identical connected component, so $H\mapsto H^0$ factors through the canonical surjective morphism $\PP_m(M)\to\widetilde{\PP}_m(M)$:
$$\xymatrixcolsep{3.5pc}\xymatrixrowsep{1.7pc}
\xymatrix{
\PP_m(M) \ar[d]	\ar[r]^{H\mapsto H^0} & \PP^0_m(M) \\
\widetilde{\PP}_m(M) \ar[ur]^\iota
}$$
The theory $T=\Th(M)$ is superstable iff $\PP^0(M)=\PP_1^0(M)$ is well-founded, and in this case $\URk(T)$ is the foundation rank of $\PP^0(M)$ \cite[Corollary~5.13]{Prest}; moreover:

\begin{proposition}\label{prop:foundation rank}
Suppose $T$ is superstable. Then $\widetilde{\PP}_m(M)$ has the same foundation rank $m\URk(T)$ as $\PP_m^0(M)$.
\end{proposition}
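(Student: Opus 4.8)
The plan is to prove that the morphism of ordered sets
$$\iota\colon\widetilde{\PP}_m(M)\longrightarrow\PP_m^0(M),\qquad [H]\mapsto H^0$$
(where $[H]$ denotes the commensurability class of a p.p.~definable subgroup $H$ of $M^m$) is an \emph{isomorphism} of ordered sets; it is order-preserving by construction, so it will suffice to show that it reflects the ordering and that it is onto. Granting this, $\widetilde{\PP}_m(M)$ and $\PP_m^0(M)$ have the same foundation rank, and it remains only to identify that rank with $m\,\URk(T)$. For $m=1$ this is \cite[Corollary~5.13]{Prest}. For general $m$, the foundation rank of $\PP_m^0(M)$ is the Lascar rank of a generic element of the group $M^m$, and by Lascar's inequalities---the additivity of $\URk$ on independent tuples together with $\URk(a)\le\URk(T)$ for $a\in M$---this rank equals $m\,\URk(T)$.

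To see that $\iota$ reflects the ordering, let $H_1,H_2$ be p.p.~definable subgroups of $M^m$ with $H_1^0\subseteq H_2^0$; I claim $H_1\lesssim H_2$. By definition $H_1^0=\bigcap\{H':H'\text{ p.p.~definable},\ H_1\lesssim H'\}$; since $H_1\lesssim H_1$, this equals $\bigcap\{H_1\cap H':H_1\lesssim H'\}$, an intersection of p.p.~definable subgroups of $H_1$, each of finite index in $H_1$. From $H_1^0\subseteq H_2^0\subseteq H_2$ we get $H_1^0\subseteq H_1\cap H_2$. As all the groups in sight are $\emptyset$-definable and $M$ is $\abs{R}^+$-saturated, if no finite subintersection of that family were contained in $H_2$, then the partial type asserting membership in every member of the family together with non-membership in $H_2$ would be finitely satisfiable, hence realized in $M$, contradicting $H_1^0\subseteq H_2$. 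So some finite subintersection---a finite-index subgroup of $H_1$---is contained in $H_1\cap H_2$, whence $[H_1:H_1\cap H_2]<\infty$, i.e.\ $H_1\lesssim H_2$. (Conversely $H_1\lesssim H_2$ gives $H_1\sim H_1\cap H_2$, so $H_1^0=(H_1\cap H_2)^0\subseteq H_2^0$ since commensurable subgroups have equal connected component; this is the order-preservation of $\iota$.)

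To see that $\iota$ is onto, let $K\in\PP_m^0(M)$ and write $K=\bigcap_{i\in I}H_i$ with the $H_i$ p.p.~definable (so $K\subseteq H_i$ for all $i$; since $K$ is connected this forces $K^0=K$). For finite $F\subseteq I$ put $H_F:=\bigcap_{i\in F}H_i$; then $K\subseteq H_F$, so $K=K^0\subseteq H_F^0$, while $\bigcap_F H_F^0\subseteq\bigcap_F H_F=K$, so that $K=\bigcap_F H_F^0$. The family $\{H_F^0\}_F$ is downward directed in $\PP_m^0(M)$, which is well-founded since $T$ is superstable; a downward-directed family in a well-founded ordered set has a least element $H_{F_0}^0$, and then $H_{F_0}^0=\bigcap_F H_F^0=K$. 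Thus $K=\iota([H_{F_0}])$, and $\iota$ is an order isomorphism.

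The step I expect to be the crux is the surjectivity of $\iota$: this is exactly where the superstability hypothesis enters, through the well-foundedness of $\PP_m^0(M)$ (order-reflection and order-preservation use only the saturation of $M$ and the elementary properties of connected components recalled above). A secondary point needing care is the passage from the case $m=1$ of \cite[Corollary~5.13]{Prest} to the value $m\,\URk(T)$ for the foundation rank of $\PP_m^0(M)$, which rests on Lascar's inequalities.
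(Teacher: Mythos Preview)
Your proof is correct, and it takes a genuinely different route from the paper's.

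The paper does \emph{not} show that $\iota$ is an isomorphism under superstability. Instead it proves (Lemma~\ref{lem:foundation rank, 2}) only that $\iota$ is strictly increasing, using the Shelah degree $R(-)$ together with Lemma~\ref{lem:foundation rank, 1}; this yields the inequality ``foundation rank of $\widetilde{\PP}_m(M)$ $\leq$ foundation rank of $\PP_m^0(M)$''. The reverse inequality is then obtained by a separate transfinite induction, again invoking $R(-)$ and Lemma~\ref{lem:foundation rank, 1}. The statement that $\iota$ is an isomorphism of ordered sets is established in the paper only under the stronger hypothesis that $T$ is totally transcendental (Lemma~\ref{lem:tt}), via the descending chain condition on $\PP_m(M)$.

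Your argument is more elementary and proves more. Your compactness argument for order-reflection ($H_1^0\subseteq H_2^0\Rightarrow H_1\lesssim H_2$) uses only the $\abs{R}^+$-saturation of $M$, not superstability; in particular it gives a superstability-free proof of the paper's Lemma~\ref{lem:foundation rank, 2}, bypassing the Shelah-degree machinery entirely. Your surjectivity argument is where superstability enters, via the well-foundedness of $\PP_m^0(M)$: the downward-directed family $\{H_F^0\}_F$ must stabilise, and the least element is $K$. Thus you obtain that $\iota$ is an order isomorphism already in the superstable case, rendering the paper's Lemma~\ref{lem:tt} a special case rather than a genuinely stronger statement. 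What the paper's approach buys is an explicit link to the rank $R(-)$, which is of independent interest; what your approach buys is a shorter, more self-contained proof yielding a sharper structural conclusion.
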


If $T$ is superstable we can employ the Shelah degree $R(-)$ (the ordinal-valued rank function denoted by $R^m(-,\mathcal L_R,\infty)$ in \cite{Shelah-book}), whose basic properties in this context we use freely (cf.~\cite[Section~5.2]{Prest}). 
In particular we recall that for $G\in\PP^0_m(M)$, the rank $R(G)$ is the foundation rank of $G$ in $\PP^0_m(M)$ \cite[Lemma~5.10, Theorems~5.12 and 5.18]{Prest}.
The proposition is a consequence of the following two lemmas, in both of which we assume that $T$ is superstable.
The first lemma is immediate from \cite[Lemma~5.10 and Corollary~5.15]{Prest}.

\begin{lemma}\label{lem:foundation rank, 1}
Let $G\in\PP^0_m(M)$. Then there is some $K\in\PP_m(M)$ with $G\leq K$ and $R(G)=R(K)$. 
\end{lemma}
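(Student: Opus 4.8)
The plan is to recognise the statement as an instance of the continuity (``definability'') of Shelah's degree $R=R^m(-,\mathcal L_R,\infty)$ for $\bigwedge$-definable groups, and to exhibit $K$ as a finite sub-intersection of a p.p.\ defining family for $G$. First I would write $G=\bigcap_{i\in I}\varphi_i(M^m)$ with the $\varphi_i$ p.p.\ $\mathcal L_R$-formulas: since, up to equivalence, there are only $\abs{R}+\aleph_0$ many p.p.\ formulas in a fixed $m$-tuple of variables, we may take $I$ to consist of \emph{all} p.p.\ $\varphi$ with $G\leq\varphi(M^m)$, so that $\abs{I}\leq\abs{R}+\aleph_0$ and $G=\bigcap_{i\in I}\varphi_i(M^m)$. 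For each finite $F\subseteq I$ put $K_F:=\bigcap_{i\in F}\varphi_i(M^m)\in\PP_m(M)$; then $G\leq K_F$, so $R(G)\leq R(K_F)$, and $F\subseteq F'$ gives $K_{F'}\leq K_F$ and hence $R(K_{F'})\leq R(K_F)$.

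Next I would use that $T$ is superstable, so that $R$ is ordinal-valued; then the set of ordinals $\{R(K_F):F\subseteq I\text{ finite}\}$ has a least element, attained at some finite $F_0\subseteq I$, and by the monotonicity just noted $R(K_F)=R(K_{F_0})$ for every finite $F\supseteq F_0$. The crucial point is that this stabilised value equals $R(G)$. The inequality $R(K_{F_0})\geq R(G)$ is monotonicity; for the reverse, one shows that a finite configuration inside $K_{F_0}$ witnessing $R(K_{F_0})\geq\alpha$ can be adjusted so as to remain compatible with every further constraint $\varphi_i$, and therefore already witnesses $R(G)\geq\alpha$. This is exactly the continuity of $R$ for $\bigwedge$-definable sets in superstable theories, recorded in \cite[Lemma~5.10 and Corollary~5.15]{Prest}. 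Granting it, $K:=K_{F_0}\in\PP_m(M)$ has $G\leq K$ and $R(K)=R(G)$; and by \cite[Lemma~5.10]{Prest} this last quantity is the foundation rank of $G$ in $\PP_m^0(M)$ by which $R(G)$ is computed elsewhere in the paper, so we are done.

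I expect the main obstacle to be precisely this equality $R(K_{F_0})=R(G)$, i.e.\ the continuity of $R$ for $\bigwedge$-definable groups: the monotone-inclusion direction is free, but excluding a strictly larger stabilised value needs the full force of superstability --- in a merely stable theory $R$ can take the value $\infty$ and the ordinals $R(K_F)$ need not decrease to $R(G)$ --- which is why the hypothesis cannot be dropped. In the write-up this ingredient is simply quoted from Prest; the other steps (reducing to a defining family of size $\leq\abs{R}+\aleph_0$, the elementary monotonicity of $R$ under inclusion, and reading off $K$ as $K_{F_0}$) are routine.
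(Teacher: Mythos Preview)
Your proposal is correct and aligns with the paper's own treatment: the paper simply records the lemma as ``immediate from \cite[Lemma~5.10 and Corollary~5.15]{Prest},'' which is precisely the continuity-of-$R$ fact you identify as the crucial step and quote yourself. Your surrounding scaffolding (writing $G=\bigcap_F K_F$ and extracting $F_0$ by well-foundedness of the ordinals $R(K_F)$) is a reasonable unpacking of what the citation says, though strictly speaking it is subsumed by the cited results rather than adding to them.
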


\begin{lemma}\label{lem:foundation rank, 2}
The map $\iota$ is strictly increasing, i.e., for all $H,H'\in\PP_m(M)$ we have
$H\lnsim H' \Rightarrow H^0 \lneq (H')^0$.
\end{lemma}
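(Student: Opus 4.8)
The plan is to dispose of the easy half first. Since $H\mapsto H^0$ is a morphism of ordered sets that factors through the canonical surjection $\PP_m(M)\to\widetilde{\PP}_m(M)$ (as recalled just before the lemma, using that commensurable subgroups in $\PP_m(M)$ have identical connected component), the map $\iota$ is increasing; in particular $H\lesssim H'$ already gives $H^0\le (H')^0$. So it remains only to establish strictness: assuming $H\lnsim H'$, that is, $H\lesssim H'$ but $H'\not\lesssim H$, I must rule out $H^0=(H')^0$.

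The heart of the matter is a converse to the fact quoted above: \emph{if $L\le K$ are p.p.\ definable subgroups of $M^m$ with $L^0=K^0$, then $L\sim K$.} I would prove this by compactness. One checks easily that $K^0=\bigcap_i N_i$, where $N_i$ ranges over the p.p.\ definable subgroups of $M^m$ contained in $K$ with finite index (a family closed under finite intersections, each $N_i$ defined by a parameter‑free p.p.\ $\mathcal L_R$‑formula, of which there are at most $\abs{R}+\aleph_0$). The inclusion $K^0\subseteq L$ then says that the partial type $\{N_i(x):i\}\cup\{x\notin L\}$ is omitted in $M$; since this is a type over $\emptyset$ with at most $\abs{R}+\aleph_0$ formulas and $M$ is $\abs{R}^+$‑saturated, it must be inconsistent, so by compactness $N_{i_1}\cap\cdots\cap N_{i_k}\subseteq L$ for some indices $i_1,\dots,i_k$. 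As the left‑hand side is a finite‑index subgroup of $K$, this forces $[K:L]<\infty$, i.e.\ $L\sim K$.

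Granting this, the lemma follows at once. Put $L:=H\cap H'$, a p.p.\ definable subgroup. From $H\lesssim H'$ we get $[H:L]<\infty$, hence $L\sim H$, and therefore $L^0=H^0$. If now $H^0=(H')^0$, then $L^0=(H')^0$ while $L\le H'$, so the auxiliary fact gives $L\sim H'$; together with $L\sim H$ this yields $H\sim H'$, contradicting $H\lnsim H'$. Hence $H^0\lneq (H')^0$, as required.

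I expect the only genuinely delicate point to be the auxiliary fact, and within it merely the bookkeeping that ensures the saturation hypothesis suffices to force inconsistency of the relevant partial type — which is exactly why $M$ is taken to be $\abs{R}^+$‑saturated in this subsection. Everything else is routine manipulation of $\lesssim$, $\sim$, and the connected‑component map. (The argument is self‑contained and uses neither superstability beyond what makes $H^0$ and $\PP^0_m(M)$ meaningful, nor the Shelah degree; one could alternatively argue via the rank $R(-)$, using that $R(H)=R(H^0)$ and that equality of $R$ among comparable p.p.\ definable subgroups is equivalent to finite index.)
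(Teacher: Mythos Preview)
Your proof is correct, and it takes a genuinely different route from the paper's. The paper argues via the Shelah degree $R(-)$: after reducing to $H\le H'$, it uses that infinite index forces $R(H)<R(H')$, then invokes Lemma~\ref{lem:foundation rank, 1} to choose $K'\in\PP_m(M)$ with $H'\lesssim K'$ and $R(K')=R((H')^0)$, and chains rank inequalities to reach $R(H)<R(H)$. This fits neatly into the rank machinery already assembled for Proposition~\ref{prop:foundation rank}, but it relies on superstability throughout. Your argument instead isolates the purely compactness-and-saturation fact that for p.p.\ definable $L\le K$ with $L^0=K^0$ one must have $[K:L]<\infty$; this is more elementary, avoids Lemma~\ref{lem:foundation rank, 1} entirely, and (as you observe) does not use superstability at all beyond the existence of $H^0$. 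So your proof is strictly more general, while the paper's has the contextual advantage of reusing the rank $R(-)$ that is needed anyway for the surrounding proposition.
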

\begin{proof}
Let $H,H'\in\PP_m(M)$; it is clear that $H\lesssim H' \Rightarrow H^0\leq (H')^0$ (and this implication doesn't need superstability). So suppose for a contradiction that $H\lnsim H'$ and $H^0 = (H')^0$. Since $H\sim H'\cap H$, after replacing $H$ by $H'\cap H$ we may assume $H\leq H'$.
Then $H$ has infinite index in $H'$, so $R(H)<R(H')$. Using Lemma~\ref{lem:foundation rank, 1}, pick $K'\in\PP_m(M)$ with $H'\lesssim K'$ and $R(K')=R((H')^0)$.
Then 
$$R(H)<R(H')=R(H'\cap K')\leq R(K')=R((H')^0)=R(H^0)\leq R(H),$$ 
a contradiction.
\end{proof}

\begin{proof}[Proof of Proposition~\ref{prop:foundation rank}] 
By Lemma~\ref{lem:foundation rank, 2}, the foundation rank of $\widetilde{\PP}_m(M)$ is not larger than that of $\PP_m^0(M)$. For the converse we show, by transfinite induction on the ordinal $\alpha$, that if $G\in\PP_m^0(M)$ satisfies $R(G)\geq\alpha$, then the foundation rank of each $K\in\PP_m(M)$ with $K\geq G$ and $R(G)=R(K)$, viewed as element of $\widetilde{\PP}_m(M)$, is no less than $\alpha$. Since the case $\alpha=0$ or $\alpha$ a limit ordinal are trivial, we only need to treat the case where $\alpha=\beta+1$ is a successor ordinal. So let $G\in\PP_m^0(M)$ with $R(G)\geq\alpha$ and let $K\in\PP_m(M)$ with $K\geq G$ and $R(G)=R(K)$. Take $G'\in\PP_m^0(M)$ with $G'\leq G$ and $R(G')=\beta$, and by Lemma~\ref{lem:foundation rank, 1} take $K'\in\PP_m(M)$ with $G'\leq K'$ and $R(G')=R(K')$; after replacing $K'$ by $K\cap K'$, we may assume $K'\leq K$. Now $R(K')<R(K)$ implies $[K:K']=\infty$, hence the foundation rank of $K$ in $\widetilde{\PP}_m(M)$ is strictly larger than that of $K'$, and by inductive hypothesis the latter is greater than or equal to $\beta$.
\end{proof}

\begin{example}\label{ex:Urank 1}
The module $M$ has $\URk$-rank $0$ iff $M$ is finite, and $M$ has $\URk$-rank $1$ iff for each $H\in\PP(M)$ we either have $H\sim 0$ or $H\sim M$.
\end{example}

\begin{corollary}\label{cor:finite U-rank for modules}
Suppose $T$ has finite $\URk$-rank $d$. Then $\vc^T(m)\leq dm$ for every $m$.
\end{corollary}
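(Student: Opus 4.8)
The plan is to derive this from Proposition~\ref{prop:foundation rank}, the inequality~\eqref{eq:breadth vs height} relating breadth and height of a semilattice, and the identification $\vc^T(m)=\breadth(\widetilde{\PP}_m(M))$ furnished by Corollary~\ref{cor:lower bounds on vc}.

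First I would observe that a theory of finite $\URk$-rank is superstable; in the present situation this also follows from the characterization recalled just before Proposition~\ref{prop:foundation rank}, since $\URk(T)=d<\omega$ forces $\PP^0(M)$ to be well-founded (of foundation rank $d$). Proposition~\ref{prop:foundation rank} then yields that the modular lattice $\widetilde{\PP}_m(M)$ is well-founded with foundation rank $m\URk(T)=md$. Consequently every chain in $\widetilde{\PP}_m(M)$ has at most $md+1$ elements — ranks strictly decrease along strictly descending chains and are bounded by $md$ — so $\height\bigl(\widetilde{\PP}_m(M)\bigr)\leq md+1$.

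Next, $\widetilde{\PP}_m(M)$ is a meet-semilattice possessing a largest element (the commensurability class of $M^m$), so the strict form of~\eqref{eq:breadth vs height} gives
$$\breadth\bigl(\widetilde{\PP}_m(M)\bigr)<\height\bigl(\widetilde{\PP}_m(M)\bigr)\leq md+1,$$
hence $\breadth\bigl(\widetilde{\PP}_m(M)\bigr)\leq md$. Finally, since the underlying group of $M$ is abelian, Corollary~\ref{cor:lower bounds on vc} (through Corollary~\ref{cor:commensurable, 2}) identifies $\vc^T(m)$ with $\breadth\bigl(\widetilde{\PP}_m(M)\bigr)$, and we conclude $\vc^T(m)\leq md$, as claimed.

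There is no genuine obstacle here: the substantive work has already been carried out in Proposition~\ref{prop:foundation rank} and Corollary~\ref{cor:lower bounds on vc}, and the argument above merely assembles these. The only point needing a moment's care is to apply the \emph{strict} version of~\eqref{eq:breadth vs height}, using the presence of the top element of $\widetilde{\PP}_m(M)$: the non-strict estimate $\breadth\leq\height\leq md+1$ would yield only $\vc^T(m)\leq md+1$, one short of the desired bound.
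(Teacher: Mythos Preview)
Your proof is correct and follows essentially the same route as the paper: apply Proposition~\ref{prop:foundation rank} to bound the height of $\widetilde{\PP}_m(M)$, invoke~\eqref{eq:breadth vs height} to bound its breadth by $md$, and then read off the bound on $\vc^T(m)$. The paper's version is terser (it cites Proposition~\ref{prop:commensurable} rather than Corollary~\ref{cor:lower bounds on vc}, and glosses over the off-by-one in the height), but your more careful treatment of the strict form of~\eqref{eq:breadth vs height} is a welcome clarification rather than a different argument.
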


\begin{proof}
By Proposition~\ref{prop:foundation rank}, $\widetilde{\PP}_m(M)$ has height $dm$ and hence breadth at most~$dm$.
Thus by Proposition~\ref{prop:commensurable} we get $\vc^T(m)\leq dm$.
\end{proof}

\begin{remark*}
Every complete theory of a module is dimensional \cite[Corollary~6.21]{Prest} and hence does not have the finite cover property (see also \cite{Baudisch}); hence Corollary~\ref{cor:mainst, finite U-rank} yields another proof of Corollary~\ref{cor:finite U-rank for modules}.
\end{remark*}

The relationship between the ordered sets $\widetilde{\PP}_m(M)$ and $\PP_m^0(M)$ is particularly clean if $T$ is totally transcendental:

\begin{lemma}\label{lem:tt}
If $T$ is totally transcendental, then $\iota$ is an isomorphism of ordered sets.
\end{lemma}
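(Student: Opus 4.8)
The plan is to exploit the fact that total transcendence collapses the distinction between the three lattices $\PP_m(M)$, $\widetilde{\PP}_m(M)$ and $\PP_m^0(M)$, by imposing a descending chain condition on $\PP_m(M)$.

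First I would record that since $T$ is totally transcendental, $\mathbf M^m$ is an $\omega$-stable group and hence satisfies the descending chain condition on definable subgroups; in particular $\PP_m(M)$ has the d.c.c.\ (this is also classical in the module-theoretic setting, cf.~\cite{Prest}). It follows that any subgroup of $M^m$ arising as an intersection of a (possibly infinite) family of p.p.~definable subgroups is already a finite such intersection, hence itself p.p.~definable; thus $\PP_m^0(M)\subseteq\PP_m(M)$, and $\PP_m^0(M)$ is precisely the set of connected members of $\PP_m(M)$. In particular, for a connected $G\in\PP_m^0(M)$ we have $G^0=G$: the defining family $\{H'\in\PP_m(M):G\lesssim H'\}$ of $G^0$ consists of subgroups containing $G$ (by connectedness) and contains $G$ itself.

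Next I would show that $H\sim H^0$ for every $H\in\PP_m(M)$. By the d.c.c., $H^0=\bigcap\{H'\in\PP_m(M):H\lesssim H'\}$ equals a finite subintersection $H\cap H_1'\cap\cdots\cap H_k'$ with $H\lesssim H_i'$, i.e.\ $[H:H\cap H_i']<\infty$ for each $i$; since a finite intersection of finite-index subgroups again has finite index, $[H:H^0]<\infty$, so $H$ and $H^0$ are commensurable. This provides a well-defined increasing map $j\colon\PP_m^0(M)\to\widetilde{\PP}_m(M)$, $G\mapsto[G]$ (well-defined since $\PP_m^0(M)\subseteq\PP_m(M)$, and increasing since inclusion implies $\lesssim$). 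Moreover $j$ is a two-sided inverse of $\iota$: $\iota(j(G))=G^0=G$ by the previous paragraph, and $j(\iota([H]))=[H^0]=[H]$ since $H\sim H^0$. As $\iota$ is increasing (Lemma~\ref{lem:foundation rank, 2}, or directly from $H\lesssim H'\Rightarrow H^0\leq(H')^0$) with increasing inverse $j$, it is an isomorphism of ordered sets.

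The only real content is the step from d.c.c.\ to "$H^0$ is p.p.~definable and commensurable with $H$": one must be sure the finite subintersection representing $H^0$ can be taken among the $H'$ with $H\lesssim H'$, so that the finite-index property is inherited; this is immediate because $H^0$ is by definition the intersection over exactly that family, and d.c.c.\ permits replacing it by finitely many of its members. Everything else is routine bookkeeping with commensurability, and the fact (already noted in the text, and reflected here by $\iota$ being a lattice quotient map composed with a section) that $H\mapsto H^0$ need not respect meets in general does not interfere, since $\iota$ is here an order isomorphism between lattices and therefore automatically a lattice isomorphism.
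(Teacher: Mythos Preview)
Your proof is correct and follows essentially the same approach as the paper: both arguments rest on the descending chain condition for $\PP_m(M)$ in the totally transcendental case, use it to deduce $H\sim H^0$ for all $H\in\PP_m(M)$, and then verify that $\iota$ is a bijection. The only difference is cosmetic: the paper dispatches surjectivity of $\iota$ by citing \cite[Lemma~5.10]{Prest}, whereas you explicitly observe that under the d.c.c.\ every $G\in\PP_m^0(M)$ is already p.p.~definable with $G^0=G$, and build the inverse $j$ by hand.
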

\begin{proof}
In this case, $\PP_m(M)$ satisfies the descending chain condition \cite[Theorem~3.1]{Prest}; it follows that $H\sim H^0$ for all $H\in\PP_m(M)$ (showing that $\iota$ is an embedding of ordered sets) and $\iota$ is onto (using \cite[Lemma~5.10]{Prest}).
\end{proof}

\subsection{Direct sums of modules.}\label{sec:direct sum}
For use in the next section we establish two lemmas concerning the behavior of VC~density under the operation of taking the direct sum of two modules. (We also prove a corresponding fact for  the $\VC{}d$ property, which, however, will not be used later.)
We let $M$, $M'$ be  $R$-modules, $T=\Th(M)$, $T'=\Th(M')$, and $T^+=\Th(M\oplus M')$. 
We first observe that if $M$ is infinite and a pure submodule of $M'$, then $\vc^{T}\leq\vc^{T'}$.
(This is an immediate consequence of the Baur-Monk Theorem and the fact that
every Boolean combination of p.p.~$\mathcal L_R$-formulas is invariant under the extension $M'\subseteq M$.) This yields:

\begin{lemma}\label{lem:VC density under adding a finite module}
Suppose $M$ is infinite and $M'$ is finite. Then $\vc^{T^+}=\vc^T$.
\end{lemma}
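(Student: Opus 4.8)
The plan is to read off both $\vc^{T}$ and $\vc^{T^+}$ from the breadth of the lattice of commensurability classes of p.p.~definable subgroups, and to observe that the finite summand $M'$ contributes nothing to that breadth.

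First, since $M$ is infinite, so is $M\oplus M'$; hence Corollary~\ref{cor:lower bounds on vc} applies to both modules and yields, for every $m$,
\[
\vc^{T}(m)=\breadth\big(\widetilde{\PP}_m(M)\big)\qquad\text{and}\qquad \vc^{T^+}(m)=\breadth\big(\widetilde{\PP}_m(M\oplus M')\big).
\]
Next I would apply Lemma~\ref{lem:basic properties of breadth, commensurability}~(4) with $N:=M'$: because $M'$ is finite, $(M')^m$ is finite, so any two of its subgroups are commensurable (each has finite index), whence $\widetilde{\PP}_m(M')$ is trivial and $\breadth(\widetilde{\PP}_m(M'))=0$. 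The ``$N$ finite'' case of that lemma then gives
\[
\breadth\big(\widetilde{\PP}_m(M\oplus M')\big)=\breadth\big(\widetilde{\PP}_m(M)\big)+\breadth\big(\widetilde{\PP}_m(M')\big)=\breadth\big(\widetilde{\PP}_m(M)\big).
\]
Combining the two displays gives $\vc^{T^+}(m)=\vc^{T}(m)$ for all $m$, i.e., $\vc^{T^+}=\vc^{T}$, as required.

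It is worth noting that one half of this, $\vc^{T}\le\vc^{T^+}$, is already immediate from the observation recorded just before the lemma, applied to the pure (indeed split) inclusion $M\hookrightarrow M\oplus M'$. Thus the real content is the reverse inequality $\vc^{T^+}\le\vc^{T}$, and this is where the commensurability-lattice computation is essential: there is no purity argument available in that direction, since $M\oplus M'$ is in general neither elementarily equivalent to $M$ nor a pure submodule of any model of $T$. Correspondingly, the only delicate point in the argument is the invocation of Corollary~\ref{cor:lower bounds on vc}, which requires the ambient module to be infinite --- this is exactly the role of the hypothesis on $M$; everything else is bookkeeping with the already-established properties of the lattices $\widetilde{\PP}_m(-)$.
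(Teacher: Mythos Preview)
Your proof is correct but proceeds differently from the paper's.  The paper argues the two inequalities separately: $\vc^{T}\le\vc^{T^+}$ from the pure inclusion $M\hookrightarrow M\oplus M'$ (exactly as you note in your last paragraph), and $\vc^{T^+}\le\vc^{T}$ by invoking an external fact from \cite{ADHMS} (the remark after Lemma~3.21 there, which handles adding a finite structure to an interpretation).  You instead stay entirely inside the present paper's framework: you use the identification $\vc^{T}(m)=\breadth(\widetilde{\PP}_m(M))$ from Corollary~\ref{cor:lower bounds on vc} and then Lemma~\ref{lem:basic properties of breadth, commensurability}~(4) to see that a finite direct summand leaves this breadth unchanged.

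What each approach buys: the paper's argument is more general in that the inequality $\vc^{T^+}\le\vc^{T}$ it imports does not depend on the module (or $1$-based) structure at all, while your argument is module-specific but self-contained---it does not send the reader to \cite{ADHMS}.  Your route also makes transparent \emph{why} the finite summand is harmless: it collapses to a single commensurability class and hence contributes zero breadth.  There is no circularity, since both Corollary~\ref{cor:lower bounds on vc} and Lemma~\ref{lem:basic properties of breadth, commensurability} are established before this lemma and do not rely on it.
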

\begin{proof}
By the remark preceding the lemma, applied to $M\oplus M'$ in place of $M'$, we obtain $\vc^{T^+}\geq \vc^T$. The other inequality $\vc^{T^+}\leq\vc^T$ follows from  the remark after Lemma~3.21 in \cite{ADHMS}.
\end{proof}

If both $M$ and $M'$ are infinite, by \cite[Lemma~3.21]{ADHMS} we can only conclude that
$$\max\big(\vc^{T},\vc^{T'}\big)\leq \vc^{T^+} \leq  \vc^T + \vc^{T'}.$$
{\it The standing assumption in both lemmas below is that $M$ and $M'$ are definable in $M\oplus M'$.}\/ Then the natural projections $\pi\colon M\oplus M'\to M$ and $\pi'\colon M\oplus M'\to M'$ are also definable in $M\oplus M'$. In this situation, the estimate on $\vc^{T^+}$ above can be improved:

\begin{lemma}\label{lem:VC density of sum of modules} 
Suppose $M$ and $M'$ are infinite.  Then
$\vc^{T^+} = \vc^T + \vc^{T'}$.
\end{lemma}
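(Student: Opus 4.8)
The inequality $\vc^{T^+}\le\vc^T+\vc^{T'}$ is already available from \cite[Lemma~3.21]{ADHMS}, so the real work is the reverse inequality $\vc^{T^+}\ge\vc^T+\vc^{T'}$. The natural approach is to fix finite sets $\Delta(x;y)$ of $\mathcal L_R$-formulas computing $\vc^T$ up to $\varepsilon$ and $\Delta'(x';y')$ computing $\vc^{T'}$, and then to manufacture a single finite set $\Delta^+$ of $\mathcal L_R$-formulas in the object variables $x=x'$ (same length $m$) whose dual shatter function in $M\oplus M'$ grows like the \emph{product} $\pi^*_\Delta(t)\cdot\pi^*_{\Delta'}(t)$. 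Concretely: because $M$ and $M'$ are definable in $M\oplus M'$, the projections $\pi,\pi'$ are definable, so for $\varphi(x;y)\in\Delta$ we may form $\varphi^\pi(x;y):=\varphi(\pi x;\pi y)$ and likewise $\varphi'^{\pi'}$ for $\varphi'\in\Delta'$; set $\Delta^+:=\{\varphi^\pi:\varphi\in\Delta\}\cup\{\varphi'^{\pi'}:\varphi'\in\Delta'\}$. The point is that a $\Delta^+(x;B^+)$-type over a parameter set of the form $B^+=B\times B'$ (with $B\subseteq M^{\abs y}$, $B'\subseteq M'^{\abs{y'}}$) decomposes: the $\varphi^\pi$-part is governed entirely by the $M$-coordinate and the $\varphi'^{\pi'}$-part entirely by the $M'$-coordinate, so the number of such types is exactly $\abs{S^\Delta_M(B)}\cdot\abs{S^{\Delta'}_{M'}(B')}$. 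Taking $B$ and $B'$ of size $t$ realizing the maxima gives $\pi^*_{\Delta^+}(2t)\ge\pi^*_\Delta(t)\cdot\pi^*_{\Delta'}(t)$, whence $\vc^*(\Delta^+)\ge\vc^*(\Delta)+\vc^*(\Delta')$; letting $\varepsilon\to0$ and using $\vc^{T^+}(m)=\sup_{\abs x=m}\vc^*(\Delta^+)$ type considerations (via Lemma~\ref{lem:encoding finite sets of formulas}, since p.p.\ formulas over $M\oplus M'$ Boolean-generate everything) yields the claim.

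There are two technical points to nail down. First, one must be careful that $\vc^T$ and $\vc^{T'}$ are attained (in the relevant limiting sense) by finite sets of p.p.\ formulas — this is exactly what Lemma~\ref{lem:encoding finite sets of formulas} provides, so given $\varepsilon>0$ one picks finite $\Delta\subseteq\PP$ with $\vc^*(\Delta)\ge\vc^T(m)-\varepsilon$ in $M$ and similarly $\Delta'$. Second — and this is where the definability hypothesis is essential — one needs that the parameter set $B\times B'$ embeds into $(M\oplus M')^{\abs y}$ in a way compatible with $\pi,\pi'$, i.e.\ identify $b\in M^{\abs y}$ with $(b,0)$ and $b'\in M'^{\abs{y'}}$ with $(0,b')$; then $\pi(b,0)=b$ and $\pi'(b,0)=0$, so $\varphi^\pi(x;(b,0))$ depends only on $\pi x$ as desired, and the independence of the two families over such parameters is genuinely a consequence of the direct-sum structure together with the definability of the projections.

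The main obstacle I anticipate is not the combinatorial product estimate (which is essentially bookkeeping) but rather the bookkeeping needed to pass from "there exist good finite $\Delta,\Delta'$" to a clean statement about $\vc^{T^+}(m)$: one has to check that enlarging $\Delta^+$ to include the "cross" p.p.\ formulas of $M\oplus M'$ (those not coming from $M$ or $M'$ alone, e.g.\ $x_1-x_2\in M$ type conditions) does not interfere — but since those only \emph{increase} $\vc^*(\Delta^+)$, they do no harm for the lower bound, and for the upper bound we already have \cite[Lemma~3.21]{ADHMS}. Secondarily, one should double-check the exact form of the supremum definition of $\vc^T(m)$ and that $\abs{x}=\abs{x'}=m$ is the right matching of object-variable lengths; a short lemma to the effect that $\pi^*_{\Delta\cup\Delta'}$ over product parameter sets factors as a product, stated at the level of types, will make the argument transparent. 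I would write the proof as: (1) recall the upper bound; (2) fix $\varepsilon$, choose $\Delta,\Delta'$ via Lemma~\ref{lem:encoding finite sets of formulas}; (3) form $\Delta^+$ and the embedding of product parameter sets; (4) prove the type-factorization and hence the product bound on $\pi^*$; (5) conclude $\vc^*(\Delta^+)\ge\vc^T(m)+\vc^{T'}(m)-2\varepsilon$ and let $\varepsilon\to0$.
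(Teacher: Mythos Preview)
Your proposal is correct and follows essentially the same route as the paper's proof: the paper also reduces to the lower bound, forms the formulas $\varphi(\pi(x);y)$ and $\varphi'(\pi'(x);y)$, takes the parameter set $C=B\cup B'$ with $\abs{B}=\abs{B'}=t$, and shows that $(q,q')\mapsto\tp^{\Delta_1\cup\Delta_1'}(a_q+a_{q'}/C)$ is injective, yielding $\pi^*_\Delta(t)\cdot\pi^*_{\Delta'}(t)\le\pi^*_{\Delta_1\cup\Delta_1'}(2t)$, and then invokes Lemma~\ref{lem:encoding finite sets of formulas} together with Baur--Monk. The only cosmetic differences are that the paper applies $\pi$ only to the object variables (not to~$y$), and concludes by taking suprema directly rather than via an explicit $\varepsilon$-argument; your version with $\varphi(\pi x;\pi y)$ works equally well since $\pi$ acts as the identity on parameters already in~$M$.
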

\begin{proof}
By Lemma~\ref{lem:expansions by constants} we may assume that $M$, $M'$ are $\emptyset$-definable in $M\oplus M'$.
We only need to show $\vc^{T^+} \geq \vc^T + \vc^{T'}$.
Let $\Delta(x;y)$, $\Delta'(x;y)$ be finite sets of p.p.~$\mathcal L_R$-formulas.
Set (slightly abusing syntax)
$$\Delta_1:=\big\{\varphi(\pi(x);y):\varphi(x;y)\in\Delta\big\}, \qquad
  \Delta_1':=\big\{\varphi'(\pi'(x);y):\varphi'(x;y)\in\Delta'\big\}.$$
Let $B\subseteq M^{\abs{y}}$, $B'\subseteq (M')^{\abs{y}}$ with $\abs{B}=\abs{B'}=t\in\bN$, and put 
$$C=B\cup B'\subseteq (M\oplus M')^{\abs{y}},$$ 
so $\abs{C}\leq 2t$. 
Given a pair $(q,q')\in S^\Delta(B)\times S^{\Delta'}(B')$ we let $p$ be the $(\Delta_1\cup\Delta_1')(x;C)$-type $\tp^{\Delta_1\cup\Delta_1'}(a_q + a_{q'}/C)$ of $a_q + a_{q'}\in (M\oplus M')^{\abs{x}}$ in $M\oplus M'$, where $a_q$, $a_{q'}$ are arbitrary realizations of $q$ in $M$ and of $q'$ in $M'$, respectively.
Then for each $\varphi\in\Delta$ and $b\in B$,
\begin{align*}
\varphi(x;b)\in q	&\quad\Longleftrightarrow\quad M\models\varphi(a_q;b) \\
					&\quad\Longleftrightarrow\quad M\oplus M'\models\varphi(a_q;b) \\
					&\quad\Longleftrightarrow\quad M\oplus M'\models\varphi(\pi(a_q + a_{q'});b) \\
					&\quad\Longleftrightarrow\quad \varphi(\pi(x);b)\in p,
\end{align*}
where in the second equivalence we used that $\varphi$ is p.p.
Similarly we see that $\varphi'(x;b')\in q'$ iff $\varphi'(\pi'(x);b')\in p$, for each $\varphi'(x;y)\in\Delta'$ and $b'\in B'$. Hence the map 
$$S^\Delta(B)\times S^{\Delta'}(B')\to S^{\Delta_1\cup\Delta_1'}(C)$$ 
given by $(q,q')\mapsto p$ is injective, so
$\pi^*_\Delta(t)\cdot\pi^*_{\Delta'}(t)\leq \pi^*_{\Delta_1\cup\Delta_1'}(2t)$. 
Here $\pi^*_\Delta$ and $\pi^*_{\Delta'}$ are computed in $M$ and $M'$, respectively, and
$\pi^*_{\Delta_1\cup\Delta_1'}$ is computed in $M\oplus M'$.
Since $t$ was arbitrary, this yields
$\vc^*(\Delta)+\vc^*(\Delta')\leq\vc^*(\Delta_1\cup\Delta_1')$, employing a similar convention for $\vc^*$ as for $\pi^*$.
By Lemma~\ref{lem:encoding finite sets of formulas} (and the Baur-Monk Theorem) we obtain $\vc^T(m)+\vc^{T'}(m)\leq\vc^{T^+}(m)$, where $m=\abs{x}$, as required.
\end{proof}

\begin{lemma}
Suppose $M$ and $M'$ are $\emptyset$-definable in $M\oplus M'$.
If $T$ has the  $\VC{}d$ property and $T'$ has the  $\VC{}d'$ property, then $T^+$ has the  $\VC{}(d+d')$ property. 
\end{lemma}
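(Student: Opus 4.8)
The plan is to deduce the $\VC{}(d+d')$ property for $T^+=\Th(M\oplus M')$ from Lemma~\ref{lem:qe and VCd}, mirroring the setup in the proof of Lemma~\ref{lem:VC density of sum of modules}: I would take as the family $\Phi$ the p.p.~formulas pulled back along the two projections and check the two hypotheses of that lemma. Since $M$ and $M'$ are $\emptyset$-definable submodules of $M\oplus M'$ with $M\cap M'=\{0\}$, the projections $\pi\colon M\oplus M'\to M$ and $\pi'\colon M\oplus M'\to M'$ are $\emptyset$-definable. Let $\Phi$ consist of the partitioned $\mathcal L_R$-formulas $\psi(\pi(x);\pi(y))$ and $\psi(\pi'(x);\pi'(y))$ in the single object variable $x$, where $\psi(x;y)$ ranges over p.p.~$\mathcal L_R$-formulas with $\abs{x}=1$, together with the formula $x=x$.

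For hypothesis~(1) of Lemma~\ref{lem:qe and VCd} I would argue that, by the Baur-Monk Theorem, it suffices to show that each p.p.~formula $\psi(x;y)$ with $\abs{x}=1$ is $T^+$-equivalent to $\psi(\pi(x);\pi(y))\wedge\psi(\pi'(x);\pi'(y))$ (the remaining cases — sentences, and p.p.~formulas not involving $x$ — being routinely Boolean combinations of members of $\Phi$). This equivalence follows from Lemma~\ref{lem:pp definable for direct sum}, applied with $I=\{1,2\}$, $M_1=M$, $M_2=M'$: the p.p.~definable subgroup $\psi(M\oplus M')$ of $(M\oplus M')^{1+\abs{y}}$ is the direct sum $\psi(M)\oplus\psi(M')$, and since p.p.~formulas are absolute between $M\oplus M'$ and each of its direct summands $M$, $M'$, a tuple $(a,b)$ lies in $\psi(M\oplus M')$ iff $\psi(\pi(a);\pi(b))$ and $\psi(\pi'(a);\pi'(b))$ both hold in $M\oplus M'$.

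For hypothesis~(2), fix a finite $\Delta_0\subseteq\Phi$; discarding $x=x$ (which costs no parameters) and padding parameter variables to a common length, its members are $\psi_1(\pi(x);\pi(y)),\dots,\psi_k(\pi(x);\pi(y))$ and $\psi'_1(\pi'(x);\pi'(y)),\dots,\psi'_l(\pi'(x);\pi'(y))$; put $\Delta:=\{\psi_1,\dots,\psi_k\}$ and $\Delta':=\{\psi'_1,\dots,\psi'_l\}$. Since $T$ has the $\VC{}d$ property, $\Delta$ has UDTFS in $M$ with $d$ parameters, say via finitely many families $\mathcal D_s=(\operatorname{d}_{\psi,s}(y;y_1,\dots,y_d))_{\psi\in\Delta}$; similarly $\Delta'$ has UDTFS in $M'$ with $d'$ parameters via families $\mathcal D'_{s'}$. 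Using that $M$ (resp.~$M'$) is $\emptyset$-definable in $M\oplus M'$, I would relativize the (in general non-p.p.) formulas $\operatorname{d}_{\psi,s}$ to $M$ and precompose all their variables with $\pi$ (and symmetrically for $\Delta'$ with $\pi'$), obtaining for each pair $(s,s')$ a family $\mathcal E_{(s,s')}$ of $\mathcal L_R$-formulas in the $d+d'$ parameter tuples $y_1,\dots,y_d,y'_1,\dots,y'_{d'}$. Then, given finite $B\subseteq(M\oplus M')^{\abs{y}}$ and $q\in S^{\Delta_0}(B)$ with a realization $a$: by the absoluteness of p.p.~formulas used in (1), $q$ records exactly the types $q_1:=\tp^\Delta(\pi(a)/\pi(B))\in S^\Delta(\pi(B))$ and $q'_1:=\tp^{\Delta'}(\pi'(a)/\pi'(B))\in S^{\Delta'}(\pi'(B))$; choosing $(s,s')$ and defining parameters $c_1,\dots,c_d\in\pi(B)$ for $q_1$ and $c'_1,\dots,c'_{d'}\in\pi'(B)$ for $q'_1$, and lifting these to $\tilde c_i,\tilde c'_j\in B$ with $\pi(\tilde c_i)=c_i$ and $\pi'(\tilde c'_j)=c'_j$, the instance $\mathcal E_{(s,s')}(y;\tilde c_1,\dots,\tilde c_d,\tilde c'_1,\dots,\tilde c'_{d'})$ defines $q$. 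Hence every finite $\Delta_0\subseteq\Phi$ has UDTFS in $M\oplus M'$ with $d+d'$ parameters, and Lemma~\ref{lem:qe and VCd} yields the $\VC{}(d+d')$ property for $T^+$.

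The main obstacle — beyond routine bookkeeping (Baur-Monk, padding of parameter variables, replaying the $\Delta_1,\Delta_1'$ construction from the proof of Lemma~\ref{lem:VC density of sum of modules}) — is the transfer of the uniform definitions from $M$ and $M'$ to $M\oplus M'$: the defining formulas furnished by UDTFS are arbitrary $\mathcal L_R$-formulas, hence not automatically absolute along the inclusions of $M$, $M'$ into $M\oplus M'$, which is precisely why one must relativize them to the $\emptyset$-definable submodules $M$, $M'$; by contrast, the p.p.~formulas that make up $\Delta_0$ itself remain absolute, and this is exactly what makes the type $q$ determined by the projected types $q_1$, $q'_1$ so that the relativized defining formulas still define $q$.
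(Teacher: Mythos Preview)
Your proof is correct and follows the same overall strategy as the paper: reduce via Lemma~\ref{lem:qe and VCd} and Baur--Monk, split each $\Delta$-type over $M\oplus M'$ into its $M$- and $M'$-components, and combine the two defining schemes into one with $d+d'$ parameters. There are two minor technical differences worth noting. First, the paper takes $\Phi$ to be the p.p.\ formulas $\psi(x;y)$ themselves (rather than their pullbacks along $\pi,\pi'$), and for a finite $\Delta\subseteq\Phi$ decomposes each parameter $c\in C\subseteq(M\oplus M')^{\abs{y}}$ explicitly as $c=b+b'$ with $b\in M^{\abs{y}}$, $b'\in(M')^{\abs{y}}$; your choice of $\Phi$ accomplishes the same decomposition implicitly via the projections. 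Second, and more interestingly, to transfer the (in general non-p.p.) defining formulas $\operatorname{d}_{\varphi,i}$ from $M$ up to $M\oplus M'$, the paper first applies Baur--Monk \emph{to the defining formulas} so as to render them Boolean combinations of p.p.\ formulas, and then invokes absoluteness of such formulas along the pure inclusion $M\hookrightarrow M\oplus M'$; you instead relativize the defining formulas to the $\emptyset$-definable submodule $M$. Your route avoids the second appeal to Baur--Monk; the paper's keeps all formulas unrelativized in the module language. Both are sound.
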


\begin{proof}
Suppose $T$ has the  $\VC{}d$ property and $T'$ has the  $\VC{}d'$ property.
Let $\Delta(x;y)$, where $\abs{x}=1$, be a finite set of p.p.~$\mathcal L_R$-formulas; by Lemma~\ref{lem:qe and VCd} and the Baur-Monk~Theorem it suffices to show that $\Delta$ has UDTFS with $d+d'$ parameters in $M\oplus M'$.
Let $\mathcal D=(\mathcal D_i)_{i\in I}$ and $\mathcal D'=(\mathcal D_{i'})_{i'\in I'}$ be uniform definitions of $\Delta(x;B)$-types over finite sets in $M$ and in $M'$, respectively, where
$$\mathcal D_i := \big\{ \operatorname{d}_{\varphi,i}(y;\overline{y}) :\varphi\in\Delta \big\}, \qquad
\mathcal D'_{i'} := \big\{ \operatorname{d}'_{\varphi,i'}(y;\overline{y'}) :\varphi\in\Delta \big\}
$$
with
$\overline{y}=(y_1,\dots,y_d)$ and $\overline{y'}=(y'_1,\dots,y'_{d'})$. By Baur-Monk we may assume that each $\operatorname{d}_{\varphi,i}$ and $\operatorname{d}'_{\varphi,i'}$ is a Boolean combination of p.p.~$\mathcal L_R$-formulas.

Let $C$ be a finite set of tuples from $(M\oplus M')^{\abs{y}}$. Take $B\subseteq M^{\abs{y}}$, $B'\subseteq (M')^{\abs{y}}$ minimal such that each $c\in C$ is of the form $c=b + b'$ for a pair $(b,b')\in B\times B'$. Let $a\in M$, $a'\in M'$ and $p=\tp^\Delta(a+a'/C)$. Put
$$q:=\tp^\Delta(a/B), \qquad q':=\tp^\Delta(a'/B'),$$
and take $\overline{b}=(b_1,\dots,b_d)\in B^d$, $\overline{b'}=(b_1',\dots,b_{d'}')\in (B')^{d'}$ and $i\in I$, $i'\in I'$ such that $\mathcal D_i(y;\overline{b})$ defines $q$ and $\mathcal D'_{i'}(y;\overline{b'})$ defines $q'$.
Then for all $(b,b')\in B\times B'$ we have
\begin{align*}
\varphi(x;b+ b')\in p	&\quad \Longleftrightarrow \quad M\oplus M'\models\varphi(a+a';b+ b') \\
							&\quad \Longleftrightarrow \quad M\models\varphi(a;b) \text{ and } M'\models\varphi(a';b')\\
							&\quad \Longleftrightarrow \quad M\models\operatorname{d}_{\varphi,i}(b;\overline{b})\text{ and } M'\models\operatorname{d}'_{\varphi,i'}(b';\overline{b'}),
\end{align*}
where in the second equivalence we used that $\varphi$ is p.p. 
Take $\overline{c}\in C^d$ and $\overline{c'}\in C^{d'}$ such that $\pi(\overline{c})=\overline{b}$ and $\pi'(\overline{c'})=\overline{b'}$.
Since $\operatorname{d}_{\varphi,i}$ and $\operatorname{d}'_{\varphi,i'}$ are Boolean combinations of p.p.~formulas, we have
$$M\models\operatorname{d}_{\varphi,i}(b;\overline{b})	\quad \Longleftrightarrow \quad 
M\oplus M'\models \operatorname{d}_{\varphi,i}(\pi(b+ b');\pi(\overline{c}))$$
and
$$M'\models\operatorname{d}'_{\varphi,i'}(b';\overline{b'})	\quad \Longleftrightarrow \quad 
M\oplus M'\models\operatorname{d}'_{\varphi,i'}(\pi'(b+ b');\pi'(\overline{c'})).$$
Hence $\mathcal D^+=(\mathcal D^+_{(i,i')})_{(i,i')\in I\times I'}$, where
$\mathcal D^+_{(i,i')} = \{\operatorname{d}^+_{\varphi,(i,i')}:\varphi\in\Delta\}$ with
$$\operatorname{d}^+_{\varphi,(i,i')}(y;\overline{y},\overline{y'}) := \operatorname{d}_{\varphi,i}(\pi(y);\pi(\overline{y})) \wedge \operatorname{d}'_{\varphi,i'}(\pi'(y);\pi'(\overline{y'})),$$
is a uniform definition of $\Delta(x;B)$-types over finite sets in $M\oplus M'$ with $d+d'$ parameters, as required.
\end{proof}

\section{Abelian Groups with Uniformly Bounded VC~Density}\label{sec:abelian}

\noindent
In this final section of the paper we focus on abelian groups. We write abelian groups additively and construe them as first-order structures in the language $\mathcal L_{\bZ}$ of $\bZ$-modules as usual. 
Throughout this section $A$ denotes an  abelian group and $T=\Th(A)$ its complete theory.
Determining the VC~density function of an arbitrary (infinite) abelian group is an interesting but probably intricate problem. In the first part of this section we obtain a satisfactory answer for those abelian groups whose lattice of p.p.~definable subgroups is finite; in particular, we prove Theorem~\ref{thm:aleph0-cat abelian} from the introduction.
After a preliminary subsection (Section~\ref{sec:homocyclic}) we then prove Theorem~\ref{thm:abelian} in Section~\ref{sec:abelian with uniform VC}, and in Section~\ref{sec:dp-minimal abelian} we characterize all dp-minimal abelian groups.
We finish with some remarks on (non-) dp-minimal expansions of the dp-minimal abelian group $\mathbb Z$ (Section~\ref{sec:dp-min expansions of Z}).

\subsection{A case study: abelian groups of finite exponent.}\label{sec:finite exponent}
For each prime~$p$ let 
$$U(p;A):=\big\{i\geq 0:U(p,i;A)\neq 1\big\}.$$ 
Then $A$ has finite exponent iff $A$ is torsion and each set $U(p;A)$ is finite with $U(p;A)=\emptyset$ for all but finitely many $p$.
We will show:

\begin{theorem}\label{thm:breadth of finite exponent group}
Suppose $A$ has finite exponent, and set
\begin{equation}\label{eq:d}
d:=\sum_p d(U(p;A)).
\end{equation}
Then $A$ has breadth $d$.
\end{theorem}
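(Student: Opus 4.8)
The plan is to reduce, via the structure theory of bounded abelian groups together with Lemma~\ref{lem:basic properties of breadth}, to an explicit finite distributive lattice, and then to apply Dilworth's theorem in the form of Proposition~\ref{prop:dilworth}. First, since $A$ has finite exponent, write $A=\bigoplus_p A_p$ for its primary decomposition; all but finitely many $A_p$ are trivial, and each $A_p$, being of the form $A[p^k]$, is p.p.~definable in $A$. Applying Lemma~\ref{lem:basic properties of breadth}(4) repeatedly (at each stage $A_p$ and $\bigoplus_{q\neq p}A_q$ are p.p.~definable in the relevant summand) gives $\breadth(A)=\sum_p\breadth(A_p)$. Since $A_p$ absorbs all the $p$-torsion we have $U(p,i;A)=U(p,i;A_p)$, so it suffices to prove $\breadth(A_p)=d(U(p;A_p))$ for each $p$; in other words, we may assume $A$ is a $p$-group of finite exponent.

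Now write $A\cong\bigoplus_{i\in S}\bZ(p^{i+1})^{(\kappa_i)}$ with $S=U(p;A)$ finite and each $\kappa_i\geq 1$. I would first note that the lattice $\PP(A)$, hence $\breadth(A)$, depends only on $S$: for p.p.~$\mathcal L_R$-formulas $\varphi,\psi$ in one variable one has $\varphi(A)\subseteq\psi(A)$ iff the index $[\varphi:\varphi\wedge\psi]$ computed in $A$ equals~$1$, and, by the Baur-Monk analysis (or directly from the displayed decomposition), this holds iff $\varphi(\bZ(p^{i+1}))\subseteq\psi(\bZ(p^{i+1}))$ for every $i\in S$; so we may take $A=A_S:=\bigoplus_{i\in S}\bZ(p^{i+1})$. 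Every p.p.~definable subgroup of $A_S$ has the shape $\bigoplus_{i\in S}p^{a_i}\bZ(p^{i+1})$, and I would show that the admissible exponent vectors $a=(a_i)_{i\in S}$ are exactly those with $0\leq a_i\leq i+1$ for which $i\mapsto a_i$ is non-decreasing and $i\mapsto a_i-i$ is non-increasing: the basic subgroups $(p^jA_S)[p^l]$ have $a_i=\max(j,i+1-l)$, which is of this shape; these properties are preserved under intersection (pointwise maximum) and sum (pointwise minimum); and conversely any such $a$ is the sum of the basic subgroups $(p^{a(i_0)}A_S)[p^{\,i_0+1-a(i_0)}]$, $i_0\in S$. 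Hence $\PP(A_S)$ is isomorphic to the finite distributive lattice $L_S$ of admissible vectors, ordered so that $a\leq a'$ iff $a_i\geq a'_i$ for all $i$ (reverse containment of the corresponding subgroups), with meet and join the pointwise maximum and minimum.

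By Proposition~\ref{prop:dilworth}, $\breadth(L_S)=\width\big(J(L_S)\big)$, so it remains to identify the poset $J(L_S)$ of join-irreducibles---these turn out to be precisely the basic subgroups $(p^jA_S)[p^l]$ that admit no proper expression as a sum---and to show that $\width\big(J(L_S)\big)=d(S)$. For the lower bound one exhibits an antichain of size $d(S)$ inside $J(L_S)$, equivalently an embedding of $(2^{[d(S)]},{\cap})$ into $L_S$, obtained from a staircase configuration dictated by the maximal blocks of consecutive integers in $S$; for the upper bound one partitions $J(L_S)$ into $d(S)$ chains, again keyed to those blocks. This last step is the main obstacle: the preceding reductions are routine manipulations with primary decompositions and p.p.~formulas, whereas pinning down the order among the join-irreducibles in terms of the gap-structure of $S$, and executing the matching two-sided estimate against $d(S)$, is where the real content lies.
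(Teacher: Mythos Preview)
Your outline is essentially the paper's own strategy: reduce to $p$-groups via Lemma~\ref{lem:basic properties of breadth}(4), then to a single copy of each $\bZ(p^{i+1})$ (the paper does this via $\breadth(M^{(I)})=\breadth(M)$ rather than your index argument, but the effect is the same), identify $\PP(A)$ with a finite distributive lattice of ``admissible'' exponent vectors, and apply Proposition~\ref{prop:dilworth}. Your admissible vectors are exactly the paper's ``slowly growing'' tuples $\Lambda$ under the change of variables $a_i=\lambda_i-\mu_i$, so nothing diverges here.

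Where you stop is precisely where the paper does the work, and you have correctly flagged this. Two remarks to close the gap. First, the join-irreducibles are not best described as ``basic subgroups $(p^jA_S)[p^l]$ with no proper sum expression''; the paper instead gives a clean parametrization $J(\Lambda)\cong P=\{(i,j):j\in[n],\ i\in[\lambda_j]\}$ with an explicit order (Lemma~\ref{lem:join-irred, 0} and Corollary~\ref{cor:join-irred}), and this concrete model is what makes the width computation tractable. Second, the two-sided estimate $\width(P)=d(\lambda)$ is carried out by building, for each $k\in[d]$, a chain $C_k=C_k^{\downarrow}\cup C_k^{\rightsquigarrow}\cup C_k^{\nearrow}$ that descends the $k$th column, zig-zags through row~$j(k)$ (and possibly $j(k)+1$), and then ascends along the anti-diagonal; an induction on $n$ shows these partition $P$ (Lemma~\ref{lem:partition into chains}), and the elements $(k,j(k))$ form the required antichain. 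Your intuition about ``blocks of consecutive integers in $S$'' is on the right track---the jump pattern in the sequence $j(1)<j(2)<\cdots$ encodes exactly whether consecutive $\lambda$'s differ by $1$ or more---but the actual chain construction is more delicate than a block decomposition and is worth writing out in full.
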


Here, given a finite non-empty set $I=\{i_1,\dots,i_n\}$ of integers $i_1<\cdots<i_n$, we define $d=d(I)$ as the maximal length of a sequence $1=j(1)<j(2)<\cdots<j(d)\leq n$ of indices such that for $k=1,\dots,d-1$,
$$j(k+1)-j(k)=\begin{cases} 
1&\text{ if $i_{j(k)+1}-i_{j(k)}>1$}, \\
2&\text{ otherwise.} 
\end{cases}$$
We also write $d(i_1,\dots,i_n)$ instead of $d(I)$, and set $d(\emptyset):=0$.

\begin{examples}\mbox{} \label{ex:d(I)}

\begin{enumerate}
\item For $I=\{2,3,5,7,8,9\}$ the sequence of $j(k)$'s is $1$, $3$, $4$, $6$, so $d(I)=4$.
\item For $I=\{1,2,3,\dots,n\}$ the sequence of $j(k)$'s is $1,3,5,\dots,n$ if $n$ is odd, and $1,3,5,\dots,n-1$ if $n$ is even, hence $d(I)=\lceil n/2\rceil$.
\end{enumerate}
\end{examples}

Clearly we have $d(I)\geq \lceil n/2\rceil$ for every finite set $I$ of $n$ positive integers.
We also have the following upper bound on $d$ (which, however, is strict in general):

\begin{lemma}\label{lem:upper bound on d(I)}
Let $I=\{i_1,\dots,i_n\}$  be as above, with $i_1>0$, and set $i_0:=0$. Then
$$d(I)\leq\displaystyle\min_{0\leq j\leq n} \big(n-j+\lceil i_j/2\rceil\big).$$
\end{lemma}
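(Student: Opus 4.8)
The plan is to first observe that the recursion defining the indices $j(k)$ is deterministic: since the sequence is required to start at $j(1)=1$ and each $j(k+1)$ is then forced by $j(k)$ and the set $I$, there is a unique strictly increasing sequence $1=j(1)<j(2)<\cdots<j(d)\leq n$ realizing $d=d(I)$, and it is ``maximal'' in the sense that the next forced index would exceed $n$. All the work then reduces to estimating how fast the values $i_{j(k)}$ grow.

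The heart of the argument will be the inequality
$$i_{j(k)}\geq 2k-1\qquad\text{for every }k\in[d],$$
which I would prove by induction on $k$. The base case $k=1$ is immediate since $i_{j(1)}=i_1\geq 1$ (as $i_1>0$ is an integer). For the inductive step, assume $i_{j(k)}\geq 2k-1$ and $k+1\leq d$, so that $j(k+1)$ exists. Split according to the defining recursion: if $i_{j(k)+1}-i_{j(k)}>1$, then $j(k+1)=j(k)+1$ and, the $i_\ell$ being integers, $i_{j(k+1)}=i_{j(k)+1}\geq i_{j(k)}+2\geq 2k+1$; otherwise the gap is exactly $1$ (it cannot be smaller), so $j(k+1)=j(k)+2$ and $i_{j(k+1)}=i_{j(k)+2}\geq i_{j(k)+1}+1=i_{j(k)}+2\geq 2k+1$. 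Either way $i_{j(k+1)}\geq 2(k+1)-1$. Consequently $\lceil i_{j(k)}/2\rceil\geq k$ for all $k\in[d]$.

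With this in hand I would fix $j$ with $0\leq j\leq n$ and prove $d\leq n-j+\lceil i_j/2\rceil$. For $j=0$ this reads $d\leq n$, which holds because $j(1)<\cdots<j(d)$ are distinct positive integers at most $n$. For $j\geq 1$, set $s:=\max\{k\in[d]:j(k)\leq j\}$, which is defined since $j(1)=1\leq j$. The tail indices $j(s+1)<\cdots<j(d)$ are distinct integers in $\{j+1,\dots,n\}$, so $d-s\leq n-j$; and $j(s)\leq j$ forces $i_{j(s)}\leq i_j$, whence $s\leq\lceil i_{j(s)}/2\rceil\leq\lceil i_j/2\rceil$ by the previous paragraph. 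Adding these two inequalities gives the bound for this particular $j$, and taking the minimum over $0\leq j\leq n$ completes the proof.

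I do not expect a genuine obstacle here; the only point requiring care is making the two cases of the induction match up correctly with the two cases $j(k+1)-j(k)\in\{1,2\}$ in the definition of $d(I)$, which comes down to the elementary remark that consecutive elements of the integer set $I$ differ by at least $1$, so ``$i_{j(k)+1}-i_{j(k)}\leq 1$'' means ``$i_{j(k)+1}-i_{j(k)}=1$''.
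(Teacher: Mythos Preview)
Your proof is correct and is in fact cleaner than the one in the paper. The paper argues by induction on the size of the set: setting $d_k:=d(i_1,\dots,i_k)$, it shows for each $k\in[n]$ that $d_k\leq\min_{0\leq j\leq k}(k-j+\lceil i_j/2\rceil)$, carrying the full minimum along through the induction and doing a case analysis at the inductive step to handle the new term $j=k+1$. Your route instead isolates the single combinatorial fact that really matters, namely $i_{j(k)}\geq 2k-1$, proves it by a short two-case induction on $k$, and then obtains the bound for each $j$ by the split $d=s+(d-s)$ with $s=\max\{k:j(k)\leq j\}$. This is more transparent and avoids tracking the minimum through the induction; the paper's version has the marginal advantage of yielding the inequality simultaneously for every prefix $\{i_1,\dots,i_k\}$, but that extra strength is never used.
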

\begin{proof}
Set $d_k:=d(i_1,\dots,i_k)$ for $k\in [n]$, and let 
$$1=j(1)<j(2)<\dots<j(d_n)\leq n$$ be a sequence of indices as above; note that necessarily $j(d_k)=k$ or $j(d_k)=k-1$, and $d_k=d_{k-1}$ or $d_k=d_{k-1}+1$. We proceed by induction to show that for each $k\in [n]$, 
\begin{equation}\label{eq:inequ on dk}
d_k\leq\displaystyle\min_{0\leq j\leq k} \big(k-j+\lceil i_j/2\rceil\big).
\end{equation} 
For $k=1$ this is trivial, since $d_1=1$ and the right-hand side in this inequality is $\min\{1,\lceil i_1/2\rceil\}=1$. Suppose we have shown \eqref{eq:inequ on dk} for some value of $k<n$; then 
$$d_{k+1}\leq d_k+1\leq (k+1)-j+\lceil i_j/2\rceil\qquad\text{for $j=0,\dots,k$,}$$
so we only need to show that $d_{k+1}\leq \lceil i_{k+1}/2\rceil$. For this, we distinguish several cases.
Suppose first that $j(d_k)=k$ and $i_{k+1}-i_k>1$. Then $d_{k+1}=d_k+1$ (and $j(d_{k+1})=j(d_k)+1$), and
$\lceil i_{k+1}/2\rceil - \lceil i_k/2\rceil \geq 1$, so 
$$d_{k+1}\leq d_k+1\leq \lceil i_k/2\rceil+1\leq \lceil i_{k+1}/2\rceil.$$ 
If $j(d_k)=k$ and $i_{k+1}-i_k=1$, then clearly 
$$d_{k+1}=d_k\leq  \lceil i_k/2\rceil\leq \lceil i_{k+1}/2\rceil.$$
Finally, suppose $j(d_k)=k-1$; then necessarily $k>1$ and $i_k-i_{k-1}=1$, $d_k=d_{k-1}$. In this case we have $d_{k+1}=d_k+1$, $j(d_{k+1})=k+1$, and clearly $i_{k+1}-i_{k-1}\geq 2$, so 
$$d_{k+1}=d_{k-1}+1\leq \lceil i_{k-1}/2\rceil+1\leq \lceil i_{k+1}/2\rceil$$ 
as required.
\end{proof}

In particular we have
\begin{equation}\label{eq:bound on d(I)}
\lceil n/2\rceil \leq d(i_1,\dots,i_n) \leq \min\{ n, \lceil i_n/2 \rceil \}.
\end{equation}
For later use  also note the following easily verified observation:
\begin{equation}\label{eq:d(I)=1}
d(i_1,\dots,i_n)=1\quad\Longleftrightarrow\quad \text{$n=1$, or $n=2$ and $i_2=i_1+1$.}
\end{equation}

\subsubsection{Towards the proof of Theorem~\ref{thm:breadth of finite exponent group}.}
Recall that every abelian torsion group is an internal direct sum of its $p$-primary components, as $p$ ranges over the set of prime numbers.
We denote the $p$-primary component of $A$ by $A_p\leq A$; if $A_p$ has finite exponent, then $A_p$ is obviously
p.p.~definable in $A$.  We have
$$A_p \cong \left(\bigoplus_{i>0} \bZ(p^i)^{(\alpha_{p,i-1})}\right)\oplus  \bZ(p^\infty)^{(\beta_p)}$$
where $\alpha_{p,i}$ and $\beta_p$ are cardinals, with $U(p,i;A)=p^{\alpha_{p,i}}$. (Here and below, $\bZ(p^i)$ denotes the cyclic group of order $p^i$, and $\bZ(p^\infty)$ the Pr\"ufer $p$-group.)

Suppose now that $A$ has finite exponent. Then by Lemma~\ref{lem:basic properties of breadth},~(4) we have
$$\breadth(A) = \sum_{p} \breadth(A_p),$$ 
and in the representation of $A_p$ above we have $\alpha_{p,i}=0$ for all but finitely many pairs $(p,i)$, and $\beta_p=0$ for each $p$.
So for the proof of Theorem~\ref{thm:breadth of finite exponent group} we may assume that for some prime $p$, our group $A$ has the following form:
$$A=\bZ(p)^{(\alpha_0)}\oplus\bZ(p^2)^{(\alpha_1)}\oplus\cdots\oplus \bZ(p^m)^{(\alpha_{m-1})}\qquad\text{where the $\alpha_i$ are cardinals.}$$
Letting 
$$B=\bZ(p)^{(\beta_0)}\oplus\bZ(p^2)^{(\beta_1)}\oplus\cdots\oplus \bZ(p^m)^{(\beta_{m-1})}\qquad\text{where
$\beta_i=\begin{cases}
0 & \text{if $\alpha_i=0$} \\
1 & \text{otherwise,}
\end{cases}
$
}$$
for an arbitrary infinite cardinal $\kappa\geq\max_i \alpha_i$ we have $A^{(\kappa)}\cong B^{(\kappa)}$ and thus
$$\breadth(A)=\breadth(A^{(\kappa)})=\breadth(B^{(\kappa)})=\breadth(B)$$ 
by part~(3) of Lemma~\ref{lem:basic properties of breadth}. Also $U(p;A)=U(p;B)$. Thus, after replacing $A$ by $B$ we may further assume that for each $i$, either $\alpha_i=0$ or $\alpha_i=1$, so
$$A=\bZ(p^{i_1})\oplus\bZ(p^{i_2})\oplus\cdots\oplus\bZ(p^{i_n})$$
where $i_1<\dots<i_n$ are the indices $i\in [m]$ such that $\alpha_{i-1}\neq 0$.
Hence (switching notation from $i_k$ to $\lambda_k$) we are done once we have shown the following:

\begin{proposition} \label{prop:breadth of finite exponent group}
Suppose
$$A=\bZ(p^{\lambda_1})\oplus\bZ(p^{\lambda_2})\oplus\cdots\oplus\bZ(p^{\lambda_n})$$
is a finite abelian $p$-group of type $\lambda=(\lambda_1,\dots,\lambda_n)$, where $0<\lambda_1<\cdots<\lambda_n$.
Then $\breadth(A)=d(\lambda)$.
\end{proposition}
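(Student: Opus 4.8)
The plan is to make the lattice $\PP(A)=\PP_1(A)$ of p.p.-definable subgroups of $A$ completely explicit and then read off $\breadth(A)=\breadth(\PP(A))$ from the meet-semilattice characterization of breadth recalled in Section~\ref{sec:dimension ...} (note $\PP(A)$ is finite, as $A$ is). Over $\bZ$ every p.p.-definable subgroup of $A$ in one variable is a finite sum $\sum_i\bigl(p^{a_i}A\cap A[p^{b_i}]\bigr)$ — it lies in the sublattice of the subgroup lattice generated by the two chains $\{p^aA\}_a$ and $\{A[p^b]\}_b$ (see \cite{Prest}) — and since $A=\bigoplus_{k=1}^{n}\bZ(p^{\lambda_k})$ is a direct sum of cyclic groups, such a subgroup is \emph{coordinatewise}: it has the form $\bigoplus_k\bZ(p^{\lambda_k})[p^{e_k}]$ for a ``profile'' $e=(e_1,\dots,e_n)$ with $0\le e_k\le\lambda_k$, the basic subgroup $H_{a,b}:=p^aA\cap A[p^b]$ corresponding to the profile $e^{(a,b)}_k=\min\{\max\{\lambda_k-a,0\},b\}$, with $\cap$ and $+$ computed as the pointwise minimum and maximum of profiles. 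Thus $\PP(A)$ is the sublattice of $\prod_k\{0,\dots,\lambda_k\}$ generated under $+$ by the ``ramp--plateau'' profiles $e^{(a,b)}$, and $\breadth(A)\ge d$ holds exactly when there are p.p.-definable subgroups $x_1,\dots,x_d$ with $\bigcap_i x_i\ne\bigcap_{i\ne s}x_i$ for every $s$, i.e.\ such that for each $s$ there is a coordinate $k(s)$ at which the profile of $x_s$ is strictly below the profiles of all the other $x_i$. Two elementary facts will drive both bounds: \textup{(i)} any profile $e$ of a p.p.-definable subgroup, being a pointwise maximum of profiles $e^{(a,b)}$, satisfies $0\le e_{k+1}-e_k\le\lambda_{k+1}-\lambda_k$ for all $k$; and \textup{(ii)} $d(\lambda)$ is the largest $m$ admitting coordinates $k(1)<\dots<k(m)$ with $\lambda_{k(s+1)}-\lambda_{k(s)}\ge2$ for all $s$ — a short greedy/exchange argument, using that $\lambda$ strictly increasing forces $\lambda_{k+2}-\lambda_k\ge2$ always, while $\lambda_{k+1}-\lambda_k\ge2$ is the first case of the recursion defining $d$, and that such a sequence may be shifted to start at $1$.

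For the upper bound $\breadth(A)\le d(\lambda)$, suppose p.p.-definable subgroups $x_1,\dots,x_{d(\lambda)+1}$ witnessed $\breadth(A)\ge d(\lambda)+1$; reindexing, get coordinates $k(1)<\dots<k(d(\lambda)+1)$ with the profile of $x_s$ strictly below all others at $k(s)$. For consecutive $s$ the profile difference $\delta:=e^{(x_{s+1})}-e^{(x_s)}$ satisfies $|\delta_{k+1}-\delta_k|\le\lambda_{k+1}-\lambda_k$ by (i), while $\delta_{k(s)}\ge1$ and $\delta_{k(s+1)}\le-1$ (as $x_s$ is strictly below $x_{s+1}$ at $k(s)$ and $x_{s+1}$ strictly below $x_s$ at $k(s+1)$); summing along $k(s)\le k<k(s+1)$ yields $\lambda_{k(s+1)}-\lambda_{k(s)}\ge|\delta_{k(s+1)}-\delta_{k(s)}|\ge2$. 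So $k(1)<\dots<k(d(\lambda)+1)$ is a sequence as in (ii) of length $d(\lambda)+1$, a contradiction.

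For the lower bound $\breadth(A)\ge d(\lambda)$, let $1=j(1)<\dots<j(d(\lambda))$ be the maximal sequence in the recursion defining $d(\lambda)$, so $\mu_r:=\lambda_{j(r)}$ satisfies $\mu_{r+1}-\mu_r\ge2$ and $\mu_r\ge r$. Put $x_r:=A[p^{\,r-1}]+p^{\,\mu_r-r+1}A$ for $r\in[d(\lambda)]$; this is a p.p.-definable subgroup, with profile at a coordinate of $\lambda$-value $\mu$ equal to $\max\{\min\{\mu,r-1\},\,\max\{\mu-\mu_r+r-1,0\}\}$. At coordinate $j(r)$ this equals $r-1$, whereas there the profile of $x_i$ is at least $\mu_r-\mu_i+i-1\ge2(r-i)+i-1>r-1$ when $i<r$, and at least $\min\{\mu_r,i-1\}\ge r>r-1$ when $i>r$. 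So $x_r$ is strictly below every other $x_i$ at coordinate $j(r)$, whence $\bigcap_i x_i\ne\bigcap_{i\ne r}x_i$ for every $r$; thus $\breadth(A)\ge d(\lambda)$, and combined with the upper bound, $\breadth(A)=d(\lambda)$.

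The hard part is not any single step — each is short — but the bookkeeping connecting the somewhat ad hoc recursion for $d(\lambda)$ with the ramp--plateau geometry of $\PP(A)$: getting fact (ii) right, and in the lower bound choosing the exact exponents $r-1$ and $\mu_r-r+1$ so that $x_r$ dips below all the other $x_i$ precisely at coordinate $j(r)$ — the interval of admissible exponents being nonempty exactly because $\mu_{r+1}-\mu_r\ge2$. One could route the upper bound instead through the observation that $\PP(A)$ is distributive (it embeds in a product of chains) and invoke Dilworth's theorem (Proposition~\ref{prop:dilworth}), reducing it to bounding $\width\bigl(J(\PP(A))\bigr)$, but identifying the poset of join-irreducibles explicitly is at least as much work as the direct argument above.
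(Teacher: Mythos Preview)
Your argument is correct and takes a genuinely different route from the paper. Both proofs begin by identifying $\PP(A)$ with the lattice of ``slowly growing'' tuples (your profiles satisfying $0\le e_{k+1}-e_k\le\lambda_{k+1}-\lambda_k$), but then diverge. The paper proceeds structurally: it observes that this lattice is distributive, invokes Proposition~\ref{prop:dilworth} to reduce $\breadth(\PP(A))$ to $\width(J(\PP(A)))$, gives an explicit parametrization of the join-irreducibles by an ordered set $P$ (Lemmas~\ref{lem:join-irred, 0}--\ref{lem:join-irred, 2} and Corollary~\ref{cor:join-irred}), and then computes $\width(P)=d(\lambda)$ by exhibiting an explicit chain partition and an antichain of matching size. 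You instead argue directly from the semilattice definition of breadth, and the pivot is your fact~(ii): the observation that $d(\lambda)$ equals the maximum length of an index sequence $k(1)<\cdots<k(m)$ with $\lambda_{k(s+1)}-\lambda_{k(s)}\ge 2$. This reformulation (which the paper never states) lets you run a short Lipschitz-style argument for the upper bound --- a breadth witness of size $d+1$ forces $d+1$ coordinates at which distinct profiles achieve a strict minimum, and the slow-growth constraint translates the sign change of the profile difference between consecutive such coordinates into a $\lambda$-gap of at least $2$ --- and lets you write down explicit witnesses $x_r=A[p^{r-1}]+p^{\mu_r-r+1}A$ for the lower bound.

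What your approach buys is economy: you avoid the machinery of join-irreducibles and the somewhat intricate chain decomposition (Lemma~\ref{lem:partition into chains}). What the paper's approach buys is a complete picture of the lattice $\PP(A)$ --- one sees not just the breadth but also the height, the Goldie dimension, and the full ordered set $J(\PP(A))\cong P$ --- which may be of independent interest. Your final remark that the Dilworth route is ``at least as much work'' is fair, though the paper's explicit description of $P$ does yield more information as a byproduct.
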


From now on assume that $A$ is as in this proposition. We first give an explicit description of the lattice of p.p.~definable subgroups of $A$.

\subsubsection{The lattice of p.p.~definable subgroups of a finite abelian $p$-group.}
The set $\bN^n$, equipped with the product ordering, is a distributive lattice. We say that an $n$-tuple $\mu=(\mu_1,\dots,\mu_n)\in\bN^n$ is \emph{slowly growing} (relative to $\lambda$) if 
$$0\leq \mu_{i}-\mu_{i-1}\leq\lambda_{i}-\lambda_{i-1}\qquad\text{for $i\in [n]$, where we set $\mu_0:=\lambda_0:=0$.}$$ 
Note that if $\mu$ is slowly growing, then $\mu_i\leq\lambda_i$ for each $i\in [n]$, and 
$\mu_j-\mu_i\leq\lambda_j-\lambda_i$ for $i,j\in [n]$ with $i<j$.
One  verifies easily that the set $\Lambda$ of slowly growing tuples is a (finite) sublattice of $\bN^n$ with smallest element $0$ and largest element $\lambda$.
If $\mu$ is slowly growing, then so is $\mu^*:=\lambda-\mu$, and the map $\mu\mapsto\mu^*$ is an order-reversing involution of $\Lambda$, so $\Lambda^*\cong\Lambda$.

For each integer $k\geq 0$ consider the $n$-tuple $\mu=(\mu_1,\dots,\mu_n)\in\bN^n$, given by
$\mu_i=\lambda_i$ if $\lambda_i\leq k$ and $\mu_i=k$ otherwise; we denote this $n$-tuple $\mu$ by $\langle k\rangle$. The map $k\mapsto\langle k\rangle$ is an embedding of the chain $K:=\{0,\dots,\lambda_n\}$ into $\Lambda$.

\begin{lemma}
The lattice $\Lambda$ is generated by the elements $\langle k\rangle$, $\langle k\rangle^*$ where $k$ ranges over~$K$.
\end{lemma}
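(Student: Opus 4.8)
The idea is to write down an explicit formula expressing an arbitrary slowly growing tuple as a join of meets of two generators each, making membership in the generated sublattice manifest. Recall that $\langle k\rangle_i=\min(\lambda_i,k)$ and $\langle k\rangle^*_i=\lambda_i-\langle k\rangle_i=\max(\lambda_i-k,0)$. Given a slowly growing $\mu=(\mu_1,\dots,\mu_n)$, for each $i\in[n]$ put
$$g_i:=\langle\mu_i\rangle\wedge\langle\lambda_i-\mu_i\rangle^*.$$
Since $0\le\mu_i\le\lambda_i\le\lambda_n$ (the inequality $\mu_i\le\lambda_i$ being one of the properties of slowly growing tuples noted above), both subscripts $\mu_i$ and $\lambda_i-\mu_i$ lie in $K$, so each $g_i$ is a meet of two of the designated generators. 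The assertion to be proved is then
$$\mu=\bigvee_{i=1}^n g_i,$$
which immediately yields the lemma, as the right-hand side visibly lies in the sublattice generated by the $\langle k\rangle$ and the $\langle k\rangle^*$.

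Since $\Lambda$ is a sublattice of $\bN^n$ with coordinatewise meet and join, proving $\mu=\bigvee_i g_i$ reduces to checking, for each fixed $j\in[n]$, that $\max_{1\le i\le n}(g_i)_j=\mu_j$; here $(g_i)_j=\min\{\lambda_j,\ \mu_i,\ \max(\lambda_j-\lambda_i+\mu_i,0)\}$. I would do this in three steps. First, $(g_j)_j=\mu_j$: indeed $\min(\lambda_j,\mu_j)=\mu_j$ (as $\mu_j\le\lambda_j$) and $\max(\lambda_j-(\lambda_j-\mu_j),0)=\mu_j$. Second, for $i<j$ one has $(g_i)_j\le\min(\lambda_j,\mu_i)\le\mu_i\le\mu_j$, using that $\mu$ is non-decreasing. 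Third, for $i>j$ one has $(g_i)_j\le\max(\lambda_j-\lambda_i+\mu_i,0)$, and the slow-growth inequality $\mu_i-\mu_j\le\lambda_i-\lambda_j$ (recorded above) together with $\mu_j\ge0$ makes this $\le\mu_j$. Hence $(g_i)_j\le\mu_j$ for every $i$, with equality when $i=j$, so $\max_i(g_i)_j=\mu_j$, as needed.

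I do not expect any real obstacle beyond spotting the right formula; the verification above is purely mechanical. The one piece of insight is the choice of the pairing $k\mapsto\lambda_i-\mu_i$: the point is that $g_i$ is exactly the least element of $\Lambda$ whose $i$-th coordinate is $\ge\mu_i$ (it decays from $\mu_i$ at the maximal permitted rate below position $i$ and is constant equal to $\mu_i$ at and above it), so each $g_i$ sits below $\mu$ while being correct in the $i$-th slot, and their join sweeps out all of $\mu$. (If a more structural argument were preferred, one could instead invoke the standard fact that a finite distributive lattice is generated by a subset $S$ iff each of its join-irreducible elements is a meet of members of $S$, determine the join-irreducibles of $\Lambda$ explicitly, and observe that each is of the form $\langle k\rangle\wedge\langle l\rangle^*$; but the displayed formula makes this detour unnecessary.)
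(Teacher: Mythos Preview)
Your proof is correct. The paper takes a different route: it argues by induction on $i$, setting $\mu_{\leq i}=(\mu_1,\dots,\mu_i,\mu_i,\dots,\mu_i)$ and verifying the recursion
\[
\mu_{\leq i+1}=\langle\mu_{i+1}\rangle\wedge\bigl(\mu_{\leq i}\vee\langle\lambda_{i+1}-\mu_{i+1}\rangle^*\bigr),
\]
with base case $\mu_{\leq 1}=\langle\mu_1\rangle$. Your argument instead produces a single closed formula $\mu=\bigvee_{i\in[n]}\bigl(\langle\mu_i\rangle\wedge\langle\lambda_i-\mu_i\rangle^*\bigr)$ and checks it coordinatewise. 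Your version is more symmetric and has a pleasant structural interpretation: each nonzero $g_i$ is exactly the join-irreducible element $\mu(\mu_i,i)$ introduced a few paragraphs later, so you are in effect writing $\mu$ as a join of join-irreducibles of $\Lambda$ and simultaneously exhibiting each join-irreducible as a meet of two generators. The paper's recursion, by contrast, uses one new generator of each type per step and does not anticipate the later analysis of $J(\Lambda)$.
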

\begin{proof}
Let $\Lambda'$ be the sublattice of $\Lambda$ generated by the elements $\langle k\rangle$, $\langle k\rangle^*$ where $k\in K$. 
Let an arbitrary $\mu\in\Lambda$ be given. For each $i\in [n]$ set
$$\mu_{\leq i} = (\mu_1,\ \mu_2,\ \dots,\ \mu_i,\ \mu_i,\ \dots,\ \mu_i)\in\Lambda.$$
We show by induction on $i$ that $\mu_{\leq i}\in\Lambda'$ for each $i\in [n]$.
For $i=1$ this is clear since $\mu_{\leq 1}=(\mu_1,\dots,\mu_1)=\langle \mu_1\rangle$, and for the inductive step notice that if $i\in [n-1]$, then
$$\mu_{\leq i+1} = \langle\mu_{i+1}\rangle \wedge \big(\mu_{\leq i}\vee \langle\lambda_{i+1}-\mu_{i+1}\rangle^*\big).$$
In particular $\mu=\mu_{\leq n}\in\Lambda'$. 
\end{proof}

We write the elements of $A$ as $n$-tuples $a=(a_1,\dots,a_n)$ where each $a_i$ is an element of $\bZ(p^{\lambda_i})$. 
For each $\mu\in\bN^n$ now define the subgroup
$$A_\mu := \big\{ a=(a_1,\dots,a_n)\in A: p^{\mu_i}a_i=0 \text{ for each $i\in [n]$} \big\}$$
of $A$. (So, e.g., $A_0=\{0\}$ and $A_\lambda=A$.)
It is clear that $A_{\mu\wedge\nu}=A_\mu\cap A_\nu$ and $A_{\mu\vee\nu}=A_\mu+A_\nu$ for all $\mu,\nu\in\bN^n$, i.e., $\mu\mapsto A_\mu$ is a  morphism of lattices.

\begin{lemma}
The map $\mu\mapsto A_\mu$ restricts to a lattice isomorphism $\Lambda\to\PP(A)$.
\end{lemma}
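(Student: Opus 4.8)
The plan is to combine the lemma just proved — that $\Lambda$ is generated as a lattice by the elements $\langle k\rangle$ and $\langle k\rangle^*$ ($k\in K$) — with the standard description of the positive-primitive definable subgroups of a module over a principal ideal domain. We already know from the paragraph preceding the statement that $\mu\mapsto A_\mu$ is a homomorphism from $\bN^n$ (with the product ordering, so meet ${\wedge}$ and join ${\vee}$) to the subgroup lattice of $A$ (with meet ${\cap}$ and join ${+}$); hence its restriction to the sublattice $\Lambda$ is again a lattice homomorphism, and since $\PP(A)$ carries exactly these operations it will suffice to check that this restriction maps $\Lambda$ bijectively onto $\PP(A)$.

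First I would compute the images of the generators. A direct check gives $A_{\langle k\rangle}=A[p^k]$ (the elements of $A$ killed by $p^k$) and $A_{\langle k\rangle^*}=A_{\lambda-\langle k\rangle}=p^kA$, for each $k\in K$; both of these subgroups are positive-primitive definable (by $p^kx=0$ and $\exists y\,(x=p^ky)$, respectively). Since $\Lambda$ is generated by the $\langle k\rangle,\langle k\rangle^*$ and $\mu\mapsto A_\mu$ is a lattice homomorphism, the image $\{A_\mu:\mu\in\Lambda\}$ is precisely the sublattice of the subgroup lattice of $A$ generated by all the $A[p^k]$ and $p^kA$; in particular it is contained in $\PP(A)$. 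For the reverse inclusion — surjectivity — I would invoke the known fact that over the principal ideal domain $\bZ$ every member of $\PP(A)$ is a finite sum of subgroups of the form $(p^aA)\cap A[p^b]$ (see, e.g., \cite{Prest}); since each such subgroup, and hence any finite sum of them, lies in the sublattice generated by the $p^kA$ and $A[p^k]$, it follows that $\PP(A)=\{A_\mu:\mu\in\Lambda\}$. (Alternatively one can avoid citing this structure theorem by checking directly that $\{a\in A:p^ia\in p^jA\}=A_\mu$ for an explicit slowly growing $\mu$, and using that every one-variable p.p.\ formula over $\bZ$ is equivalent to a conjunction of such divisibility conditions, so that every element of $\PP(A)$ is an intersection of subgroups of the form $A_\mu$ with $\mu\in\Lambda$, hence again of that form.)

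It remains to prove injectivity, which is easy: for $\mu\in\Lambda$ we have $\mu_i\leq\lambda_i$ for every $i$, so $A_\mu\cap\bZ(p^{\lambda_i})=\bZ(p^{\lambda_i})[p^{\mu_i}]$ has order exactly $p^{\mu_i}$ (here $\bZ(p^{\lambda_i})$ denotes the $i$-th cyclic summand of $A$); thus each coordinate $\mu_i$, and therefore $\mu$ itself, is recovered from the subgroup $A_\mu$. Putting the three parts together, $\mu\mapsto A_\mu$ restricts to a bijective lattice homomorphism $\Lambda\to\PP(A)$, that is, to a lattice isomorphism. The only genuinely non-routine ingredient is the surjectivity step: one must know that the p.p.\ definable subgroups of the finite abelian $p$-group $A$ do not reach beyond the lattice generated by the subgroups $p^kA$ and $A[p^k]$; everything else is direct computation and bookkeeping.
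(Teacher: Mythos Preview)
Your proof is correct and follows essentially the same approach as the paper: establish that the images of the generators $\langle k\rangle$, $\langle k\rangle^*$ are the p.p.~definable subgroups $A[p^k]$, $p^kA$, then use that $\PP(A)$ is generated as a lattice by these subgroups. The only cosmetic difference is that the paper justifies surjectivity by viewing $A$ as a module over the finite ring $\bZ(p^{\lambda_n})$ and citing that $\PP_R$ is generated by the formulas $p^kx=0$ and $p^k\mid x$, whereas you cite the analogous structure result over $\bZ$; and the paper dismisses injectivity as ``clear'' while you spell out the pleasant argument recovering $\mu_i$ from $\abs{A_\mu\cap\bZ(p^{\lambda_i})}$.
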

\begin{proof}
For each $k\in K$ the subgroup $A_{\langle k\rangle}$ of $A$ is defined by the p.p.~formula $p^k x=0$, 
and $A_{\langle k\rangle^*}$ is defined by $p^k|x$. Hence by the previous lemma, the image of the restriction of $\mu\mapsto A_\mu$ to $\Lambda$ is contained in $\PP(A)$.
Conversely, it is well-known (and easy to see) that the lattice $\PP_R$, where $R$ is the ring $\bZ/p^{\lambda_n}\bZ={\bZ(p^{\lambda_n})}$, is generated by the p.p.~formulas having the form $p^k x=0$ or $p^k|x$ where $k\in K$. Thus our morphism maps onto $\PP(A)$, and it is also clearly one-to-one.
\end{proof}

\subsubsection{The ordered set of join-irreducibles.}
Now that we have identified $\PP(A)$ as~$\Lambda$, we turn to giving an explicit description of the ordered set $J(\Lambda)$ of join-irreducibles of the distributive lattice $\Lambda$. 
(We are interested in such a description since $\breadth(L)=\width(J(L))$, as we recall from Proposition~\ref{prop:dilworth}.)
 Consider
$$P := \big\{ (i,j) : j\in [n],\ i\in [\lambda_j] \big\},$$
equipped with the ordering given by
$$(i,j)\leq (i',j') \qquad :\Longleftrightarrow \qquad i\leq i' \text{ and } \lambda_j-i \geq \lambda_{j'}-i'.$$
See Figure~\ref{fig:example of P} for an example. Note that $\abs{P}=\lambda_1+\cdots+\lambda_n$, and $P$ has smallest element $(1,n)$ and largest element $(\lambda_n,n)$.

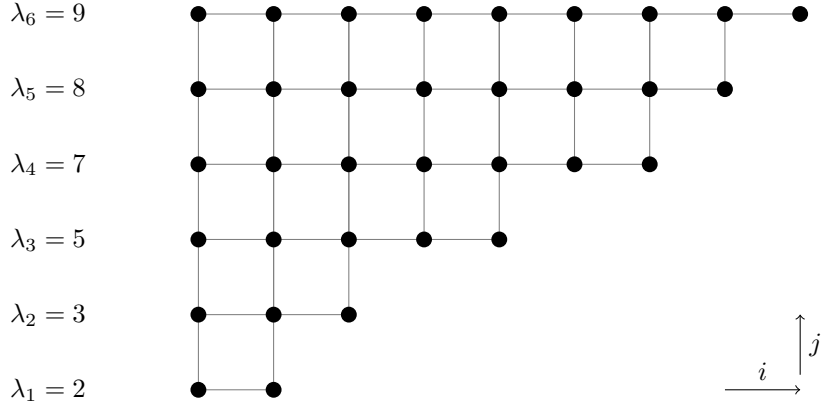
\begin{figure}

\begin{tikzpicture}

\draw[step=1,gray, thin] (1,1) grid (2,6);
\draw[step=1,gray, thin] (2,2) grid (3,6);
\draw[step=1,gray, thin] (3,3) grid (5,6);
\draw[step=1,gray, thin] (5,4) grid (7,6);
\draw[step=1,gray, thin] (7,5) grid (8,6);
\draw[gray,thin] (8,6) to (9,6);

\fill [black] (1,1) circle (3pt);
\fill [black] (2,1) circle (3pt);
\path (-1, 1) node {$\lambda_1=2$};

\fill [black] (1,2) circle (3pt);
\fill [black] (2,2) circle (3pt);
\fill [black] (3,2) circle (3pt);
\path (-1, 2) node {$\lambda_2=3$};

\fill [black] (1,3) circle (3pt);
\fill [black] (2,3) circle (3pt);
\fill [black] (3,3) circle (3pt);
\fill [black] (4,3) circle (3pt);
\fill [black] (5,3) circle (3pt);
\path (-1, 3) node {$\lambda_3=5$};

\fill [black] (1,4) circle (3pt);
\fill [black] (2,4) circle (3pt);
\fill [black] (3,4) circle (3pt);
\fill [black] (4,4) circle (3pt);
\fill [black] (5,4) circle (3pt);
\fill [black] (6,4) circle (3pt);
\fill [black] (7,4) circle (3pt);
\path (-1, 4) node {$\lambda_4=7$};

\fill [black] (1,5) circle (3pt);
\fill [black] (2,5) circle (3pt);
\fill [black] (3,5) circle (3pt);
\fill [black] (4,5) circle (3pt);
\fill [black] (5,5) circle (3pt);
\fill [black] (6,5) circle (3pt);
\fill [black] (7,5) circle (3pt);
\fill [black] (8,5) circle (3pt);
\path (-1, 5) node {$\lambda_5=8$};

\fill [black] (1,6) circle (3pt);
\fill [black] (2,6) circle (3pt);
\fill [black] (3,6) circle (3pt);
\fill [black] (4,6) circle (3pt);
\fill [black] (5,6) circle (3pt);
\fill [black] (6,6) circle (3pt);
\fill [black] (7,6) circle (3pt);
\fill [black] (8,6) circle (3pt);
\fill [black] (9,6) circle (3pt);
\path (-1, 6) node {$\lambda_6=9$};

\draw[->] (8,1) to node[above] {$i$} (9,1); 
\draw[->] (9,1.2) to node[right] {$j$} (9,2);

\end{tikzpicture}
\caption{An example for (the underlying set of) the ordered set $P$}
 \label{fig:example of P}

\end{figure}

For each $(i,j)\in P$ we define $\mu=\mu(i,j)\in\bN^n$ by
$$\mu_k := \begin{cases}
\lambda_k \dot- (\lambda_j-i) & \text{if $k<j$,} \\
i & \text{if $k\geq j$.} 
\end{cases}$$
Here $a\dot- b=\max\{a-b,0\}$  for integers $a$, $b$. It is easy to verify that for each $(i,j)\in P$, the tuple $\mu(i,j)$ is slowly growing. Figure~\ref{fig:mu(i,j)} illustrates $\mu(4,4)$ in the example from Figure~\ref{fig:example of P}.

\begin{lemma}\label{lem:join-irred, 0}
The map $(i,j)\mapsto \mu(i,j)\colon P\to\Lambda$ is an embedding of ordered sets.
\end{lemma}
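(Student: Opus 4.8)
The plan is to check directly from the formula that the map is well defined and is both order-preserving and order-reflecting; injectivity then comes for free since $P$ is a poset (antisymmetry of its ordering follows from the strict monotonicity of $\lambda$). The one cosmetic move that makes everything transparent is to put $t:=\lambda_j-i$, so that $0\le t\le\lambda_j-1$, and to rewrite the defining formula uniformly as
\[
\mu(i,j)_k=
\begin{cases}
\max\{\lambda_k-t,\,0\} & \text{if } k\le j,\\
\lambda_j-t & \text{if } k\ge j,
\end{cases}
\]
the two clauses agreeing at $k=j$ because $\lambda_j-t=i\ge 1>0$ (and note $\lambda_j-t=i$). In these terms the ordering of $P$ reads simply: $(i,j)\le(i',j')$ iff $i\le i'$ and $t\ge t'$, where $t':=\lambda_{j'}-i'$. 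That $\mu(i,j)\in\Lambda$, i.e.\ is slowly growing, was noted just above and is a one-line telescoping estimate, so I would treat it as given.

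For order-preservation I would assume $i\le i'$ and $t\ge t'$ and verify $\mu(i,j)_k\le\mu(i',j')_k$ for each $k$, splitting $\{1,\dots,n\}$ into the ranges $k\le\min\{j,j'\}$, $\min\{j,j'\}\le k\le\max\{j,j'\}$, and $k\ge\max\{j,j'\}$. On the first range both sides have the form $\max\{\lambda_k-s,0\}$ with $s\in\{t,t'\}$, and $t\ge t'$ gives the inequality; on the last range the two sides are $i\le i'$; on the middle range one compares $i$ against $\max\{\lambda_k-t',0\}$ (when $j\le j'$), or $\max\{\lambda_k-t,0\}$ against $i'$ (when $j'\le j$), which follows from $\lambda_k$ lying between $\lambda_j$ and $\lambda_{j'}$ together with $t\ge t'$ and $i\le i'$. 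Each case is a single line.

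For order-reflection, assume $\mu(i,j)\le\mu(i',j')$. Evaluating at the top coordinate $k=n$ gives $i=\mu(i,j)_n\le\mu(i',j')_n=i'$. To obtain $t\ge t'$ I would argue by contradiction: suppose $t<t'$ and look at the single probe coordinate $\ell:=\min\{j,j'\}$. Since $\ell\le j$ one has $\mu(i,j)_\ell=\max\{\lambda_\ell-t,0\}$, and a brief case check ($\ell=j'$: $t<t'\le\lambda_{j'}-1<\lambda_\ell$; $\ell=j$: $\lambda_\ell-t=i\ge 1$) shows $\lambda_\ell-t>0$, so in fact $\mu(i,j)_\ell=\lambda_\ell-t$. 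On the other hand $\ell\le j'$, so $\mu(i',j')_\ell=\max\{\lambda_\ell-t',0\}$, which is strictly below $\lambda_\ell-t$ (it equals $\lambda_\ell-t'<\lambda_\ell-t$ if $\lambda_\ell-t'\ge0$, and equals $0<\lambda_\ell-t$ otherwise). This contradicts $\mu(i,j)_\ell\le\mu(i',j')_\ell$, whence $t\ge t'$ and $(i,j)\le(i',j')$. The only genuinely delicate point is this last step: one would like to recover $t$ from $\mu(i,j)$ as $\min_k(\lambda_k-\mu(i,j)_k)$, but that minimum is $\min\{\lambda_1,t\}$, so the truncation by $0$ defeats the naive argument; choosing the probe coordinate $\ell=\min\{j,j'\}$ — which is still $\le j'$, hence lies in the region where $\mu(i',j')$ is given by $\max\{\lambda_\ell-t',0\}$ rather than being pinned at $i'$ — is precisely what sidesteps this. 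Everything else is bookkeeping.
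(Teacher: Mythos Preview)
Your proof is correct and follows essentially the same strategy as the paper: a direct coordinate-wise verification that the map preserves and reflects the order. The paper organizes the case split by whether $j\geq j'$ or $j<j'$ (which collapses the condition on $P$ to a single inequality in each case), whereas you introduce the substitution $t=\lambda_j-i$ up front and split instead on the position of the coordinate $k$; your probe at $\ell=\min\{j,j'\}$ for the reflection step is a slightly cleaner packaging of the same idea, but the underlying argument is the same.
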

\begin{proof}
Let $(i,j),(i',j')\in P$; we need to show that $(i,j)\leq (i',j')$ if and only if $\mu(i,j)\leq\mu(i',j')$.
Write  $\mu(i,j)=(\mu_1,\dots,\mu_n)$ and $\mu(i',j')=(\mu_1',\dots,\mu_n')$. It is convenient to distinguish two cases. 
First assume that $j\geq j'$. Then we have $(i,j)\leq (i',j')$ iff $i\leq i'$. If $\mu(i,j)\leq\mu(i',j')$, then $i=\mu_j\leq\mu'_{j'}=i'$. Conversely, suppose $i\leq i'$. Then for each $k\in [n]$, we have
$$\begin{cases}
\mu_k=i\leq i'=\mu'_k & \text{if $k\geq j$,} \\
\mu_k=\lambda_k \dot- (\lambda_j-i) < i \leq i'=\mu'_k & \text{if $j'\leq k<j$,} \\
\mu_k=\lambda_k \dot- (\lambda_j-i) \leq \lambda_k \dot- (\lambda_{j'}-i')=\mu'_k & \text{if $k<j'$.}
\end{cases}$$
This shows $\mu(i,j)\leq\mu(i',j')$.
On the other hand, if we assume that $j<j'$, then $(i,j)\leq (i',j')$ iff $\lambda_j-i\geq\lambda_{j'}-i'$, and in a similar way as in the previous case one sees that this is equivalent to $\mu(i,j)\leq\mu(i',j')$. 
\end{proof}

\begin{figure}

\begin{tikzpicture}[scale=0.8]
\draw[step=1,gray,thin] (1,1) grid (6,10);


\draw[very thick, dotted] (1,3) -- (2,4);
\draw[very thick, dotted] (2,4) -- (3,6);
\draw[very thick, dotted] (3,6) -- (4,8);
\draw[very thick, dotted] (4,8) -- (5,9);
\draw[very thick, dotted] (5,9) -- (6,10);

\draw[very thick] (1,1) -- (2,1);
\draw[very thick] (2,1) -- (3,2);
\draw[very thick] (3,2) -- (4,5);
\draw[very thick] (4,5) -- (6,5);

\path (1, 0.5) node  {$2$};
\path (2, 0.5) node  {$3$};
\path (3, 0.5) node  {$5$};
\path (4, 0.5) node  {$7$};
\path (5, 0.5) node  {$8$};
\path (6, 0.5) node  {$9$};

\path (0, 0.5) node  {$\lambda_k$};

\end{tikzpicture}
\caption{An example for $\mu(4,4)$}
 \label{fig:mu(i,j)}

\end{figure}
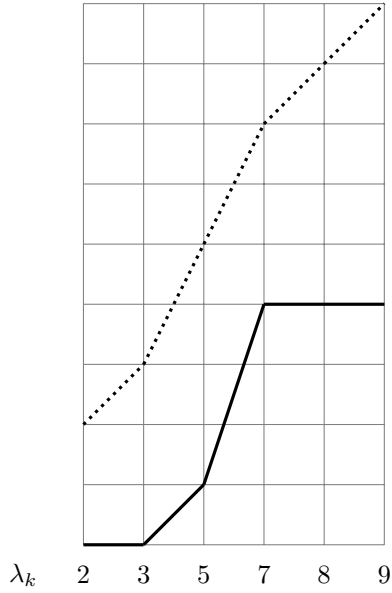

\begin{lemma}\label{lem:join-irred, 1}
Let  $\mu=\mu(i,j)$ where $(i,j)\in P$, and $\nu\in\Lambda$ with $\nu<\mu$. Then~$\nu_j<\mu_j$. 
\end{lemma}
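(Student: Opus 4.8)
The plan is to argue by contradiction. Recall that by the definition of $\mu(i,j)$ we have $\mu_j=i$, and that $\nu<\mu$ in the product order on $\bN^n$ means $\nu\leq\mu$ and $\nu\neq\mu$; since then $\nu_j\leq\mu_j=i$, it suffices to rule out the possibility $\nu_j=i$. So I would assume $\nu\leq\mu$ and $\nu_j=i$, and show that this already forces $\nu=\mu$, contradicting $\nu\neq\mu$.

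To show $\nu=\mu$ I would verify $\nu_k=\mu_k$ for every $k\in[n]$, splitting into two cases. For $k\geq j$: a slowly growing tuple is non-decreasing, so $\nu_k\geq\nu_j=i=\mu_k$, and combined with $\nu_k\leq\mu_k$ this gives $\nu_k=\mu_k$. For $k<j$: here $\mu_k=\lambda_k\dot-(\lambda_j-i)$, and I would use the elementary consequence of slow growth recorded right after the definition of $\Lambda$, namely $\nu_j-\nu_k\leq\lambda_j-\lambda_k$; this yields $\nu_k\geq i-(\lambda_j-\lambda_k)=\lambda_k-(\lambda_j-i)$, and since also $\nu_k\geq 0$ we get $\nu_k\geq\lambda_k\dot-(\lambda_j-i)=\mu_k$, hence once more $\nu_k=\mu_k$ from $\nu_k\leq\mu_k$. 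Therefore $\nu=\mu$, the desired contradiction.

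Conceptually this is just the statement that $\mu(i,j)$ is the least element of $\Lambda$ whose $j$-th coordinate is at least $i$: the two estimates above say precisely that no slowly growing tuple with $j$-th coordinate equal to $i$ can lie strictly below $\mu(i,j)$. I do not expect a genuine obstacle in the argument; the only point requiring a little care is to invoke the right form of the slow-growth condition in the case $k<j$ (the bound on $\nu_j-\nu_k$, not merely on successive differences $\nu_l-\nu_{l-1}$).
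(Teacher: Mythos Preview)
Your proof is correct and follows the same overall strategy as the paper: assume $\nu_j=\mu_j=i$ and derive $\nu=\mu$. The case $k\geq j$ is handled identically. For $k<j$, however, your execution is cleaner than the paper's: you directly invoke the telescoped inequality $\nu_j-\nu_k\leq\lambda_j-\lambda_k$ (which the paper itself records right after defining $\Lambda$) to get $\nu_k\geq\mu_k$ in one step, whereas the paper introduces auxiliary indices $k_0$ (the first place where $\mu$ becomes positive) and $k_1$ (the last place where $\nu_{k_1}<\mu_{k_1}$) and then uses the successive-difference form of slow growth to contradict the maximality of $k_1$. Your argument avoids this bookkeeping and makes the conceptual point---that $\mu(i,j)$ is the least slowly growing tuple with $j$th coordinate $\geq i$---more transparent.
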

\begin{proof}
Suppose for a contradiction that $\nu_j=\mu_j$. Then $\nu_k=\mu_k=i$ for all $k\geq j$,
so if we let $k_1\in [n]$ be the largest index such that $\nu_{k_1}<\mu_{k_1}$, then $k_1<j$.
Let also $k_0\in [n]$ be minimal such that $\lambda_{k_0}>\lambda_j-i$; note that $k_0\leq j$. 
Since $\nu_k=\mu_k=0$ for $k<k_0$, we have $k_0\leq k_1<j$.
Then 
$$\nu_{k_1+1}-\nu_{k_1}\leq \lambda_{k_1+1}-\lambda_{k_1}=\mu_{k_1+1}+\lambda_j-i-(\mu_{k_1}+\lambda_j-i)=\mu_{k_1+1}-\mu_{k_1}$$
and hence 
$$\nu_{k_1+1}\leq \mu_{k_1+1}+\nu_{k_1}-\mu_{k_1}<\mu_{k_1+1},$$ 
contradicting the maximality of~$k_1$.
\end{proof}

Let $\mu\in\Lambda$. Call $j\in [n]$ \emph{critical} (for $\mu$) if  $\mu_j>\mu_{j-1}$ and either $j=n$ or $j<n$ and $\mu_{j+1}-\mu_j<\lambda_{j+1}-\lambda_j$. Clearly if $j$ is critical, then $\mu-e_j\in\Lambda$. (Here and below, $e_1,\dots,e_n$ denote the standard basis vectors in $\bR^n$.)
Moreover, one easily shows:

\begin{lemma}\label{lem:join-irred, 2}
Let $\mu\in\Lambda$ and $j,j'\in [n]$ with $j'<j$, and suppose all indices $j',j'+1,\dots,j$ are non-critical.
\begin{enumerate}
\item If $j=n$ or $j<n$ and $\mu_j=\mu_{j+1}$, then $\mu_{j'-1}=\mu_{j'}=\cdots=\mu_j$.
\item If $\mu_{k-1}<\mu_k$ for $k=j',\dots,j$ and $j<n$, then $\mu_k=\lambda_k-(\lambda_{j+1}-\mu_{j+1})$ for $k=j',\dots,j$.
\end{enumerate}
\end{lemma}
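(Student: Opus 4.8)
The plan is to prove both parts by a downward induction on the index, running from $k=j$ down to $k=j'$. The one preliminary observation I would record is an unravelling of the definition of criticality: since $\mu$ is slowly growing, an index $\ell\in[n]$ is \emph{non-critical} for $\mu$ precisely when \emph{either} $\mu_\ell=\mu_{\ell-1}$, \emph{or} $\ell<n$ and $\mu_{\ell+1}-\mu_\ell=\lambda_{\ell+1}-\lambda_\ell$. (Indeed, negating ``$\mu_\ell>\mu_{\ell-1}$ and ($\ell=n$ or ($\ell<n$ and $\mu_{\ell+1}-\mu_\ell<\lambda_{\ell+1}-\lambda_\ell$))'' and then using the slow-growth inequalities $0\leq\mu_\ell-\mu_{\ell-1}$ and $\mu_{\ell+1}-\mu_\ell\leq\lambda_{\ell+1}-\lambda_\ell$ to replace the resulting ``$\leq$'' and ``$\geq$'' by equalities gives exactly this.) I also keep in mind throughout that $0<\lambda_1<\cdots<\lambda_n$, so $\lambda_k-\lambda_{k-1}>0$ for every $k\in[n]$; this strict inequality is precisely what lets me discard the ``wrong'' alternative at each step.

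For (1), I would show $\mu_k=\mu_{k-1}$ for every $k$ with $j'\leq k\leq j$; chaining these equalities then yields $\mu_{j'-1}=\mu_{j'}=\cdots=\mu_j$. In the base case $k=j$: if $j=n$ the second alternative in the reformulation is vacuous for $\ell=j$, while if $j<n$ then $\mu_{j+1}-\mu_j=0<\lambda_{j+1}-\lambda_j$ by the hypothesis $\mu_j=\mu_{j+1}$; either way the second alternative fails, so non-criticality of $j$ forces $\mu_j=\mu_{j-1}$. For the inductive step, assuming $j'<k\leq j$ and $\mu_k=\mu_{k-1}$, the index $k-1$ lies in $\{j',\dots,j-1\}$ and hence is non-critical; since $\mu_{(k-1)+1}-\mu_{k-1}=\mu_k-\mu_{k-1}=0\neq\lambda_k-\lambda_{k-1}$, the second alternative fails for $\ell=k-1$, so $\mu_{k-1}=\mu_{k-2}$.

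For (2), I would show $\mu_k=\lambda_k-(\lambda_{j+1}-\mu_{j+1})$ for $k=j',\dots,j$, again by downward induction from $k=j$. In the base case, $\mu_{j-1}<\mu_j$ rules out the first alternative for non-criticality of $j$, and since $j<n$ it must therefore come from the second, i.e.\ $\mu_{j+1}-\mu_j=\lambda_{j+1}-\lambda_j$, which rearranges to $\mu_j=\lambda_j-(\lambda_{j+1}-\mu_{j+1})$. For the step, assuming $j'<k\leq j$ and the formula at $k$: again $k-1\in\{j',\dots,j-1\}$ is non-critical, but now $\mu_{k-2}<\mu_{k-1}$ (this is the hypothesis $\mu_{\ell-1}<\mu_\ell$ applied at $\ell=k-1$), so the first alternative fails for $\ell=k-1$ and hence $\mu_k-\mu_{k-1}=\lambda_k-\lambda_{k-1}$; subtracting this from the formula at $k$ gives $\mu_{k-1}=\lambda_{k-1}-(\lambda_{j+1}-\mu_{j+1})$, as desired.

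I do not expect any serious obstacle here: the only point demanding attention is the correct Boolean negation of the disjunctive, $n$-dependent definition of criticality in the two base cases, after which everything reduces to routine index bookkeeping together with the single fact that $\lambda$ is strictly increasing.
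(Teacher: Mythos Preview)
Your proof is correct. The paper does not give its own proof of this lemma (it merely says ``one easily shows''), so there is nothing to compare against; your downward induction, together with the clean reformulation of non-criticality as a disjunction, is exactly the intended routine verification.
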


Note that part (1) of the previous lemma shows in particular that $\mu$ has no critical index iff $\mu=0$. On the other hand, if $\mu$ is join-irreducible, then $\mu$ cannot have more than one critical index; for if $j\neq j'$ both are critical, then $\mu=(\mu-e_j)\vee (\mu-e_{j'})$ where $\mu-e_j,\mu-e_{j'}\in\Lambda\setminus\{\mu\}$. Hence if $\mu$ is join-irreducible, then $\mu$ has exactly one critical index.

\begin{lemma}
Let $\mu\in\Lambda$ have the unique critical index $j\in [n]$; then $\mu=\mu(i,j)$ where $i=\mu_j$.
\end{lemma}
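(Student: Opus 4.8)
The plan is to reconstruct $\mu$ coordinate by coordinate, starting from the monotonicity pattern of the sequence $\mu_0=0,\mu_1,\dots,\mu_n$. Set $i:=\mu_j$. The definition of $\mu(i,j)$ amounts to the two assertions ``$\mu_k=i$ for every $k\geq j$'' and ``$\mu_k=\lambda_k\dot-(\lambda_j-i)$ for every $k<j$'', so I would establish these in turn. The single tool used repeatedly is an elementary remark read off the definition of criticality: if $\mu_{k-1}<\mu_k$ and moreover $k=n$ or $\mu_{k+1}=\mu_k$, then $k$ is critical for $\mu$ (the second clause because $\lambda$ is strictly increasing, so $\mu_{k+1}-\mu_k=0<\lambda_{k+1}-\lambda_k$). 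By the uniqueness hypothesis, the only index at which a strict increase of $\mu$ can fail to continue is $k=j$.

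First I would show $\mu_k=i$ for all $k\geq j$. This is vacuous if $j=n$; otherwise every index $k>j$ is non-critical, and if $\mu_{k-1}<\mu_k$ held for some such $k$, then choosing the largest one the remark above would make it critical, a contradiction. Hence $\mu$ is constant on $\{j,j+1,\dots,n\}$, with value $\mu_j=i$. (One may instead quote Lemma~\ref{lem:join-irred, 2},~(1) with its parameters $j',j$ taken to be $j+1,n$, treating $j=n-1$ separately.)

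Next I would treat the indices below $j$. Since $j$ is critical, $\mu_{j-1}<\mu_j$; and if $k<j$ is non-critical with $\mu_{k-1}<\mu_k$, then the remark, in contrapositive form, forces $k<n$ and $\mu_{k+1}-\mu_k=\lambda_{k+1}-\lambda_k>0$, so the strict increase continues at $k+1$. Iterating, the set $\{k:\mu_{k-1}<\mu_k\}$ is exactly an interval $\{j',j'+1,\dots,j\}$ with $j'\leq j$ (and nothing above $j$, by the previous step). Therefore $\mu_0=\cdots=\mu_{j'-1}=0$ and $0<\mu_{j'}<\cdots<\mu_j=i$. Applying Lemma~\ref{lem:join-irred, 2},~(2) to this run (the degenerate cases $j'=j$ and $j'=j-1$ being immediate from the defining inequalities $0\leq\mu_k-\mu_{k-1}\leq\lambda_k-\lambda_{k-1}$ of $\Lambda$ and the non-criticality of $j-1$) yields $\mu_k=\lambda_k-(\lambda_j-i)$ for $j'\leq k\leq j$; as $\mu_k\geq0$ this equals $\lambda_k\dot-(\lambda_j-i)$, which is $\mu(i,j)_k$. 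Finally, for $k<j'$ we have $\mu_k=0$, while $\mu_{j'-1}=0$ and $\mu_{j'}=\lambda_{j'}-(\lambda_j-i)$ together with $\mu_{j'}-\mu_{j'-1}\leq\lambda_{j'}-\lambda_{j'-1}$ give $\lambda_{j'-1}\leq\lambda_j-i$, hence $\lambda_k\leq\lambda_j-i$ and $\mu(i,j)_k=\lambda_k\dot-(\lambda_j-i)=0=\mu_k$. Combining the two ranges gives $\mu=\mu(i,j)$.

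I expect the main obstacle to be purely bookkeeping: keeping track of the boundary indices ($j=1$, $j=n$, $j'=j$), where some of the ``runs'' above are empty and the invoked parts of Lemma~\ref{lem:join-irred, 2} must be replaced by a one-line inequality check. The conceptual content — that a ``terminated'' strict increase is precisely a critical index — is short and is the crux of both halves of the argument.
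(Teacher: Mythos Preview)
Your proposal is correct and follows essentially the same route as the paper's proof: both identify the run of indices on which $\mu$ strictly increases as an interval $\{j',\dots,j\}$ (the paper calls the left endpoint $k_0$), use Lemma~\ref{lem:join-irred, 2} (or its content) to get $\mu_k=\lambda_k-(\lambda_j-i)$ on that interval and $\mu_k=i$ above it, and finish by bounding $\lambda_{j'-1}\leq\lambda_j-i$ to handle the zero tail. One small wording point: when you invoke Lemma~\ref{lem:join-irred, 2},~(2), the range of non-critical indices should be $j',\dots,j-1$ (your index $j$ is critical), which then gives the formula with $\lambda_{(j-1)+1}-\mu_{(j-1)+1}=\lambda_j-i$; your direct argument via $\mu_{k+1}-\mu_k=\lambda_{k+1}-\lambda_k$ already establishes this without the lemma, so the slip is harmless.
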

\begin{proof}
By part (1) of the previous lemma we have $i=\mu_j=\mu_{j+1}=\cdots=\mu_n$; note that $i>0$ since $\mu\neq 0$.
On the other hand, if $\mu_{k-1}=\mu_k$ for some $k\in [j]$, then by the same part of Lemma~\ref{lem:join-irred, 2} we have $0=\mu_1=\cdots=\mu_k$. So if we let $k_0\in [n]$ be minimal such that $\mu_{k_0}>0$, then $k_0\leq j$ and $\mu_{k-1}<\mu_k$ for $k=k_0,\dots,j$ and hence $\mu_k=\lambda_k-(\lambda_j-\mu_j)$ for $k=k_0,\dots,j-1$. Also, 
$$\lambda_{k_0}-(\lambda_j-\mu_j) = \mu_{k_0} - \mu_{k_0-1} \leq \lambda_{k_0}-\lambda_{k_0-1}$$
and hence
$\lambda_{k_0-1}-(\lambda_j-\mu_j)\leq 0$. This yields the claim.
\end{proof}

\begin{corollary}\label{cor:join-irred}
$J(\Lambda) = \big\{ \mu(i,j) : (i,j)\in P \big\}$.
\end{corollary}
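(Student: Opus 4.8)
The plan is to prove the two inclusions $J(\Lambda) \supseteq \{\mu(i,j):(i,j)\in P\}$ and $J(\Lambda)\subseteq\{\mu(i,j):(i,j)\in P\}$ separately, using the structural lemmas just established. For the inclusion $\supseteq$, fix $(i,j)\in P$ and set $\mu:=\mu(i,j)$. Suppose $\mu=\nu\vee\nu'$ with $\nu,\nu'\in\Lambda$; I need to conclude $\mu=\nu$ or $\mu=\nu'$. If both $\nu<\mu$ and $\nu'<\mu$, then by Lemma~\ref{lem:join-irred, 1} we have $\nu_j<\mu_j$ and $\nu'_j<\mu_j$, so $(\nu\vee\nu')_j=\max\{\nu_j,\nu'_j\}<\mu_j$, contradicting $\mu=\nu\vee\nu'$. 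Hence one of $\nu,\nu'$ equals $\mu$, so $\mu$ is join-irreducible, i.e.\ $\mu\in J(\Lambda)$.

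For the reverse inclusion $\subseteq$, let $\mu\in J(\Lambda)$. First, $\mu\neq 0$ by definition of join-irreducibility, so by part~(1) of Lemma~\ref{lem:join-irred, 2} (applied with $j'=1$, $j=n$, reading off the contrapositive as already noted in the text) $\mu$ has at least one critical index. Next, as observed in the paragraph preceding Corollary~\ref{cor:join-irred}, $\mu$ cannot have two distinct critical indices $j\neq j'$: otherwise $\mu-e_j,\mu-e_{j'}\in\Lambda\setminus\{\mu\}$ and $\mu=(\mu-e_j)\vee(\mu-e_{j'})$, contradicting join-irreducibility. So $\mu$ has exactly one critical index $j$, and then the last lemma gives $\mu=\mu(i,j)$ with $i=\mu_j$. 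It remains only to check $(i,j)\in P$, i.e.\ $i\in[\lambda_j]$: we have $i=\mu_j>\mu_{j-1}\geq 0$ since $j$ is critical, and $\mu_j\leq\lambda_j$ because $\mu$ is slowly growing. Hence $\mu\in\{\mu(i,j):(i,j)\in P\}$.

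Both directions are short once the preceding lemmas are in hand; there is essentially no obstacle, as Corollary~\ref{cor:join-irred} is by design just the assembly of Lemmas~\ref{lem:join-irred, 0}--\ref{lem:join-irred, 2} and the lemma identifying $\mu$ from its critical index. The one point requiring a sliver of care is confirming that the indexing map $(i,j)\mapsto\mu(i,j)$ is injective so that $J(\Lambda)$ genuinely has $\abs{P}=\lambda_1+\cdots+\lambda_n$ elements — but this is immediate from Lemma~\ref{lem:join-irred, 0}, since an order embedding is in particular injective. Thus the proof is simply: combine the $\supseteq$ argument via Lemma~\ref{lem:join-irred, 1} with the $\subseteq$ argument via the critical-index analysis, and invoke Lemma~\ref{lem:join-irred, 0} for injectivity.
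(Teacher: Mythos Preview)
Your proof is correct and follows essentially the same approach as the paper: the $\supseteq$ direction uses Lemma~\ref{lem:join-irred, 1} to show that any $\nu<\mu(i,j)$ must drop in the $j$th coordinate (hence no two such elements can join to $\mu$), and the $\subseteq$ direction uses the critical-index analysis together with the unnamed lemma immediately preceding the corollary. The closing remark about injectivity via Lemma~\ref{lem:join-irred, 0} is not needed for the set equality as stated, but it is correct and harmless.
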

\begin{proof}
Lemma~\ref{lem:join-irred, 1} implies that the  $\mu=\mu(i,j)$ with $(i,j)\in P$ are join-irreducible:  if $\nu\in\Lambda$ satisfies  $\nu<\mu$, then $\nu\leq\mu-e_j$, in particular, there cannot exist $\nu,\nu'\in\Lambda$ with $\mu=\nu\vee\nu'$ and $\nu,\nu'<\mu$.
Conversely, by the previous lemma, every join-irreducible element $\mu$ of $\Lambda$ is of the form $\mu=\mu(i,j)$ with $(i,j)\in P$.
\end{proof}

\subsubsection{The width of $J(\Lambda)$.}
Since by Lemma~\ref{lem:join-irred, 0} and Corollary~\ref{cor:join-irred} we now have an explicit description of the ordered set $J(\Lambda)$, we can read off the various numerical invariants of $\Lambda$ from $P$ by means of Proposition~\ref{prop:dilworth}. For example, we have 
$$\height(\Lambda)=\lambda_1+\cdots+\lambda_n+1 \quad\text{and}\quad \Gdim(\Lambda)=\Gdim^*(\Lambda)=1.$$ 
To complete the proof of Proposition~\ref{prop:breadth of finite exponent group}, it remains to compute the width of $P$ (and hence of $J(\Lambda)$), using Dilworth's Theorem: we will specify a partition of $P$ into $d=d(\lambda_1,\dots,\lambda_n)$ many chains which, by virtue of Dilworth's Theorem, shows $\width(P)\leq d$, and, then we pick an element from each of these chains to form an antichain of size $d$, which shows $\width(P)\geq d$.

\begin{figure}

\begin{tikzpicture}

\draw[step=1,gray,thin] (1,1) grid (2,6);
\draw[step=1,gray,thin] (2,2) grid (3,6);
\draw[step=1,gray,thin] (3,3) grid (5,6);
\draw[step=1,gray,thin] (5,4) grid (7,6);
\draw[step=1,gray,thin] (7,5) grid (8,6);
\draw[gray,thin] (8,6) to (9,6);

\fill [black] (1,1) circle (3pt);
\fill [black] (2,1) circle (3pt);

\path (1, 6.5) node {$C_1$};
\draw[->,thick,shorten >=0.5em] (1,2) to (1,1); 
\draw[->,thick,shorten >=0.5em] (1,3) to (1,2); 
\draw[->,thick,shorten >=0.5em] (1,4) to (1,3); 
\draw[->,thick,shorten >=0.5em] (1,5) to (1,4); 
\draw[->,thick,shorten >=0.5em] (1,6) to (1,5); 
\draw[->,thick,shorten >=0.5em] (1,1) to (2,2); 
\draw[->,thick,shorten >=0.5em] (2,2) to (2,1); 
\draw[->,thick,shorten >=0.5em] (2,1) to (3,2); 
\draw[->,thick,shorten >=0.5em] (3,2) to (5,3); 
\draw[->,thick,shorten >=0.5em] (5,3) to (7,4); 
\draw[->,thick,shorten >=0.5em] (7,4) to (8,5); 
\draw[->,thick,shorten >=0.5em] (8,5) to (9,6); 

\path (2, 6.5) node {$C_2$};
\draw[->,thick,shorten >=0.5em] (2,6) to (2,5); 
\draw[->,thick,shorten >=0.5em] (2,5) to (2,4); 
\draw[->,thick,shorten >=0.5em] (2,4) to (2,3); 
\draw[->,thick,shorten >=0.5em] (2,3) to (3,3); 
\draw[->,thick,shorten >=0.5em] (3,3) to (4,3); 
\draw[->,thick,shorten >=0.5em] (4,3) to (6,4); 
\draw[->,thick,shorten >=0.5em] (6,4) to (7,5); 
\draw[->,thick,shorten >=0.5em] (7,5) to (8,6); 

\path (3, 6.5) node {$C_3$};
\draw[->,thick,shorten >=0.5em] (3,6) to (3,5); 
\draw[->,thick,shorten >=0.5em] (3,5) to (3,4); 
\draw[->,thick,shorten >=0.5em] (3,4) to (4,5); 
\draw[->,thick,shorten >=0.5em] (4,5) to (4,4); 
\draw[->,thick,shorten >=0.5em] (4,4) to (5,5); 
\draw[->,thick,shorten >=0.5em] (5,5) to (5,4); 
\draw[->,thick,shorten >=0.5em] (5,4) to (6,5); 
\draw[->,thick,shorten >=0.5em] (6,5) to (7,6); 

\path (4, 6.5) node {$C_4$};
\draw[->,thick,shorten >=0.5em] (4,6) to (5,6); 
\draw[->,thick,shorten >=0.5em] (5,6) to (6,6);

\fill [black] (1,2) circle (3pt);
\fill [black] (2,2) circle (3pt);
\fill [black] (3,2) circle (3pt);

\fill [black] (1,3) circle (3pt);
\fill [black] (2,3) circle (3pt);
\fill [black] (3,3) circle (3pt);
\fill [black] (4,3) circle (3pt);
\fill [black] (5,3) circle (3pt);

\fill [black] (1,4) circle (3pt);
\fill [black] (2,4) circle (3pt);
\fill [black] (3,4) circle (3pt);
\fill [black] (4,4) circle (3pt);
\fill [black] (5,4) circle (3pt);
\fill [black] (6,4) circle (3pt);
\fill [black] (7,4) circle (3pt);

\fill [black] (1,5) circle (3pt);
\fill [black] (2,5) circle (3pt);
\fill [black] (3,5) circle (3pt);
\fill [black] (4,5) circle (3pt);
\fill [black] (5,5) circle (3pt);
\fill [black] (6,5) circle (3pt);
\fill [black] (7,5) circle (3pt);
\fill [black] (8,5) circle (3pt);

\fill [black] (1,6) circle (3pt);
\fill [black] (2,6) circle (3pt);
\fill [black] (3,6) circle (3pt);
\fill [black] (4,6) circle (3pt);
\fill [black] (5,6) circle (3pt);
\fill [black] (6,6) circle (3pt);
\fill [black] (7,6) circle (3pt);
\fill [black] (8,6) circle (3pt);
\fill [black] (9,6) circle (3pt);

\draw [black] (1,1) circle (4pt);
\draw [black] (2,3) circle (4pt);
\draw [black] (3,4) circle (4pt);
\draw [black] (4,6) circle (4pt);

\end{tikzpicture}
\caption{Partition of $P$ into the chains $C_k$}
 \label{fig:example of P, 2}

\end{figure}
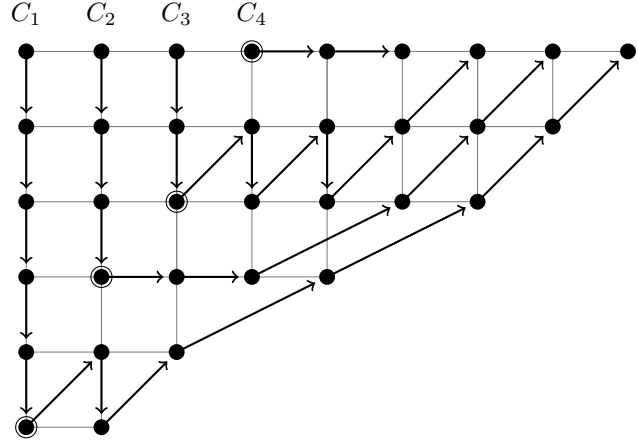

We consider the strictly increasing sequence 
$$1=j(1)<\cdots<j(k)<\cdots<j(d)$$
defined according to the definition of $d=d(\lambda_1,\dots,\lambda_n)$, i.e., for $k\in [d-1]$, we have
$j(k+1)=j(k)+1$ if $\lambda_{j(k)+1}-\lambda_{j(k)}\geq 2$ and 
$j(k+1)=j(k)+2$ otherwise.
Now for $k\in [d]$ define the subsets
\begin{align*}
C_k^{\downarrow}		&:= \big\{ (k,j) : j=j(k),\dots,n \big\}  \\
C_k^{\rightsquigarrow}	&:= \big\{ (i,j) : i=k,\dots,\lambda_{j(k)}-k+1,\ j(k)\leq j<j(k+1) \big\} \\
C_k^{\nearrow}		&:= \big\{ (\lambda_j-k+1,j) : j=j(k),\dots,n \big\}
\end{align*} 
of $\bN^2$, where we set $j(d+1)=n+1$. Using Lemma~\ref{lem:upper bound on d(I)} it is not difficult to verify that 
each of the sets $C_k^{\downarrow}$, $C_k^{\rightsquigarrow}$ and $C_k^{\nearrow}$ is contained in $P$.
Note that in $P$, we have
$$(k,n)<(k,n-1)<\cdots<(k,j(k)),$$
so $C_k^{\downarrow}$ is a chain in $P$. Also, $C_k^{\rightsquigarrow}$ is a chain, since
$$(k,j(k))<(k+1,j(k))<\cdots<(\lambda_{j(k)}-k+1,j(k))\quad
\text{if $j(k+1)-j(k)=1$}$$
and
\begin{multline*}
(k,j(k)+1)<(k,j(k))<(k+1,j(k)+1)<(k+1,j(k))<\cdots < \\ (\lambda_{j(k)}-k+1,j(k))  \qquad
\text{if $j(k+1)-j(k)=2$.}
\end{multline*}
Similarly we see that $C_k^{\nearrow}$ is a chain, since
$$(\lambda_{j(k)}-k+1,j(k))<(\lambda_{j(k)+1}-k+1,j(k)+1)<\cdots<(\lambda_n-k+1,n).$$
Hence
$$C_k:=C_k^{\downarrow} \cup C_k^{\rightsquigarrow} \cup C_k^{\nearrow}$$
is a chain, with smallest element $(k,n)$ and largest element $(\lambda_n-k+1,n)$. Figure~\ref{fig:example of P, 2} shows the $C_k$ in the example introduced in Figure~\ref{fig:example of P}.

As with $d=d(\lambda_1,\dots,\lambda_n)$, the definition of the ordered set $P$ and the chains~$C_k$ depend on $\lambda=(\lambda_1,\dots,\lambda_n)$; we write $P=P(\lambda)$ and $C_k=C_k(\lambda)$, respectively, to make this dependence explicit.

\begin{lemma}\label{lem:partition into chains}
The chains $C_1,\dots,C_d$ form a partition of $P$.
\end{lemma}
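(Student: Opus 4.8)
The plan is to handle $P=P(\lambda)$ one column at a time: for each $(i,j)\in P$ I would identify the unique $k\in[d]$ with $(i,j)\in C_k$, by comparing $i$ with $k^\ast=k^\ast(j)$, the largest index with $j(k^\ast)\le j$. Note $k^\ast$ exists (as $j(1)=1\le j\le n$) and satisfies $j(k^\ast)\le j<j(k^\ast+1)$, using the convention $j(d+1)=n+1$.

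First I would record two arithmetic facts about the greedy sequence $1=j(1)<\dots<j(d)$. Fact (i): $\lambda_j-\lambda_{j(k^\ast)}\le 1$ for every $j\in[n]$. This is clear if $j=j(k^\ast)$; and if $j(k^\ast)<j<j(k^\ast+1)$ then necessarily $j=j(k^\ast)+1$ with $j(k^\ast+1)=j(k^\ast)+2$, which forces $\lambda_{j(k^\ast)+1}-\lambda_{j(k^\ast)}=1$ --- by the recurrence defining the $j(k)$ when $k^\ast<d$, and by maximality of the length of the greedy sequence when $k^\ast=d$ (in which case $j(d)=n-1$ and $j=n$, since otherwise the recurrence would produce a longer sequence). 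Fact (ii): $k^\ast\le\lambda_{j(k^\ast)}-k^\ast+1$. Here one observes that the first $k^\ast$ terms of the greedy sequence for $\lambda$ coincide with the full greedy sequence for the truncation $(\lambda_1,\dots,\lambda_{j(k^\ast)})$, so $k^\ast=d(\lambda_1,\dots,\lambda_{j(k^\ast)})\le\lceil\lambda_{j(k^\ast)}/2\rceil$ by \eqref{eq:bound on d(I)}, and $2\lceil\lambda_{j(k^\ast)}/2\rceil\le\lambda_{j(k^\ast)}+1$.

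Then I would define $\kappa\colon P\to[d]$ and prove $C_k=\kappa^{-1}(k)$ for each $k$. For $(i,j)\in P$ put
\[
\kappa(i,j):=\begin{cases}
i & \text{if } 1\le i\le k^\ast,\\
k^\ast & \text{if } k^\ast\le i\le\lambda_{j(k^\ast)}-k^\ast+1,\\
\lambda_j-i+1 & \text{if } \lambda_j-k^\ast+1\le i\le\lambda_j.
\end{cases}
\]
Facts (i) and (ii) give that the three $i$-intervals above cover $[1,\lambda_j]$ and that the three formulas agree wherever those intervals overlap (every overlap lies in $\{k^\ast\}$, or in $\{\lambda_j-k^\ast+1\}$ when $j=j(k^\ast)$); so $\kappa$ is well defined and visibly lands in $[d]$. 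That $(i,j)\in C_{\kappa(i,j)}$ is then immediate from the definitions: in the first case $(i,j)\in C_i^{\downarrow}$ (valid since $i\le k^\ast\Rightarrow j\ge j(k^\ast)\ge j(i)$); in the second $(i,j)\in C_{k^\ast}^{\rightsquigarrow}$; in the third $(i,j)\in C_k^{\nearrow}$ with $k=\lambda_j-i+1$ (here $k\le k^\ast$, so $j\ge j(k)$, and the column-$j$ entry of $C_k^{\nearrow}$ is exactly $(i,j)$). Conversely, if $(i,j)\in C_l$ I would read off from which of $C_l^{\downarrow},C_l^{\rightsquigarrow},C_l^{\nearrow}$ it arises: from $C_l^{\downarrow}$ one gets $i=l$ and $j\ge j(l)$, hence $l\le k^\ast$ and $\kappa(i,j)=i=l$; from $C_l^{\rightsquigarrow}$ one gets $j(l)\le j<j(l+1)$ (forcing $l=k^\ast$) and $k^\ast\le i\le\lambda_{j(k^\ast)}-k^\ast+1$, hence $\kappa(i,j)=k^\ast=l$; from $C_l^{\nearrow}$ one gets $j\ge j(l)$ (hence $l\le k^\ast$) and $i=\lambda_j-l+1\ge\lambda_j-k^\ast+1$, hence $\kappa(i,j)=\lambda_j-i+1=l$. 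Thus $(i,j)\in C_l\iff l=\kappa(i,j)$, which says precisely that the $C_k$ are pairwise disjoint with union $P$; and each $C_k$ is non-empty, since $(k,n)\in C_k^{\downarrow}$.

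The only step that is not a mechanical check against the definitions of $C_k^{\downarrow}$, $C_k^{\rightsquigarrow}$, $C_k^{\nearrow}$ is the per-column bookkeeping that makes $\kappa$ well defined: verifying that the intervals $[1,k^\ast]$, $[k^\ast,\lambda_{j(k^\ast)}-k^\ast+1]$, $[\lambda_j-k^\ast+1,\lambda_j]$ tile $[1,\lambda_j]$ with overlaps confined to the expected endpoints. This is exactly where Facts (i) and (ii) --- and hence Lemma~\ref{lem:upper bound on d(I)} via \eqref{eq:bound on d(I)} --- are needed; everything else is a direct case analysis.
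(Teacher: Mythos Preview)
Your proof is correct, but it takes a genuinely different route from the paper. The paper argues by induction on $n$: it checks directly that the sets $C_k\cap([\lambda_n]\times\{n\})$ partition the top row $[\lambda_n]\times\{n\}$, and then observes that removing the top row leaves $P(\lambda')$ for $\lambda'=(\lambda_1,\dots,\lambda_{n-1})$, with $C_k(\lambda)\cap P(\lambda')=C_k(\lambda')$ for $j(k)\leq n-1$ and $C_d\subseteq[\lambda_n]\times\{n\}$ when $j(d)=n$; the inductive hypothesis then finishes the job. Your argument instead works column by column, producing an explicit assignment function $\kappa\colon P\to[d]$ and verifying $C_k=\kappa^{-1}(k)$ directly. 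The paper's inductive proof is shorter and avoids the arithmetic Facts~(i) and~(ii); your approach, while requiring more bookkeeping (and the invocation of~\eqref{eq:bound on d(I)} to establish Fact~(ii)), has the advantage of giving an explicit closed-form description of the chain containing a given point, which the inductive argument does not provide.
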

\begin{proof}
We proceed by induction on $n$.
Note that 
$$C_k\cap ([\lambda_n]\times\{n\}) = 
\begin{cases}
\{k,\lambda_n-k+1\}\times\{n\} & \text{if $j(k)<n-1$} \\
\{k,\dots,\lambda_n-k+1\}\times\{n\} & \text{if $j(k)=n$ or $j(k)=n-1$,}
\end{cases}
$$ and hence
the sets $C_k\cap ([\lambda_n]\times\{n\})$ ($k=1,\dots,d$) partition  $[\lambda_n]\times\{n\}$.
Suppose $n>1$ and set $\lambda'=(\lambda_1,\dots,\lambda_{n-1})$; then we clearly have 
$$P(\lambda') = P(\lambda)\cap ([\lambda_{n-1}]\times [n-1])$$ 
as ordered sets, and
$$C_k(\lambda)\cap ([\lambda_{n-1}]\times [n-1]) = 
\begin{cases}
C_k(\lambda') & \text{if $j(k)\leq n-1$ } \\
\emptyset     & \text{if $j(k)=n$.}
\end{cases}$$
By inductive hypothesis therefore, if $j(d)<n$, then
the sets $C_k\cap ([\lambda_{n-1}]\times [n-1])$ ($k=1,\dots,d$) partition  $P\cap ([\lambda_{n-1}]\times [n-1])$, if $j(d)=n$, then the sets $C_k\cap ([\lambda_{n-1}]\times [n-1])$  ($k=1,\dots,d-1$)  partition  $P\cap ([\lambda_{n-1}]\times [n-1])$, with $C_d\subseteq [\lambda_n]\times\{n\}$. Hence the $C_k$ ($k=1,\dots,d$) partition $P=\bigcup_{k\in [n]} [\lambda_k]\times\{k\}$.
\end{proof}

By Lemma~\ref{lem:partition into chains} and Dilworth's~Theorem we have $\width(P)\leq d$; the next lemma thus shows that $\width(P) = d$. (In Figure~\ref{fig:example of P, 2} the elements of this antichain are circled.)

\begin{lemma}
The elements $(1,j(1)),\dots,(d,j(d))$  form an antichain of $P$.
\end{lemma}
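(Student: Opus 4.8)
The plan is to show directly that any two of the listed elements are incomparable in $P$. Fix $k<l$ in $[d]$. Since the projection $(i,j)\mapsto i$ is a morphism of ordered sets from $P$ to $\bN$ and $k<l$, the relation $(l,j(l))\leq(k,j(k))$ would force $l\leq k$, which is false; so the only comparability that could occur is $(k,j(k))\leq(l,j(l))$. By the definition of the ordering on $P$, this is equivalent to the conjunction of $k\leq l$ and $\lambda_{j(k)}-k\geq\lambda_{j(l)}-l$. As the first conjunct holds, it suffices to prove that the second fails for every such pair, i.e. that the map $k\mapsto\lambda_{j(k)}-k$ is \emph{strictly increasing} on $[d]$. (I would also record at this point that $(k,j(k))$ really lies in $P$: one has $j(k)\leq j(d)\leq n$ by the maximality defining $d$, and $\lambda_{j(k)}\geq\lambda_1+2(k-1)\geq k$, which also drops out of the computation below.)

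Next I would reduce the monotonicity statement to the one-step inequality $\lambda_{j(k+1)}-\lambda_{j(k)}\geq 2$ for each $k\in[d-1]$, since this gives $\lambda_{j(k+1)}-(k+1)\geq\lambda_{j(k)}-k+1$. To verify it I would split into the two cases of the recursive definition of the sequence $j(1)<\cdots<j(d)$. If $j(k+1)=j(k)+1$, then by definition this case is triggered precisely by $\lambda_{j(k)+1}-\lambda_{j(k)}\geq 2$, i.e. $\lambda_{j(k+1)}-\lambda_{j(k)}\geq 2$. If instead $j(k+1)=j(k)+2$, then $\lambda_{j(k)+1}-\lambda_{j(k)}=1$, and since $\lambda$ is strictly increasing we have $\lambda_{j(k)+2}-\lambda_{j(k)+1}\geq 1$, so $\lambda_{j(k+1)}-\lambda_{j(k)}=(\lambda_{j(k)+2}-\lambda_{j(k)+1})+(\lambda_{j(k)+1}-\lambda_{j(k)})\geq 2$. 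In either case the one-step inequality holds, completing the proof.

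There is no real obstacle here; the only point requiring care is to use the correct (non-product) ordering on $P$, namely $(i,j)\leq(i',j')\iff i\leq i'\text{ and }\lambda_j-i\geq\lambda_{j'}-i'$, so that incomparability of $(k,j(k))$ and $(l,j(l))$ for $k<l$ comes down exactly to the strict inequality $\lambda_{j(k)}-k<\lambda_{j(l)}-l$. Together with Lemma~\ref{lem:partition into chains} and Dilworth's Theorem, this antichain of size $d$ establishes $\width(P)=d$, hence (via Corollary~\ref{cor:join-irred}, Lemma~\ref{lem:join-irred, 0} and Proposition~\ref{prop:dilworth}) $\breadth(\Lambda)=\width(J(\Lambda))=d(\lambda)$, which is the content of Proposition~\ref{prop:breadth of finite exponent group}.
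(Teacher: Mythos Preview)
Your proof is correct and follows essentially the same approach as the paper: both reduce the antichain claim to the strict monotonicity of $k\mapsto\lambda_{j(k)}-k$, which follows from the one-step inequality $\lambda_{j(k+1)}-\lambda_{j(k)}\geq 2$. You spell out the case split behind this inequality in more detail than the paper does, and your parenthetical verification that $(k,j(k))\in P$ is a welcome addition.
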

\begin{proof}
For all $k\in [d-1]$ we have $\lambda_{j(k+1)}-\lambda_{j(k)}\geq 2$, hence the sequence $\lambda_{j(k)}-k$ ($k\in [d]$) is strictly increasing.
Since $(k,j(k)) \leq (l,j(l))$ iff $k\leq l$ and $\lambda_{j(k)}-k\geq\lambda_{j(l)}-l$ for all $k,l\in [d]$, this immediately yields the claim.
\end{proof}

This finishes the proof of Proposition~\ref{prop:breadth of finite exponent group}, and hence of Theorem~\ref{thm:breadth of finite exponent group}. \qed 

\medskip
\noindent
From Theorem~\ref{thm:breadth of finite exponent group} and Proposition~\ref{prop:breadth and vc for modules} we see immediately:

\begin{corollary}\label{cor:finite exp, cor 1}
Suppose $A$ has finite exponent, and let $d$ be as in \eqref{eq:d};
then $A$ has the $\VC{}d$ property.  Hence if $A$ is infinite, then $\vc^{T}(m)\leq dm$ for each $m$. 
\end{corollary}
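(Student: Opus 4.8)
The plan is simply to splice together the two results immediately preceding this corollary. First I would apply Theorem~\ref{thm:breadth of finite exponent group}: since $A$ has finite exponent, its lattice $\PP(A)$ of p.p.~definable subgroups has breadth exactly $d=\sum_p d(U(p;A))$, i.e., $A$ has breadth $d$ in the sense of Section~\ref{sec:breadth and pp subgroups}. In particular this breadth is finite. Next I would invoke Proposition~\ref{prop:breadth and vc for modules} with $R=\bZ$ and $M=A$: since $A$ has finite breadth $d$, the theory $T=\Th(A)$ has the $\VC{}d$ property. Finally, the asserted bound $\vc^T(m)\leq dm$ for every $m$ is precisely the ``in particular'' clause of Proposition~\ref{prop:breadth and vc for modules}, which applies because $A$ is assumed infinite.

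There is no genuine obstacle at this stage: all the substantive work has already been carried out, on the one hand in the proof of Theorem~\ref{thm:breadth of finite exponent group} (the reduction via Lemma~\ref{lem:basic properties of breadth} to a finite abelian $p$-group $A=\bigoplus_k \bZ(p^{\lambda_k})$, the identification of $\PP(A)$ with the lattice $\Lambda$ of slowly growing tuples, the description of $J(\Lambda)$ via the poset $P$ in Corollary~\ref{cor:join-irred}, and the computation $\breadth(\Lambda)=\width(J(\Lambda))=\width(P)=d(\lambda)$ through the chain partition of Lemma~\ref{lem:partition into chains}), and on the other hand in Proposition~\ref{prop:breadth and vc for modules} (the passage from breadth to the $\VC{}d$ property, which rests on Baur--Monk together with Lemmas~\ref{lem:breadth of cosets}, \ref{lem:breadth and UDTFS}, and \ref{lem:qe and VCd}). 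The only point to verify is that the hypotheses match up, which they do verbatim: ``$A$ has breadth $d$'' as produced by Theorem~\ref{thm:breadth of finite exponent group} is exactly the hypothesis ``$M$ has finite breadth $d$'' required by Proposition~\ref{prop:breadth and vc for modules}. Hence the corollary follows in two lines.
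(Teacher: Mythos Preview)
Your proposal is correct and matches the paper's own argument exactly: the corollary is stated immediately after the sentence ``From Theorem~\ref{thm:breadth of finite exponent group} and Proposition~\ref{prop:breadth and vc for modules} we see immediately,'' and that is precisely the two-step splice you describe.
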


For the following examples also consult Examples~\ref{ex:d(I)} above:

\begin{example*}
Suppose
$$A = \bZ(p^2)^{(\alpha_2)}\oplus \bZ(p^3)^{(\alpha_3)}\oplus \bZ(p^5)^{(\alpha_5)}\oplus \bZ(p^7)^{(\alpha_7)}\oplus \bZ(p^8)^{(\alpha_8)}\oplus  \bZ(p^9)^{(\alpha_9)},$$
where the $\alpha_i$ are non-zero cardinals. Then $A$ has breadth~$4$ and hence $A$ has the $\VC{}4$ property; so if one of the $\alpha_i$ is infinite, then $\vc^{T}(m)\leq 4m$ for each $m$.
\end{example*}

\begin{example*}
Suppose 
$$A=\bZ(p)^{(\alpha_1)}\oplus\bZ(p^2)^{(\alpha_2)}\oplus\cdots\oplus\bZ(p^n)^{(\alpha_n)}\qquad (n>0)$$ 
where the $\alpha_i$ are  non-zero cardinals. 
Then $A$ has the $\VC{}d$ property where $d=\lceil n/2\rceil$; so if $A$ is infinite, then $\vc^{T}(m)\leq \lceil n/2\rceil m$ for each $m$. 
\end{example*}

In fact, we can now precisely determine the VC~density function of an infinite abelian group of finite exponent.
Recall that for each $p$, in the introduction we defined
$$U_{\geq\aleph_0}(p;A)=\big\{i\geq 0:U(p,i;A)\geq\aleph_0\big\},$$
a subset of $U(p;A)$. 
With this notation we have:

\begin{corollary}\label{cor:finite exp, cor 2}
Suppose $A$ is $\aleph_0$-categorical, and let 
$$d_{0}:= \sum_p d(U_{\geq\aleph_0}(p;A)).$$
Then $\vc^T(m)=d_0 m$ for each $m$.
\end{corollary}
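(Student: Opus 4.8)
The plan is to compute $\vc^T$ by passing to a transparent elementarily equivalent model and then combining Theorem~\ref{thm:breadth of finite exponent group}, Lemma~\ref{lem:VC density under adding a finite module}, and Corollary~\ref{cor:lower bounds on vc}. First I would recall that an $\aleph_0$-categorical abelian group has finite exponent; this is classical, and can also be seen directly from the fact that an $\aleph_0$-categorical structure has only finitely many $\emptyset$-definable subsets of each Cartesian power (apply this to the subgroups $p^iA$ and $A[p^i]$, which forces $A$ to be torsion, to involve only finitely many primes, and to have bounded $p$-part for each such prime). Consequently $T=\Th(A)$ is determined by the Ulm invariants $U(p,i;A)$, two abelian groups of finite exponent being elementarily equivalent precisely when these invariants agree whenever finite and are simultaneously infinite otherwise.

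Next I would exhibit a concrete $A_0\equiv A$ of the form $A_0=A^\circ\oplus F$ with
$$A^\circ:=\bigoplus_{p}\ \bigoplus_{i\in U_{\geq\aleph_0}(p;A)}\bZ(p^{i+1})^{(\aleph_0)},\qquad F:=\bigoplus_{p}\ \bigoplus_{i:\ 1<U(p,i;A)<\aleph_0}\bZ(p^{i+1})^{(\gamma_{p,i})},$$
where $\gamma_{p,i}\geq 1$ is the integer with $p^{\gamma_{p,i}}=U(p,i;A)$. Both direct sums are finite since $A$ has finite exponent, so $F$ is a finite abelian group. Using that Ulm invariants are multiplicative over direct sums and that $\bZ(p^{i+1})^{(\kappa)}$ contributes its invariant exactly at index $i$, one checks that $U(p,i;A^\circ\oplus F)$ has the same finite values as $U(p,i;A)$ and is infinite exactly where $U(p,i;A)$ is; hence $A^\circ\oplus F\equiv A$ by the preceding paragraph. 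Note moreover that $A^\circ$ is infinite (as $A$ is), that $A^\circ\cong(A^\circ)^{(\aleph_0)}$ and in particular $A^\circ\equiv(A^\circ)^{\aleph_0}$, and that $U(p,i;A^\circ)\neq 1$ if and only if $i\in U_{\geq\aleph_0}(p;A)$.

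Finally I would run the chain of identities. VC~density functions of elementarily equivalent structures agree, so $\vc^T=\vc^{\Th(A^\circ\oplus F)}$; Lemma~\ref{lem:VC density under adding a finite module}, applied with the infinite module $A^\circ$ and the finite module $F$, gives $\vc^{\Th(A^\circ\oplus F)}=\vc^{\Th(A^\circ)}$; the equality case of Corollary~\ref{cor:lower bounds on vc}, available since $A^\circ\equiv(A^\circ)^{\aleph_0}$, gives $\vc^{\Th(A^\circ)}(m)=m\,\breadth(\PP(A^\circ))$; and Theorem~\ref{thm:breadth of finite exponent group}, together with $\{i\geq 0:U(p,i;A^\circ)\neq 1\}=U_{\geq\aleph_0}(p;A)$ for every prime $p$, gives $\breadth(\PP(A^\circ))=\sum_p d\big(U_{\geq\aleph_0}(p;A)\big)=d_0$. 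Concatenating, $\vc^T(m)=d_0\,m$ for every $m$, as claimed.

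I expect the only genuinely delicate point to be the construction of $F$ and the verification $A^\circ\oplus F\equiv A$ — that is, the bookkeeping with Ulm invariants and the appeal to the (Szmielew-style) classification of finite-exponent abelian groups up to elementary equivalence. All remaining steps are immediate applications of results already proved above.
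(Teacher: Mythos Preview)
Your proof is correct and follows essentially the same approach as the paper: split off a finite summand, reduce to a group $A^\circ$ with $A^\circ\equiv(A^\circ)^{\aleph_0}$ and $U(p;A^\circ)=U_{\geq\aleph_0}(p;A)$, and then combine Theorem~\ref{thm:breadth of finite exponent group} with the equality case of Corollary~\ref{cor:lower bounds on vc}. The only cosmetic difference is that the paper decomposes the given group $A$ itself as $A_{\geq\aleph_0}\oplus B$ using the Pr\"ufer structure theorem for abelian groups of finite exponent (so no appeal to Szmielew-style elementary classification is needed), whereas you pass to an elementarily equivalent model and verify the equivalence via Ulm invariants; both routes work and the underlying argument is the same.
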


\begin{proof}
We may write $A=A_{\geq\aleph_0}\oplus B$ where $U_{\geq\aleph_0}(p;A)=U(p;A_{\geq\aleph_0})$ for each $p$ and $B$ is finite: if we decompose each $p$-primary component as
$$A_p \cong \bigoplus_{i>0} \bZ(p^i)^{(\alpha_{p,i-1})},$$
then
$$A_{\geq\aleph_0} \cong \bigoplus_{\substack{\text{$p$ prime, $i>0$} \\ \text{with $\alpha_{p,i-1}\geq\aleph_0$}}} \bZ(p^i)^{(\alpha_{p,i-1})}, \qquad
B \cong \bigoplus_{\substack{\text{$p$ prime, $i>0$} \\ \text{with $\alpha_{p,i-1}<\aleph_0$}}} \bZ(p^i)^{(\alpha_{p,i-1})}.
$$
Replacing $A$ by $A_{\geq\aleph_0}$ if necessary we may assume that $U_{\geq\aleph_0}(p;A)=U(p;A)$ for every $p$ and so $A^{\aleph_0}\equiv A$ and $d_0=d=\breadth(A)$. Now $\vc^{T}(m) = dm$ follows from Corollary~\ref{cor:lower bounds on vc}.
\end{proof}

Theorem~\ref{thm:aleph0-cat abelian} from the introduction is a consequence of Corollary~\ref{cor:finite exp, cor 2} and the upper bound in \eqref{eq:bound on d(I)}. We also obtain:

\begin{corollary}\label{cor:finite exp dp-minimal}
An  $\aleph_0$-categorical abelian group is dp-minimal iff it is isomorphic to one of the form
$$\bZ(p^k)^{(\alpha)}\oplus\bZ(p^{k+1})^{(\beta)}\oplus B,$$ 
where $p$ is a prime, $k>0$, $\alpha$, $\beta$ are cardinals, at least one of which is infinite, and~$B$ is a finite abelian group.
\end{corollary}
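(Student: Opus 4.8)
The plan is to combine the exact computation of $\vc^T(1)$ for $\aleph_0$-categorical abelian groups with the dp-minimality criterion for $1$-based groups, and then unwind the resulting numerical condition into the structural description claimed. We may assume $A$ is infinite, since every group of the form described in the statement is infinite. As the theory $T$ of an abelian group, being that of a module, is $1$-based, Corollary~\ref{cor:commensurable, 1} applies and gives that $A$ is dp-minimal if and only if $\vc^T(1)=1$. On the other hand, since $A$ is $\aleph_0$-categorical, Corollary~\ref{cor:finite exp, cor 2} (with $m=1$) yields $\vc^T(1)=d_0$, where $d_0:=\sum_p d\big(U_{\geq\aleph_0}(p;A)\big)$. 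Hence $A$ is dp-minimal if and only if $d_0=1$, and it remains only to characterize, in terms of the structure of $A$, when this holds.

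Since $d(\emptyset)=0$ while $d(I)\geq 1$ for every finite non-empty set $I$ of non-negative integers, $d_0=1$ holds precisely when there is exactly one prime $p$ with $U_{\geq\aleph_0}(p;A)\neq\emptyset$ and $d\big(U_{\geq\aleph_0}(p;A)\big)=1$; by \eqref{eq:d(I)=1} the latter means that $U_{\geq\aleph_0}(p;A)$ is either a singleton $\{j\}$ or a two-element set of consecutive integers $\{j,j+1\}$, with $j\geq 0$. Now recall that $i\in U_{\geq\aleph_0}(p;A)$ iff the Ulm invariant $U(p,i;A)=p^{\alpha_{p,i}}$ is infinite, equivalently iff the cyclic summand $\bZ(p^{i+1})$ occurs with infinite multiplicity $\alpha_{p,i}$ in the decomposition $A_p\cong\bigoplus_{i>0}\bZ(p^i)^{(\alpha_{p,i-1})}$; likewise $U_{\geq\aleph_0}(q;A)=\emptyset$ for $q\neq p$ exactly when each primary component $A_q$ with $q\neq p$ is finite. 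Unwinding, and setting $k:=j+1>0$: in the singleton case exactly $\bZ(p^k)$ has infinite multiplicity in $A$, while in the consecutive-pair case exactly $\bZ(p^k)$ and $\bZ(p^{k+1})$ do, all other cyclic summands (over all primes) occurring with finite multiplicity and hence assembling into a finite abelian group $B$. This yields $A\cong\bZ(p^k)^{(\alpha)}\oplus\bZ(p^{k+1})^{(\beta)}\oplus B$ with $\alpha$ infinite and $\beta$ either infinite (consecutive-pair case) or finite, in which case $\bZ(p^{k+1})^{(\beta)}$ is absorbed into $B$ and one takes $\beta=0$ — precisely the form in the statement. Conversely, for $A$ of that form one computes directly that the only contributing prime is $p$, that $U_{\geq\aleph_0}(p;A)=\{k-1\}$ if $\beta$ is finite and $\{k-1,k\}$ if $\beta$ is infinite (using $k>0$, so $k-1\geq 0$), and that $d$ of either of these sets is $1$ by \eqref{eq:d(I)=1}; thus $d_0=1$ and $A$ is dp-minimal.

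No genuinely new idea is required beyond the two already established corollaries and the arithmetic fact \eqref{eq:d(I)=1}; the one place where care is needed is the bookkeeping in the last step — keeping straight the index shift between $U(p,i;A)$ and the summand $\bZ(p^{i+1})$, deciding where the finite part $B$ goes, and matching the clause ``at least one of $\alpha,\beta$ infinite'' against both the singleton case (where only $\alpha$ is forced infinite) and the consecutive-pair case (where both are).
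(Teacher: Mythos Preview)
Your proof is correct and follows essentially the same approach as the paper: invoke Corollary~\ref{cor:commensurable, 1} to reduce dp-minimality to $\vc^T(1)=1$, use Corollary~\ref{cor:finite exp, cor 2} to compute $\vc^T(1)=d_0$, and then unwind the condition $d_0=1$ via \eqref{eq:d(I)=1}. You are in fact more careful than the paper's own write-up about the index shift between $i\in U_{\geq\aleph_0}(p;A)$ and the exponent $i+1$ in $\bZ(p^{i+1})$; one very minor omission in your converse direction is the subcase $\alpha$ finite, $\beta$ infinite, but that is immediately absorbed by relabeling $k\mapsto k+1$.
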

\begin{proof}
Suppose $A$ is $\aleph_0$-categorical. Then by the previous corollary, 
$$\vc^T(1)=\sum_p d(U_{\geq\aleph_0}(p;A)).$$
Thus by Corollary~\ref{cor:commensurable, 1}, $A$ is dp-minimal iff $U_{\geq\aleph_0}(p;A)\neq\emptyset$ for exactly one prime~$p$ and for this $p$, $d(U_{\geq\aleph_0}(p;A))=1$. By \eqref{eq:d(I)=1}, the latter condition is equivalent to
$U_{\geq\aleph_0}(p;A)=\{k\}$ or $U_{\geq\aleph_0}(p;A)=\{k,k+1\}$, for some $k>0$.
\end{proof}

\subsection{Asymptotics of p.p.~definable subgroups in  homocyclic groups.}\label{sec:homocyclic}
In this subsection we go beyond  finite-exponent groups and consider abelian groups of the form
$$A = \bigoplus_{i>0} \bZ(p^i)^{(\alpha_{i-1})}$$
where the $\alpha_{i-1}$ are natural numbers; we assume that $A$ is infinite, i.e., $\alpha_{i-1}>0$ for infinitely many $i>0$. Our goal is to show:

\begin{proposition}\label{prop:homocyclic}
For every $m$ we have $\breadth(\widetilde{\PP}_m(A))\leq m$.
\end{proposition}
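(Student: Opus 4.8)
The plan is to localize at $p$, put p.p.-definable subgroups of $A^m$ into a Smith-type normal form over the discrete valuation ring $\bZ_p$, and then read off the breadth of the resulting commensurability lattice.

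First I would replace $\bZ$ by $\bZ_p$: since $A$ is torsion it is a $\bZ_p$-module, over $\bZ_p$-modules every p.p.-$\mathcal L_\bZ$-formula is equivalent in $\Th(A)$ to a p.p.-$\mathcal L_{\bZ_p}$-formula and conversely, and neither the lattices $\PP_m(A)$ nor the commensurability relation change. Now $A^m\cong\bZ_p^m\otimes_{\bZ_p}A$ is a direct sum of finite cyclic $p$-groups, and $\mathrm{GL}_m(\bZ_p)$ acts on $A^m$ by automorphisms of abelian groups. The technical heart is then a normal form: \emph{every p.p.-definable subgroup $H$ of $A^m$ is, after applying a suitable $\sigma\in\mathrm{GL}_m(\bZ_p)$ and up to commensurability, of the product form $H_1\oplus\cdots\oplus H_m$ with each $H_k\in\PP_1(A)$} (in fact with each $H_k$ of the form $p^{a_k}A$ or $A[p^{b_k}]$, $a_k,b_k\in\{0,1,2,\dots,\infty\}$, with the conventions $p^\infty A=0$, $A[p^\infty]=A$). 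To prove this, write $H=\{x\in A^m:\exists y\,(Nx=N'y)\}$ with matrices $N,N'$ over $\bZ_p$; bringing $N'$ into Smith normal form by row operations on the system (an equivalent system) and by column operations in the existential variables $y$ (a harmless substitution) rewrites $H$ as the set of $x$ satisfying finitely many conditions ``$v\cdot x\in p^gA$'' with $v\in\bZ_p^m$ and $g\in\{0,\dots,\infty\}$; after factoring out the $p$-power content of $v$ one may take $v$ primitive, using that $p^aA+A[p^b]\sim p^aA$ in $A$. It then remains to diagonalize this finite family of weighted primitive covectors, i.e.\ to choose a basis of $\bZ_p^m$ in which each condition involves only one coordinate; this is a Smith normal form statement for finitely generated valuated free $\bZ_p$-modules, proved by induction on $m$, and it is precisely here that one uses that $\bZ_p$ is a valuation ring. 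The basis so produced is the automorphism $\sigma$.

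Granting the normal form, the case $m=1$ drops out at once: $\PP_1(A)$ is generated as a lattice by the subgroups $p^aA$ and $A[p^b]$, and since $A$ has infinite exponent one computes directly (using $[A:p^aA]=\infty$ and $A[p^b]\cap p^aA\sim A[p^b]$, and the resulting collapses of joins and meets) that $\widetilde{\PP}_1(A)$ is the chain $[0]<[A[p]]<[A[p^2]]<\cdots<[p^2A]<[pA]<[A]$; hence $\breadth(\widetilde{\PP}_1(A))\le1$.

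For general $m$ the normal form identifies the commensurability classes of p.p.-definable subgroups of $A^m$ with ``weighted flags'' of $\bZ_p$-lattices in $\bZ_p^m$, and the proposition becomes the combinatorial assertion that this decorated-flag ordered set has breadth at most $m$. I would organize this by splitting such data into its ``divisibility part'' — recorded by the connected component $H\mapsto H^0\in\PP^0_m(A)$, which in one variable is a chain — and its ``torsion part'', which lives in finite-exponent subquotients and is controlled by the analysis of the previous subsection; one then verifies that these combine to breadth $m$ rather than $2m$ because along each flag the weights move in a single one-dimensional parameter (the $p$-adic valuation), so that the whole poset embeds as an ordered set into an $m$-fold product of chains, whence $\breadth\le m\cdot\breadth(\text{chain})=m$ by the behaviour of breadth under products and embeddings recorded in Section~\ref{sec:dimension ...}. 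Equivalently, and perhaps more transparently, one can argue directly with $m+1$ p.p.-definable subgroups $H_0,\dots,H_m$ of $A^m$: putting each intersection $\bigcap_{i\in S}H_i$ into normal form and comparing $\bZ_p$-ranks together with the valuation labels of the corresponding flags exhibits a redundant index $i_0$ with $\bigcap_{i\ne i_0}H_i\sim\bigcap_iH_i$. I expect this last step — controlling how the $\mathrm{GL}_m(\bZ_p)$-twist interacts with the valuation depth — to be the main obstacle; everything preceding it is Smith normal form bookkeeping.
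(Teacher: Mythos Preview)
Your underlying algebraic engine (Smith normal form over the DVR $\bZ_{(p)}$ or $\bZ_p$) is the same as the paper's, but you apply it in a way that leaves a real gap, which you yourself flag. The paper avoids exactly this gap by organizing the argument differently.

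The paper does \emph{not} put a single p.p.-definable subgroup of $A^m$ into product form via a $\mathrm{GL}_m$-action. Instead it exploits the concrete shape of $A=\bigoplus_{i>0}\bZ(p^i)^{(\alpha_{i-1})}$ with each $\alpha_{i-1}$ finite: any p.p.-definable $H\le A^m$ decomposes as $\bigoplus_i H_i$ with $H_i\le \bZ(p^i)^{m\alpha_{i-1}}$ finite, so $H\sim H'$ iff $H_i=H'_i$ for all sufficiently large $i$. This turns the breadth bound into a purely asymptotic statement: given p.p.\ formulas $\varphi_1,\dots,\varphi_n$ in $m$ variables, one can pick $m$ of them whose conjunction defines the same subgroup of $\bZ(p^i)^m$ for all large $i$ (Lemma~\ref{lem:asymp pp}). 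That lemma is proved by writing the conditions as $a_jx=0$ and $p^{d_j}\mid b_jx$ and applying a short DVR fact (Lemma~\ref{lem:DVR}): among the $a_j$'s and the $p^eb_j$'s one finds $r+s\le m$ that generate the rest over $\bZ_{(p)}$. So the Smith-type argument is applied once to the \emph{collection} of defining covectors, and the output is already ``$m$ conditions suffice asymptotically''; no per-subgroup automorphism $\sigma$ is chosen and no comparison between different $\sigma$'s is ever needed.

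Your route, by contrast, diagonalizes each $H$ separately and then has to reconcile the $\sigma$'s. Your first proposed reconciliation (``the whole poset embeds into an $m$-fold product of chains'') is asserted, not proved; the normal form for a single $H$ gives no canonical map to such a product, since the coordinates depend on $H$. Your second proposal (compare normal forms of all intersections $\bigcap_{i\in S}H_i$) is exactly where the $\mathrm{GL}_m$-twists interact, and you correctly identify this as the main obstacle without resolving it. There is also a smaller gap earlier: when you say ``choose a basis of $\bZ_p^m$ in which each condition involves only one coordinate'', this is not possible when there are more than $m$ independent conditions; you need an argument that the excess conditions become redundant modulo commensurability, which is precisely the content of the paper's Lemma~\ref{lem:DVR}. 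The fix is to drop the per-$H$ normal form and instead argue, as the paper does, with the asymptotic component picture: this converts the lattice-theoretic breadth statement into a single linear-algebra statement over a DVR, and the difficulty you anticipate simply does not arise.
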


Note that by Corollary~\ref{cor:lower bounds on vc} (and since always $\vc^T(m)\geq m$), this proposition implies $\vc^{T}(m)=m$ for every $m$. Before we give a proof of Proposition~\ref{prop:homocyclic}, we first look at the instructive case $m=1$, in which we  also have a sharper result.  Below, for each~$n$ we let $A[n]:=\{a\in A:na=0\}$ (a~p.p.~definable subgroup of~$A$).

\begin{lemma}\label{lem:asymp pp, chain}
The ordered set $\widetilde{\PP}(A)=\widetilde{\PP}_1(A)$ is a chain of order type $\omega+\omega^*$, with representatives given by the p.p.~definable subgroups $A[p^d]$ and $p^e A$:
$$0\lnsim A[p] \lnsim \cdots \lnsim A[p^d]\lnsim A[p^{d+1}]\lnsim  \cdots \lnsim p^{e+1}A\lnsim p^eA\lnsim\cdots \lnsim  pA \lnsim A.$$
\end{lemma}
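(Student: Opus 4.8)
The plan is to identify the ordered set $\widetilde{\PP}(A)$ with the chain formed by the commensurability classes of the subgroups $A[p^d]$ ($d\geq 0$) and $p^eA$ ($e\geq 0$), in the order displayed. The argument splits into three parts: (a) the lattice $\PP(A)$ is generated by the subgroups $p^kA$ and $A[p^l]$, $k,l\geq 0$; (b) these subgroups are linearly ordered by $\lesssim$, and consecutive ones in the displayed list are \emph{strictly} related, i.e.\ $\lnsim$; (c) since the canonical map $\PP(A)\to\widetilde{\PP}(A)$ is a surjective lattice morphism, $\widetilde{\PP}(A)$ is a lattice generated by the (now totally ordered) images of those subgroups, hence equals that chain, and (b) pins down its order type as $\omega+\omega^*$.

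For (a), since $A$ is a $p$-group, multiplication by any integer coprime to $p$ is an automorphism of $A$; so among the standard generators $q^k\mid x$ and $q^kx=0$ of $\PP_{\bZ}$ only those with $q=p$ produce nontrivial subgroups of $A$, namely $p^kA$ and $A[p^l]$. That \emph{every} p.p.~definable subgroup of $A$ lies in the sublattice they generate can be seen as follows: a p.p.~formula $\varphi(x)$ in the single variable $x$ defines the image under some $\bZ$-linear map $A^{1+s}\to A$ of the kernel of an integer matrix $M$ on $A^{1+s}$; bringing $M$ to Smith normal form over $\bZ$ exhibits that kernel as a direct product of subgroups $A[d_i]$ and copies of $A$, whence $\varphi(A)=\sum_i c_iB_i$ is a finite sum with $c_i\in\bZ$ and each $B_i$ equal to $A$ or to some $A[d_i]$. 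For a $p$-group, if $c_i=p^{k_i}u$ with $u$ coprime to $p$, then $c_iA=p^{k_i}A$, and $c_iA[p^l]$ is either $0$ or of the form $p^{k_i}A\cap A[p^{l'}]$; so every summand, and hence $\varphi(A)$, lies in the sublattice generated by the $p^kA$ and $A[p^l]$. (This amounts to the classification of p.p.~formulas in one variable over $\bZ$; see \cite{Prest}.) Thus $\PP(A)$ is exactly that sublattice.

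For (b), I would read the indices off the decomposition $A=\bigoplus_{i>0}\bZ(p^i)^{(\alpha_{i-1})}$, using that infinitely many of the $\alpha_{i-1}$ are positive. One computes $A[p^{l+1}]/A[p^l]\cong\bZ(p)^{(\aleph_0)}$, which is infinite, so $A[p^l]\lnsim A[p^{l+1}]$; likewise $p^eA/p^{e+1}A\cong\bZ(p)^{(\aleph_0)}$, so $p^{e+1}A\lnsim p^eA$; and $A[p^l]\cap p^eA=(p^eA)[p^l]$, whose index in $A[p^l]$ is finite --- the quotient being a finite direct sum of finite cyclic $p$-groups, supported on the summands $\bZ(p^i)$ with $i\leq e+l$ --- while $p^eA/(p^eA\cap A[p^l])$ is infinite; hence $A[p^l]\lnsim p^eA$ for all $l,e$. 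Altogether, the subgroups $A[p^d]$ and $p^eA$ are linearly preordered by $\lesssim$, with $[0]=[A[p^0]]$ the least element, $[A]=[p^0A]$ the greatest, and all the displayed strict inequalities in place.

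For (c), the natural surjection $q\colon\PP(A)\to\widetilde{\PP}(A)$ is a lattice morphism (as recorded in the text: $\cap$ and $+$ are monotone for $\lesssim$), so $\widetilde{\PP}(A)$ is generated as a lattice by $q\bigl(\{p^kA:k\geq 0\}\cup\{A[p^l]:l\geq 0\}\bigr)$. By (b) this image is a chain, and a sublattice generated by a chain is that chain; so $\widetilde{\PP}(A)=\{[A[p^l]]:l\geq 0\}\cup\{[p^eA]:e\geq 0\}$, and the linear order of (b) displays it as a chain of order type $\omega+\omega^*$ with the asserted representatives. I expect part (a) to be the only substantial point: it is what excludes p.p.~definable subgroups lying strictly between two consecutive members of the chain, so I would take care to spell out (or reference precisely) the reduction to the $p^kA$ and $A[p^l]$; parts (b) and (c) are then routine bookkeeping with the direct-sum decomposition.
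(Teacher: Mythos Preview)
Your proof is correct and follows essentially the same strategy as the paper. The only difference is in part~(a): rather than your Smith normal form argument, the paper simply cites that every p.p.~definable subgroup of $A$ is a finite intersection of groups defined by $\tau_d(x):=(p^dx=0)$ and $\delta_{d,d'}(x):=(p^{d'}\mid p^dx)$, and then checks directly (component-by-component in the decomposition $A=\bigoplus_i\bZ(p^i)^{(\alpha_{i-1})}$) that $\delta_{d,d'}(A)\sim p^{d'-d}A$, which amounts to your observation $\delta_{d,d'}(A)=p^{d'-d}A+A[p^d]$.
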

\begin{proof}
For all $d<d'$ the p.p.~formula $\delta^p_{d,d'}(x)=\delta_{d,d'}(x):=(p^{d'}|p^d x)$ defines the subgroup
$$\delta_{d,d'}(\bZ(p^i))=\begin{cases}
\bZ(p^i)				& \text{if $i\leq d$} \\
p^{i-d}\bZ(p^i)		& \text{if $d<i<d'$} \\
p^{d'-d}\bZ(p^i)		& \text{if $i\geq d'$}
\end{cases}$$
of $\bZ(p^i)$, hence
$$\delta_{d,d'}(A) = \bigoplus_{i < d'} p^{\max\{0,i-d\}}\bZ(p^i)^{(\alpha_{i-1})}\oplus	
\bigoplus_{i \geq d'} p^{d'-d} \bZ(p^i)^{(\alpha_{i-1})}.$$
Note that setting $e:=d'-d$, the group $\delta_{d,d'}(A)$ is commensurable with 
$$p^{e}A=\bigoplus_{i\geq e} p^{e} \bZ(p^i)^{(\alpha_{i-1})}.$$
For each $d$ the p.p.~formula $\tau^p_d(x)=\tau_d(x):=(p^d x=0)$ defines the subgroup
$\tau_d(\bZ(p^i))=p^{\max\{i-d,0\}}\bZ(p^i)$ in $\bZ(p^i)$, so
$$A[p^d]=\tau_d(A) \sim \bigoplus_{i\geq d} p^{i-d} \bZ(p^i)^{(\alpha_{i-1})}.$$
This description of the subgroups $A[p^d]$ and $p^eA$ makes it clear that $A[p^d]\lesssim p^eA$ for all $d$, $e$. 
Since every p.p.~definable subgroup of $A$ is a finite intersection of groups definable by $\tau_d$'s or $\delta_{d,d'}$'s, the lemma follows.
\end{proof}

The key in this proof was that for any choice of $d$ and $e$, for large enough $i$ we have an inclusion $p^e\bZ(p^i) \geq p^{i-d}\bZ(p^i)$ among the subgroups of $\bZ(p^i)$ defined by the p.p.~formulas $p^e|x$ and $p^dx=0$, respectively.
For the case $m>1$ we proceed in a similar way, by first investigating the intersection behavior, as $i\to\infty$, of subgroups of the homocyclic $p$-group $\bZ(p^i)^m$ defined by a fixed collection of p.p.~formulas:

\begin{lemma}\label{lem:asymp pp}
Let $\varphi_1(x),\dots,\varphi_n(x)$ be p.p.~$\mathcal L_{\bZ}$-formulas where $\abs{x}=m$. There are $i_1,\dots,i_m\in [n]$ such that $\varphi_1\wedge\cdots\wedge\varphi_n$ and $\varphi_{i_1}\wedge\cdots\wedge\varphi_{i_m}$ define the same subgroup of $\bZ(p^i)^m$, for all sufficiently large $i$.
\end{lemma}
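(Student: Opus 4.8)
The plan is to compute, for each $\varphi_j$, the subgroup it defines in $\bZ(p^i)^m$ in a form whose combinatorial data is independent of $i$ once $i$ is large, and then to run a ``breadth~$\le m$'' argument on these standard forms. First I would pass to the localization $\bZ_{(p)}$: every prime $q\neq p$ acts invertibly on $\bZ(p^i)$, so each $\varphi_j$ may be written as $\exists y\,(A_jx=B_jy)$ with $A_j,B_j$ matrices over the discrete valuation ring $\bZ_{(p)}$. Unwinding the definition gives $\varphi_j(\bZ(p^i)^m)=\bar A_j^{-1}\!\big((B_j\bZ_{(p)}^{\bullet}+p^i\bZ_{(p)}^{\bullet})/p^i\bZ_{(p)}^{\bullet}\big)$, where $\bar A_j$ is $A_j$ reduced mod $p^i$. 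Applying Smith normal form to $B_j$ over $\bZ_{(p)}$ and absorbing the invertible factor into $A_j$, one sees that for $i$ larger than a bound depending only on $\varphi_j$ the module in parentheses has a fixed ``diagonal shape''; treating the rows one at a time then exhibits $\varphi_j(\bZ(p^i)^m)$ as a finite intersection of \emph{basic} subgroups $H_{v,e}:=\{x\in(\bZ_{(p)}/p^i)^m : v\cdot x\in p^{e}(\bZ_{(p)}/p^i)\}$, where $v$ ranges over a fixed finite set of primitive linear forms over $\bZ_{(p)}$ (independent of $i$) and the exponent $e=e(i)$ is either a constant (a ``divisibility'' contribution $p^{e}\mid v\cdot x$) or of the form $i-c$ with $c$ constant (an ``annihilator'' contribution $p^{c}(v\cdot x)=0$) --- exactly the two types appearing in the one-variable case of Lemma~\ref{lem:asymp pp, chain}. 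Consequently $\varphi_1\wedge\cdots\wedge\varphi_n$ defines, for all large $i$, a subgroup $K_i:=\bigcap_{v\in\Sigma}H_{v,e_v(i)}$, where $\Sigma$ is a fixed finite set of primitive forms and, for each $v$, only the largest relevant exponent is kept (the $H_{v,e}$ with fixed $v$ forming a chain).

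The heart of the proof is to show $K_i$ already equals an intersection of at most $m$ of the $H_{v,e_v(i)}$. For fixed $i$ this follows from a matroid-type exchange argument: writing $Q_i:=(\bZ_{(p)}/p^i)^m/K_i$, the quotient map embeds $Q_i$ into $\bigoplus_{v\in\Sigma}(\bZ_{(p)}/p^i)/p^{e_v(i)}$; if $T\subseteq\Sigma$ is maximal with $Q_i$ meeting $\bigoplus_{v\in T}$ trivially, then for each $v\in\Sigma\setminus T$ there is a nonzero element of $Q_i$ supported on $T\cup\{v\}$ with nonzero $v$-coordinate, and the submodule of $Q_i$ these generate surjects onto a direct sum of $\abs{\Sigma\setminus T}$ nonzero cyclic modules; since a submodule of a quotient of $(\bZ_{(p)}/p^i)^m$ needs at most $m$ generators (lift to $\bZ_p^m$, where submodules are free of rank $\le m$), we get $\abs{\Sigma\setminus T}\le m$ and $K_i=\bigcap_{v\in\Sigma\setminus T}H_{v,e_v(i)}$. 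Because each basic subgroup is a conjunct of some $\varphi_j$, an intersection of $\le m$ basic subgroups equal to $\bigcap_j\varphi_j(\bZ(p^i)^m)$ forces equality already for the corresponding $\le m$ formulas $\varphi_j$ (pad with repetitions to reach exactly $m$), which is the assertion of the lemma.

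What remains, and what I expect to be the fiddly part, is \emph{uniformity in $i$}: the argument above a priori produces a subset $\Sigma\setminus T=S_i$ that could vary with $i$. To pin down a single choice I would prove a stabilization lemma in the spirit of Lemma~\ref{lem:asymp pp, chain}: for any $S\subseteq\Sigma$ and any $v'\in\Sigma$ the inclusion $\bigcap_{v\in S}H_{v,e_v(i)}\subseteq H_{v',e_{v'}(i)}$ has an eventually constant truth value as $i\to\infty$. This is where the two exponent types must be handled with care; the clean way is to dualize the annihilator-type conjuncts via $\operatorname{Hom}(-,\bZ(p^i))$ (a duality on finite $\bZ(p^i)$-modules exchanging $\cap$ and $+$), turning them into divisibility-type data which genuinely stabilizes, so that all the relevant containments reduce to comparisons of the finitely many constants $c_v,e_v$, just as in the chain lemma.

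Granting this stabilization, ``$S$ is a valid choice'' becomes an $i$-independent condition for all large $i$; since some valid $S$ exists for every large $i$ and there are only finitely many candidates, one fixed $S$ works for all large $i$, which finishes the proof. I expect the structural step (Smith normal form plus the reduction to basic subgroups) and the matroidal collapse to be routine once set up, and the eventual-constancy bookkeeping for the mixed ``constant'' and ``$i-c$'' exponents to be the only genuinely delicate point.
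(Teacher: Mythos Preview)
Your overall strategy is sound but takes a genuinely different route from the paper's. The paper does not work in $\bZ(p^i)^m$ for each $i$ and then argue stabilization; instead, after reducing (as you do) to conjunctions of atomic formulas of the two shapes $a\cdot x=0$ and $p^{d}\mid b\cdot x$, it makes the selection once and for all inside the DVR $R=\bZ_{(p)}$, via the following module-theoretic lemma: given $a_1,\dots,a_n,b_1,\dots,b_n\in R^m$, there exist $i_1,\dots,i_r,j_1,\dots,j_s\in[n]$ with $r+s\le m$ and an integer $e\ge 0$ such that (i)~$a_{i_1},\dots,a_{i_r}$ already generate $N:=\sum_k Ra_k$, and (ii)~$p^e b_k\in N+\sum_l Rp^e b_{j_l}$ for every $k$. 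The proof uses only Nakayama and the structure theory of finitely generated modules over a DVR: choose a minimal generating set for $N$, choose $e$ annihilating the torsion of $(N+\sum_k Rb_k)/N$, and lift a basis of the resulting torsion-free quotient. This single choice is then checked to work in $\bZ(p^i)^m$ for all $i\ge d+e$, so no uniformity-in-$i$ argument is needed at all. This is cleaner than your route precisely because it bypasses the step you flag as delicate.

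Your per-$i$ matroid/exchange argument and your reduction to basic subgroups $H_{v,e}$ are both fine. The genuine gap is the stabilization sketch. Pontryagin duality over $\bZ(p^i)$ does exchange $\cap$ and $+$, but it also exchanges the two exponent types: $H_{v,e}$ dualizes to the cyclic subgroup $\langle p^{\,i-e}v\rangle$, so a constant exponent $e=d$ becomes $i-d$ and an exponent $e=i-c$ becomes the constant $c$. You therefore do not ``turn annihilator-type into divisibility-type''; you merely swap them, and since $S$ may contain both types, the dual question has exactly the same mixed shape as the original. The eventual constancy you need is true, but it requires an honest argument---for instance, pass to the $\bZ_{(p)}$-submodule $P$ generated by the constant-exponent data and analyze the free/torsion decomposition of $\bZ_{(p)}^m/P$---not just an appeal to duality. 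Alternatively, simply replace your per-$i$ selection by the paper's DVR lemma, which produces a fixed $S$ directly and makes the stabilization issue disappear.
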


In the proof of this lemma, which we give after some preliminary observations, we use the following basic properties of finitely generated modules over discrete valuation rings~(DVRs):

\begin{lemma}
Let $R$ be a DVR with maximal ideal $\mathfrak m$ and residue field $k=R/\mathfrak m$,  let $M$ be a finitely generated $R$-module, and let $\overline{M}=M/\mathfrak m M$, viewed as a $k$-linear space, with natural surjection $x\mapsto\overline{x}\colon M\to\overline{M}$.
\begin{enumerate}
\item Elements $a_1,\dots,a_n$ of $M$ form a minimal generating set for the $R$-module $M$ iff $\overline{a_1},\dots,\overline{a_n}$ form a basis for the $k$-linear space $\overline{M}$. 
\item If $M$ is torsion-free, then $M$ is free, and any minimal generating set for $M$ is a basis for $M$.
\item If $M$ is a submodule of $R^m$, then $M$ can be generated by $m$ elements.
\end{enumerate}
In particular, if $M$ is a submodule of $R^m$ and $a_1,\dots,a_n\in M$ generate $M$, then there are $i_1,\dots,i_r\in [n]$, where $r=\dim_k \overline{M}\leq m$, such that $a_{i_1},\dots,a_{i_r}$ is a basis for~$M$.
\end{lemma}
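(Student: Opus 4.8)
The plan is to establish the three items as standard facts about finitely generated modules over a DVR, each reduced to Nakayama's lemma together with the observation that a DVR is a principal ideal domain, and then to read off the concluding assertion.

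For (1), I would first note that if $a_1,\dots,a_n$ generate $M$ then their images $\overline{a_1},\dots,\overline{a_n}$ span $\overline M$ over $k$, by applying the surjection $M\to\overline M$. For the converse, if $\overline{a_1},\dots,\overline{a_n}$ span $\overline M$, then $N:=\sum_i Ra_i$ satisfies $N+\mathfrak m M=M$, so $\mathfrak m\cdot(M/N)=M/N$; as $M/N$ is finitely generated, Nakayama gives $M/N=0$, i.e.\ $N=M$. With this equivalence in hand, a generating set $a_1,\dots,a_n$ fails to be minimal exactly when a proper subset still generates $M$, which by the equivalence happens exactly when the images of that subset still span $\overline M$, i.e.\ exactly when $\overline{a_1},\dots,\overline{a_n}$ are $k$-linearly dependent; hence $a_1,\dots,a_n$ is a minimal generating set iff $\overline{a_1},\dots,\overline{a_n}$ is a $k$-basis of $\overline M$.

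For (2), a finitely generated torsion-free module over the PID $R$ is free by the structure theorem, so $n=\dim_k\overline M$ for a minimal generating set $a_1,\dots,a_n$, and it remains to see such a set is a basis; by (1) this amounts to proving $R$-linear independence of any family whose images form a $k$-basis of $\overline M$. Suppose $\sum_i r_ia_i=0$ with not all $r_i=0$, pick $j$ with $v(r_j)$ minimal among the nonzero coefficients (here $v$ is the valuation), and write $r_i=\pi^{v(r_j)}s_i$ with $\pi$ a uniformizer, $s_i\in R$, and $s_j$ a unit; then $\pi^{v(r_j)}\sum_i s_ia_i=0$, torsion-freeness gives $\sum_i s_ia_i=0$, and reducing mod $\mathfrak m$ yields a nontrivial $k$-linear relation among the $\overline{a_i}$ (the coefficient $\overline{s_j}$ being nonzero), a contradiction.

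For (3) I would use that a submodule of the free module $R^m$ over the PID $R$ is itself free of rank at most $m$, hence generated by at most $m$ elements. For the concluding assertion, $M\subseteq R^m$ is torsion-free and, by (3), its rank $r=\dim_k\overline M$ is at most $m$; since $a_1,\dots,a_n$ generate $M$, the images $\overline{a_i}$ span $\overline M$, so I may extract $i_1,\dots,i_r\in[n]$ with $\overline{a_{i_1}},\dots,\overline{a_{i_r}}$ a $k$-basis of $\overline M$, whence by (1) the tuple $a_{i_1},\dots,a_{i_r}$ is a minimal generating set of $M$ and by (2) a basis of $M$. The whole argument is routine; the only point requiring a little care is the bookkeeping in (1) passing between ``minimal generating set'' and ``$k$-basis of $\overline M$'', and once Nakayama is invoked I expect no real obstacle.
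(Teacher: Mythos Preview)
Your proposal is correct and follows precisely the approach the paper indicates: the paper omits the proof entirely, remarking only that these are standard facts from commutative algebra and that part~(1) is a consequence of Nakayama's Lemma (valid more generally over any local ring). Your argument supplies exactly those standard details---Nakayama for~(1), the structure theorem for finitely generated modules over a PID for~(2) and~(3)---and the deduction of the final assertion from (1)--(3) is routine and matches the intended reading.
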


We omit the proof of these facts, which can be found in any standard text on commutative algebra. (Part~(1) is a consequence of Nakayama's Lemma and holds more generally if $R$ is a local ring.)
We use this lemma to show:

\begin{lemma}\label{lem:DVR}
Let $R$ be a DVR and $\pi$ a generator of the maximal ideal of $R$, and let $a_1,\dots,a_n,b_1,\dots,b_n\in R^m$. There are $i_1,\dots,i_r,j_1,\dots,j_s\in [n]$ with $r+s\leq m$ and some integer $e\geq 0$ such that 
\begin{enumerate}
\item $a_{i_1},\dots,a_{i_r}$ generate the $R$-submodule $N=Ra_1+\cdots+Ra_n$ of $R^m$;
\item for each $i\in [n]$ we  have $\pi^e b_i\in N+R\pi^eb_{j_1}+\cdots+R\pi^eb_{j_s}$.
\end{enumerate}
\end{lemma}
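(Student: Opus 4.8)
The plan is to reduce everything to the structure theory of finitely generated modules over the DVR $R$ and then apply the preceding lemma twice. First, since $R$ is a PID, the submodule $N = Ra_1 + \cdots + Ra_n$ of $R^m$ is free, of some rank $r \leq m$; applying the final assertion of the preceding lemma with $M = N$ (which is generated by $a_1,\dots,a_n$) yields indices $i_1,\dots,i_r \in [n]$ such that $a_{i_1},\dots,a_{i_r}$ is a basis of $N$, and in particular generates $N$, so that (1) holds.

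For (2), I would pass to the quotient $Q := R^m/N$, writing $x \mapsto \overline{x}$ for the canonical surjection $R^m \to Q$. By the structure theorem, $Q \cong R^t \oplus T$ with $T$ of finite length; tensoring the exact sequence $0 \to N \to R^m \to Q \to 0$ with the fraction field of $R$ identifies $t = m - r$. The key move is then to clear torsion: choosing an integer $e \geq 0$ with $\pi^e T = 0$ (possible since $T$ has finite length), the module $\pi^e Q = \pi^e R^t$ is free of rank $m - r$. The elements $\pi^e\overline{b_1},\dots,\pi^e\overline{b_n}$ generate a submodule $M''$ of this free module, and applying the final assertion of the preceding lemma once more — after identifying $\pi^e Q$ with $R^{m-r}$ — produces $j_1,\dots,j_s \in [n]$ with $s \leq m - r$ such that $\pi^e\overline{b_{j_1}},\dots,\pi^e\overline{b_{j_s}}$ is a basis of $M''$. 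Then for each $i \in [n]$ we have $\pi^e\overline{b_i} \in M'' = R\pi^e\overline{b_{j_1}} + \cdots + R\pi^e\overline{b_{j_s}}$; lifting this membership back along $R^m \to Q$ gives exactly $\pi^e b_i \in N + R\pi^e b_{j_1} + \cdots + R\pi^e b_{j_s}$, which is (2). Finally $r + s \leq r + (m-r) = m$, as required.

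The only genuine difficulty is that $Q = R^m/N$ need not be free — it can carry a nonzero (though finite-length) torsion part — so one cannot extract a sub-basis from the $\overline{b_i}$ directly. Multiplying through by a sufficiently high power $\pi^e$ annihilates this torsion and makes $\pi^e Q$ free, which is what permits the second application of the preceding lemma; the remaining steps are routine bookkeeping with ranks.
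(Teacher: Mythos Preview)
Your proof is correct and follows essentially the same approach as the paper: extract a sub-basis for $N$, kill the torsion in the quotient by a suitable power of $\pi$, and then extract a sub-basis from the $\pi^e b_j$. The only cosmetic difference is that the paper stays inside $R^m$, working with the chain $M_k = N + \sum_i R\pi^k b_i$ and checking directly that $M_e/N$ (which is exactly your $M''$) is torsion-free, whereas you pass to $Q = R^m/N$ and invoke the structure theorem to see that $\pi^e Q$ is free of rank $m-r$.
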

\begin{proof}
By the lemma above, first choose $i_1,\dots,i_r\in [n]$ such that $a_{i_1},\dots,a_{i_r}$ is a basis for $N$. 
For each $k$ let $M_k$ be the $R$-submodule $N+R\pi^kb_1+\cdots+R\pi^kb_n$ of~$R^m$, so 
$$N\subseteq\cdots\subseteq M_{k+1}\subseteq M_k\subseteq\cdots\subseteq M_0=:M.$$ 
Now the torsion submodule of $M/N$ is finitely generated, hence we may take some $e\in\bN$ such that if $a\in M$ satisfies $\pi^ka\in N$ for some $k\geq 0$, then $\pi^ea\in N$. Then $M_e/N$ is torsion-free: if $a\in M_e$ is torsion in $M_e/N$, then $\pi^e a\in N$, so
writing $a=b+\pi^e c$ ($b\in N$, $c\in M$) we have $\pi^{2e}c=\pi^ea-\pi^eb\in N$ and hence $\pi^ec\in N$, i.e., $a\in N$.
Now apply the lemma again and choose $j_1,\dots,j_s\in [n]$ such that $\pi^e b_{j_1},\dots,\pi^e b_{j_s}$ map onto a basis for $M_e/N$ under the natural surjection $M_e\to M_e/N$. Then $M_e=\bigoplus_{k} Ra_{i_k} \oplus \bigoplus_l R\pi^e b_{j_l}$ (internal direct sum of $R$-submodules of $M_e$), so $r+s\leq m$ (again by the lemma above), and the $i_k$, $j_l$ and~$e$ have the desired properties.
\end{proof}

We now prove Lemma~\ref{lem:asymp pp}:

\begin{proof}[Proof of Lemma~\ref{lem:asymp pp}]
Let $a_1,\dots,a_n,b_1,\dots,b_n\in\bZ^m$ and $d_1,\dots,d_n\in\bN$, and consider the p.p.~formulas
$$\chi_j(x) = (a_jx = 0),\qquad \psi_j(x) = (p^{d_j}|b_j x) \qquad (j\in [n]).$$
Here, for $a=(a_1,\dots,a_m)\in\bZ^m$ and $x=(x_1,\dots,x_m)$, we denote by $ax$ the $\mathcal L_{\bZ}$-term $a_1x_1+\cdots+a_mx_m$. It suffices to show (cf.~\cite[Lemma~A.2.1]{Hodges} or \cite[Theorem~2.$\bZ$1]{Prest}): there are $i_1,\dots,i_r,j_1,\dots,j_s\in [n]$ with $r+s\leq m$ such that for sufficiently large $i$, the formulas $\chi_{1}\wedge\cdots\wedge\chi_{n}\wedge\psi_{1}\wedge\cdots\wedge\psi_{n}$ and
$\chi_{i_1}\wedge\cdots\wedge\chi_{i_r}\wedge\psi_{j_1}\wedge\cdots\wedge\psi_{j_s}$ define the same subgroup of 
$\bZ(p^i)^m$. Now
$\psi_j(x)$ and $p^{\max\{0,i-d_j\}}b_jx=0$ define the same subgroup of
$\bZ(p^i)^m$, and if $i$ and $d$ are such that $i\geq d\geq d_j$ for each $j$, then $p^{\max\{0,i-d_j\}}b_j=p^{i-d}(p^{d-d_j}b_j)$, hence after replacing $b_j$ by $p^{d-d_j}b_j$ we may assume that $d=d_j$ for each $j$.
Set $R=\bZ_{(p)}$ (the localization of the ring of integers at its prime ideal $(p)=p\bZ$) and $\pi=p$, and choose $i_1,\dots,i_r,j_1,\dots,j_s\in [n]$ and $e\in\bN$ as in Lemma~\ref{lem:DVR}.
Then by condition (1) in this lemma,
$\chi:=\chi_{1}\wedge\cdots\wedge\chi_{n}$ and $\chi_{i_1}\wedge\cdots\wedge\chi_{i_r}$ define the same subgroup of 
$\bZ(p^i)^m$; similarly, by condition (2), if $i\geq d+e$, then $\chi\wedge \psi_{1}\wedge\cdots\wedge\psi_{n}$ and $\chi\wedge\psi_{j_1}\wedge\cdots\wedge\psi_{j_s}$  define the same subgroup of $\bZ(p^i)^m$.
\end{proof}

Lemma~\ref{lem:asymp pp} immediately implies Proposition~\ref{prop:homocyclic}. \qed

\medskip
\noindent
We note that if the question posed after the proof of Proposition~\ref{prop:commensurable} had a positive answer, then Proposition~\ref{prop:homocyclic} would simply follow from Lemma~\ref{lem:asymp pp, chain}.

\subsection{Abelian groups with uniform bounds on VC~density.}\label{sec:abelian with uniform VC}
Throughout this subsection we assume that $A$ is infinite.
A list of abelian groups $A$ with $\vc^T(m)=m$ for all $m$ (so that  in particular, $T=\Th(A)$ is dp-minimal) includes: 
$\bZ^n$ and $\bZ^n\oplus\bQ$, for each $n$ (in each case, an expansion has the $\VC{}1$ property, by 
\cite[Corollary~6.5]{ADHMS}); and any p.p.~uniserial abelian group such as 
$$\bZ(p^d)^{(\alpha)}\oplus\bZ_{(p)}^{(\beta)}\oplus\bQ^{(\gamma)}$$ 
where $d$ is a positive integer or $\infty$, and $\alpha$, $\beta$, $\gamma$ are cardinals (by Cor\-ollary~\ref{cor:pp uniserial}). (Here
$\bZ_{(p)}$ is the additive group of the localization of the ring of integers at its prime ideal $(p)=p\bZ$.)
On the other hand, the following lemma shows that for many~$A$ we have $\vc^{T}(1)=\infty$.

\begin{lemma}\label{lem:lower bound, abelian gps}
Suppose $p_1,\dots,p_d$ are pairwise distinct primes. If 
\begin{enumerate}
\item $A[p_i]$ is infinite for each $i$, or
\item $A/p_iA$ is infinite for each $i$, 
\end{enumerate}
then $\vc^T(m)\geq dm$ for each $m$ \textup{(}and so if $d>1$, then $T$ is not dp-minimal\textup{)}.
\end{lemma}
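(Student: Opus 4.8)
The plan is to deduce this directly from Corollary~\ref{cor:Gdim and vc for modules}, by choosing in each case a family of $d$ p.p.~definable subgroups of $A$ witnessing one of the two hypotheses of that corollary (hypothesis~(1) corresponding to the p.p.~Goldie dimension of $\widetilde{\PP}(A)$, hypothesis~(2) to its dual). Since $\vc^T$ depends only on $T$, we may argue with $A$ itself, which is infinite by the standing assumption of this subsection.

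In case~(1), I would put $H_i:=A[p_i]$ for $i\in[d]$; this is p.p.~definable, being defined by $p_ix=0$, and is infinite by hypothesis. To verify the remaining requirement that $H_i\cap\sum_{j\neq i}H_j$ be finite, set $n_i:=\prod_{j\neq i}p_j$; since each $p_j$ with $j\neq i$ divides $n_i$ we have $\sum_{j\neq i}A[p_j]\subseteq A[n_i]$, and as $\gcd(p_i,n_i)=1$ this forces $A[p_i]\cap\sum_{j\neq i}A[p_j]\subseteq A[p_i]\cap A[n_i]=A[1]=\{0\}$. So Corollary~\ref{cor:Gdim and vc for modules}(1) applies and gives $\vc^T(m)\geq md$ for all $m$.

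In case~(2), I would dually put $H_i:=p_iA$, which is p.p.~definable (defined by $p_i\mid x$) and satisfies $[A:H_i]=\abs{A/p_iA}=\infty$ by hypothesis. For the remaining requirement that $H_i+\bigcap_{j\neq i}H_j$ have finite index in $A$, note that with $n_i$ as above we have $n_iA\subseteq p_jA$ for each $j\neq i$, so $\bigcap_{j\neq i}p_jA\supseteq n_iA$; since $\gcd(p_i,n_i)=1$, Bézout's identity furnishes $s,t\in\bZ$ with $sp_i+tn_i=1$, whence every $a\in A$ equals $sp_ia+tn_ia\in p_iA+n_iA$, so that $H_i+\bigcap_{j\neq i}H_j=A$. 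Thus Corollary~\ref{cor:Gdim and vc for modules}(2) applies and again yields $\vc^T(m)\geq md$.

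For the parenthetical remark: when $d>1$ we get $\vc^T(1)\geq d\geq 2$, so $A$ is not vc-minimal, and since $\Th(A)$ is $1$-based (modules being $1$-based), the equivalence of clauses~(1) and~(2) of Corollary~\ref{cor:commensurable, 1} shows $T$ is not dp-minimal. I do not expect a genuine obstacle here: the substance of the argument is only the bookkeeping needed to match the chosen subgroups to the hypotheses of Corollary~\ref{cor:Gdim and vc for modules}, and this reduces to the two elementary inclusions above together with Bézout's identity — note in particular that we never need the reverse equalities $\sum_{j\neq i}A[p_j]=A[n_i]$ or $\bigcap_{j\neq i}p_jA=n_iA$, so no primary-decomposition argument is required.
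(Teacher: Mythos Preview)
Your proof is correct and takes exactly the same approach as the paper: apply Corollary~\ref{cor:Gdim and vc for modules}(1) with $H_i=A[p_i]$ in the first case and Corollary~\ref{cor:Gdim and vc for modules}(2) with $H_i=p_iA$ in the second. The paper's proof is a two-line sketch that omits the verifications you supply (that $A[p_i]\cap\sum_{j\neq i}A[p_j]=\{0\}$ and $p_iA+\bigcap_{j\neq i}p_jA=A$), so your version is simply a more detailed rendering of the same argument.
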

\begin{proof}
In the first case, this follows from Corollary~\ref{cor:Gdim and vc for modules},~(1) applied to $H_i=A[p_i]$, and in the second case, from Corollary~\ref{cor:Gdim and vc for modules},~(2) applied to $H_i=p_iA$.
\end{proof}

In this subsection we extend this observation to a characterization of all $A$ satisfying $\vc^T(m)<\infty$ for all $m$. We say that a prime $p$ is \emph{non-singular} for $A$ if both $A[p]$ and $A/pA$ are finite, and \emph{singular} for $A$ otherwise. We also say that an abelian group is \emph{non-singular} if it is either finite, or  infinite and each prime is non-singular for it, and \emph{singular} otherwise. For example, the additive group $\bZ$ of integers is non-singular.
If an infinite abelian group is non-singular, then it has $\URk$-rank~$1$  (cf.~Example~\ref{ex:Urank 1}); but of course, the converse of this implication does not hold, as witnessed, e.g., by infinite elementary abelian $p$-groups.


\begin{theorem}\label{thm:abelian groups}
The following are equivalent:
\begin{enumerate}
\item $\vc^T(1)<\infty$;
\item $\vc^T(m)<\infty$ for every $m$;
\item there is some $d$ such that $\vc^T(m)\leq dm$ for every $m$;
\item there are only finitely many  $p$ which are singular for $A$, and for all $p$ the set
$U_{\geq\aleph_0}(p;A)$ is finite.
\end{enumerate}
\end{theorem}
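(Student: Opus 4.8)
The implications $(3)\Rightarrow(2)\Rightarrow(1)$ are immediate, so the plan is to prove $(1)\Rightarrow(4)$ and $(4)\Rightarrow(3)$. For $(1)\Rightarrow(4)$ I would argue by contraposition. If there are infinitely many singular primes, then by the pigeonhole principle either $A[p]$ is infinite for infinitely many $p$ or $A/pA$ is infinite for infinitely many $p$; in either case, for every $d$ one chooses $d$ such primes, and Lemma~\ref{lem:lower bound, abelian gps} gives $\vc^T(1)\geq d$, so $\vc^T(1)=\infty$. If instead $U_{\geq\aleph_0}(p;A)$ is infinite for some prime $p$, fix $d$ and choose $i_1<\dots<i_d$ in $U_{\geq\aleph_0}(p;A)$ with $i_{k+1}-i_k\geq 2$. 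By the classification of abelian groups up to elementary equivalence, $A$ is elementarily equivalent to a group $A'$ having $E:=\bigoplus_{k=1}^{d}\bZ(p^{i_k+1})^{(\aleph_0)}$ as a direct summand, hence as an infinite pure submodule; by the remark preceding Lemma~\ref{lem:VC density under adding a finite module}, $\vc^T=\vc^{\Th(A')}\geq\vc^{\Th(E)}$. Now $E$ is $\aleph_0$-categorical with $U_{\geq\aleph_0}(p;E)=\{i_1,\dots,i_d\}$, and the gap condition forces $d(U_{\geq\aleph_0}(p;E))=d$, so $\vc^{\Th(E)}(1)=d$ by Corollary~\ref{cor:finite exp, cor 2}. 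Letting $d\to\infty$ yields $\vc^T(1)=\infty$, i.e.\ (1) fails.

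For $(4)\Rightarrow(3)$ — the substantial direction — I would first replace $A$ by an elementarily equivalent group $A'$ in Szmielew normal form, that is, a direct sum of cyclic $p$-groups, Prüfer groups $\bZ(p^{\infty})$, localizations $\bZ_{(p)}$, and copies of $\bQ$, with all multiplicities in $\{0\}\cup\bN\cup\{\aleph_0\}$; this does not change $\vc^T$. Since a non-singular prime contributes only finite Ulm invariants, condition~(4) ensures that there are only finitely many pairs $(p,i)$ with $U(p,i;A)\geq\aleph_0$. Write $A'=B\oplus C$, where $B$ collects the finitely many cyclic summands $\bZ(p^{i+1})^{(\aleph_0)}$ with $(p,i)$ among those pairs: then $B$ has finite exponent, so $\breadth(B)=:d_B<\infty$ by Theorem~\ref{thm:breadth of finite exponent group}, and $\vc^{\Th(B)}(m)\leq d_Bm$ by Proposition~\ref{prop:breadth and vc for modules}. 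Using the subadditivity $\vc^{T^{+}}\leq\vc^{T}+\vc^{T'}$ for direct sums (\cite[Lemma~3.21]{ADHMS}), it then suffices to bound $\vc^{\Th(C)}(m)$ linearly in $m$.

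The group $C$ is a direct sum of a torsion part $\bigoplus_{p}\bigl(\bigoplus_{i}\bZ(p^{i})^{(\alpha_{p,i})}\oplus\bZ(p^{\infty})^{(\beta_p)}\bigr)$ and a torsion-free part $\bigoplus_{p}\bZ_{(p)}^{(\gamma_p)}\oplus\bQ^{(\delta)}$, where all $\alpha_{p,i}$ are finite and $\beta_p,\gamma_p<\aleph_0$ for all but the finitely many singular $p$. Again by \cite[Lemma~3.21]{ADHMS} one peels off, for each singular prime, the finitely many infinite blocks it contributes — the p.p.-uniserial summands $\bZ(p^{\infty})^{(\aleph_0)}$, $\bZ_{(p)}^{(\aleph_0)}$ (handled by Corollary~\ref{cor:pp uniserial}) and the homocyclic-type summand $\bigoplus_i\bZ(p^i)^{(\alpha_{p,i})}$ with $\alpha_{p,i}$ finite (handled by Proposition~\ref{prop:homocyclic}), each contributing at most $m$ to $\vc(m)$ — together with the p.p.-uniserial block $\bQ^{(\delta)}$. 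What remains is a direct sum $C''$ over all primes $q$ of "tame" $q$-parts $C_q=F_q\oplus\bZ(q^{\infty})^{(\beta_q)}\oplus\bZ_{(q)}^{(\gamma_q)}$ with $F_q$ a finite $q$-group and $\beta_q,\gamma_q$ finite. For $C''$ I would bound $\breadth(\widetilde{\PP}_m(C''))$ directly: since $\vc^{\Th(C'')}(m)=\breadth(\widetilde{\PP}_m(C''))$ by Corollary~\ref{cor:commensurable, 2}, it suffices to produce a semilattice embedding of $\widetilde{\PP}_m(C'')$ into a bounded power of the subspace lattice $L(\bQ^m)$, whose breadth equals $m$. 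The embedding should send the commensurability class of a p.p.-definable subgroup $H$ of $(C'')^m$ to its "generic fibre": by Lemma~\ref{lem:pp definable for direct sum}, $H=\bigoplus_q(H\cap C_q^m)$, and for all but finitely many $q$ the component $H\cap C_q^m$ should be the rational-linear-algebra datum determined by $H$ independently of $q$ (a fixed rational subspace, suitably tensored up), the finitely many exceptional primes only perturbing $H$ within its commensurability class.

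The main obstacle is exactly this last point — a decoupling statement generalizing Lemma~\ref{lem:asymp pp} and its engine Lemma~\ref{lem:DVR} from one prime to all primes, uniformly in the prime: one must show that for a fixed finite set of p.p.\ $\mathcal L_{\bZ}$-formulas in $m$ object variables, at every prime $q$ not dividing the (finitely many) coefficients involved the associated subgroups of $C_q^m$ are "generic", so that intersecting across the infinitely many tame primes does not raise the breadth beyond that of $L(\bQ^m)$, while the finitely many remaining primes (necessarily among the singular ones) contribute only a bounded amount. Assembling the Szmielew pieces of $C$ into finitely many blocks amenable to \cite[Lemma~3.21]{ADHMS}, and checking that non-singular primes force the $q$-parts into the stated tame shape, is routine bookkeeping.
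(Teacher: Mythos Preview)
Your argument for $(1)\Rightarrow(4)$ is correct and matches the paper's, with a cosmetic difference: you impose a gap condition $i_{k+1}-i_k\geq 2$ to force $d(U_{\geq\aleph_0}(p;E))=d$ exactly, whereas the paper just uses the lower bound $d(I)\geq\lceil |I|/2\rceil$ from~\eqref{eq:bound on d(I)}.

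For $(4)\Rightarrow(3)$ your decomposition is close to the paper's, and the handling of $B$, the p.p.-uniserial pieces, and the homocyclic pieces is correct. However, what you flag as ``the main obstacle''---the residual group $C''=\bigoplus_q C_q$ with $C_q=F_q\oplus\bZ(q^\infty)^{(\beta_q)}\oplus\bZ_{(q)}^{(\gamma_q)}$, each $F_q$ a finite $q$-group and $\beta_q,\gamma_q$ finite---is in fact no obstacle at all. This group is \emph{non-singular}: for any prime $p$, only the summand $C_p$ contributes to $C''[p]$ and to $C''/pC''$ (the other $C_q$ have trivial $p$-torsion and are $p$-divisible), and both are finite since $F_p$, $\beta_p$, $\gamma_p$ are. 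Hence $C''$ has $\URk$-rank~$1$ (cf.\ Example~\ref{ex:Urank 1} and the paragraph preceding Theorem~\ref{thm:abelian groups}), and Corollary~\ref{cor:finite U-rank for modules} immediately gives $\vc^{\Th(C'')}(m)\leq m$. The ``generic fibre'' embedding into a power of $L(\bQ^m)$ that you sketch---and leave incomplete---is not needed.

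The paper organizes the decomposition so that this observation is visible from the start: it first groups the contributions of \emph{all} non-singular primes, together with $\bQ^{(\delta)}$, into a single summand $A^{[\mathcal P]}$, notes that $A^{[\mathcal P]}$ is non-singular and hence of $\URk$-rank~$1$, and then handles each of the finitely many singular primes individually via Corollaries~\ref{cor:finite exp, cor 2} and~\ref{cor:pp uniserial} and Proposition~\ref{prop:homocyclic}, just as you do.
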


For the proof of this theorem,  recall that 
a \emph{Szmielew group} is a countable abelian group of the form 
$$\bigoplus_{\text{$p$ prime}} \left(
\bigoplus_{n>0} \bZ(p^n)^{(\alpha_{p,n-1})}\oplus  \bZ(p^\infty)^{(\beta_p)} \oplus \bZ_{(p)}^{(\gamma_p)}
\right)\oplus\bQ^{(\delta)}$$
where the $\alpha_{p,n-1}$, $\beta_p$, $\gamma_p$ and $\delta$ are cardinals (finite or $\aleph_0$).
Such a Szmielew group is \emph{strict} if
\begin{enumerate}
\item $\delta$ is either $0$ or $\aleph_0$;
\item if $\beta_p\neq 0$ or $\gamma_p\neq 0$ for some $p$ or $\alpha_{p,n-1}\neq 0$ for infinitely many pairs $(p,n)$, then $\delta=0$; and
\item for each  $p$, if there is no finite upper bound on the order of the elements which have $p$-power order and are not divisible by $p$, then $\beta_p=\gamma_p=0$.
\end{enumerate}
Any abelian group is elementarily equivalent to a unique strict Szmie\-lew group (cf.~\cite[A.2]{Hodges}). 
Also, suppose $B$ is an abelian group, elementarily equivalent to $A$, and $p$ is a prime. Then for each $N\in\bN$ we have
$\abs{A[p]}=N\Longleftrightarrow \abs{B[p]}=N$ and $\abs{A/pA}=N\Longleftrightarrow\abs{B/pB}=N$, so $p$ is non-singular for $A$ iff $p$ is non-singular for~$B$. Similarly one sees that $U_{\geq\aleph_0}(p;A)=U_{\geq\aleph_0}(p;B)$. Hence $A$ satisfies condition~(4) in Theorem~\ref{thm:abelian groups} iff $B$ does.
So for the proof of the theorem we may assume that~$A$ is a strict Szmielew group as above. 
We then have 
$$U(p,n;A) = \abs{(p^nA)[p]/(p^{n+1}A)[p]} =  p^{\alpha_{p,n}}\qquad\text{for all $n$,}$$ 
and
$$\dim_{\mathbb F_p} A[p] = \sum_{i\geq 0} \alpha_{p,i} + \beta_p, \quad
\dim_{\mathbb F_p} A/pA = \sum_{i\geq 0} \alpha_{p,i} + \gamma_p.$$
We see from this that $p$ is non-singular for $A$ iff $\alpha_{p,n}<\aleph_0$ for every $n$, with $\alpha_{p,n}=0$ for all but finitely many~$n$, and $\beta_p,\gamma_p<\aleph_0$. Hence if $U_{\geq\aleph_0}(p;A)$ is non-empty, then $p$ is singular for $A$. 

\begin{proof}[Proof of Theorem~\ref{thm:abelian groups}.]
Since the implications (3)~$\Rightarrow$~(2)~$\Rightarrow$~(1) are trivial, we only need to prove (1)~$\Rightarrow$~(4)~$\Rightarrow$~(3).
Suppose that $\vc^T(1)<\infty$. By Lemma~\ref{lem:lower bound, abelian gps} there are only finitely many singular primes for $A$. 
Suppose $p$ is such that $U(p,i;A)\geq\aleph_0$ for infinitely many $i$.  
Let $(i_k)_{k>0}$ be a strictly increasing sequence of non-negative integers such that $U(p,i_k;A)\geq\aleph_0$ for each $k$; then
$$A_n:=\bigoplus_{k\in [n]} \bZ(p^{i_k})^{(\alpha_{p,i_k-1})}$$
is a pure subgroup of $A$, so $\vc^T\geq\vc^{T_n}$ for $T_n=\Th(A_n)$; 
see the discussion at the beginning of Section~\ref{sec:direct sum}. 
Since $A_n$ has finite exponent $p^{i_n}$ and $U_{\geq\aleph_0}(p;A_n)=\{i_1,\dots,i_n\}$,
by 
Corollary~\ref{cor:finite exp, cor 2} and \eqref{eq:bound on d(I)} we have $\vc^{T_n}(1)\geq \lceil n/2\rceil $. Since this holds for each $n$, we obtain  $\vc^T(1)=\infty$, a contradiction. This shows (1)~$\Rightarrow$~(4).

For the proof of the remaining implication (4)~$\Rightarrow$~(3), suppose condition (4) holds. Then the set 
$$\mathcal P = \big\{ p: \text{$\dim_{\mathbb F_p} A[p]<\aleph_0$ and $\dim_{\mathbb F_p} A/pA<\aleph_0$} \big\}$$
of non-singular primes for $A$ contains all but finitely many primes. The group
$$A^{[\mathcal P]} := \bigoplus_{p\in\mathcal P} \left(
\bigoplus_{n>0} \bZ(p^n)^{(\alpha_{p,n-1})}\oplus  \bZ(p^\infty)^{(\beta_p)} \oplus \bZ_{(p)}^{(\gamma_p)}
\right)\oplus\bQ^{(\delta)}$$
is non-singular. Thus if $A^{[\mathcal P]}$ is infinite, then, with $T^{[\mathcal P]}=\Th(A^{[\mathcal P]})$, we have
\begin{equation}\label{eq:vc-1}
\vc^{T^{[\mathcal P]}}(m)=m\qquad\text{for every $m$,}
\end{equation}
by Corollary~\ref{cor:finite U-rank for modules}.
Now let $p\notin\mathcal P$ and set
$$A^{[p]}:=\bigoplus_{n>0} \bZ(p^n)^{(\alpha_{p,n-1})}\oplus  \bZ(p^\infty)^{(\beta_p)} \oplus \bZ_{(p)}^{(\gamma_p)}$$
and $T^{[p]}=\Th(A^{[p]})$, and suppose $A^{[p]}$ is infinite. If $\alpha_{p,n}=0$ for all but finitely many~$n$, then $A^{[p]}$ is the direct sum of a finite exponent
group and the p.p.~uniserial abelian group $\bZ(p^\infty)^{(\beta_p)} \oplus \bZ_{(p)}^{(\gamma_p)}$, and hence satisfies 
\begin{equation}\label{eq:vc-2}
\vc^{T^{[p]}}(m)\leq m\cdot\left(d(U_{\geq\aleph_0}(p;A))+1\right)\qquad\text{for each $m$,}
\end{equation}
by Corollaries~\ref{cor:finite exp, cor 2} and \ref{cor:pp uniserial}, respectively. Suppose
$\alpha_{p,n}>0$ for infinitely many~$n$; then $\beta_p=\gamma_p=0$ since $A$ is strict Szmielew, and by assumption (4) the set $U_{\geq\aleph_0}(p;A)$ is finite.
Decomposing
$A^{[p]}= A^{[p]}_{\aleph_0}\oplus A^{[p]}_{<\aleph_0} $
with
$$A^{[p]}_{\aleph_0} = \bigoplus_{\alpha_{p,n-1}=\aleph_0} \bZ(p^n)^{(\alpha_{p,n-1})}$$
and 
$$A^{[p]}_{<\aleph_0} = \bigoplus_{\alpha_{p,n-1}<\aleph_0} \bZ(p^n)^{(\alpha_{p,n-1})},$$
we see that Corollary~\ref{cor:finite exp, cor 2} applies to $A^{[p]}_{\aleph_0}$ and
Proposition~\ref{prop:homocyclic}  to $A^{[p]}_{<\aleph_0}$; thus
\begin{equation}\label{eq:vc-3}
\vc^{T^{[p]}}(m)\leq m\cdot\left(d(U_{\geq\aleph_0}(p;A))+1\right)\qquad\text{for each $m$.}
\end{equation}
Hence from \eqref{eq:vc-1}--\eqref{eq:vc-3} and
$$A = A^{[\mathcal P]} \oplus \bigoplus_{p\notin\mathcal P} A^{[p]}$$
we see that for all $m$ we have
$$\vc^T(m) \leq \big((\abs{\mathcal P^{\operatorname{c}}}+1)+d\big)\cdot m\qquad\text{where $d=\sum_{p} d(U_{\geq\aleph_0}(p;A))$.}$$ 
Here and below $\mathcal P^{\operatorname{c}}$ denotes the complement of $\mathcal P$ in the set of prime numbers; i.e., $\mathcal P^{\operatorname{c}}$ is the (finite) set of primes singular for $A$.
Thus (3) holds (with $(\abs{\mathcal P^{\operatorname{c}}}+1)+d$ in place of $d$).
This finishes the proof of (4)~$\Rightarrow$~(3), and hence of Theorem~\ref{thm:abelian groups}. 
\end{proof}

Implicit in the proof of this theorem are upper and lower bounds on~$\vc^T$:

\begin{corollary}\label{cor:abelian groups}
Suppose $\vc^T(1)<\infty$. Then the set $\mathcal P^{\operatorname{c}}$ of primes singular for~$A$ is finite,
$U_{\geq\aleph_0}(p;A)$ is finite for all $p$, and
$U_{\geq\aleph_0}(p;A)=\emptyset$ for all but finitely many~$p$. For every $m$ we have
$$\max\{d,\abs{\mathcal P^{\operatorname{c}}}\}\cdot m\leq\vc^T(m) \leq \big(d+(\abs{\mathcal P^{\operatorname{c}}}+1)\big)\cdot m
 \quad\text{where $d=\sum_p d(U_{\geq\aleph_0}(p;A))$.}$$
\end{corollary}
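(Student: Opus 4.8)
\emph{Plan.} The idea is to read both inequalities off the analysis already carried out in the proof of Theorem~\ref{thm:abelian groups}, supplying only the lower bound by hand. Since the VC~density function and all the invariants occurring in the statement depend only on $T$, we may replace $A$ by the strict Szmielew group elementarily equivalent to it, and write $\alpha_{p,n},\beta_p,\gamma_p$ for its Szmielew parameters. For the finiteness assertions, assume $\vc^T(1)<\infty$: Theorem~\ref{thm:abelian groups} gives condition~$(4)$, so $\mathcal P^{\operatorname{c}}$ is finite and each $U_{\geq\aleph_0}(p;A)$ is finite; and since (as recorded just before the proof of Theorem~\ref{thm:abelian groups}) a prime with $U_{\geq\aleph_0}(p;A)\neq\emptyset$ is singular, $U_{\geq\aleph_0}(p;A)=\emptyset$ for all but finitely many~$p$ and $d=\sum_p d(U_{\geq\aleph_0}(p;A))$ is a finite sum of finite terms, hence finite.

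\emph{Upper bound.} This is immediate: the computation in the proof of the implication $(4)\Rightarrow(3)$ of Theorem~\ref{thm:abelian groups}, applying \eqref{eq:vc-1}--\eqref{eq:vc-3} and \cite[Lemma~3.21]{ADHMS} to the decomposition $A=A^{[\mathcal P]}\oplus\bigoplus_{p\in\mathcal P^{\operatorname{c}}}A^{[p]}$, establishes exactly $\vc^T(m)\leq\bigl(d+(\abs{\mathcal P^{\operatorname{c}}}+1)\bigr)m$ for every~$m$.

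\emph{Lower bound.} By Corollary~\ref{cor:lower bounds on vc} it suffices to prove $\vc^T(1)=\breadth\bigl(\widetilde\PP(A)\bigr)\geq\max\{d,\abs{\mathcal P^{\operatorname{c}}}\}$ and then multiply by~$m$. For the bound $\geq d$: whenever $i\in U_{\geq\aleph_0}(p;A)$ we have $\alpha_{p,i}\geq\aleph_0$, so $A$ has the direct summand $B:=\bigoplus_{p}\bigoplus_{i\in U_{\geq\aleph_0}(p;A)}\bZ(p^{\,i+1})^{(\aleph_0)}$, a finite direct sum which is $\aleph_0$-categorical (having only finitely many p.p.~definable subgroups), of finite exponent, and with $U_{\geq\aleph_0}(p;B)=U_{\geq\aleph_0}(p;A)$ for every~$p$; as $B$ is an infinite pure submodule of $A$, the opening remark of Section~\ref{sec:direct sum} and Corollary~\ref{cor:finite exp, cor 2} give $\vc^T(m)\geq\vc^{\Th(B)}(m)=dm$ (trivially if $d=0$). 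For the bound $\geq\abs{\mathcal P^{\operatorname{c}}}$ the plan is to exhibit, for each singular prime~$p$, a commensurability-rank-one ``coordinate'' of $\widetilde\PP(A)$ localized at the $p$-primary component — the infinite subgroup $A[p]$ when $A[p]$ is infinite, and the finite-index subgroup $pA$ when $A/pA$ is infinite (these being respectively join-independent and dually join-independent over distinct primes, cf.~Lemma~\ref{lem:lower bound, abelian gps} and Corollary~\ref{cor:Gdim and vc for modules}) — and to assemble them, via Lemma~\ref{lem:breadth and vc for modules}, into a single family of definable subgroups of $A$ witnessing $\breadth\bigl(\widetilde\PP(A)\bigr)\geq\abs{\mathcal P^{\operatorname{c}}}$.

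\emph{Main obstacle.} This last assembly is the delicate point, and the only ingredient not already contained verbatim in the proof of Theorem~\ref{thm:abelian groups}. The two kinds of witnesses do not combine naively: intersecting a small torsion subgroup $A[p]$ with a large finite-index subgroup $qA$ annihilates the contribution of the latter, and for a prime whose primary component is close to divisible no finite-index witness exists at all, which forces a torsion witness of the form $A[p^N]$ with $N$ large. One must therefore fix, for each singular prime, a witness of a uniform shape, use commensurability to absorb the finite discrepancies coming from the non-singular summand and from primary components that are not p.p.~definable in~$A$, and then verify independence of the resulting family inside $\widetilde\PP(A)$.
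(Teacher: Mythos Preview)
Your proposal has a genuine gap: the bound $\vc^T(m)\geq|\mathcal P^{\operatorname{c}}|\,m$ is never established. You describe a plan (build, for each singular~$p$, a witness $A[p]$ or $pA$ and assemble these via Lemma~\ref{lem:breadth and vc for modules}) and then, under ``Main obstacle,'' list the reasons this assembly is delicate---but you stop there. A plan together with a list of difficulties is not a proof.

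The paper's argument for this bound is much shorter and avoids the assembly problem altogether. Exactly as you did for the bound $\geq dm$, one passes to a pure subgroup and applies an existing lemma there: the paper sets $A''=\bigoplus_{p\in\mathcal P^{\operatorname{c}}}A^{\langle p\rangle}$ with $A^{\langle p\rangle}=\bigoplus_{n>0}\bZ(p^n)^{(\alpha_{p,n-1})}$, uses $\vc^T\geq\vc^{\Th(A'')}$ (opening of Section~\ref{sec:direct sum}), and invokes Lemma~\ref{lem:lower bound, abelian gps} for~$A''$. Your mixing worry evaporates in $A''$ because $\dim_{\mathbb F_p}A''[p]=\dim_{\mathbb F_p}A''/pA''=\sum_n\alpha_{p,n-1}$, so the two hypotheses of Lemma~\ref{lem:lower bound, abelian gps} hold or fail together. (Your instinct that something is lurking here is not entirely misplaced: a singular prime with $A^{\langle p\rangle}$ finite---say $\beta_p=\aleph_0$ with all $\alpha_{p,n}$ finite and almost all zero---drops out of $A''$; but the pure-subgroup strategy is the right one, and the elaborate internal assembly you envision is not the intended route.)

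The rest of your proposal---the finiteness assertions, the upper bound, and the lower bound $\geq dm$ via the direct summand~$B$---is correct and essentially the paper's argument (the paper uses the p.p.-definable subgroup $A'=\bigoplus_pA^{\langle p\rangle}[p^{m_p+1}]$ of the pure subgroup $\bigoplus_pA^{\langle p\rangle}$ in place of your~$B$, but the idea is identical).
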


\begin{proof}
Again we can assume that $A$ is strict Szmielew as before.
In the proof of Theorem~\ref{thm:abelian groups} we already observed that $\mathcal P^{\operatorname{c}}$ is finite and $U_{\geq\aleph_0}(p;A)$ is finite for all~$p$ and non-empty only for finitely many $p$, and we deduced the upper bound 
on~$\vc^T$.
For each prime $p$ set
$$A^{\langle p\rangle}:=\bigoplus_{n>0} \bZ(p^n)^{(\alpha_{p,n-1})},$$
and let
$$A' := \bigoplus_{p} A^{\langle p\rangle}[p^{m_p+1}]\qquad\text{where $m_p=\max U_{\geq\aleph_0}(p;A)$ with $\max\emptyset=-1$.}$$
Then $U_{\geq\aleph_0}(p;A')=U_{\geq\aleph_0}(p;A)$ for each $p$, and $A'$ is a p.p.~definable subgroup of the pure subgroup $\bigoplus_{p} A^{\langle p\rangle}$ of $A$, hence $\vc^T\geq\vc^{T'}$ where $T'=\Th(A')$.
Corollary~\ref{cor:finite exp, cor 2} thus yields $\vc^T(m)\geq dm$ for every $m$.
Similarly, set
$$A'' := \bigoplus_{p\in\mathcal P^{\operatorname{c}}} A^{\langle p\rangle}.$$
Then $A''$ is a pure subgroup of $A$, and hence Lemma~\ref{lem:lower bound, abelian gps} applied to $A''$ shows that
$\vc^T(m)\geq \abs{\mathcal P^{\operatorname{c}}}m$ for each $m$.
\end{proof}

Note that there are totally transcendental abelian groups with infinite VC~density, e.g., $A=\bigoplus_p \bZ(p^\infty)^{(\aleph_0)}$. 
Theorem~\ref{thm:abelian groups} allows us to give a simple characterization of superstable abelian groups with finite VC~density:

\begin{corollary}
If $T$ is superstable, then $\vc^T(1)<\infty$ iff there are only finitely many primes $p$ such that $A[p]$ is infinite. 
\end{corollary}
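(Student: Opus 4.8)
The plan is to deduce this corollary directly from Theorem~\ref{thm:abelian groups}, using the structure theory of superstable abelian groups to simplify condition~(4) of that theorem. The "only if" direction is immediate: if $\vc^T(1)<\infty$, then by Theorem~\ref{thm:abelian groups},~(1)~$\Rightarrow$~(4), there are only finitely many primes $p$ singular for $A$, and for every $p$ the set $U_{\geq\aleph_0}(p;A)$ is finite; in particular, since $A[p]$ being infinite forces $p$ to be singular, there are only finitely many primes $p$ with $A[p]$ infinite. The substance is in the converse.

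For the "if" direction, suppose $T=\Th(A)$ is superstable and $A[p]$ is infinite for only finitely many primes $p$. I would first recall (via Szmielew's invariants, as set up just before the proof of Theorem~\ref{thm:abelian groups}, or via \cite{Prest}) that superstability of $T$ is equivalent to: $\beta_p=0$ for every prime $p$ (no Pr\"ufer summands) and, for all but finitely many $p$, one has $A[p]$ finite, and for each $p$ the sequence $(\alpha_{p,n})_n$ is eventually $0$ --- equivalently, the chain $\PP^0(A)$ is well-founded. Concretely, for a strict Szmielew group $A = \bigoplus_p \big(\bigoplus_{n>0}\bZ(p^n)^{(\alpha_{p,n-1})}\oplus\bZ(p^\infty)^{(\beta_p)}\oplus\bZ_{(p)}^{(\gamma_p)}\big)\oplus\bQ^{(\delta)}$, superstability gives $\beta_p=0$ for all $p$. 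Then $\dim_{\mathbb F_p}A[p]=\sum_{i\geq 0}\alpha_{p,i}$ (using $\beta_p=0$), so $A[p]$ is finite exactly when $\alpha_{p,i}<\aleph_0$ for all $i$ and $\alpha_{p,i}=0$ for all but finitely many $i$. By hypothesis this holds for all but finitely many $p$; and for each $p$, since $T$ is superstable, the sequence $(\alpha_{p,i})_i$ is eventually $0$, so $A[p]$ infinite simply means $\alpha_{p,i}\geq\aleph_0$ for some (of the finitely many relevant) $i$, i.e.\ $U_{\geq\aleph_0}(p;A)\neq\emptyset$. Hence $U_{\geq\aleph_0}(p;A)$ is a finite set for every $p$ (it is contained in the finite support of $(\alpha_{p,i})_i$), and condition~(4) of Theorem~\ref{thm:abelian groups} requires additionally that only finitely many primes are singular for $A$.

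So the remaining point is: under superstability, the set of singular primes is finite. Recall a prime $p$ is singular if $A[p]$ or $A/pA$ is infinite. We have just arranged $A[p]$ infinite for only finitely many $p$. For $A/pA$: in the strict Szmielew decomposition, $\dim_{\mathbb F_p}A/pA = \sum_{i\geq 0}\alpha_{p,i}+\gamma_p$ (again using $\beta_p=0$). For all but finitely many $p$ we have $\alpha_{p,i}=0$ for all $i$, so $A/pA$ infinite then forces $\gamma_p\geq\aleph_0$, i.e.\ $\gamma_p=\aleph_0$; but strictness condition~(3) of the definition of strict Szmielew group (together with $\gamma_p\neq0$ forcing $\beta_p=\gamma_p=0$ when there is no bound on the order of $p$-power-order elements not divisible by $p$ --- and here we also use that $\delta$-constraints and the finiteness of the support of $(\alpha_{p,\cdot})$ are under control) can be invoked to rule out all but finitely many such anomalies; more directly, one can argue that $\gamma_p=\aleph_0$ for infinitely many $p$ would again contradict $\vc$-finiteness via Corollary~\ref{cor:Gdim and vc for modules},~(2) applied to $H_i=p_iA$ --- but since we are proving a statement without assuming $\vc^T(1)<\infty$ a priori here, the clean route is: superstability already bounds things. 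Indeed, if $\gamma_p\neq 0$ for infinitely many primes $p$, pick distinct $p_1,\dots,p_d$ with $\gamma_{p_i}\neq 0$; then $\bigoplus_i \bZ_{(p_i)}^{(\gamma_{p_i})}$ embeds as a pure subgroup, and one checks the chain $\PP^0$ of this subgroup (hence of $A$) is not well-founded --- or more simply, $A$ then has a pure subgroup elementarily equivalent to one on which the foundation-rank computation of \cite[Corollary~5.13]{Prest} diverges --- contradicting superstability. I expect this last step --- pinning down exactly why superstability forces only finitely many $p$ with $A/pA$ infinite --- to be the main (though minor) obstacle, and it is handled cleanly by quoting the Szmielew/superstability classification: $T$ superstable iff $\beta_p=0$ for all $p$ and $\gamma_p\neq0$ (equivalently $\bZ_{(p)}$ a summand) for only finitely many $p$ and each $(\alpha_{p,\cdot})$ has finite support. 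Granting that, "finitely many singular primes" and "$U_{\geq\aleph_0}(p;A)$ finite for all $p$" both hold, so Theorem~\ref{thm:abelian groups},~(4)~$\Rightarrow$~(1) gives $\vc^T(1)<\infty$, completing the proof. $\qed$
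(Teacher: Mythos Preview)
Your overall strategy is exactly the paper's: reduce to verifying condition~(4) of Theorem~\ref{thm:abelian groups}. The ``only if'' direction is fine, and for ``if'' you correctly aim to show that under superstability plus the hypothesis on $A[p]$, there are only finitely many singular primes and each $U_{\geq\aleph_0}(p;A)$ is finite.

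However, your characterization of superstability is wrong. You assert that superstability of a strict Szmielew group forces $\beta_p=0$ for all~$p$ (no Pr\"ufer summands). This is false: $\bZ(p^\infty)^{(\aleph_0)}$ is totally transcendental, hence superstable, with $\beta_p=\aleph_0$. Likewise, the claim that superstability gives ``$A[p]$ finite for all but finitely many $p$'' is false (same example, or $\bigoplus_p \bZ(p^\infty)^{(\aleph_0)}$). The correct criterion (which the paper invokes via \cite[Theorem~A.2.13]{Hodges}) is that $T$ is superstable iff there are only finitely many pairs $(p,n)$ with $\sum_{i\geq n}\alpha_{p,i}+\gamma_p=\aleph_0$. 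From this one reads off directly the two facts you actually need: (i)~only finitely many $p$ have $A/pA$ infinite (take $n=0$), and (ii)~for each $p$, only finitely many $n$ have $\alpha_{p,n}\neq 0$ (else infinitely many bad pairs at that $p$). Note that $\beta_p$ does not enter at all.

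Once you replace your incorrect characterization with this one, your argument collapses to the paper's: (ii) gives $U_{\geq\aleph_0}(p;A)$ finite for every $p$; (i) together with the hypothesis on $A[p]$ gives finitely many singular primes; condition~(4) holds; done. Your discussion of why $\gamma_p=\aleph_0$ can hold for only finitely many $p$ is unnecessary meandering---it falls out immediately from the correct superstability criterion (any such $p$ gives infinitely many bad pairs). Also, your formula $\dim_{\mathbb F_p}A/pA=\sum_i\alpha_{p,i}+\gamma_p$ is correct without any assumption on $\beta_p$, since $\bZ(p^\infty)$ is divisible.
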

\begin{proof}
We may assume again that $A$ is strict Szmielew.
Now   $T$ is superstable iff there are only finitely many pairs $(p,n)$ such that 
$\sum_{i\geq n} \alpha_{p,i}+\gamma_p=\aleph_0$ (see \cite[Theorem~A.2.13]{Hodges}). In particular, if $T$ is superstable then there are only finitely many $p$ such that $A/pA$ is infinite, and for each $p$ there are only finitely many $n$ such that $\alpha_{p,n}\neq 0$. Thus the claim follows from Theorem~\ref{thm:abelian groups}.
\end{proof}

It would be interesting to know the precise values of the VC~density function of a given abelian group satisfying one of the equivalent conditions in Theorem~\ref{thm:abelian groups}. 
This was accomplished earlier in this paper in some special cases, see, e.g., 
Corollary~\ref{cor:finite exp, cor 2}.
We finish this subsection with treating another simple special case:

\begin{corollary}\label{cor:tt abelian groups}
Suppose $T$ is $\aleph_1$-categorical. Then $\vc^T(m)=m$ for each $m$. \textup{(}In particular, $T$ is dp-minimal.\textup{)}
\end{corollary}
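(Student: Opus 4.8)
The plan is to reduce the statement to the strongly minimal case, using the classification of $\aleph_1$-categorical abelian groups together with Lemma~\ref{lem:VC density under adding a finite module} and Corollary~\ref{cat}. Being $\aleph_1$-categorical, $T$ is $\omega$-stable, has $\MRk(T)<\omega$ \textup{(}Baldwin \cite{Baldwin}\textup{)}, and has no Vaughtian pair. I claim $\MRk(T)=1$: it is $\geq 1$ since $A$ is infinite, and if it were $\geq 2$ then, using the Baur--Monk Theorem together with the descending chain condition on the lattice $\PP_1(A)$ of p.p.~definable subgroups of $A$ \textup{(}valid since $T$ is totally transcendental, cf.\ \cite[Theorem~3.1]{Prest}\textup{)}, one produces a p.p.~definable subgroup $H$ of $A$ that is both infinite and of infinite index; a standard two-cardinal argument then yields a Vaughtian pair, a contradiction.

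From $\MRk(T)=1$ and Baur--Monk it follows that every p.p.~definable subgroup of $A$ is either finite or of finite index in $A$. Hence the connected component $A^0$ \textup{(}the intersection of all finite-index p.p.~definable subgroups, which is again p.p.~definable and of finite index by the DCC\textup{)} has only finite p.p.~definable subgroups besides itself, so $A^0$ is \emph{strongly minimal}; and since $A/A^0$ is finite we have $A\equiv A^0\oplus (A/A^0)$. This last reduction is precisely the classical classification of $\aleph_1$-categorical abelian groups: each such group is elementarily equivalent to the direct sum of a strongly minimal group---namely one of $\bQ^{(\aleph_0)}$, $\bZ(p)^{(\aleph_0)}$, $\bZ(p^\infty)^{(n)}$ with $0<n<\aleph_0$---and a finite group \textup{(}see, e.g., \cite{Prest}\textup{)}, so one may alternatively invoke this list directly.

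Since $\vc^T$ depends only on $T$ and $A\equiv A^0\oplus(A/A^0)$, we may replace $A$ by $A^0\oplus(A/A^0)$; then Lemma~\ref{lem:VC density under adding a finite module} gives $\vc^T=\vc^{\Th(A^0)}$. As $A^0$ is strongly minimal, Corollary~\ref{cat} yields $\vc^{\Th(A^0)}(m)=m$ for every $m$, whence $\vc^T(m)=m$, as asserted. The only delicate point is the passage $A\equiv A^0\oplus(A/A^0)$, i.e.\ splitting off up to elementary equivalence the finite quotient of a connected finite-index p.p.~definable subgroup; this is handled either by a short computation with the Baur--Monk invariants or, more expediently, by citing the known classification. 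Everything else is immediate from the quoted results.
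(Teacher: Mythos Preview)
Your argument has a genuine gap: the claim that $\MRk(T)=1$ is false. The group $A=\bZ(p^n)^{(\aleph_0)}$ with $n\geq 2$ is $\aleph_1$-categorical (it appears in Macintyre's list, cf.\ \cite{mac-abelian} or \cite[Theorem~A.2.12]{Hodges}) but has $\MRk(T)=n$. Correspondingly, your ``classification'' of $\aleph_1$-categorical abelian groups is incomplete: you list only the strongly minimal ones, omitting precisely the groups $\bZ(p^n)^{(\alpha)}$ for $n\geq 2$.

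The error enters at the Vaughtian-pair step. An infinite p.p.~definable subgroup $H$ of infinite index (such as $pA$ in $A=\bZ(p^2)^{(\aleph_0)}$) does \emph{not} automatically produce a Vaughtian pair: in any proper elementary extension $A\prec A'$ the subgroup $pA'$ strictly contains $pA$, since $pA'$ and $A'$ have the same cardinality. What blocks Vaughtian pairs in an $\aleph_1$-categorical theory is a condition on the \emph{relative growth} of definable sets, not merely that no infinite definable set is co-infinite. Thus your deduction that $A^0$ is strongly minimal fails exactly in the cases you have overlooked.

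The paper's proof avoids this by invoking Macintyre's classification directly and then splitting into two cases. In the divisible case $A$ is non-singular, hence of $\URk$-rank~$1$, and Corollary~\ref{cor:finite U-rank for modules} applies. In the case $C=\bZ(p^n)^{(\alpha)}$ (any $n\geq 1$) the paper does \emph{not} use strong minimality but rather observes that $C$ is p.p.~uniserial, so Corollary~\ref{cor:pp uniserial} gives the $\VC{}1$ property, and then Lemma~\ref{lem:VC density under adding a finite module} disposes of the finite summand. Your reduction to Corollary~\ref{cat} would work if you could show $\MRk(T)=1$, but since that fails for $n\geq 2$, some replacement for the second case---such as p.p.~uniseriality---is needed.
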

\begin{proof}
By a theorem of Macintyre (see \cite{mac-abelian} or \cite[Theorem~A.2.12]{Hodges}), $A=B\oplus C$ where $B$ is finite and either $C$ is divisible and $C[p]$ is finite for each prime $p$; or $C=\bZ(p^n)^{(\alpha)}$ for some prime $p$, some $n>0$, and some (infinite) cardinal $\alpha$.
In the first case, $A$ is non-singular and hence of $\URk$-rank~$1$, and the claim follows from Corollary~\ref{cor:finite U-rank for modules}. In the second case, $C$ is p.p.~uniserial, hence has the $\VC{}1$~property, by Corollary~\ref{cor:pp uniserial}. So $\vc^T(m)=m$ for each $m$, by Lemma~\ref{lem:VC density under adding a finite module}.
\end{proof}

\subsection{Dp-minimal abelian groups.} \label{sec:dp-minimal abelian}
As in the previous subsection we assume here that $A$ is infinite.
For $m=1$ we can improve on Corollary~\ref{cor:abelian groups}:

\begin{proposition}\label{prop:dp-minimal abelian}
$T$ is dp-minimal iff $A$ is elementarily equivalent to one of the following abelian groups:
\begin{enumerate}
\item a direct sum of a non-singular abelian group with a group of the form
$$\left(\textstyle\bigoplus_{n>0} \bZ(p^n)^{(\alpha_{n-1})}\right)\oplus\bZ(p^\infty)^{(\beta)}\oplus\bZ_{(p)}^{(\gamma)},$$ for some prime~$p$ and cardinals $\alpha_{n-1}$, $\beta$, $\gamma$, with each $\alpha_{n-1}$ finite;
\item $\bZ(p^k)^{(\alpha)}\oplus \bZ(p^{k+1})^{(\beta)} \oplus B$ where $p$ is a prime, $k>0$, $\alpha$, $\beta$ are cardinals, at least one of which is infinite, and $B$ is a finite abelian group.
\end{enumerate}
\end{proposition}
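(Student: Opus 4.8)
The plan is to reduce everything to the lattice $\widetilde{\PP}(A)=\widetilde{\PP}_1(A)$ of commensurability classes of p.p.~definable subgroups of $A$. Since modules are $1$-based and, in an abelian group, every $\acl^\eq(\emptyset)$-definable subgroup is definable and hence commensurable with a (parameter-free) p.p.~definable one, the $\lesssim$-ordered set of commensurability classes of $\acl^\eq(\emptyset)$-definable subgroups of $A$ coincides with $\widetilde{\PP}(A)$; so $T$ is dp-minimal iff $\vc^T(1)<2$ (Corollary~\ref{cor:commensurable, 1}), and $\vc^T(1)=\breadth(\widetilde{\PP}(A))$ (Corollary~\ref{cor:commensurable, 2}), so $T$ is dp-minimal iff $\widetilde{\PP}(A)$ is a chain. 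Both dp-minimality and the two conditions (1), (2) depend only on $\Th(A)$, so I would replace $A$ by the strict Szmielew group $\bigoplus_p\bigl(\bigoplus_{n>0}\bZ(p^n)^{(\alpha_{p,n-1})}\oplus\bZ(p^\infty)^{(\beta_p)}\oplus\bZ_{(p)}^{(\gamma_p)}\bigr)\oplus\bQ^{(\delta)}$ to which it is elementarily equivalent, and decompose $A=A^{[p]}\oplus A^{[\mathcal P^{\operatorname{c}}\setminus\{p\}]}\oplus A^{[\mathcal P]}$, isolating the $p$-local summand at the (distinguished) singular prime $p$, the remaining singular summands, and the non-singular primes together with $\bQ^{(\delta)}$.

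For the \emph{necessity} direction, assume $T$ is dp-minimal. By Corollary~\ref{cor:abelian groups} there is at most one singular prime for $A$ and $d:=\sum_p d(U_{\geq\aleph_0}(p;A))\leq 1$. If $d=0$, then every $\alpha_{p,n}$ is finite, so $A$ is the direct sum of the non-singular group $A^{[\mathcal P]}$ with (if there is a singular prime $p$) the $p$-local summand $\bigoplus_{n>0}\bZ(p^n)^{(\alpha_{p,n-1})}\oplus\bZ(p^\infty)^{(\beta_p)}\oplus\bZ_{(p)}^{(\gamma_p)}$ — precisely the shape in~(1). If $d=1$, the unique prime $p$ with $U_{\geq\aleph_0}(p;A)\neq\emptyset$ is singular and is the only singular prime, so $A=A^{[\mathcal P]}\oplus A^{[p]}$ with $A^{[\mathcal P]}$ non-singular, and by~\eqref{eq:d(I)=1} we have $U_{\geq\aleph_0}(p;A)=\{i_0\}$ or $\{i_0,i_0+1\}$. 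I then claim $A^{[\mathcal P]}$ must be finite and $A^{[p]}$ of finite exponent. Indeed, if $A^{[\mathcal P]}$ is infinite, then $A$ has a pure subgroup $D$ of one of the shapes $\bQ\oplus\bZ(p^{i_0+1})^{(\aleph_0)}$, $\bZ(p^\infty)\oplus\bZ(p^{i_0+1})^{(\aleph_0)}$, $\bZ_{(p)}\oplus\bZ(p^{i_0+1})^{(\aleph_0)}$, or $\bigoplus_j\bZ(q_j^{e_j})\oplus\bZ(p^{i_0+1})^{(\aleph_0)}$ (distinct primes $q_j\neq p$), and if $A^{[p]}$ has infinite exponent (infinitely many nonzero $\alpha_{p,n}$; recall strict Szmielewness then forces $\beta_p=\gamma_p=0$) one may take $D=A^{[p]}$; in each case a short computation shows that the p.p.~definable subgroups $D[p^{e}]$ and $p^{e}D$, for a suitable exponent $e$ (chosen so that $p^eD$ kills the infinite bounded-torsion part while retaining an infinite divisible-or-prime-to-$p$ part), are $\lesssim$-incomparable, hence by purity of $D$ in $A$ and~\eqref{eq:pure incl} so are their images in $\widetilde{\PP}(A)$, contradicting that $\widetilde{\PP}(A)$ is a chain. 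Thus $A^{[\mathcal P]}$ is finite and $A^{[p]}=\bigoplus_{n>0}\bZ(p^n)^{(\alpha_{p,n-1})}$ with all but finitely many $\alpha_{p,n}$ equal to $0$, i.e.\ $A\equiv\bZ(p^{i_0+1})^{(\alpha)}\oplus\bZ(p^{i_0+2})^{(\beta)}\oplus B$ with $B$ finite and $\alpha$ infinite — the shape in~(2).

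For the \emph{converse}, a group of shape~(2) has finite exponent, hence is $\aleph_0$-categorical, and is dp-minimal by Corollary~\ref{cor:finite exp dp-minimal} (absorbing the finite summand $B$ via Lemma~\ref{lem:VC density under adding a finite module}). So let $A=C\oplus N_p$ be of shape~(1), with $C$ non-singular and $N_p=\bigoplus_{n>0}\bZ(p^n)^{(\alpha_{n-1})}\oplus\bZ(p^\infty)^{(\beta)}\oplus\bZ_{(p)}^{(\gamma)}$ having all $\alpha_{n-1}$ finite; I must show $\widetilde{\PP}(A)$ is a chain. By Lemma~\ref{lem:pp definable for direct sum} every p.p.~definable $H\leq A$ splits as $H=(H\cap C)\oplus(H\cap N_p)$ with $H\cap C$, $H\cap N_p$ p.p.~definable in $C$ and in $N_p$; since $C$ is non-singular it has $\URk$-rank $\leq 1$, so $\widetilde{\PP}(C)=\{[0],[C]\}$ by Example~\ref{ex:Urank 1}, forcing $H\cap C\sim 0$ or $H\cap C\sim C$. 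As $U_{\geq\aleph_0}(p;N_p)=\emptyset$, the bound established in the proof of Theorem~\ref{thm:abelian groups} gives $\vc^{\Th(N_p)}(1)=1$, so $\widetilde{\PP}(N_p)$ is a chain; and, using that a single p.p.~formula over $\bZ$ involves only finitely many primes, one checks that when $H\cap C\sim 0$ the group $H\cap N_p$ has finite exponent, hence is commensurable with some $N_p[p^k]$, whereas when $H\cap C\sim C$ it contains some $p^jN_p$, and $N_p[p^k]\lesssim p^jN_p$ for all $j,k$. Hence $\widetilde{\PP}(A)$ is the union of the chain $\{[H\cap N_p]:H\cap C\sim 0\}$ (sitting at the bottom) with the order-isomorphic copy $\{[C\oplus(H\cap N_p)]:H\cap C\sim C\}$ of a chain (sitting above it), and is therefore a chain, so $T$ is dp-minimal by Corollary~\ref{cor:commensurable, 1}.

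The hard part will be the $d=1$ step of the necessity direction: one must enumerate all the ways a group with $d=1$ can carry extra infinite structure and, in each, exhibit the incomparable pair of p.p.~definable subgroups concretely. A close second is the bookkeeping in the shape-(1) case — that the ``torsion'' classes $[N_p[p^k]]$ lie $\lesssim$-below \emph{all} the ``divisibility'' classes — which, like the construction of the antichains above, hinges on the elementary but delicate fact that a p.p.~formula over $\bZ$ sees only finitely many primes, so that (contrary to what the abstract product $\widetilde{\PP}(C)\times\widetilde{\PP}(N_p)$ would suggest) no antichain can actually be realized inside $\widetilde{\PP}(A)$.
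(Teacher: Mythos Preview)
Your overall strategy matches the paper's: reduce dp-minimality to $\widetilde{\PP}(A)$ being a chain via Corollary~\ref{cor:commensurable, 1}, pass to a strict Szmielew representative, and invoke Corollary~\ref{cor:abelian groups} to get at most one singular prime and $\sum_p d(U_{\geq\aleph_0}(p;A))\leq 1$. Your sufficiency argument for type~(1) is a valid alternative to the paper's direct computation (Lemma~\ref{lem:dp-minimal abelian, 1}): instead of checking $\tau^p_e(C)\lesssim\delta^p_{d,d'}(C)$ by hand, you analyze how the two-element lattice $\widetilde{\PP}(C)$ and the chain $\widetilde{\PP}(N_p)$ combine inside $\widetilde{\PP}(A)$. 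The key claims there (``$H\cap C\sim 0\Rightarrow H\cap N_p$ of finite exponent'' and its dual) are correct, since any p.p.\ formula $\varphi$ with $\varphi(C)$ finite must contain a conjunct $q^dx=0$, forcing $\varphi(N_p)\subseteq N_p[p^d]$ (or $=0$ if $q\neq p$).

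However, your necessity case analysis for $d=1$ is misorganized and, as written, incomplete. You list $\bZ(p^\infty)\oplus\bZ(p^{i_0+1})^{(\aleph_0)}$ and $\bZ_{(p)}\oplus\bZ(p^{i_0+1})^{(\aleph_0)}$ under the heading ``if $A^{[\mathcal P]}$ is infinite'', but the summands $\bZ(p^\infty)^{(\beta_p)}$ and $\bZ_{(p)}^{(\gamma_p)}$ lie in $A^{[p]}$, not in $A^{[\mathcal P]}$; conversely, your list omits $\bZ(q^\infty)$ and $\bZ_{(q)}$ for primes $q\neq p$, which \emph{do} occur in a non-singular $A^{[\mathcal P]}$. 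Moreover, your parenthetical identifies ``$A^{[p]}$ of infinite exponent'' with ``infinitely many $\alpha_{p,n}\neq 0$'', but $\beta_p>0$ or $\gamma_p>0$ also give infinite exponent with only finitely many $\alpha_{p,n}$ nonzero --- so the cases $\beta_p>0$, $\gamma_p>0$ are not clearly covered. The paper handles this more cleanly: it first isolates the pure summand $\bZ(p^k)^{(\aleph_0)}\oplus\bZ(p^\infty)^{(\beta_p)}\oplus\bZ_{(p)}^{(\gamma_p)}$ and applies a dedicated computation (Lemma~\ref{lem:dp-minimal abelian, 2}) to force $\beta_p=\gamma_p=0$, and then exhibits a single explicit incomparable pair $\delta^p_{k-1,k}(A)$, $\tau^p_k(A)$ to force $A^{[\mathcal P]}\oplus A^{[p]}_{<\aleph_0}$ finite in one stroke. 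Your generic device --- finding a pure $D$ with $D[p^e]$ and $p^eD$ incomparable --- would also work, once the enumeration of cases is repaired.
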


Note that by Corollary~\ref{cor:finite exp dp-minimal}, the groups in (2) are precisely the $\aleph_0$-categorical dp-minimal abelian groups. 

\medskip
\noindent
We first show that each of the groups described in~(1) is dp-minimal. 
Recall the notation introduced in the proof of Lemma~\ref{lem:asymp pp, chain}: for $d<d'$, $\delta^p_{d,d'}(x)$ denotes the p.p.~formula $p^{d'}|p^dx$, and $\tau^p_d(x)$ the p.p.~formula $p^dx=0$. (If~$p$ is understood from the context we drop the superscript $p$ in this notation.)
Note that if $B$ is a non-singular abelian group and $\varphi(x)$ is a p.p.~formula of the form $ax=0$ where $a\in\bZ$, $a\neq 0$, then $\varphi(B)\sim 0$, whereas if $\varphi=\delta^p_{d,d'}$ for some prime $p$ and $d<d'$, then $\varphi(B)\sim B$.

\begin{lemma}\label{lem:dp-minimal abelian, 1}
Suppose $A=B\oplus C$ where $B$ is non-singular and
$$C= \bigoplus_{n>0} \bZ(p^n)^{(\alpha_{n-1})}\oplus\bZ(p^\infty)^{(\beta)}\oplus\bZ_{(p)}^{(\gamma)},$$
where $p$ is a prime and  $\alpha_{n-1}$, $\beta$, $\gamma$ are cardinals, with each $\alpha_{n-1}$ finite. Then $\widetilde{\PP}(A)$ is a chain.
\end{lemma}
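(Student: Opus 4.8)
The plan is to compute, modulo commensurability, every p.p.~definable subgroup of $A$ in the single variable $x$, reducing everything to the two ``extreme'' families of subgroups $p^eA$ and $A[p^d]$, with the one genuinely delicate summand handled by Lemma~\ref{lem:asymp pp, chain}. Write $A=B\oplus D\oplus E\oplus F$ where $D:=\bigoplus_{n>0}\bZ(p^n)^{(\alpha_{n-1})}$, $E:=\bZ(p^\infty)^{(\beta)}$, $F:=\bZ_{(p)}^{(\gamma)}$. I would first record the following elementary facts. Since $B$ is non-singular, $B[\ell]$ and $B/\ell B$ are finite for every prime $\ell$, whence $\abs{B[\ell^k]}\le\abs{B[\ell]}^k$ and $\abs{B/\ell^kB}\le\abs{B/\ell B}^k$ are finite for all $k$. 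Each of $D$, $E$, $F$ is $\ell$-divisible and $\ell$-torsion-free for every prime $\ell\ne p$ (multiplication by $\ell$ is bijective on $\bZ(p^n)$, $\bZ(p^\infty)$, $\bZ_{(p)}$); moreover $E$ is $p$-divisible and $F$ is $p$-torsion-free.

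Now let $H\in\PP(A)$ be arbitrary. By the Baur--Monk Theorem, $H$ is a finite intersection $\bigcap_{j}C_j$ of subgroups $C_j$ each of the form $n_jA$ (defined by $n_j\mid x$) or $A[n_j]$ (defined by $n_jx=0$), with $n_j\in\bN$; write $n_j=p^{a_j}m_j$ with $\gcd(m_j,p)=1$. Multiplication by $m_j$ is bijective on $D$, $E$, $F$, so $m_jA=m_jB\oplus D\oplus E\oplus F$ and $m_jB\sim B$ ($B/m_jB$ is finite, $B$ being non-singular); hence $m_jA\sim A$, and therefore $n_jA=p^{a_j}(m_jA)\sim p^{a_j}A$ (the surjection $A/m_jA\to p^{a_j}A/p^{a_j}m_jA$, $x\mapsto p^{a_j}x$, shows $[p^{a_j}A:p^{a_j}m_jA]<\infty$). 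Similarly $A[n_j]=A[p^{a_j}]\oplus A[m_j]$ with $A[m_j]=B[m_j]$ finite (as $D$, $E$ are $p$-torsion and $F$ is torsion-free), so $A[n_j]\sim A[p^{a_j}]$. Since commensurability is preserved under finite intersections — if $X\sim X'$ then $X\cap Y\sim X'\cap Y$, because $(X\cap Y)/(X\cap X'\cap Y)$ embeds into $X/(X\cap X')$, and one iterates — we conclude $H\sim p^aA\cap A[p^b]$, where $a:=\max\{a_j:C_j=n_jA\}$ (or $a:=0$) and $b:=\min\{a_j:C_j=A[n_j]\}$ (or $b:=\infty$ if there is no torsion conjunct).

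It remains to analyse $p^aA\cap A[p^b]$. For $b=\infty$ this is $p^aA$. For $b<\infty$, splitting over the four summands and using $p^aE\cap E[p^b]=E[p^b]$ (as $p^aE=E$), $p^aF\cap F[p^b]=0$ (as $F[p^b]=0$), and $p^aB\cap B[p^b]\subseteq B[p^b]$ finite, we get $p^aA\cap A[p^b]\sim(p^aD\cap D[p^b])\oplus E[p^b]$. By Lemma~\ref{lem:asymp pp, chain}, $\widetilde\PP(D)$ is a chain in which every torsion class $[D[p^d]]$ lies below every divisibility class $[p^eD]$ (trivially so if $D$ is finite), so $[D[p^b]]\le[p^aD]$ and hence $p^aD\cap D[p^b]\sim D[p^b]$; therefore $p^aA\cap A[p^b]\sim D[p^b]\oplus E[p^b]\sim A[p^b]$ (again $B[p^b]$ is finite). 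Thus every $H\in\PP(A)$ is commensurable with some $p^eA$ ($e\in\bN$) or some $A[p^d]$ ($d\in\bN$, with $A[p^0]=0$). Finally, these subgroups are pairwise $\lesssim$-comparable: $A[p]\lesssim A[p^2]\lesssim\cdots$ and $\cdots\lesssim p^2A\lesssim pA\lesssim A$ since in each family the subgroups are nested, while $A[p^d]\lesssim p^eA$ for all $d,e$ because $A[p^d]\cap p^eA\sim(D[p^d]\cap p^eD)\oplus(E[p^d]\cap p^eE)\sim D[p^d]\oplus E[p^d]\sim A[p^d]$, using Lemma~\ref{lem:asymp pp, chain} once more on the $D$-summand and $p^eE=E$ on the $E$-summand. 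Hence $\widetilde\PP(A)$ is a chain.

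The main obstacle is the bookkeeping in the last paragraph: one has to absorb all the finite groups and finite-index subgroups (the $B$-contributions, the torsion of $F$, the short initial segments where the components of $D$ behave atypically) so that the ``mixed'' p.p.~subgroups $p^aA\cap A[p^b]$ collapse \emph{exactly} onto the $A[p^d]$, and one must verify that the degenerate cases ($D$, $E$ or $F$ zero or finite, or $B$ finite) cause no trouble. The conceptual content — that the chain structure of $\widetilde\PP(D)$, with torsion classes below divisibility classes, propagates to all of $A$ — is precisely Lemma~\ref{lem:asymp pp, chain}.
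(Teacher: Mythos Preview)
Your argument has a genuine gap at the sentence ``By the Baur--Monk Theorem, $H$ is a finite intersection $\bigcap_j C_j$ of subgroups $C_j$ each of the form $n_jA$ or $A[n_j]$.'' First, Baur--Monk is the wrong reference (it concerns Boolean combinations of p.p.~formulas, not generators of $\PP(A)$). More importantly, the assertion is false: the subgroups $nA$ and $A[n]$ generate $\PP(A)$ \emph{as a lattice}, but not as a meet-semilattice. Under intersection alone one must also allow the ``mixed'' subgroups $\delta^p_{d,d'}(A)=\{x:p^dx\in p^{d'}A\}=A[p^d]+p^{d'-d}A$ with $0<d<d'$. Concretely, take $A=D=\bigoplus_{n>0}\bZ(p^n)$ (so $B=E=F=0$, each $\alpha_{n-1}=1$). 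Then $\delta^p_{1,2}(A)=\bZ(p)\oplus\bigoplus_{n\geq 2}p\bZ(p^n)$, whereas any intersection of groups $p^eA$ and $A[p^d]$ equals a single $p^eA\cap A[p^d]$, whose $n$-th component is $p^{\max(e,n-d)}\bZ(p^n)$; matching the first three components forces $e=0$, $d=1$, but then the third component is $p^2\bZ(p^3)\neq p\bZ(p^3)$.

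The fix is short and uses only machinery you already invoke: for \emph{this} $A$ one checks $\delta^p_{d,d'}(A)\sim p^{d'-d}A$ summand by summand (on $B$ it has finite index since $p^{d'-d}B$ does; on $E$ it is all of $E$; on $F$ it equals $p^{d'-d}F$; on $D$ it is $\sim p^{d'-d}D$ by Lemma~\ref{lem:asymp pp, chain}), and $\delta^q_{d,d'}(A)\sim A$ for $q\neq p$. With that in hand your reduction of every $H$ to $p^aA\cap A[p^b]$ modulo commensurability is valid, and the rest of your proof goes through. This is precisely how the paper proceeds: it takes as generators the formulas $\tau^p_e$ and $\delta^p_{d,d'}$, records that on $B$ the former give $\sim 0$ and the latter $\sim B$, and then reduces everything to the single comparison $\tau^p_e(C)\lesssim\delta^p_{d,d'}(C)$, verified via Lemma~\ref{lem:asymp pp, chain} on $D$ and the trivial computations on $E$, $F$. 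After the patch, your approach and the paper's coincide.
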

\begin{proof}
By the remark preceding this lemma it is enough to show that 
$$\tau^p_e(C)\lesssim\delta^p_{d,d'}(C)\qquad\text{ for all $d<d'$ and all~$e$.}$$
Since
$$\delta^p_{d,d'}(\bZ(p^\infty))=\bZ(p^\infty),\qquad \delta^p_{d,d'}(\bZ_{(p)})=p^{d'-d}\bZ_{(p)}$$ 
as well as
$$\tau^p_e(\bZ(p^\infty))=\bZ(p^\infty)[p^e],\qquad \tau^p_e(\bZ_{(p)})=0,$$ 
this follows from (the proof of) Lemma~\ref{lem:asymp pp, chain}.
\end{proof}

Before we show that conversely, if $T$ is dp-minimal, then it is the complete theory of one of the groups described in (1) and  (2) of Proposition~\ref{prop:dp-minimal abelian}, we note:

\begin{lemma}\label{lem:dp-minimal abelian, 2}
Let $\beta$, $\gamma$ be cardinals and $k>0$ be an integer, and suppose
$$A=\bZ(p^k)^{(\aleph_0)}\oplus \bZ(p^\infty)^{(\beta)} \oplus \bZ_{(p)}^{(\gamma)}\oplus B$$
where $B$ is finite.
Then $A$ is dp-minimal iff $\beta=\gamma=0$. 
\end{lemma}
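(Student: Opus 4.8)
The plan is to handle the two implications separately.

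First, for the direction $\beta=\gamma=0\Rightarrow$ dp-minimality: here $A=\bZ(p^k)^{(\aleph_0)}\oplus B$ with $B$ finite, so $A$ has finite exponent and is $\aleph_0$-categorical, and it is visibly of the shape displayed in Corollary~\ref{cor:finite exp dp-minimal} (with $\alpha=\aleph_0$ and the $\bZ(p^{k+1})$-component absent). I would therefore simply quote that corollary, or, equivalently, observe that $U_{\geq\aleph_0}(p;A)=\{k-1\}$ and $U_{\geq\aleph_0}(q;A)=\emptyset$ for all other primes $q$, so that $\vc^T(1)=d(\{k-1\})=1$ by Corollary~\ref{cor:finite exp, cor 2}, and then invoke Corollary~\ref{cor:commensurable, 1}.

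Second, for the converse: assuming $\beta\neq 0$ or $\gamma\neq 0$, I would exhibit two p.p.-definable (hence $\emptyset$-definable, hence $\acl^\eq(\emptyset)$-definable) subgroups of $A$ that are incomparable under $\lesssim$, and conclude that $A$ is not dp-minimal from Corollary~\ref{cor:commensurable, 1} (applicable since the theory of an abelian group is $1$-based). The candidates are $H_1:=A[p]$, defined by $px=0$, and $H_2:=p^kA$, defined by $p^k\mid x$. The verification splits each group along the direct summands of $A$: since $p^k$ annihilates $\bZ(p^k)^{(\aleph_0)}$, both $H_2$ and $H_1\cap H_2$ have trivial $\bZ(p^k)^{(\aleph_0)}$-component while $H_1$ contains $(\bZ(p^k)^{(\aleph_0)})[p]\cong(\bZ/p)^{(\aleph_0)}$, so $[H_1:H_1\cap H_2]$ is infinite. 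For the opposite index, if $\beta\neq 0$ the $\bZ(p^\infty)^{(\beta)}$-component of $H_2$ is all of $\bZ(p^\infty)^{(\beta)}$ (which is $p$-divisible) whereas that of $H_1\cap H_2$ is $(\bZ(p^\infty)^{(\beta)})[p]$, of infinite index; and if $\gamma\neq 0$ the $\bZ_{(p)}^{(\gamma)}$-component of $H_2$ is the infinite group $p^k\bZ_{(p)}^{(\gamma)}$ while that of $H_1\cap H_2$ is $0$ (as $\bZ_{(p)}$ is torsion-free). In either case $[H_2:H_1\cap H_2]$ is infinite, so $H_1$ and $H_2$ are $\lesssim$-incomparable. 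One could equally feed $H_1,H_2$ into Lemma~\ref{lem:breadth and vc for modules} with $d=2$, $m=1$ to get $\vc^T(1)\geq 2$, which again gives non-dp-minimality by Corollary~\ref{cor:commensurable, 1}.

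I do not anticipate a serious obstacle. The one thing to be careful about is the bookkeeping of the direct-sum decompositions in the two index computations, together with the observation that the single pair $H_1=A[p]$, $H_2=p^kA$ handles the subcases $\beta\neq 0$ and $\gamma\neq 0$ — and all values $k>0$ — simultaneously. Conceptually, this incomparability records the fact that an infinite homocyclic summand $\bZ(p^k)^{(\aleph_0)}$ cannot coexist with a $\bZ(p^\infty)$- or a $\bZ_{(p)}$-summand without destroying the linearity of the commensurability quasi-ordering on p.p.-definable subgroups, which is exactly the obstruction to dp-minimality for a $1$-based group.
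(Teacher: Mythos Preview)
Your argument is correct and follows the same overall strategy as the paper: both directions go through Corollary~\ref{cor:commensurable, 1}, with the converse established by exhibiting two $\lesssim$-incomparable p.p.~definable subgroups.

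The one noteworthy difference is your choice of the pair. The paper (after first reducing to $B=0$) takes $H_1=A[p^{k-1}]$ and $H_2=p^kA$; since $A[p^{k-1}]=0$ when $k=1$, this only gives incomparability for $k>1$, and the case $k=1$ has to be handled separately with the pair $(A[p],pA)$. Your single choice $H_1=A[p]$, $H_2=p^kA$ works uniformly for all $k\geq 1$ (indeed it specializes to the paper's $k=1$ pair), so your write-up avoids the case split. For the forward direction the paper invokes p.p.-uniseriality of $\bZ(p^k)^{(\aleph_0)}$ via Corollary~\ref{cor:pp uniserial} rather than Corollary~\ref{cor:finite exp dp-minimal}, but either citation is fine.
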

\begin{proof}
Since $A$ is dp-minimal iff $A/B$ is dp-minimal, by parts (4) and (5) of Corollary~\ref{cor:operations on dp-minimal modules}, we may assume that $B=0$. So if $\beta=\gamma=0$, then $A=\bZ(p^k)^{(\aleph_0)}$ is p.p.~uniserial and hence dp-minimal (by Corollaries~\ref{cor:pp uniserial} and \ref{cor:commensurable, 1}). Conversely, suppose $A$ is dp-minimal.
We have
$$A[p^{k-1}] = p\bZ(p^k)^{(\aleph_0)}\oplus (\bZ(p^\infty)[p^{k-1}])^{(\beta)}$$
and
$$p^kA = \bZ(p^\infty)^{(\beta)} \oplus p^k\bZ_{(p)}^{(\gamma)}$$
with intersection
$$A[p^{k-1}] \cap p^kA = (\bZ(p^\infty)[p^{k-1}])^{(\beta)},$$
so 
$$A[p^{k-1}]/ (A[p^{k-1}] \cap p^kA)\cong p\bZ(p^k)^{(\aleph_0)}$$ 
and
$$p^kA / (A[p^{k-1}] \cap p^kA)\cong (\bZ(p^\infty)/\bZ(p^\infty)[p^{k-1}])^{(\beta)}\oplus p^k\bZ_{(p)}^{(\gamma)}.$$
Thus $A[p^{k-1}] \cap p^kA$ has infinite index in both $A[p^{k-1}]$ and $p^kA$  iff $k>1$ and $\beta$ or $\gamma$ are non-zero.
By the equivalence of (1) and (4) in Corollary~\ref{cor:commensurable, 1},
this shows that, as $A$ is dp-minimal, we have $k=1$ or $\beta=\gamma=0$.
Suppose $k=1$; then
$$A[p] = \bZ(p)^{(\aleph_0)} \oplus \bZ(p^\infty)^{(\beta)}[p],\quad
pA = \bZ(p^\infty)^{(\beta)} \oplus p\bZ^{(\gamma)}_{(p)},\quad  
  A[p]\cap pA=\bZ(p^\infty)^{(\beta)}[p],$$
so $A[p]\cap pA$ has infinite index in $A[p]$, and $A[p]\cap pA$ has infinite index
in $pA$ iff $\beta>0$ or $\gamma>0$. Again, dp-minimality of $A$ and Corollary~\ref{cor:commensurable, 1}
yield $\beta=\gamma=0$.
\end{proof}

Now suppose $T$ is dp-minimal; so in particular $\vc^T(1)=1$ by Corollary~\ref{cor:commensurable, 1}. By Corollary~\ref{cor:abelian groups}, this implies that there is at most one prime singular for $A$, and $d(U_{\geq\aleph_0}(p;A))\leq 1$ for all $p$, with  $d(U_{\geq\aleph_0}(p;A)) = 1$ for at most one $p$. (Recall the definition of $d(\,\cdot\,)$ from Section~\ref{sec:finite exponent}.)
If no prime is singular for $A$, then $A$ is of type~(1) (with $p$ arbitrary and $\alpha_{n-1}=\beta=\gamma=0$ for all $n>0$), so we may assume that $p$ is the unique prime which is singular for~$A$.

We may and shall assume that $A$ is strict Szmielew as described before the proof of Theorem~\ref{thm:abelian groups}.
Employing the notation introduced in that proof, we then have 
$$A=A^{[\mathcal P]}\oplus A^{[p]}_{\aleph_0} \oplus A^{[p]}_{<\aleph_0}\oplus\bZ(p^\infty)^{(\beta_p)}\oplus\bZ_{(p)}^{(\gamma_p)}$$ 
where $A^{[\mathcal P]}$ is non-singular. 
Note that by Corollary~\ref{cor:operations on dp-minimal modules},~(2), each of the direct summands of $A$ is also dp-minimal.
If $U_{\geq\aleph_0}(p;A)=\emptyset$, then $A^{[p]}_{\aleph_0}=0$, and $A$ is as described in~(1). So from now on suppose 
that $U_{\geq\aleph_0}(p;A)$ is non-empty, hence $d(U_{\geq\aleph_0}(p;A)) = 1$, that is, $U_{\geq\aleph_0}(p;A)=\{k\}$ or $U_{\geq\aleph_0}(p;A)=\{k,k+1\}$, for some $k>0$.  Then, applying Lemma~\ref{lem:dp-minimal abelian, 2} to the direct summand 
$$\bZ(p^k)^{(\aleph_0)}\oplus\bZ(p^\infty)^{(\beta_p)}\oplus\bZ_{(p)}^{(\gamma_p)}$$ 
of $A$, we see that $\beta_p=\gamma_p=0$. Thus, to show that $A$ is of type (2), it is enough to prove that $B:=A^{[\mathcal P]}\oplus A^{[p]}_{<\aleph_0}$ is finite, and for this, after passing to a direct summand, we may assume that 
$$A=\bZ(p^k)^{(\aleph_0)}\oplus B.$$
Then by (the proof of) Lemma~\ref{lem:asymp pp, chain} and the remark before Lemma~\ref{lem:dp-minimal abelian, 1} we have
$$\delta^p_{k-1,k}(A) \sim p\bZ(p^k)^{(\aleph_0)} \oplus A^{[\mathcal P]}\oplus  p A^{[p]}_{<\aleph_0}$$
whereas
$$\tau^p_{k}(A)=A[p^k]\sim \bZ(p^k)^{(\aleph_0)} \oplus (A^{[p]}_{<\aleph_0})[p^k].$$
Hence 
$$\delta^p_{k-1,k}(A)\cap \tau^p_{k}(A) \sim p\bZ(p^k)^{(\aleph_0)} \oplus (A^{[p]}_{<\aleph_0})[p^k]$$
has infinite index in $\tau^p_{k}(A)$.
Now suppose for a contradiction that $B$ is infinite.
Then $\delta^p_{k-1,k}(A)\cap \tau^p_{k}(A)$ also has infinite index in  $\delta^p_{k-1,k}(A)$, contradicting dp-minimality of $A$.
This shows that indeed, $B$ is finite.

This finishes the proof of Proposition~\ref{prop:dp-minimal abelian}. \qed

\begin{corollary}
Suppose $T$ is totally transcendental. Then $T$ is dp-minimal iff~$A$ is elementarily equivalent to the direct sum of a finite abelian group with one of the following:
\begin{enumerate}
\item an infinite divisible group $D$ such that $D[p]$ infinite for at most one $p$;
\item $\bZ(p^k)^{(\aleph_0)}$ where $p$ is a prime and $k>0$; or
\item $\bZ(p^k)^{(\aleph_0)}\oplus \bZ(p^{k+1})^{(\aleph_0)}$ where $p$ is a prime and $k>0$.
\end{enumerate}
\end{corollary}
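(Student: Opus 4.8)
The plan is to read this off from Proposition~\ref{prop:dp-minimal abelian} by intersecting the list given there with the class of totally transcendental abelian groups. As usual the whole statement is invariant under elementary equivalence, so we may assume $A$ is the strict Szmielew group $\equiv A$. We recall the classical description of total transcendence for abelian groups: $\Th(A)$ is totally transcendental iff $A$ is elementarily equivalent to a direct sum $D\oplus R$ with $D$ divisible and $R=\bigoplus_{p\in P}\bigoplus_{n\in I_p}\bZ(p^n)^{(\alpha_{p,n})}$, where $P$ is a \emph{finite} set of primes and each $I_p$ is \emph{finite} (equivalently, by the result of \cite{Prest} cited in Lemma~\ref{lem:tt}, $\PP_1(A)$ has the descending chain condition; compare also the superstability criterion \cite[Theorem~A.2.13]{Hodges}). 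Two consequences will be used throughout: if $A$ is non-singular (all $A[q]$ and $A/qA$ finite) then the $\alpha_{p,n}$ above are forced to be finite, so $R$ is \emph{finite} and $A\equiv(\text{finite})\oplus D$ with $D$ divisible; and a divisible group $D=\bigoplus_q\bZ(q^\infty)^{(\beta_q)}\oplus\bQ^{(\delta)}$ has $D[q]$ infinite exactly for those $q$ with $\beta_q$ infinite.

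For $(\Leftarrow)$: every group in the list is totally transcendental — a finite group is, a divisible group is (its lattice of p.p.\ subgroups has DCC), and the finite-exponent groups in (2),(3) even have a \emph{finite} lattice $\PP_1$; in each case the group has the shape $D\oplus R$ described above. For dp-minimality, items (2) and (3) together with the finite summand are precisely the $\aleph_0$-categorical dp-minimal abelian groups of Corollary~\ref{cor:finite exp dp-minimal} (equivalently, they are the groups of Proposition~\ref{prop:dp-minimal abelian}(2)); and for item~(1), writing the infinite divisible $D$ as $\bZ(p^\infty)^{(\beta)}\oplus D_0$, where $p$ is the unique prime (if any) with $D[p]$ infinite, the group $D_0$ has all socles finite, hence $(\text{finite})\oplus D_0$ is non-singular, so that $(\text{finite})\oplus D$ is a direct sum of a non-singular group with $\bZ(p^\infty)^{(\beta)}$, i.e.\ exactly the shape in Proposition~\ref{prop:dp-minimal abelian}(1) (with all $\alpha_{n-1}=\gamma=0$). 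Hence it is dp-minimal.

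For $(\Rightarrow)$: assume $T$ is totally transcendental and dp-minimal. By Proposition~\ref{prop:dp-minimal abelian}, $A$ is (after passing to strict Szmielew form again) of type (1) or (2) listed there. If $A$ is of type (2), say $A\equiv\bZ(p^k)^{(\alpha)}\oplus\bZ(p^{k+1})^{(\beta)}\oplus B$ with at least one of $\alpha,\beta$ infinite, then replacing an infinite $\alpha$ or $\beta$ by $\aleph_0$, absorbing a finite $\bZ(p^k)^{(\alpha)}$ or $\bZ(p^{k+1})^{(\beta)}$ summand into $B$, and reindexing $k$ if necessary puts $A$ into the shape of item (2) or item (3). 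If $A$ is of type (1), write $A=D'\oplus\big(\bigoplus_{n>0}\bZ(p^n)^{(\alpha_{n-1})}\big)\oplus\bZ(p^\infty)^{(\beta)}\oplus\bZ_{(p)}^{(\gamma)}$ with $D'$ non-singular and each $\alpha_{n-1}$ finite. Total transcendence forces $\gamma=0$: otherwise the direct summand $\bZ_{(p)}^{(\gamma)}$ makes the p.p.-definable subgroups $p^jA$ (defined by $p^j\mid x$) form an infinite strictly descending chain, contradicting DCC on $\PP_1(A)$. Likewise it forces $\alpha_{n-1}\neq0$ for only finitely many $n$ (otherwise $p^jA/p^{j+1}A\neq0$ for all $j$, again giving an infinite strictly descending p.p.\ chain), so $\bigoplus_{n>0}\bZ(p^n)^{(\alpha_{n-1})}$ is finite. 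Moreover $D'$, being non-singular and totally transcendental, is $\equiv(\text{finite})\oplus D_0$ with $D_0$ divisible and all $D_0[q]$ finite. Collecting the pieces, $A\equiv(\text{finite})\oplus D$ with $D:=D_0\oplus\bZ(p^\infty)^{(\beta)}$ divisible; here $D[q]$ is finite for every $q\neq p$ and $D[p]$ is infinite only if $\beta=\aleph_0$, so $D[r]$ is infinite for at most one prime $r$, and $D$ is infinite since $A$ is. This is item (1).

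The only genuine obstacle is having the right classification of totally transcendental abelian groups available in the form that makes ``non-singular $+$ totally transcendental $\Rightarrow$ (finite)$\,\oplus\,$divisible'' transparent, together with the elementary DCC arguments eliminating $\bZ_{(p)}$-summands and unbounded finite-exponent towers; once these are in place, the rest is the bookkeeping of absorbing finite direct summands and matching the resulting shapes against the two cases of Proposition~\ref{prop:dp-minimal abelian}.
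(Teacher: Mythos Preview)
Your proof is correct and follows the same overall strategy as the paper: intersect the classification of Proposition~\ref{prop:dp-minimal abelian} with the totally transcendental condition. The paper's two-sentence proof simply cites Macintyre's theorem ($A=B\oplus D$ with $B$ of bounded exponent and $D$ divisible) together with the observation that a non-singular group of bounded exponent is finite, leaving the reader to match this against the two cases of Proposition~\ref{prop:dp-minimal abelian}. You instead start from the Proposition~\ref{prop:dp-minimal abelian} side and, in the type~(1) case, re-derive the needed consequences of Macintyre by hand via DCC on $\PP_1(A)$ (eliminating the $\bZ_{(p)}^{(\gamma)}$ summand and the unbounded tower of $\bZ(p^n)$'s), then invoke ``non-singular $+$ t.t.\ $\Rightarrow$ (finite)\,$\oplus$\,(divisible with finite socles)'' for the remaining non-singular summand. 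This last step is exactly the paper's key observation in a slightly repackaged form. So the two arguments differ only in organization and level of detail: the paper front-loads Macintyre, you unfold it inline. Your version has the advantage of making explicit why each unwanted summand is excluded; the paper's has the advantage of brevity.
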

\begin{proof}
By a theorem of Macintyre (see \cite{mac-abelian} or \cite[Theorem~A.2.11]{Hodges}), $A=B\oplus D$ where $B$ has finite exponent and $D$ is divisible. The claim now follows from
Proposition~\ref{prop:dp-minimal abelian} and the observation that every non-singular abelian group of finite exponent is finite. 
\end{proof}

By \cite{OU}, a stable theory is dp-minimal iff all $1$-types have weight $1$. It would be interesting to obtain a proof of Proposition~\ref{prop:dp-minimal abelian} using only the methods of geometric stability theory.
It is instructive to see how Proposition~\ref{prop:dp-minimal abelian} entails the well-known fact (cf.~\cite[Fact~3.2]{OU}) that every $\URk$-rank $1$ abelian group is dp-minimal. The following lemma shows, in fact, that if a singular abelian group has $\URk$-rank~$1$, then it is of type~(2) in Proposition~\ref{prop:dp-minimal abelian}.

\begin{lemma}
$A$ has  $\URk$-rank $1$ iff $A$ is non-singular or of the form $A=\bZ(p)^{(\aleph_0)}\oplus B$ where $p$ is a prime and $B$ is finite.
\end{lemma}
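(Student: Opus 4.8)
The plan is to run both directions through the criterion of Example~\ref{ex:Urank 1}: an infinite module $M$ has $\URk$-rank $1$ exactly when every p.p.~definable subgroup of $M$ is finite or of finite index in $M$ (equivalently $\sim 0$ or $\sim M$). Since $\URk$-rank and the notion of a prime being (non-)singular for $A$ depend only on $T=\Th(A)$, I would first replace $A$ by the strict Szmielew group elementarily equivalent to it (so that ``$\bZ(p)^{(\aleph_0)}$'' becomes literal). I would also use the standard description (see, e.g., \cite{Prest}) of the p.p.~definable subgroups of an abelian group $A$ in one variable as finite intersections of subgroups $H_{a,n}:=\{x\in A:ax\in nA\}$ with $a\in\bZ\setminus\{0\}$ and $n\in\bN$, together with the bootstrapping remark that for non-singular $A$ the groups $A[m]$ and $A/mA$ are finite for every $m\geq 1$, since this holds for prime~$m$.

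For the implication $(\Leftarrow)$ I would verify that for $A$ non-singular, and for $A=\bZ(p)^{(\aleph_0)}\oplus B$ with $B$ finite, every $H_{a,n}$ is finite or cofinite; a finite intersection of finite-or-cofinite subgroups is again finite or cofinite, so Example~\ref{ex:Urank 1} gives $\URk(A)=1$. For non-singular $A$ this is immediate from the bootstrapping remark ($H_{a,0}=A[a]$ is finite, and $H_{a,n}\supseteq nA$ with $A/H_{a,n}$ a quotient of the finite group $A/nA$). For $A=\bZ(p)^{(\aleph_0)}\oplus B$ I would split $x=(x_1,x_2)$ along the two summands and run through the four cases according to whether $p\mid a$ and whether $p\mid n$: in each case $H_{a,n}$ either contains the summand $\bZ(p)^{(\aleph_0)}$ up to finite index, or is contained in the finite summand $B$, hence is cofinite or finite. (Alternatively one could note that $A$ has finite exponent, hence is totally transcendental, and deduce the claim from Corollaries~\ref{cor:finite exp, cor 2} and~\ref{cor:pp uniserial}; the direct check is just as quick.)

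For $(\Rightarrow)$, assume $\URk(A)=1$ (so $A$ is infinite by the standing assumption of this subsection). Applying Example~\ref{ex:Urank 1} to the p.p.~definable subgroups $pA$ and $A[p]$ and using $A/A[p]\cong pA$, I get that for each prime~$p$ exactly one of $A[p]$, $pA$ has finite index in~$A$: not both (else $A$ is finite), and not neither (else both are finite, and again $A$ is finite). One then checks that $p$ is singular iff $[A:A[p]]<\infty$, and that no two distinct primes $p,q$ can both be singular, since $A[p]$ and $A[q]$ would both be of finite index while $A[p]\cap A[q]=A[\gcd(p,q)]=0$, forcing $A$ finite. If no prime is singular, $A$ is non-singular and we are done. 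Otherwise let $p$ be the unique singular prime; then $pA\cong A/A[p]$ is finite, so $p\cdot\exp(pA)$ annihilates $A$ and $A$ has finite exponent. Write $A=A_p\oplus B'$, where $A_p$ is the $p$-primary part (a pure, p.p.~definable summand, hence infinite with $\URk(A_p)=1$) and $B'=\bigoplus_{q\neq p}A_q$ is finite (for $q\neq p$ each $A_q$ is a bounded $q$-group with finite socle $A[q]$, hence finite, and $A_q=0$ for all but finitely many $q$). Writing $A_p=\bigoplus_{i\geq 1}\bZ(p^i)^{(\alpha_i)}$ with finitely many $\alpha_i\neq 0$, the p.p.~definable subgroup $pA_p=\bigoplus_{i\geq 2}\bZ(p^{i-1})^{(\alpha_i)}$ cannot be of finite index (that would make $A_p/pA_p\cong\bigoplus_i\bZ(p)^{(\alpha_i)}$ finite, contradicting that $A_p$ is infinite of finite exponent), so by Example~\ref{ex:Urank 1} it is finite; hence $\alpha_i<\aleph_0$ for all $i\geq 2$, so $\alpha_1$ is infinite and $A_p=\bZ(p)^{(\alpha_1)}\oplus(\text{finite group})$. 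For the strict Szmielew representative $\alpha_1=\aleph_0$, so $A=\bZ(p)^{(\aleph_0)}\oplus B$ with $B$ finite.

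The main obstacle is the direction $(\Rightarrow)$: one must simultaneously rule out a second singular prime and rule out any summand $\bZ(p^i)^{(\kappa)}$ with $i\geq 2$ and $\kappa$ infinite. The leverage comes entirely from Example~\ref{ex:Urank 1}, which converts ``$\URk$-rank $1$'' into the purely algebraic statement that every p.p.~definable subgroup is finite or cofinite; once that is available, the remaining points are the Szmielew reduction (needed to make ``$\aleph_0$'' exact) and the bookkeeping of p.p.~definable subgroups of abelian groups.
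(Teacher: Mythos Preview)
Your proof is correct and follows essentially the same strategy as the paper: both directions are run through the criterion of Example~\ref{ex:Urank 1}, and for $(\Rightarrow)$ one analyzes the singular primes after passing to a strict Szmielew representative. The paper's argument is terser---it dismisses $(\Leftarrow)$ as ``clearly'' and, for $(\Rightarrow)$, picks a singular prime $p$, splits into the cases $A[p]$ infinite versus $A/pA$ infinite, and in each case reads off directly from the Szmielew parameters that $A$ is $\bZ(p)^{(\alpha_{p,0})}$ plus something finite (uniqueness of the singular prime then falls out as a consequence rather than being argued first). Your version is more explicit: you first establish the dichotomy for every prime, prove uniqueness of the singular prime via $A[p]\cap A[q]=0$, deduce finite exponent, and then decompose. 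Both arguments are equally valid; yours has the advantage of making every step visible, while the paper's is quicker once one is comfortable reading off the conclusion from the Szmielew invariants.
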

\begin{proof}
Clearly if $A$ is non-singular, or if $A=\bZ(p)^{(\aleph_0)}\oplus B$ where $p$ is a prime and~$B$ is finite, then $A$ has $\URk$-rank~$1$. So assume conversely that $A$ has $\URk$-rank~$1$, and let~$p$ be a prime singular for $A$.
We are allowed to assume that $A$ is strict Szmielew as before. Suppose first that  $A[p]$ is infinite.
Then $A/A[p]$ is finite, since~$A$ has $\URk$-rank~$1$. This yields $\alpha_{p,n-1}<\aleph_0$ for all $n>1$, with $\alpha_{p,n-1}=0$ for all but finitely many $n>1$,  $\beta_p=\gamma_p=\nu=0$, and
$\bigoplus_{q\neq p} A^{[q]}$ is finite.
Hence $A$ is a direct sum of $\bZ(p)^{(\alpha_{p,0})}$ with a finite abelian group, as required.
Similarly, if $A/pA$ is infinite, then $pA$ is finite, and again we obtain the same conclusion.
\end{proof}

\subsection{Dp-minimal expansions of the group of integers.} \label{sec:dp-min expansions of Z}
The abelian group $(\bZ,{+})$ of integers, having $\URk$-rank $1$, is dp-minimal.
In fact, in \cite{ADHMS} we showed that the ordered abelian group $(\bZ,{<},{+})$  of integers is also dp-minimal, and has no proper dp-minimal expansions.
One might wonder whether the non-dp-minimality of the structures in question is caused by the presence of the ordering (due to our use, in the proof of this fact given in \cite{ADHMS}, of a lemma from \cite{Simon}, which specifically deals with ordered dp-minimal structures). We do not know the answer to the following:

\begin{question}
Is every dp-minimal expansion of $(\mathbb Z,{+})$ definable in $(\bZ,{<},{+})$?
\end{question}

We have a positive answer in several examples (which are well-known to have otherwise good model-theoretic properties). This is based on the following observation. (Below we will apply this lemma in the case $U=V$.)

\begin{lemma}\label{lem:not dp-min}
Let $(A,{+})$ be a commutative semigroup, let  $U\subseteq V$ be infinite subsets of~$A$, and $C>0$. Suppose for each $u\in U$ there are at most $C$ elements~$u'$ of $U$ such that $u+u'=v+v'$ for some $v,v'\in V$ with $\{u,u'\}\neq\{v,v'\}$.
Then $\mathbf A = (A,{+},V)$ is not dp-minimal.
\end{lemma}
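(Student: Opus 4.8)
The plan is to exhibit a single formula $\varphi(x;y)$ with $\abs{y}=1$ witnessing the failure of dp-minimality via condition~(1) of Lemma~\ref{lem:dp-min}: I will show that for each $r>0$ there are finite $A_0,A_0'\subseteq A$ such that $G_{A_0,A_0',\varphi}$ contains $K_{r,r}$ (equivalently one could phrase the output as an ICT pattern, but the $K_{r,r}$ route is cleanest here). The formula is
$\varphi(x;y):=\exists z\,(V(z)\wedge x+z=y)$ with $\abs{x}=\abs{y}=1$, so that $\mathbf A\models\varphi(a;b)$ iff $b\in a+V$.

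First I would record that the relation ``$u'$ is a \emph{bad partner} of $u$'', meaning $u+u'=v+v'$ for some $v,v'\in V$ with $\{u,u'\}\neq\{v,v'\}$, is symmetric in $u$ and $u'$ (by commutativity), and that by hypothesis each $u\in U$ has at most $C$ bad partners. Since $U$ is infinite, a one-pass greedy construction then yields an infinite sequence $u_1,u_2,\dots$ of pairwise distinct elements of $U$, no two of which are bad partners: having chosen $u_1,\dots,u_{n-1}$, the union of $\{u_1,\dots,u_{n-1}\}$ with the bad-partner sets of $u_1,\dots,u_{n-1}$ has size at most $(n-1)(C+1)$, so one may pick $u_n\in U$ outside it. For such a sequence, for all $m\neq n$ the only $v,v'\in V$ with $v+v'=u_m+u_n$ are those with $\{v,v'\}=\{u_m,u_n\}$.

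Next, fix $r$, put $A_0=A_0':=\{u_1,\dots,u_{2r}\}$, take as ``rows'' $a_i:=u_i$ and as ``columns'' $a_j':=u_{r+j}$ for $i,j\in[r]$, and for each pair propose the private parameter $b_{ij}:=u_i+u_{r+j}\in A$. Then $\mathbf A\models\varphi(u_i;b_{ij})$ and $\mathbf A\models\varphi(u_{r+j};b_{ij})$, with witnesses $z=u_{r+j}$ and $z=u_i$; both lie in $V$ because $U\subseteq V$, which is the only place that inclusion is used. Conversely, if $k\in[2r]$ satisfies $u_k\notin\{u_i,u_{r+j}\}$ and $\mathbf A\models\varphi(u_k;b_{ij})$, then $u_k+v=u_i+u_{r+j}$ for some $v\in V$, and by the ``only trivial representations'' property this forces $\{u_k,v\}=\{u_i,u_{r+j}\}$, hence $u_k\in\{u_i,u_{r+j}\}$, a contradiction. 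Thus $b_{ij}$ cuts out exactly $\{a_i,a_j'\}$ from $A_0\cup A_0'$, so $(a_i,a_j')$ is an edge of $G_{A_0,A_0',\varphi}$; since this holds for all $i,j\in[r]$ and the $u$'s are distinct, $G_{A_0,A_0',\varphi}$ contains $K_{r,r}$. By Lemma~\ref{lem:dp-min}, $\mathbf A$ is not dp-minimal.

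The argument is routine once $\varphi$ and the parameters $b_{ij}$ are chosen; the only genuine design choices are exactly those two, and the one point needing care is the bookkeeping that identifies the ``bad event'' $u_k+v=u_i+u_{r+j}$ with a non-trivial representation in the precise sense of the hypothesis, together with the symmetry of bad-partnership that makes a single greedy sweep suffice. Notably, no elementary extension or saturation is needed, since the whole configuration lives among finitely many elements of $A$.
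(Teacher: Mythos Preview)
Your proof is correct and uses essentially the same formula and the same key observation (good pairs $\{u,u'\}$ whose sum has a unique representation in $V+V$) as the paper. The only difference is in how Lemma~\ref{lem:dp-min} is invoked: you greedily extract an infinite subset of $U$ in which \emph{every} pair is good and then exhibit $K_{r,r}$ explicitly (condition~(1)), whereas the paper simply observes that in any $n$-element subset of $U$ at least $\binom{n}{2}-Cn$ pairs are good and appeals to the edge-count condition~(2). Your route is slightly more constructive and sidesteps the K\H{o}v\'ari--S\'os--Tur\'an direction of the equivalence; the paper's route is a line shorter because it skips the greedy selection.
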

\begin{proof}
Let $I=\{u,u'\}$ range over the $2$-element subsets of $U$. 
Let us say that  $I$ is \emph{good} if the only representation of $u+u'$ as a sum of two elements of $V$ is the given one, that is, if $v,v'\in V$ with $u+u'=v+v'$, then  $\{u,u'\}=\{v,v'\}$.
If $I$ is good, then $u+u'\notin v+V$, for each $v\in V\setminus I$. Consider the partitioned formula $$\varphi(x;y)=\exists z(z\in V\wedge  x=y+z)$$ 
in the language of $\mathbf A$. By what we have shown, for each choice of good $I=\{u,u'\}$ the element $u+u'$ satisfies
$$\{\varphi(x;u),\varphi(x;u')\}\cup\{\neg\varphi(x;v):v\in V\setminus I\}.$$
Each $u\in U$ is contained in no more than $C$ $2$-element subsets of $U$ which are not good. Hence
given $U'\subseteq U$ with $\abs{U'}=n$, at least ${n\choose 2}-Cn$ $2$-element subsets of~$U'$ are good. Since for each $r>0$ and $C'$ we have ${n\choose 2}-Cn>C'n^{2-1/r}$ for $n\gg 0$, we see by Lemma~\ref{lem:dp-min} that $\mathbf A$ is not dp-minimal.
\end{proof}

Recall that a set $U$ of elements of a commutative semigroup, written additively, is said to be a {\em Sidon set} if every element of $U+U$ can be written as a sum of two elements of $U$ in essentially a unique way, that is: if $u+v=u'+v'$ where $u,v,u',v'\in U$, then $\{u,v\}=\{u',v'\}$. (In the terminology of the proof above, all two-element subsets of $U$ are good.) We say that a set $U$ is a {\em weak Sidon set} if for all $u,v,u',v'\in U$ with $u+v=u'+v'$ we have $u=v$ or $u'=v'$ or $\{u,v\}=\{u',v'\}$.
From the previous lemma we immediately obtain:

\begin{corollary}\label{cor:weak Sidon}
Let $U$ be an infinite weak Sidon set in a commutative semigroup $(A,{+})$. Then $(A,{+},U)$ is not dp-minimal.
\end{corollary}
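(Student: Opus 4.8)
The plan is to derive this from Lemma~\ref{lem:not dp-min}, taking the distinguished set there to be $U$ itself but replacing the auxiliary infinite set by a suitably chosen infinite subset $W\subseteq U$, and taking $C=1$. The weak Sidon property does most of the work: if $w,w'\in U$ and $w+w'=v+v'$ for some $v,v'\in U$ with $\{w,w'\}\neq\{v,v'\}$, then either $w=w'$ or $v=v'$, so that when $w\neq w'$ the sum $w+w'$ lies in the doubling set $2U:=\{v+v:v\in U\}$. Consequently, if $W\subseteq U$ is infinite and has the property $(\star)$ that $w+w'\notin 2U$ for all distinct $w,w'\in W$, then for each $w\in W$ the only $w'\in W$ for which $w+w'$ admits a representation $v+v'$ with $v,v'\in U$ and $\{w,w'\}\neq\{v,v'\}$ is $w'=w$; so Lemma~\ref{lem:not dp-min} (with $C=1$) yields that $(A,{+},U)$ is not dp-minimal. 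Note that $(\star)$ says precisely that $W$ is a Sidon subset of $U$, so the heart of the matter is that every infinite weak Sidon set contains an infinite Sidon set.

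To produce such a $W$ I would argue by Ramsey's theorem: color a two-element subset $\{a,b\}$ of $U$ red if $a+b\in 2U$ and blue otherwise, and pass to an infinite monochromatic subset $W_0\subseteq U$. If $W_0$ is blue we are done, taking $W=W_0$; so the crux is to rule out an infinite red set $\{w_1,w_2,\dots\}\subseteq U$, i.e.\ one with $w_i+w_j\in 2U$ for all $i\neq j$. Writing $w_i+w_j=2t_{ij}$ with $t_{ij}=t_{ji}\in U$, adding $w_1+w_i=2t_{1i}$ and $w_1+w_j=2t_{1j}$ and using $w_i+w_j=2t_{ij}$ gives $2(t_{1i}+t_{1j})=2(w_1+t_{ij})$, hence --- in a group, or whenever doubling is injective --- the relation $t_{1i}+t_{1j}=w_1+t_{ij}$ among elements of $U$. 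The weak Sidon property forces one of $t_{1i}=t_{1j}$, $w_1=t_{ij}$, or $\{t_{1i},t_{1j}\}=\{w_1,t_{ij}\}$; the first and third each quickly lead (after cancelling) to $w_i=w_j$ or $w_k=w_1$ for some $k\geq 2$, contradicting the pairwise distinctness of the $w_k$, so $t_{ij}=w_1$ for all distinct $i,j\geq 2$. But then $w_2+w_3=2w_1=w_2+w_4$, and the weak Sidon property applied to the identity $w_2+w_3=w_2+w_4$ in $U$ gives $w_2=w_3$, $w_2=w_4$, or $w_3=w_4$ --- all impossible. This contradiction establishes $(\star)$, and hence the corollary.

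The step I expect to require the most care is the passage from $2(t_{1i}+t_{1j})=2(w_1+t_{ij})$ to $t_{1i}+t_{1j}=w_1+t_{ij}$, together with the cancellations used to dispose of the first and third cases above: these are harmless in the application we actually care about --- where the ambient structure is the group $(\bZ,{+})$ --- but need not hold in an arbitrary commutative semigroup. For the general formulation I would either state the result for cancellative commutative semigroups, or replace the analysis of the red set by a cancellation-free argument (for instance, by first thinning the sequence $(w_i)$ until the elements $t_{ij}$ are coherent enough to be fed into the weak Sidon property through sums of only two elements of $U$); in the written proof I would present the clean group argument and flag this point.
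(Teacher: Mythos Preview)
Your argument is correct (in any cancellative commutative semigroup in which doubling $x\mapsto 2x$ is injective---so in particular in any torsion-free abelian group, which covers every application in the paper) and is in fact more careful than the paper's. The paper's entire proof is the phrase ``From the previous lemma we immediately obtain,'' presumably meaning: take $U=V$ in Lemma~\ref{lem:not dp-min} and $C=1$, the only bad $u'$ for a given $u$ being $u'=u$ itself. But that direct reading does not go through: a weak Sidon set can contain an element $u$ with infinitely many $u'\neq u$ satisfying $u+u'\in 2U$. For instance, $U=\{0\}\cup\{2^n:n\geq 0\}\subseteq\mathbb Z$ is weak Sidon, yet $0+2^k=2\cdot 2^{k-1}\in 2U$ for every $k\geq 1$, so no finite $C$ works for this $U$ with $U=V$. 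Some additional step---such as your Ramsey argument extracting an infinite subset $W\subseteq U$ with property~$(\star)$---is genuinely needed to invoke the lemma.

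Your route via Ramsey is perfectly good; a slight shortcut is to observe that the very computation you carry out already shows the ``red'' graph on $U$ (edges $\{u,u'\}$ with $u+u'\in 2U$) contains no $K_4$, so by Tur\'an any $n$-element subset of $U$ has at least $\binom{n}{2}-n^2/3$ good two-element subsets, and this quadratic lower bound is all the \emph{proof} of Lemma~\ref{lem:not dp-min} actually uses. The caveat you flag about general commutative semigroups is real: both the cancellation of common summands and the passage from $2X=2Y$ to $X=Y$ are essential to your argument, and without them the corollary in its stated generality does not obviously follow from the lemma. Since every use in the paper lives in $\mathbb Z$ or in a field, presenting the group case and flagging the issue, as you propose, is exactly right.
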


\begin{example*}
If $R$ is a commutative ring, $M$ is an $R$-module, and $U$ an infinite $R$-linearly independent subset of $M$, then $U$ is a weak Sidon set (and a Sidon set if $R$ is not of characteristic $2$); hence $(M,{+},U)$ is not dp-minimal. (This applies, e.g., to the structures introduced in the proof of Proposition~4.10 in \cite{ADHMS}.)
\end{example*}

Fast growing integer sequences form Sidon sets. More precisely, let $U$  be an infinite subset of $\bZ$, enumerated as $u_0<u_1<\cdots<u_n<u_{n+1}<\cdots$, and suppose $u_0>0$ and $u_{n+1}\geq 2u_n$ for each $n$; then $U$ is a Sidon set.

\begin{corollary}
The structure $(\mathbb Z,{+},U)$ is not dp-minimal if $U$ is one of the following:
\begin{enumerate}
\item $U=\{b^n:n\geq 0\}$ or $U=\{n+b^n:n\geq 0\}$, where $b\in\bN$, $b>1$;
\item $U=\{n!:n\geq 0\}$; or
\item $U=\{F_n:n\geq 0\}$ where $F_n$ is the $n$th Fibonacci number: $F_0=F_1=1$, and $F_{n+2}=F_{n+1}+F_n$ for each $n$.
\end{enumerate}
\end{corollary}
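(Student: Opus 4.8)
The plan is to derive all three cases from Corollary~\ref{cor:weak Sidon}: it suffices to check that each of the sets $U$ in (1)--(3), regarded as an infinite subset of the abelian group $(\bZ,{+})$, is a \emph{weak} Sidon set. Three of the (sub)cases fall immediately under the remark recorded just before the statement, that an infinite $U=\{u_0<u_1<\cdots\}\subseteq\bZ$ with $u_0>0$ and $u_{n+1}\ge 2u_n$ for each $n$ is a Sidon set (a fortiori a weak Sidon set): for $U=\{b^n:n\ge 0\}$ one has $u_{n+1}=b\cdot b^n\ge 2u_n$; for $U=\{n!:n\ge 0\}=\{n!:n\ge 1\}$ one has $u_{n+1}=(n{+}1)\cdot n!\ge 2u_n$ for $n\ge 1$; and for $U=\{n+b^n:n\ge 0\}$ with $b\ge 3$ one has $u_{n+1}-2u_n=(1-n)+(b-2)b^n\ge(1-n)+2^n\ge 0$. (Note $0!=1!$, so the underlying \emph{set} in the factorial case is $\{n!:n\ge 1\}$; replacing $U$ by its underlying set is harmless since only infinitude of $U$ matters.)

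The remaining two cases, $U=\{n+2^n:n\ge 0\}$ and $U=\{F_n:n\ge 0\}=\{F_n:n\ge 1\}$, are honestly \emph{not} Sidon sets, so here I would verify the weak Sidon property by hand. Enumerate $U=\{u_0<u_1<\cdots\}$; the single input is the estimate
\[
u_{n+1}-u_n\ \ge\ u_{n-1}\qquad(n\ge 1),
\]
which holds with equality for Fibonacci ($F_{n+1}-F_n=F_{n-1}$) and with slack for $n+2^n$ (there $u_{n+1}-u_n=1+2^n$ while $u_{n-1}=(n-1)+2^{n-1}\le 2^n$). Now suppose $u_i+u_j=u_k+u_l$ with $i<j$, $k<l$, and, after possibly interchanging the two pairs, $j\ge l$. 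If $j\ge l+1$, then
\[
u_j\ \ge\ u_{l+1}\ =\ u_l+(u_{l+1}-u_l)\ \ge\ u_l+u_{l-1}\ \ge\ u_l+u_k,
\]
the last step using $k\le l-1$; hence $u_i+u_j\ge u_i+u_k+u_l>u_k+u_l$, a contradiction. So $j=l$, and then $u_i=u_k$ forces $i=k$. Together with the cases $i=j$ or $k=l$ — which are precisely the ones the definition of a weak Sidon set sets aside — this shows $U$ is an infinite weak Sidon set, and Corollary~\ref{cor:weak Sidon} applies.

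The point that deserves care — and the reason one cannot simply quote the ``$u_{n+1}\ge 2u_n$'' criterion uniformly — is that $n+2^n$ and $F_n$ grow with ratio tending to $2$ from below, respectively to the golden ratio, and are therefore genuinely not Sidon: the ``doubled-element'' collisions $u_i+u_j=u_k+u_k$ (e.g.\ $1+11=6+6$ for $n+2^n$, and $F_{n-1}+F_{n+2}=2F_{n+1}$ for the Fibonacci numbers) really occur. The argument is arranged so that the gap estimate $u_{n+1}-u_n\ge u_{n-1}$ tolerates exactly these harmless coincidences while still ruling out any nontrivial one; beyond this bookkeeping (and the trivial identifications $F_0=F_1$, $0!=1!$) I expect no real difficulty.
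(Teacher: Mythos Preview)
Your proof is correct and follows the paper's strategy of reducing everything to Corollary~\ref{cor:weak Sidon}, but it is in fact more careful than the paper's own proof. The paper asserts that $\{n+b^n:n\ge 0\}$ is a Sidon set for every $b>1$, which is false for $b=2$ (as you note, $1+11=6+6$); you correctly separate out this case and handle it together with the Fibonacci numbers via the uniform gap estimate $u_{n+1}-u_n\ge u_{n-1}$. For the Fibonacci case the paper instead invokes Zeckendorf's theorem to classify all representations of an integer as a sum of two Fibonacci numbers; your argument is more elementary and, by treating $\{n+2^n\}$ and $\{F_n\}$ in one stroke, also more economical.
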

\begin{proof}
For parts (1) and (2) use Corollary~\ref{cor:weak Sidon} and the fact that the set of powers~$b^n$ and the set of integers of the form $n+b^n$, where  $b\in\bN$, $b>1$, as well as the set of factorials, 
are examples of Sidon sets. The set of Fibonacci numbers is not a Sidon set,
but it is a weak Sidon set, and hence Corollary~\ref{cor:weak Sidon} also applies. In fact, using Zeckendorf's~Theorem (every positive integer can be represented in a unique way as the sum of distinct Fibonacci numbers, no two of which are consecutive)
one shows easily that $a\in\bN$ has two representations as the sum of two Fibonacci numbers iff $a=2F_n$ for some $n\geq 2$, and in this case $a=F_{n+1}+F_{n-2}$ is the only other representation of $a$ as a sum of two Fibonacci numbers.
\end{proof}

In the examples considered in the previous corollary, the corresponding expansion of the ordered abelian group $(\bZ,{<},{+})$ of integers by a predicate symbol for $U$ has quantifier elimination in a natural expansion of the language $\{<,{+},U\}$ (see \cite{Cherlin-Point, Point}), and one can show that these expansions of $(\bZ,{<},{+})$ are NIP.

\medskip
\noindent
We finish by noting that Lemma~\ref{lem:not dp-min} also yields a result about expansions of fields by a subgroup of their multiplicative group.
For this, let $K$ be a field and $G$ be an infinite subgroup of~$K^\times$.

\begin{corollary}\label{cor:Mann}
If $G$ has the Mann property, then $(K,{+},G)$ is not dp-minimal.
\end{corollary}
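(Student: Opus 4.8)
The plan is to deduce this from Lemma~\ref{lem:not dp-min}, taking the commutative semigroup to be $(K,{+})$ and setting $U=V=G$. Since $G$ is infinite by assumption, the only thing to check is the combinatorial hypothesis of that lemma: that there is a constant $C>0$ such that for every $u\in G$ there are at most $C$ elements $u'\in G$ for which $u+u'=v+v'$ holds for some $v,v'\in G$ with $\{u,u'\}\neq\{v,v'\}$. Granting this, Lemma~\ref{lem:not dp-min} gives directly that $(K,{+},G)$ is not dp-minimal.

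To verify the hypothesis I would normalize: given $u\in G$ (so $u\neq 0$, as $G\subseteq K^\times$) and a relation $u+u'=v+v'$ with $u',v,v'\in G$, divide by $u$ and put $g_1:=u^{-1}u'$, $g_2:=u^{-1}v$, $g_3:=u^{-1}v'$, all in $G$; the relation becomes $g_2+g_3-g_1=1$. The Mann property, applied to the equation $x+y-z=1$ (whose coefficients lie in the prime ring of $K$), provides a finite number $N=N(G)$ of \emph{non-degenerate} solutions $(g_2,g_3,g_1)\in G^3$, i.e. solutions for which no proper subsum of the terms $g_2,\ g_3,\ -g_1$ vanishes. The degenerate cases are then analysed by hand: the possible vanishing proper subsums are $g_2+g_3=0$, $g_2=g_1$, and $g_3=g_1$. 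If $g_2=g_1$ the equation forces $g_3=1$, whence $v=ug_1=u'$ and $v'=u$, so $\{v,v'\}=\{u,u'\}$, contrary to assumption; the case $g_3=g_1$ is symmetric and likewise excluded. The remaining case $g_2+g_3=0$ forces $g_1=-1$, i.e. $u'=-u$, contributing at most one further value of $u'$. Hence for each fixed $u$, any admissible $u'$ is either one of the at most $N$ values $g_1u$ coming from a non-degenerate solution, or equals $-u$; so $C:=N+1$ works, uniformly in $u$.

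The main obstacle is really just this bookkeeping of the degenerate solutions --- confirming that the solutions discarded by the Mann property are exactly those yielding $\{u,u'\}=\{v,v'\}$, and that the one genuinely degenerate family ($g_2+g_3=0$) contributes only boundedly (in fact at most one) value of $u'$. I would also remark that the case $\operatorname{char}K=2$ causes no trouble: there $-u=u$ is not a legitimate partner $u'\neq u$ in a two-element subset of $G$, so it simply drops out and one may take $C=N$. With the uniform bound in hand, the conclusion is immediate from Lemma~\ref{lem:not dp-min}.
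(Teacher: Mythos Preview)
Your proof is correct and follows essentially the same approach as the paper's: apply Lemma~\ref{lem:not dp-min} with $U=V=G$, normalize the relation $u+u'=v+v'$ by dividing by $u$ to obtain a solution of $x_1+x_2-x_3=1$ in $G$, bound the non-degenerate solutions via the Mann property, and check that the degenerate subsums either force $\{u,u'\}=\{v,v'\}$ or contribute the single extra value $u'=-u$, yielding $C=N+1$. Your case analysis of the degenerate subsums is slightly more explicit than the paper's, but the argument is the same.
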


Recall that $G$ is said the have the \emph{Mann property} if every equation
$$a_1x_1+\cdots+a_nx_n=1$$
where $n\geq 2$ and $a_1,\dots,a_n$ are non-zero coefficients in the prime field of $K$,
has only finitely many non-degenerate solutions in $G$, i.e., solutions $(g_1,\dots,g_n)\in G^n$ such that $\sum_{i\in I} a_ig_i\neq 0$ for each
non-empty $I\subseteq [n]$. (Examples for multiplicative groups with the Mann property include all finite-rank subgroups of $K^\times$ if~$K$ is algebraically closed of characteristic zero, by \cite{ESS}.)
In \cite{vdDG}, van den Dries and G{\"u}nayd{\i}n study the model theory of pairs $(K,G)$ (in the language of fields expanded by a unary predicate symbol) where $K$ is algebraically closed or real closed and $G$ has the Mann property; in particular, they show that if $K$ is algebraically closed and~$G$ has the Mann property, then $(K,G)$ is stable \cite[Corollary 6.2]{vdDG}. 

To prove Corollary~\ref{cor:Mann}, suppose $G$ has the Mann property  and let $C$ be a bound on the number of non-degenerate solutions of the equation $x_1+x_2-x_3=1$ in $G$.
Then for each $u,v,u',v'\in G$ with $u+v=u'+v'$ and $\{u,v\}\neq\{u',v'\}$, either $u+v=0$ or the triple $(u'/u,v'/u,v/u)$ is a non-degenerate solution of $x_1+x_2-x_3=1$.
Hence taking $(K,{+})$, $G$, and $C+1$ for $(A,{+})$, $U$ and $C$, respectively, the hypothesis of Lemma~\ref{lem:not dp-min} is satisfied. \qed

\medskip
\noindent
Note that Corollary~\ref{cor:Mann} is mainly interesting if $K$ is not real closed, since Simon \cite[Corollary~3.7]{Simon} has shown that an expansion of a (linearly) ordered group is o-minimal if  it is dp-minimal and definably complete and its underlying group is divisible. (These conditions are obviously necessary for o-minimality.) In particular, the only dp-minimal expansions of the ordered group $(\mathbb R,{+},{<})$ of real numbers are the o-minimal ones.

\medskip
\noindent
If $K$ is algebraically closed and $G$ has the Mann property, then $(K,G)$ is not dp-minimal; this
consequence of Corollary~\ref{cor:Mann} may be strengthened as follows. (That a sufficiently saturated structure $\mathbf M=(K,G)$ where $K$ is algebraically closed satisfies the hypothesis of the lemma below with $X=G$ follows from \cite[Lemmas~2.2~(3)~and~6.1]{vdDG}.)

\begin{lemma}  Let $\mathbf M$ be a structure which expands a field.  Suppose that $X$ is an infinite definable subset of $M$
so that the transcendence degree of $M$ over its subfield generated by $X$ is at least $n$.  Then there is a definable  one-to-one function $X^{n+1} \to M$, hence the dp-rank of $\Th(\mathbf M)$ is at least $n+1$.
\end{lemma}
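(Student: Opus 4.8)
The plan is to build the required injection explicitly, as a ``generic affine-linear combination'' map over suitably chosen transcendental parameters. Write $F$ for the subfield of $M$ generated by $X$ (so $F$ contains the prime field and $X\subseteq F$). Since $M$ has transcendence degree at least $n$ over $F$, I would first fix elements $t_1,\dots,t_n\in M$ that are algebraically independent over $F$, and then consider
\[
f\colon X^{n+1}\to M,\qquad f(x_0,x_1,\dots,x_n):=x_0+x_1t_1+\dots+x_nt_n .
\]
This $f$ is the restriction to $X^{n+1}$ of a polynomial map with coefficients $1,t_1,\dots,t_n$, hence --- since $\mathbf M$ expands a field and $X$ is definable --- it is definable in $\mathbf M$ (with parameters).

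Next I would check that $f$ is one-to-one. If $f(x_0,\dots,x_n)=f(x_0',\dots,x_n')$ with all $x_i,x_i'\in X$, set $c_i:=x_i-x_i'\in F$; then $c_0+c_1t_1+\dots+c_nt_n=0$. The point is that algebraic independence of $t_1,\dots,t_n$ over $F$ forces $1,t_1,\dots,t_n$ to be $F$-linearly independent: a nontrivial such relation would either make a nonzero element of $F$ vanish, or exhibit some $t_j$ as a polynomial of degree at most $1$ over $F$ in the remaining $t_i$, contradicting the algebraic independence. Hence every $c_i=0$, i.e.\ $x_i=x_i'$ for all $i$, so $f$ is injective.

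Finally, $X$ is infinite and definable and $f\colon X^{n+1}\to M$ is a definable injection, so Lemma~\ref{lem:dp-rank} yields that $\Th(\mathbf M)$ has dp-rank at least $n+1$; here one uses, exactly as in the proof of Lemma~\ref{lem:dp-rank}, that naming the parameters $t_1,\dots,t_n$ and any parameters defining $X$ does not change the dp-rank, since dp-rank is an invariant of the complete theory.

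I do not anticipate a genuine obstacle: the argument is essentially a one-line construction plus an elementary linear-algebra observation. The only mild care needed is to keep track of the fact that all relevant differences $x_i-x_i'$ land in the field $F$ generated by $X$, so that the algebraic-to-linear-independence step can be applied over $F$ rather than over the prime field alone.
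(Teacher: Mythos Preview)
Your proof is correct and is essentially the same as the paper's: both pick $n$ elements algebraically independent over the field generated by $X$, form the affine-linear map $(x_0,\dots,x_n)\mapsto x_0+x_1t_1+\cdots+x_nt_n$ on $X^{n+1}$, and invoke Lemma~\ref{lem:dp-rank}. You spell out the injectivity step more explicitly than the paper (which just says ``it is easy to see''), but the argument is identical.
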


\begin{proof}  Pick  elements $e_1, \dots, e_n\in M$ which are algebraically independent over the subfield of $M$ generated by $X$.
Consider the function $X^{n+1} \to M$ given by $(x_1, \dots, x_{n+1})\mapsto x_1e_1 + \cdots +x_ne_n + x_{n+1}$.  It is easy
to see that this function is one-to-one. The claim thus follows from Lemma~\ref{lem:dp-rank}.
\end{proof}

We plan to systematically investigate the VC~density functions of pairs of structures (including the expansions of fields by groups with the Mann property studied in \cite{vdDG}) at another occasion.

\bibliographystyle{jflnat}

\end{document}